\documentclass{amsart}

\usepackage{pstricks}
\usepackage{pst-node}
\usepackage{graphicx}
\usepackage{amsmath,amsfonts,amssymb}

\setlength{\evensidemargin}{0in} 
\setlength{\textheight}{9in}      
\setlength{\textwidth}{6.5in}    
\setlength{\topmargin}{0in}      
\setlength{\oddsidemargin}{0in}


\newtheorem{theorem}{Theorem}[section]
\newtheorem{lemma}[theorem]{Lemma}
\newtheorem{proposition}[theorem]{Proposition}
\newtheorem{corollary}[theorem]{Corollary}
\newtheorem{conjecture}[theorem]{Conjecture}

\theoremstyle{definition}
\newtheorem{definition}[theorem]{Definition}

\theoremstyle{remark}
\newtheorem{remark}[theorem]{Remark}

\numberwithin{equation}{section}


\newcommand{\iph}{\ensuremath i\!+\!3}
\newcommand{\ipt}{\ensuremath i\!+\!2}
\newcommand{\ipo}{\ensuremath i\!+\!1}
\newcommand{\imo}{\ensuremath i\!-\!1}
\newcommand{\imt}{\ensuremath i\!-\!2}
\newcommand{\imh}{\ensuremath i\!-\!3}
\newcommand{\imf}{\ensuremath i\!-\!4}
\newcommand{\triple}{\ensuremath i\!-\!1,i,i\!+\!1}

\newcommand{\npo}{\ensuremath n\!+\!1}
\newcommand{\nmo}{\ensuremath n\!-\!1}

\newcommand{\G}{\ensuremath\mathcal{G}}
\newcommand{\C}{\ensuremath\mathcal{C}}

\newcommand{\LSP}{\ensuremath\mathrm{LSP}}
\newcommand{\LSF}{\ensuremath\mathrm{LSF}}

\newcommand{\B}{\bullet}
\newcommand{\cs}[1]{c@{\hskip #1ex}}

\newlength\cellsize \setlength\cellsize{12\unitlength}

\savebox2{%
\begin{picture}(12,12)
\put(0,0){\line(1,0){12}}
\put(0,0){\line(0,1){12}}
\put(12,0){\line(0,1){12}}
\put(0,12){\line(1,0){12}}
\end{picture}}

\newcommand\cellify[1]{\def\thearg{#1}\def\nothing{}%
\ifx\thearg\nothing
\vrule width0pt height\cellsize depth0pt\else
\hbox to 0pt{\usebox2\hss}\fi%
\vbox to 12\unitlength{
\vss
\hbox to 12\unitlength{\hss$#1$\hss}
\vss}}

\newcommand\tableau[1]{\vtop{\let\\=\cr
\setlength\baselineskip{-12000pt}
\setlength\lineskiplimit{12000pt}
\setlength\lineskip{0pt}
\halign{&\cellify{##}\cr#1\crcr}}}

\newlength\smcellsize \setlength\smcellsize{8\unitlength}

\savebox3{%
\begin{picture}(8,8)
\put(0,0){\line(1,0){8}}
\put(0,0){\line(0,1){8}}
\put(8,0){\line(0,1){8}}
\put(0,8){\line(1,0){8}}
\end{picture}}

\newcommand\smcellify[1]{\def\thearg{#1}\def\nothing{}%
\ifx\thearg\nothing
\vrule width0pt height\smcellsize depth0pt\else
\hbox to 0pt{\usebox3\hss}\fi%
\vbox to 8\unitlength{
\vss
\hbox to 8\unitlength{\hss$#1$\hss}
\vss}}

\newcommand\smtableau[1]{\vtop{\let\\=\cr
\setlength\baselineskip{-8000pt}
\setlength\lineskiplimit{8000pt}
\setlength\lineskip{0pt}
\halign{&\smcellify{##}\cr#1\crcr}}}

\newcommand{\e}{\mbox{}}
\definecolor{boxgray}{gray}{.7}
\newcommand{\cb}{\color{boxgray}\rule{1\cellsize}{1\cellsize}\hspace{-\cellsize}\usebox2}

\newcommand{\stab}[3]{\begin{array}{c}\rnode{#1}{\tableau{#2}}\\\rnode{#1#1}{_{#3}}\end{array}}

\newcommand{\sbull}[2]{\begin{array}{c}\rnode{#1}{\B}\\[-1ex]\rnode{#1#1}{\makebox[0pt]{$_{#2}$}}\end{array}}

\definecolor{lightgray}{gray}{.85}

\begin{document}


\title[Dual equivalence graphs II]{Dual equivalence graphs II: \\ 
  Transformations on locally Schur positive graphs}  

\author[S. Assaf]{Sami H. Assaf}
\address{Department of Mathematics, University of Southern California, Los Angeles, CA 90089-2532}
\email{shassaf@usc.edu}

\subjclass[2000]{Primary 05E10; Secondary 05A30, 33D52}



\keywords{dual equivalence graphs, quasisymmetric functions, Schur positivity}

\begin{abstract}
  Dual equivalence graphs are a powerful tool in symmetric function theory that provide a general framework for proving that a given quasisymmetric function is symmetric and Schur positive. In this paper, we study a larger family of graphs that includes dual equivalence graphs and define maps that, in certain cases, transform graphs in this larger family into dual equivalence graphs. This allows us to broaden the applications of dual equivalence graphs and points the way toward a broader theory that could solve many important, long-standing Schur positivity problems.
\end{abstract}

\maketitle
\tableofcontents

%
\section{Introduction}
%
\label{sec:introduction}

A quintessential problem in symmetric function theory that often arises in connections with representation theory or geometry is to prove that a given function has nonnegative integer coefficients when expressed in the Schur basis. For example, the product of two Schur functions is Schur positive, and these Littlewood--Richardson coefficients that arise as structure constants for Schur functions give multiplicities of irreducible representations in tensor products of representations for the general linear group as well as intersection numbers for Grassmannian subvarieties of the flag manifold. More generally, the LLT polynomials introduced by Lascoux, Leclerc, and Thibon \cite{LLT97} give a $q$-analog of these coefficients that arise from Fock space representations of quantum affine Lie algebras. In certain cases, the Schur coefficients of LLT polynomials are known to be nonnegative \cite{LT00,vLe05,Ass15}, but the general case remains open. Many important symmetric functions can be expressed naturally as a nonnegative sum of LLT polynomials, adding emphasis to this important problem. For example, Macdonald polynomials \cite{Mac88} have a monomial expansion due to Haglund, Haiman and Loehr \cite{Hag04,HHL05} that easily gives an LLT expansion as well. While Macdonald polynomials are known to expand non-negatively into the Schur basis by the representation theory developed by Garsia and Haiman \cite{GH96} and the algebraic geometry developed by Haiman \cite{Hai01}, it remains an important open problem to give a combinatorial proof of Schur positivity.

In this paper, we study a combinatorial construction called a \emph{dual equivalence graph} \cite{Ass07,Ass15} by which one can establish the symmetry and Schur positivity of a function expressed in Gessel's fundamental basis for quasisymmetric functions. This powerful tool has been studied from many perspectives\footnote{See, for example \cite{MR1,MR2,Bla16,MR4,MR5,MR6,MR7,MR8,MR9,MR10,MR11,MR12,MR13,MR14,MR15,MR16,MR17,MR18,AB12}, only the last of which includes the original author.} since its introduction in \cite{Ass07}, though the initial definitions did not appear until \cite{Ass15}. Among other applications, dual equivalence graphs give a simple proof of positivity of the Littlewood--Richardson coefficients in general as well as the LLT $q$-analogs indexed by two shapes \cite{Ass15}, the latter of which implies a combinatorial proof of Macdonald positivity for two column shapes. Moreover, the approach of this paper has led to a simple combinatorial formula for the Schur expansion of Macdonald polynomials indexed by a partition with second part at most $2$ \cite{Ass-B}, a case that has not been proved by any other means.

While this paper is self-contained, the reader is encouraged to consult its predecessor \cite{Ass15} for a basic introduction to dual equivalence graphs. Nevertheless, we begin in Section~\ref{sec:schur} by presenting the essentials of quasisymmetric functions as they relate to Schur functions and, in particular, the approach for which we advocate. We recall the definition of dual equivalence graphs from \cite{Ass15} and recall the main theorem from \cite{Ass15}, that putting a dual equivalence graph structure on a combinatorial set establishes the Schur positivity of the generating function. We refer to reader to \cite{Ass15} for explicit examples of the standard dual equivalence graphs on tableaux as well as for the explicit structure defined for LLT and Macdonald polynomials. The examples throughout this paper will be of abstract graphs with vertices denoted by $\B$.

Picking up from this point, in Section~\ref{sec:local}, we define a larger class of \emph{locally Schur positive graphs}. The motivation for this definition comes from the re-characterization of the axioms for dual equivalence graphs in \cite{Ass15} that states that a function that is locally a Schur function is globally a Schur function provided there exists a family of commuting involutions with prescribed fixed points. Note that this commutativity condition is necessary and, as such, will never be omitted.\footnote{The examples in \cite{BF17}(Proposition~3.8) are counter-examples to the false statement the authors considered: that dual equivalence axiom $5$ can be omitted without consequence. This was a natural consideration from their algebraic reformulation of dual equivalence graphs, though from the graph perspective axiom $5$ is clearly necessary. Those examples have no relevance to dual equivalence graphs nor to the generalizations considered in this paper since we \emph{always} assume axiom $5$ holds.} A locally Schur positive graph is one whose degree $4,5,6$ restricted generating functions are Schur positive. This family includes the graphs constructed for LLT and Macdonald polynomials \cite{Ass15} as well as those for $k$-Schur functions \cite{AB12}. It is not known whether this family includes graphs whose full generating functions are not Schur positive.\footnote{The $4950$ vertex graph $\G^{*}$ of degree $8$ in \cite{Bla16}(Section~7.6) is not locally Schur positive.} We prove basic results for low degree graphs both to motivate the transformations to come, one for each of degrees $4,5,6$, and to give a sense of the style of diagram chasing arguments to follow. 

The three transformations are discussed in Section~\ref{sec:LSP6} for degree $6$ (equivalently, axiom $6$), Section~\ref{sec:LSP4} for degree $4$, and Section~\ref{sec:LSP5} for degree $5$ (these latter two together are equivalent to axiom $4$). For each case, we define and investigate in detail a transformation on graphs that focuses on the restricted generating functions of the given degree. We give necessary and sufficient conditions for the maps to be applied to a graph, and prove that the maps maintain the lower structure of the graph. In Section~\ref{sec:LSP}, we introduce a third transformation designed to facilitate the application of the others, and we prove that together these maps can transform a locally Schur positive graph into a dual equivalence graph if only the highest color edges need adjustment. While theoretically necessary, this additional transformation is not needed to carry out the described transformation of the graphs for LLT polynomials, Macdonald polynomials, or $k$-Schur functions up to degree $12$.

Finally, in Section~\ref{sec:axiom4p}, we explore the shortfalls of these transformations, presenting necessary additional axioms that must be maintained in order for the maps to maintain the upper structure of the graph. Not only do the graphs for LLT polynomials, Macdonald polynomials, and $k$-Schur functions all satisfy these additional axioms, but we prove that the transformations cannot create new violations of them. The question remains as to whether they are implied by degree $6$ local Schur positivity as well as whether they are sufficient to ensure the transformation to a dual equivalence graph can always be carried out.

%
\section{Dual equivalence graphs}
%
\label{sec:schur}

We begin with functions expressed in terms of Gessel's fundamental quasisymmetric functions \cite{Ges84}. For $\sigma \in \{\pm 1\}^{\nmo}$, the \emph{fundamental quasisymmetric function} $Q_{\sigma}(X)$ is defined by
\begin{equation}
  Q_{\sigma}(X) = \sum_{\substack{i_1 \leq \cdots \leq i_n \\ \sigma_j = -1 \Rightarrow i_j < i_{j+1}}} x_{i_1} \cdots x_{i_n} .
\label{eqn:quasisym}
\end{equation}
We have indexed quasisymmetric functions by sequences of $+1$'s and $-1$'s to facilitate the study of signed graphs. By setting $D(\sigma) = \{ i \mid \sigma_i = -1\}$, we may change the indexing to the more familiar subsets of $[\nmo] =\{1,2,\ldots,n-1\}$.

A \emph{standard Young tableau} is  bijection $T$ from the cells of a partition $\lambda$ of $n$ to $[n]$ such that entries increase along rows and columns. To connect quasisymmetric functions with Schur functions, for $T$ a standard Young tableau on $[n]$, define the \emph{descent signature} $\sigma(T) \in \{\pm1\}^{\nmo}$ by
\begin{equation}
  \sigma(T)_{i} \; = \; \left\{ 
    \begin{array}{ll}
      +1 & \; \mbox{if $i$ appears weakly above $\ipo$ in $T$} \\
      -1 & \; \mbox{if $\ipo$ appears strictly above $i$ in $T$}
    \end{array} \right. .
\label{e:sigma}
\end{equation}

We can now define a Schur function as the generating function for standard Young tableaux.

\begin{definition}[\cite{Ges84}]
  The Schur function $s_{\lambda}$ is given by
  \begin{equation}
    s_{\lambda}(X) = \sum_{T \in \mathrm{SYT}(\lambda)} Q_{\sigma(T)}(X) .
  \label{eqn:quasi-s}
  \end{equation}
\label{def:quasisym}
\end{definition}

Dual equivalence graphs provide a general framework for proving that a given function, expressed in terms of fundamental quasisymmetric functions, is symmetric and Schur positive. The motivating idea is to collect together terms in the fundamental quasisymmetric expansion of a function into equivalence classes so that each class is a single Schur function, or, more generally, is Schur positive. 

Given a set $\mathcal{A}$ of combinatorial objects and a signature function $\sigma:\mathcal{A}\rightarrow\{\pm 1\}^{n-1}$, we can form the quasisymmetric generating function for $\mathcal{A}$ with respect to $\sigma$ by
\begin{displaymath}
  f(X;q) \ = \ \sum_{T \in \mathcal{A}} q^{\mathrm{stat}(T)} Q_{\sigma(T)}(X),
\end{displaymath}
where $\mathrm{stat}$ is some positive integer statistic on $\mathcal{A}$. 

In order to prove that $f(X;q)$ is Schur positive, we group together terms of $\mathcal{A}$ into equivalence classes so that each class has a constant $\mathrm{stat}$ value and the quasisymmetric generating function for a class is a Schur function. To put added structure, we define not only equivalence classes, but elementary equivalence relations between objects of $\mathcal{A}$, and we record this structure with a signed, colored graph.

\begin{definition}[\cite{Ass15}]
  A \emph{signed, colored graph of type $(n,N)$} consists of the following:
  \begin{itemize}
    \item a finite vertex set $V$;
    \item a signature function $\sigma : V \rightarrow \{\pm 1\}^{N-1}$;
    \item for each $1 < i < n$, a collection $E_i$ of pairs of distinct vertices of $V$.
  \end{itemize}
  We denote such a graph by $\G = (V,\sigma,E_{2} \cup\cdots\cup E_{\nmo})$ or simply by $(V,\sigma,E)$.
\label{def:scg}
\end{definition}

Here the vertex set $V$ corresponds to the combinatorial set $\mathcal{A}$ and the edges correspond to elementary equivalence relations. For example, if we take $\mathcal{A}$ to be standard Young tableaux of shape $(3,2)$ with $\sigma$ as in \eqref{e:sigma}, and elementary equivalence relations that swap $i$ with whichever of $i \pm 1$ lies further away, we obtain the \emph{standard dual equivalence graph} shown in Figure~\ref{fig:G5}. Its generating function is $s_{(3,2)}(X)$.

\begin{figure}[ht]
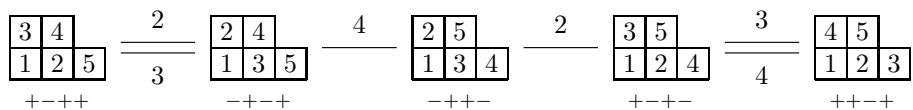

  \begin{center}
    \begin{displaymath}
      \begin{array}{\cs{7} \cs{7} \cs{7} \cs{7} c}
        \stab{a}{3 & 4 \\ 1 & 2 & 5}{+-++} &
        \stab{b}{2 & 4 \\ 1 & 3 & 5}{-+-+} &
        \stab{c}{2 & 5 \\ 1 & 3 & 4}{-++-} &
        \stab{d}{3 & 5 \\ 1 & 2 & 4}{+-+-} &
        \stab{e}{4 & 5 \\ 1 & 2 & 3}{++-+}
      \end{array}
      \psset{nodesep=6pt,linewidth=.1ex}
      \ncline[offset=2pt] {a}{b} \naput{2}
      \ncline[offset=2pt] {b}{a} \naput{3}
      \ncline             {b}{c} \naput{4}
      \ncline             {c}{d} \naput{2}
      \ncline[offset=2pt] {d}{e} \naput{3}
      \ncline[offset=2pt] {e}{d} \naput{4}
    \end{displaymath}
    \caption{\label{fig:G5}The standard dual equivalence graph  $\G_{(3,2)}$.}
  \end{center}
\end{figure}

In general terms, the goal is for the connected components, which correspond to equivalence classes, to be Schur functions. This notion is captured by the following definition, Definition~3.2 from \cite{Ass15}.

\begin{definition}[\cite{Ass15}]
  A signed, colored graph $\G = (V,\sigma,E)$ of type $(n,N)$ is a \emph{dual equivalence graph of type $(n,N)$} if $n \leq N$ and the following hold:
  \begin{itemize}
    
  \item[(ax$1$)] For $w \in V$ and $1<i<n$, $\sigma(w)_{\imo} = -\sigma(w)_{i}$ if and only if there exists $x \in V$ such that $\{w,x\} \in E_{i}$. Moreover, $x$ is unique when it exists.

  \item[(ax$2$)] For $\{w,x\} \in E_{i}$, $\sigma(w)_j = -\sigma(x)_j$ for $j=\imo,i$, and $\sigma(w)_h = \hspace{1ex}\sigma(x)_h$ for $h < \imt$ and $h > \ipo$.
      
  \item[(ax$3$)] For $\{w,x\} \in E_{i}$, if $\sigma(w)_{\imt} = -\sigma(x)_{\imt}$, then $\sigma(w)_{\imt} = -\sigma(w)_{\imo}$, and if $\sigma(w)_{\ipo} = -\sigma(x)_{\ipo}$, then $\sigma(w)_{\ipo} = -\sigma(w)_{i}$.
    
  \item[(ax$4$)] Every connected component of $(V,\sigma, E_{\imo} \cup E_{i})$ appears in Figure~\ref{fig:lambda4} and every connected component of $(V,\sigma,E_{\imt} \cup E_{\imo} \cup E_{i})$ appears in Figure~\ref{fig:lambda5}.

  \item[(ax$5$)] If $\{w,x\} \in E_i$ and $\{x,y\} \in E_j$ for $|i-j|\geq 3$, then $\{w,v\} \in E_j$ and $\{v,y\} \in E_i$ for some $v \in V$.

  \item[(ax$6$)] Any two vertices of a connected component of $(V,\sigma,E_2 \cup\cdots\cup E_i)$ may be connected by a path crossing at most one $E_i$ edge.
    
  \end{itemize}
\label{def:deg}
\end{definition}

\begin{figure}[ht]
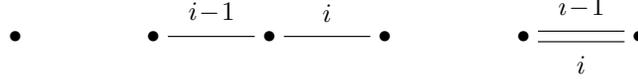

  \begin{displaymath}
    \begin{array}{\cs{11}\cs{9}\cs{9}\cs{11}\cs{9}c}
      \B & \rnode{a}{\B} & \rnode{b}{\B} & \rnode{c}{\B} & \rnode{d}{\B} & \rnode{e}{\B}
    \end{array}
    \psset{nodesep=3pt,linewidth=.1ex}
    \ncline            {a}{b} \naput{\imo}
    \ncline            {b}{c} \naput{i}
    \ncline[offset=2pt]{d}{e} \naput{\imo}
    \ncline[offset=2pt]{e}{d} \naput{i}
  \end{displaymath}
  \caption{\label{fig:lambda4} Allowed $2$-color connected components of a dual equivalence graph.}
\end{figure}

\begin{figure}[ht]
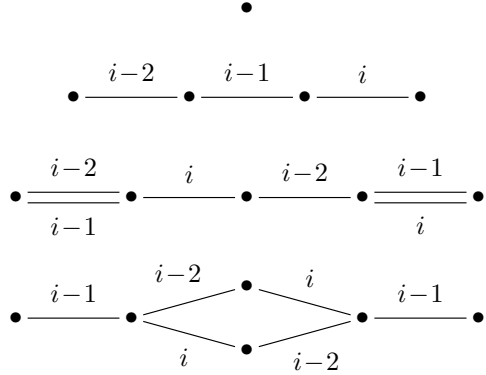

  \begin{displaymath}
    \begin{array}{c}
        \begin{array}{c}
          \B
        \end{array}\\[5ex]
        \begin{array}{\cs{9} \cs{9} \cs{9} c}
          \rnode{h}{\B} & \rnode{i}{\B} & \rnode{j}{\B} & \rnode{k}{\B}
        \end{array} \\[6ex]
        \begin{array}{\cs{9} \cs{9} \cs{9} \cs{9} c}
          \rnode{a}{\B} & \rnode{b}{\B} & \rnode{c}{\B} & \rnode{d}{\B} & \rnode{e}{\B}
        \end{array} \\[5ex]
        \begin{array}{\cs{9} \cs{9} \cs{9} \cs{9} c}
          & & \rnode{w}{\B} & & \\
          \rnode{u}{\B} & \rnode{v}{\B} & & \rnode{y}{\B} & \rnode{z}{\B} \\
          & & \rnode{x}{\B} & &
        \end{array}
        \psset{nodesep=2pt,linewidth=.1ex}
        \ncline  {h}{i} \naput{\imt}
        \ncline  {i}{j} \naput{\imo}
        \ncline  {j}{k} \naput{i}
        \ncline[offset=2pt] {a}{b} \naput{\imt}
        \ncline[offset=2pt] {b}{a} \naput{\imo}
        \ncline             {b}{c} \naput{i}
        \ncline             {c}{d} \naput{\imt}
        \ncline[offset=2pt] {d}{e} \naput{\imo}
        \ncline[offset=2pt] {e}{d} \naput{i}
        \ncline {u}{v}  \naput{\imo}
        \ncline {v}{w}  \naput{\imt}
        \ncline {v}{x}  \nbput{i}
        \ncline {w}{y}  \naput{i}
        \ncline {x}{y}  \nbput{\imt}
        \ncline {y}{z}  \naput{\imo}
      \end{array}
    \end{displaymath}
  \caption{\label{fig:lambda5} Allowed $3$-color connected components of a dual equivalence graph.}
\end{figure}

Note that Figure~\ref{fig:lambda5} implies Figure~\ref{fig:lambda4}. Therefore in graph of type $(n,N)$ with $n \geq 5$, axiom $4$ is equivalent to all connected component of $(V,\sigma,E_{\imt} \cup E_{\imo} \cup E_{i})$ appearing in Figure~\ref{fig:lambda5}. Moreover, if $n\leq 5$, then inspecting Figures~\ref{fig:lambda4} and \ref{fig:lambda5}, we see that axiom $4$ implies axiom $6$. In \cite{Ass15}, we prove that both axioms $4$ and $6$ may be replaced with local Schur positivity conditions, though for our current purposes we prefer to study the explicit structure of the graph.

As a first example, we note that the graph structure on standard Young tableaux of shape $\lambda$ given by elementary dual equivalence moves is a dual equivalence graph. This is proved in Proposition~3.3 of \cite{Ass15}. For this reason, we call these the \emph{standard dual equivalence graphs} and denote them by $\G_{\lambda}$.

When two graphs satisfy axiom 1, as all graphs in this paper do, we define an isomorphism between them, Definition~3.5 from \cite{Ass15}, to be a sign-preserving bijection on vertex sets that respects color-adjacency. 

\begin{definition}[\cite{Ass15}]
  A \emph{morphism} between two signed, colored graphs of type $(n,N)$ satisfying dual equivalence graph axiom $1$, say $\G=(V,\sigma,E)$ and $\mathcal{H}=(W,\tau,F)$, is a map $\phi : V \rightarrow W$ such that 
  \begin{itemize}
  \item for every $1 \leq i < N$, we have $\sigma(v)_i = \tau(\phi(v))_i$, and
  \item for every $1 < i < n$, if $\{u,v\} \in E_i$, then $\{\phi(u),\phi(v)\} \in F_i$.
  \end{itemize}
  A morphism is an \emph{isomorphism} if it is a bijection on vertex sets.
\label{def:isomorphism}
\end{definition}

One of the main results from \cite{Ass15}, Theorem~3.7, is the following structure theorem.

\begin{theorem}[\cite{Ass15}]
  Every connected component of a dual equivalence graph of type $(n,n)$ is isomorphic to $\G_{\lambda}$ for a unique partition $\lambda$ of $n$.
\label{thm:isomorphic}
\end{theorem}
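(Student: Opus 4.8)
The plan is to prove this by induction on $n$, establishing that each connected component of a dual equivalence graph of type $(n,n)$ is isomorphic to exactly one standard dual equivalence graph $\G_\lambda$. The base cases $n \leq 2$ are trivial (a single vertex), and for small $n$ one checks directly that the only connected components satisfying axioms (ax$1$)--(ax$6$) are the $\G_\lambda$ pictured (e.g., Figure~\ref{fig:G5} for $n=5$, $\lambda=(3,2)$). For the inductive step, suppose the result holds for type $(n-1,n-1)$ and let $\C$ be a connected component of a dual equivalence graph $\G$ of type $(n,n)$. First I would restrict attention to $E_2 \cup \cdots \cup E_{\nmo}$ and consider the subgraph obtained by deleting the highest-color edges $E_{\nmo}$: by axioms (ax$1$)--(ax$3$), the signatures of vertices in a component of $(V,\sigma,E_2\cup\cdots\cup E_{\nmt})$ agree in position $\nmo$, so truncating each signature to its first $n-2$ entries yields a well-defined signed colored graph of type $(n-1,n-1)$ that still satisfies all the axioms. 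By the inductive hypothesis, every such component is isomorphic to some $\G_\mu$ with $\mu \vdash n-1$.

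The heart of the argument is then to understand how these $E_{\nmo}$-free components are glued together by $E_{\nmo}$ edges. Here axiom (ax$6$) is crucial: any two vertices of $\C$ are joined by a path using at most one $E_{\nmo}$ edge. Combined with axiom (ax$1$), which says an $i$-edge at $w$ exists precisely when $\sigma(w)_{\imo} = -\sigma(w)_i$ and is then unique, this forces the $E_{\nmo}$ edges to match up the lower components in a highly constrained way. I would show that each lower component $\G_\mu$ sits inside $\C$ as the restriction to tableaux of shape $\lambda$ whose entry $n$ occupies a fixed corner cell — equivalently, $\mu$ ranges over the shapes obtained from some fixed $\lambda \vdash n$ by removing a corner — and that the $E_{\nmo}$ edges between $\G_\mu$ and $\G_{\mu'}$ correspond exactly to the elementary dual equivalence moves on $n$ in $\G_\lambda$. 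The uniqueness of $\lambda$ follows because $\lambda$ is recovered from $\C$ by reading off, from any single vertex together with axiom (ax$1$), which descents are present, and axioms (ax$2$)--(ax$3$) pin down the signature of $\G_\lambda$ on its canonical vertex.

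The main obstacle I anticipate is the gluing step: ruling out that the lower components could be assembled into something other than a genuine $\G_\lambda$ — for instance, that two copies of the same $\G_\mu$ might appear, or that the $E_{\nmo}$ edges might connect incompatible vertices. This is exactly where the finite list of allowed $3$-color components in Figure~\ref{fig:lambda5} (axiom (ax$4$)) must be invoked: one examines the local picture around any $E_{\nmo}$ edge together with the adjacent $E_{\nmt}$ edges, checks it against the allowed configurations, and uses this to propagate a consistent labelling of the vertices of $\C$ by standard Young tableaux of a single shape $\lambda$. The verification that the resulting labelling is a bijection — surjective onto $\mathrm{SYT}(\lambda)$ and injective — is then a connectivity argument: the candidate map is a morphism by construction, its image is a connected component of $\G_\lambda$ hence all of $\G_\lambda$ by Proposition~3.3 of \cite{Ass15}, and injectivity follows from axiom (ax$1$) since a vertex is determined by its signature and its position relative to one fixed vertex along the unique reduced paths. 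Finally, axiom (ax$5$) guarantees that the far-apart colors commute as they do in $\G_\lambda$, so no additional identifications are forced, completing the isomorphism.
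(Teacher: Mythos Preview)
This theorem is not proved in the present paper at all: it is quoted from \cite{Ass15} (Theorem~3.7 there) and stated here without argument, so there is no proof in this paper to compare against. Your inductive strategy---restrict to $E_2\cup\cdots\cup E_{\nmt}$, apply the inductive hypothesis to identify each lower component as some $\G_\mu$, then glue along $E_{\nmo}$---is indeed the approach taken in \cite{Ass15}, and the existence of the surjective morphism $\phi:\C\to\G_\lambda$ you describe is exactly Theorem~\ref{thm:cover} of the present paper.

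There is, however, a genuine gap in your final step. You write that ``injectivity follows from axiom (ax$1$) since a vertex is determined by its signature and its position relative to one fixed vertex along the unique reduced paths.'' This is false: axiom~$1$ does not give injectivity, and vertices are certainly not determined by their signatures (already in $\G_{(3,2,1)}$ several tableaux share a signature). The paper itself exhibits in Figure~\ref{fig:gregg} a connected graph satisfying axioms $1$--$5$ that is a two-fold cover of $\G_{(3,2,1)}$; the morphism of Theorem~\ref{thm:cover} exists and is surjective but not injective. What forces injectivity is precisely axiom~$6$, which you invoked earlier but then abandoned at the crucial moment. The correct argument is: if $\phi$ were not injective, two distinct $(n-1)$-restricted components of $\C$ would be isomorphic to the same $\G_\mu$, and then two vertices in the same fiber could only be connected by a path crossing at least two $E_{\nmo}$ edges, contradicting axiom~$6$ for $i=\nmo$. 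Your sketch needs to replace the appeal to axiom~$1$ with this use of axiom~$6$.
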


In \cite{Ass15}, and earlier in \cite{Ass07}, we define a graph structure for the combinatorial objects that generate Macdonald polynomials and, more generally, LLT polynomials  \cite{Ass15,Ass-B} as well as $k$-Schur functions \cite{AB12}. In certain cases, these are dual equivalence graphs, and so the following combinatorial formula, Corollary~3.8 from \cite{Ass15}, applies to give the Schur expansion.

\begin{corollary}[\cite{Ass15}]
  Let $\G$ be a dual equivalence graph of type $(n,n)$ such that every vertex is assigned some additional statistic $\alpha$ that is constant on connected components of $\G$. Then
  \begin{equation}
    \sum_{v \in V(\G)} q^{\alpha(v)} Q_{\sigma(v)}(X) =
    \sum_{\lambda} \left( \sum_{\C \cong \G_{\lambda}} q^{\alpha(\C)} \right) s_{\lambda}(X) .
    \label{eqn:schurpos}
  \end{equation}
  where the inner sum is over connected components of $\G$ that are isomorphic to $\G_{\lambda}$. In particular, the generating function for $\G$ so defined is symmetric and Schur positive.
\label{cor:schurpos}
\end{corollary}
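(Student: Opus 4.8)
The plan is to deduce this directly from the structure theorem, Theorem~\ref{thm:isomorphic}, together with the expansion of a Schur function over standard Young tableaux recorded in Definition~\ref{def:quasisym}. First I would partition the vertex set $V(\G)$ into its connected components and rewrite the left-hand side of \eqref{eqn:schurpos} as $\sum_{\C} \sum_{v \in V(\C)} q^{\alpha(v)} Q_{\sigma(v)}(X)$, where the outer sum ranges over connected components $\C$ of $\G$. Since $\alpha$ is assumed constant on each component, for every $v \in V(\C)$ we have $q^{\alpha(v)} = q^{\alpha(\C)}$, so this rearranges to $\sum_{\C} q^{\alpha(\C)} \sum_{v \in V(\C)} Q_{\sigma(v)}(X)$.

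Next I would apply Theorem~\ref{thm:isomorphic}: each connected component $\C$ is isomorphic, as a signed colored graph, to $\G_{\mu}$ for a unique partition $\mu$ of $n$. By Definition~\ref{def:isomorphism}, an isomorphism is in particular a sign-preserving bijection on vertex sets; and (by Proposition~3.3 of \cite{Ass15}, which identifies $\G_{\mu}$ concretely) the vertex set of $\G_{\mu}$ is $\mathrm{SYT}(\mu)$ with signature function given by \eqref{e:sigma}. Hence the multiset $\{\sigma(v) : v \in V(\C)\}$ coincides with $\{\sigma(T) : T \in \mathrm{SYT}(\mu)\}$, so that $\sum_{v \in V(\C)} Q_{\sigma(v)}(X) = \sum_{T \in \mathrm{SYT}(\mu)} Q_{\sigma(T)}(X) = s_{\mu}(X)$ by Definition~\ref{def:quasisym}.

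Substituting this into the rearranged sum and grouping the components by their common isomorphism type $\lambda$ then gives $\sum_{v \in V(\G)} q^{\alpha(v)} Q_{\sigma(v)}(X) = \sum_{\lambda} \big( \sum_{\C \cong \G_{\lambda}} q^{\alpha(\C)} \big) s_{\lambda}(X)$, which is exactly \eqref{eqn:schurpos}; only finitely many partitions $\lambda$ of $n$ contribute. For the concluding assertion, each $s_{\lambda}$ is a symmetric function, so the right-hand side is symmetric, and each coefficient $\sum_{\C \cong \G_{\lambda}} q^{\alpha(\C)}$ is a nonnegative integer combination of powers of $q$, whence the generating function is Schur positive.

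I do not anticipate a genuine obstacle here: essentially all of the content is carried by Theorem~\ref{thm:isomorphic}, and the remainder is bookkeeping. The one point that warrants a moment's care is the verification that an isomorphism of signed colored graphs restricts to a signature-preserving bijection of vertex sets, and that the vertices of $\G_{\mu}$ are literally indexed by $\mathrm{SYT}(\mu)$ equipped with the descent signature of \eqref{e:sigma} --- both immediate from the definitions recalled above.
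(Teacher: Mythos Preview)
Your argument is correct and is exactly the natural derivation: split $V(\G)$ into connected components, pull out the constant $q^{\alpha(\C)}$, invoke Theorem~\ref{thm:isomorphic} to identify each component with some $\G_{\lambda}$, and then use that an isomorphism in the sense of Definition~\ref{def:isomorphism} is signature-preserving so that the component's generating function is $s_{\lambda}$ by Definition~\ref{def:quasisym}. The paper itself does not supply a proof here; the corollary is quoted from \cite{Ass15} (as Corollary~3.8 there), so there is no in-paper argument to compare against, but your write-up is precisely the intended deduction from Theorem~\ref{thm:isomorphic}.
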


These graphs are not, in general, dual equivalence graphs. In particular, for Macdonald polynomials this fails for $\mu_1>3$ or $\mu_2>2$. However, we conjecture that the graphs for LLT polynomials, Macdonald polynomials, and $k$-Schur functions have Schur positive connected components. This has been tested by computer up to $n=12$. Moreover, using the algorithms described in this paper, we obtain explicit formulas for the Schur expansion in certain cases \cite{Ass-B}.

%
\section{Local Schur positivity}
%
\label{sec:local}

The graphs for LLT polynomials, Macdonald polynomials, and $k$-Schur functions all have similar local structure. In order to study these graphs and expand the applicability of dual equivalence, we introduce terminology to characterize certain local properties.

\begin{definition}
  A signed, colored graph $\G = (V,\sigma,E)$ of type $(n,N)$ is \emph{Schur multiplicity-free for degree $m$}, denoted by $\LSF_m$, if for every $m-2<i<n$ and every connected component $\C$ of $(V,\sigma,E_{i-(m-3)} \cup \cdots \cup E_i)$, there exists some $\lambda$ such that 
  \begin{equation}
    \sum_{v \in \C} Q_{\sigma(v)_{i-(m-2),\ldots,i}}(X) = s_{\lambda}(X).
    \label{eqn:LSF}
  \end{equation}
  A graph is \emph{locally Schur multiplicity-free} if it is Schur multiplicity-free for degrees up to $6$.
  \label{def:LSF}
\end{definition}

In \cite{Ass15}, we show that for a connected graph, if axiom $5$ holds, then locally Schur multiplicity-free implies $\LSF_m$ for all $m$. That is, locally a Schur function implies globally a Schur function. Translating the axioms to local positivity conditions, axiom $1$ is equivalent to $\LSF_3$, Figure~\ref{fig:lambda4} is equivalent to $\LSF_4$, Figure~\ref{fig:lambda5} is equivalent to $\LSF_5$, and axiom $6$ is equivalent to $\LSF_6$. Axioms $2$ and $5$ are the commutativity axioms.

\begin{definition}
  A signed, colored graph $\G = (V,\sigma,E)$ of type $(n,N)$ is \emph{Schur positive for degree $m$}, denoted by $\LSP_m$, if for every $m-2<i<n$ and every connected component $\C$ of $(V,\sigma,E_{i-(m-3)} \cup \cdots \cup E_i)$, the restricted degree $m$ generating function
  \begin{equation}
    \sum_{v \in \C} Q_{\sigma(v)_{i-(m-2),\ldots,i}}(X)
    \label{eqn:LSP}
  \end{equation}
  is symmetric and Schur positive. 
  \label{def:LSP}
\end{definition}

We can now see how axiom $3$ fits into this picture. In some sense, it is redundant since it is implied by axiom $4$. However, since we aim to generalize dual equivalence graphs, it serves as a useful guide.

\begin{proposition}
  A signed, colored graph of type $(n,n)$ with $\LSF_3$ and $\LSP_4$ satisfies axiom $3$.
\end{proposition}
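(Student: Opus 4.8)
The plan is to argue by contradiction: suppose $\{w,x\} \in E_i$ violates axiom $3$, say $\sigma(w)_{\imt} = -\sigma(x)_{\imt}$ but $\sigma(w)_{\imt} = +\sigma(w)_{\imo}$ (the case involving $\ipo$ and $i$ is symmetric under reversing the alphabet). First I would use $\LSF_3$, which by the discussion in the excerpt is equivalent to axiom $1$, to pin down the signature data near positions $\imt,\imo,i$. Since $\{w,x\}\in E_i$, axiom $1$ gives $\sigma(w)_{\imo} = -\sigma(w)_i$; combined with the assumption this forces a specific pattern of signs on $w$ in positions $\imt,\imo,i$, and axiom $2$ (which holds, being one of the commutativity axioms always assumed, or by noting that we only need the local sign-change behavior coming from $\LSF_3$/$\LSF_4$ structure) tells us how $x$ differs from $w$: it flips exactly the $\imo$ and $i$ coordinates, and here also the $\imt$ coordinate by hypothesis.

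Next I would bring in $\LSP_4$. Consider the connected component $\C$ of $(V,\sigma,E_{\imo}\cup E_i)$ containing $w$ and $x$; by $\LSP_4$ its restricted degree-$4$ generating function $\sum_{v\in\C} Q_{\sigma(v)_{\imt,\imo,i}}(X)$ is symmetric and Schur positive, hence a nonnegative integer combination of $s_\lambda$ for $\lambda \vdash 4$. The key point is that the $E_i$ edge between $w$ and $x$ forces both $Q_{\sigma(w)_{\imt,\imo,i}}$ and $Q_{\sigma(x)_{\imt,\imo,i}}$ to appear in this sum, and with the computed signatures these two quasisymmetric functions correspond to descent sets that cannot coexist inside any single $s_\lambda$ with $\lambda\vdash 4$ — nor be completed to a Schur-positive sum by adding the other at most two terms available in a size-$4$ component. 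The arithmetic is small: there are only five partitions of $4$ and one checks directly (using $s_\lambda = \sum_{T} Q_{\sigma(T)}$ over the handful of standard tableaux of each shape) that no Schur-positive symmetric function of the appropriate total degree contains the pair of fundamentals forced by a violating edge. This is where I would do the finite case check, ideally by identifying that the forbidden pattern is precisely the one excluded by Figure~\ref{fig:lambda4}, so that "$\LSP_4$" rules it out exactly because $\LSF_4$ would have.

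The main obstacle I anticipate is bookkeeping the sign patterns carefully enough that the finite check is clean: one must track all four coordinates $\imh$ (via axiom 2's "$h<\imt$" clause, irrelevant here), $\imt$, $\imo$, $i$ on both $w$ and $x$, and make sure that the restricted signature $\sigma(v)_{\imt,\imo,i}$ — a string in $\{\pm1\}^2$ recording descents among three consecutive letters — is extracted correctly from the full signature. A secondary subtlety is that $\LSP_4$ only asserts Schur positivity of the whole component's generating function, so I cannot argue purely locally about the single edge; I need to know that the component is small enough (or that the two offending terms are "extremal" enough in dominance order) that no cancellation-free completion exists. I expect the cleanest route is to observe that the hypothetical violating configuration, together with axiom $1$, already exhibits a $2$-color component not on the list in Figure~\ref{fig:lambda4}, and then invoke that $\LSP_4$ forces every such component onto that list — which is essentially the content of the $\LSF_4 \Leftrightarrow \LSP_4$ correspondence asserted in Section~\ref{sec:local} for components that are required to be single Schur functions, here weakened only to Schur positivity, which still suffices because the bad pair of fundamentals is genuinely incompatible with positivity.
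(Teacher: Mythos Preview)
Your approach is the same as the paper's in spirit --- contradiction, pin down the signatures via axiom~$1$ (equivalently $\LSF_3$), then invoke $\LSP_4$ on the $E_{\imo}\cup E_i$ component --- but you have not closed the one step that actually makes the argument work, and your proposed fallback is incorrect.

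The missing observation is this: once you have computed (as you outline) that a violation forces $\sigma(w)_{\imt,\imo,i}=++-$ and $\sigma(x)_{\imt,\imo,i}=--+$ (or the global negative), axiom~$1$ immediately tells you that \emph{neither} $w$ nor $x$ admits an $(\imo)$-neighbor, since $\sigma(w)_{\imt}=\sigma(w)_{\imo}$ and $\sigma(x)_{\imt}=\sigma(x)_{\imo}$. Hence the connected component of $E_{\imo}\cup E_i$ containing them is exactly the two-vertex graph $\{w,x\}$ with a single $i$-edge, and its restricted degree-$4$ generating function is literally $Q_{++-}+Q_{--+}$, which is not Schur positive (since $Q_{++-}$ occurs only in $s_{(3,1)}$ and $Q_{--+}$ only in $s_{(2,1,1)}$). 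That is the whole proof; there is no ``completion'' issue because the component cannot be any larger. The paper states this directly by first rephrasing axiom~$3$ as ``for every $\{w,x\}\in E_i$, at least one of $w,x$ has an $(i\pm1)$-neighbor.''

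Your fallback --- ``$\LSP_4$ forces every $2$-color component onto the list in Figure~\ref{fig:lambda4}'' --- is false: Proposition~\ref{prop:LSP4} exhibits $\LSP_4$ components with generating functions $s_{(3,1)}+k s_{(2,2)}$, $k s_{(2,2)}$, etc., none of which appear in Figure~\ref{fig:lambda4}. $\LSP_4$ is strictly weaker than $\LSF_4$, so you cannot recover the Figure~\ref{fig:lambda4} classification from it. (Minor slip: $\sigma(v)_{\imt,\imo,i}$ lies in $\{\pm1\}^3$, not $\{\pm1\}^2$.)
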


\begin{proof}
  We claim that axiom $3$ is equivalent to the statement that for all $\{w,x\} \in E_{i}$, at least one of $w$ or $x$ has an $i\pm 1$-neighbor. To see the equivalence, note that by axiom $1$ (equivalently, $\LSF_3$), neither $w$ nor $x$ will admit an $\imo$-neighbor if and only if $\sigma(w)_{\imt} = \sigma(w)_{\imo}$ and $\sigma(x)_{\imt} = \sigma(x)_{\imo}$. By axioms 1 and 2, this implies $\sigma(w)_{\imt} = \sigma(w)_{\imo} = -\sigma(x)_{\imo} = - \sigma(x)_{\imt}$. The analogous argument holds for $\ipo$.
  
  If neither $w$ nor $E_i(w)$ has an $\imo$-neighbor (resp. $\ipo$-neighbor) then the connected component of $E_{\imo} \cup E_{i}$ (resp. $E_{i} \cup E_{\ipo}$) containing $w$ consists solely of $w$ and $E_i(w)$ forcing the restricted degree 4 generating function to be $Q_{++-}+Q_{--+}$, which is not Schur positive. Therefore, assuming $\LSP_4$, if $w$ has an $i$-neighbor, then at least one of $w$ and $E_i(w)$ has an $\imo$, and so axiom $3$ holds.
\end{proof}

The requirement that the graph be of type $(n,n)$ is necessary in order to ensure that $E_{\ipo}$ edges exist in the graph. For a graph is of type $(n,N)$ with $n<N$, neither local Schur positivity nor even axiom $4$ is enough to ensure axiom $3$.

The graphs mentioned above for LLT polynomials, Macdonald polynomials, and $k$-Schur functions all satisfy axioms $1, 2, 3$ and $5$ and are locally Schur positive. Therefore these conditions provide a good starting point for a more general study of signed, colored graphs.

\begin{definition}
  A signed, colored graph $\G = (V,\sigma,E)$ of type $(n,N)$ is \emph{locally Schur positive} if it satisfies dual equivalence graphs axioms $1$, $2$, $3$, and $5$ and has $\LSP_m$ for $m = 4,5,6$.
  \label{def:LSPG}
\end{definition}

The idea for transforming a locally Schur positive graph $\G$ into a dual equivalence graph $\widetilde{\G}$ with the same quasisymmetric generating function is to construct a sequence of signed, colored graphs $\G = \G_2 , \ldots, \G_{\nmo} = \widetilde{\G}$ on the same vertex set such that $\G_{\imo}$ is a locally Schur positive and the $(i,N)$-restriction of $\G_{\imo}$ is a dual equivalence graph. We wish to define a transformation by identifying two $i$-edges on the same connected component of $E_{2} \cup\cdots\cup E_{i}$ and swapping the connections in the unique way that maintains the reversal of $\sigma_{\imo}$ and $\sigma_i$ required by dual equivalence axiom $1$, as indicated in Figure~\ref{fig:swap}.

\begin{figure}[ht]
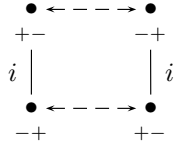

  \begin{displaymath}
    \begin{array}{\cs{7}c}
      \sbull{a}{+-} & \sbull{b}{-+} \\[5ex]
      \sbull{c}{-+} & \sbull{d}{+-}
    \end{array}
    \psset{nodesep=3pt,linewidth=.1ex}
    \ncline {aa}{c} \nbput{i}
    \ncline {bb}{d} \naput{i}
    \ncline[linestyle=dashed] {<->}{a}{b} 
    \ncline[linestyle=dashed] {<->}{c}{d} 
  \end{displaymath}
  \caption{\label{fig:swap} An illustration of how two $i$-edges are swapped in the transformation process.}
\end{figure}

Since axiom $1$ implies $\LSF_3$, this degree is resolved, so we focus on three transformations to address $\LSF_4$, $\LSF_5$, and $\LSF_6$, respectively. These transformations are depicted in Figure~\ref{fig:involutions}. To see why these transformations are natural, let us consider graphs is increasing sizes that satisfy $\LSF_i$ for $i<n$ and $\LSP_n$.

\begin{figure}[ht]
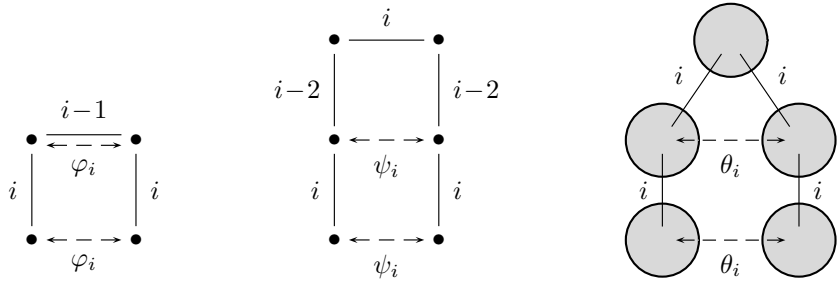

  \begin{displaymath}
    \begin{array}{\cs{8}c}
      & \\[6ex]
      \rnode{C}{\B} & \rnode{D}{\B}  \\[6ex] 
      \rnode{B}{\B} & \rnode{E}{\B}
    \end{array}
    \hspace{6em}
    \begin{array}{\cs{8}c}
      \rnode{a2}{\B} & \rnode{b2}{\B} \\[6ex] 
      \rnode{e2}{\B} & \rnode{f2}{\B} \\[6ex] 
      \rnode{h2}{\B} & \rnode{i2}{\B} 
    \end{array}
    \hspace{7em}
    \begin{array}{\cs{5}\cs{5}c}
      & \rnode{X}{%
        \psset{xunit=1ex}
        \psset{yunit=1ex}
        \pspicture(0,0)(1,1)
        \pscircle[fillstyle=solid,fillcolor=lightgray](0.5,0.5){0.5}
        \endpspicture} & \\[6ex]
      \rnode{Y1}{%
        \psset{xunit=1ex}
        \psset{yunit=1ex}
        \pspicture(0,0)(1,1)
        \pscircle[fillstyle=solid,fillcolor=lightgray](0.5,0.5){0.5}
        \endpspicture} & & \rnode{Y2}{%
        \psset{xunit=1ex}
        \psset{yunit=1ex}
        \pspicture(0,0)(1,1)
        \pscircle[fillstyle=solid,fillcolor=lightgray](0.5,0.5){0.5}
        \endpspicture} \\[6ex]
      \rnode{Z1}{%
        \psset{xunit=1ex}
        \psset{yunit=1ex}
        \pspicture(0,0)(1,1)
        \pscircle[fillstyle=solid,fillcolor=lightgray](0.5,0.5){0.5}
        \endpspicture} & & \rnode{Z2}{%
        \psset{xunit=1ex}
        \psset{yunit=1ex}
        \pspicture(0,0)(1,1)
        \pscircle[fillstyle=solid,fillcolor=lightgray](0.5,0.5){0.5}
        \endpspicture}
    \end{array}
    \psset{nodesep=3pt,linewidth=.1ex}
    \ncline[offset=2pt] {C}{D} \naput{\imo}
    \ncline {B}{C} \naput{i}
    \ncline {D}{E} \naput{i}
    \ncline[offset=-2pt,linestyle=dashed] {<->}{C}{D} \nbput{\varphi_i}
    \ncline[linestyle=dashed] {<->}{B}{E} \nbput{\varphi_i}
    \ncline {a2}{b2} \naput{i}
    \ncline {a2}{e2} \nbput{\imt}
    \ncline {b2}{f2} \naput{\imt}
    \ncline {e2}{h2} \nbput{i}
    \ncline {f2}{i2} \naput{i}
    \ncline[linestyle=dashed] {<->}{e2}{f2} \nbput{\psi_i}
    \ncline[linestyle=dashed] {<->}{h2}{i2} \nbput{\psi_i}
    \ncline {Y1}{X} \naput{i}
    \ncline {X}{Y2} \naput{i}
    \ncline {Y1}{Z1} \nbput{i}
    \ncline {Y2}{Z2} \naput{i}
    \ncline[linestyle=dashed]{<->} {Y1}{Y2} \nbput{\theta_i}
    \ncline[linestyle=dashed]{<->} {Z1}{Z2} \nbput{\theta_i}
  \end{displaymath}
  \caption{\label{fig:involutions} Illustrations of the involutions $\varphi_i$, $\psi_i$, and $\theta_i$ used to redefine $E_i$.}
\end{figure}

Given a connected signed, colored graph of degree $4$ having $\LSF_3$, if the graph is locally Schur positive (in this case, meaning has $\LSP_4$) but the generating function is not a single Schur function, then there are only a few possibilities for the generating function, similar to Proposition~\ref{prop:LSP6}.

\begin{proposition}
  Let $\G$ be a connected, locally Schur positive graph of degree $4$. Then the quasisymmetric generating function for $\G$ is either $s_{\lambda}$ for some partition $\lambda$ of $4$ or has the form $s_{(3,1)} + k s_{(2,2)}, k s_{(2,2)}, s_{(2,1,1)} + k s_{(2,2)}$, for some positive integer $k$.
  \label{prop:LSP4}
\end{proposition}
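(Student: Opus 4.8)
## Proof Proposal

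The plan is to argue entirely at the level of degree-$4$ combinatorics, that is, working with $2$-color graphs $(V,\sigma,E_{\imo}\cup E_i)$ whose connected components we have already been told, by $\LSF_3$ (axiom $1$) together with axioms $2$ and $3$, are tightly constrained. The first step is to invoke $\LSP_4$ to pin down the possible shapes of connected components: since $\G$ is connected of degree $4$, there is essentially only one relevant pair of colors, and $\LSP_4$ says that the restricted degree-$4$ generating function $\sum_{v\in\G} Q_{\sigma(v)}(X)$ is symmetric and Schur positive. Because this is a symmetric function of degree $4$, it lies in the span of $s_{(4)},s_{(3,1)},s_{(2,2)},s_{(2,1,1)},s_{(1,1,1,1)}$ with nonnegative integer coefficients; the goal is to rule out all but the four listed possibilities.

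Second, I would bound the signatures that can actually occur. A vertex $v$ has $\sigma(v)\in\{\pm1\}^{3}$, so there are only $8$ possible signatures. By axiom $1$, the signature $+++$ admits no $E_i$ or $E_{\imo}$ edge at all, and similarly $---$; hence a connected graph of degree $4$ with more than one vertex contains no vertex of signature $+++$ or $---$, so $Q_{+++}$ and $Q_{---}$ do not appear — which already forces the coefficient of $s_{(4)}$ and of $s_{(1,1,1,1)}$ in the generating function to be $0$, since $s_{(4)}=\sum$ over the single tableau with signature $+++$ and $s_{(1^4)}$ over the single tableau with signature $---$. (The one-vertex case gives $s_{(4)}$ or $s_{(1,1,1,1)}$, both of which are $s_\lambda$ for $\lambda\vdash 4$, so it is covered by the first alternative of the statement.) Thus the generating function has the form $a\, s_{(3,1)} + b\, s_{(2,2)} + c\, s_{(2,1,1)}$ with $a,b,c\ge 0$ integers.

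Third, I would show $a$ and $c$ cannot both be positive, and that if $a=c=0$ then $b\ge 1$, and that if exactly one of $a,c$ is positive it equals $1$. For the first point, I would track which of the two "mixed-sign-in-the-middle" signatures ($+-+$ and $-+-$, which appear only in $s_{(3,1)}$ and $s_{(2,1,1)}$ respectively among degree-$4$ shapes, alongside the signatures $++-$, $--+$, $+--$, $-++$) can coexist in a connected graph: a careful reading of axioms $1$ and $3$ constrains which signatures can be $E_i$- or $E_{\imo}$-adjacent, and one checks by enumerating the finitely many signed $2$-color components (exactly the structure encoded in Figure~\ref{fig:lambda4} and its non-dual-equivalence analogues) that $+-+$ and $-+-$ never lie in the same component. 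For the normalization $a,c\le 1$: in $s_{(3,1)}$ the signature $+-+$ occurs with multiplicity exactly $1$, so each component contributes at most one vertex of signature $+-+$; since $\G$ is connected, it is a single component and $a\le 1$, and likewise $c\le 1$. Finally, if $a=c=0$, the generating function is $b\,s_{(2,2)}$, which is Schur positive and nonzero (the graph is nonempty), so $b\ge1$.

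The main obstacle is the finite-case verification in the third step: showing that axioms $1$, $2$, $3$ together with $\LSP_4$ genuinely force the coexistence restrictions and the multiplicity bounds, rather than merely suggesting them. This amounts to the kind of diagram-chasing over signed $2$-color components that the introduction promises, and it parallels the stated Proposition~\ref{prop:LSP6}; I expect the cleanest route is to first classify, using only $\LSF_3$ and axioms $2$--$3$, all connected signed $2$-color graphs on signatures drawn from $\{++-,--+,+--,-++,+-+,-+-\}$, observe each such component's generating function is one of $s_{(3,1)}$, $s_{(2,1,1)}$, $Q_{++-}+Q_{--+}$, $Q_{+--}+Q_{-++}$, or a $2$-cycle contributing toward $s_{(2,2)}$, and then note that $\LSP_4$ forbids exactly the components whose generating function is not Schur positive on its own and forbids gluing an $s_{(3,1)}$-type piece to an $s_{(2,1,1)}$-type piece (as their sum would introduce a non-Schur-positive cross term). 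Assembling these observations yields precisely the list $s_\lambda$, $s_{(3,1)}+k\,s_{(2,2)}$, $k\,s_{(2,2)}$, $s_{(2,1,1)}+k\,s_{(2,2)}$.
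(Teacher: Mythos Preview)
Your outline has the right shape through step two, but step three contains concrete errors that break the argument. You assert that $+-+$ and $-+-$ ``appear only in $s_{(3,1)}$ and $s_{(2,1,1)}$ respectively among degree-$4$ shapes,'' but this is false: $s_{(2,2)} = Q_{+-+} + Q_{-+-}$, so both of these signatures also occur in $s_{(2,2)}$. Consequently your multiplicity bound ``$a\le 1$ because $+-+$ occurs once in $s_{(3,1)}$'' fails, since the number of vertices of signature $+-+$ in $\G$ equals $a+b$, not $a$. Likewise the claim that ``$+-+$ and $-+-$ never lie in the same component'' is simply wrong: in both the cycle case (generating function $k\,s_{(2,2)}$) and the long-path case (generating function $s_{(3,1)}+k\,s_{(2,2)}$), interior vertices alternate between the signatures $+-+$ and $-+-$.

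The paper's argument avoids this by first observing the structural fact you never state: since each vertex has at most one $2$-edge and at most one $3$-edge, $\G$ is either a simple path or a cycle. The signatures that \emph{are} unique to a single Schur function are the endpoint signatures $\{++-,\,-++\}$ (only in $s_{(3,1)}$) and $\{+--,\,--+\}$ (only in $s_{(2,1,1)}$); a path has exactly two endpoints, so $a+c\le 1$ comes from counting endpoints, and $\LSP_4$ together with axiom~$3$ forces both endpoints to contribute to the same Schur function, giving $ac=0$. A cycle has no endpoints, forcing $a=c=0$ directly. Your final paragraph's talk of ``gluing'' pieces and listing $Q_{++-}+Q_{--+}$ as a possible component generating function suggests you are decomposing $\G$ further, but $\G$ is already connected; there is nothing to glue. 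Rework step three around the path/cycle dichotomy and the endpoint signatures, and the argument goes through.
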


\begin{proof}
  By axiom $1$ (equivalently $\LSF_3$), every vertex has at most one $2$-edge and at most one $3$-edge. Therefore edges must alternate between $2$ and $3$, so $\G$ is either a path or a closed loop. Using axiom $1$ again, a vertex has no edges if and only if it has signature $+++$ or $---$, in which case the generating function is $s_{(4)}$ or $s_{(1,1,1,1)}$, respectively. Moreover, a vertex has a $2$-edge but not a $3$-edge if and only if it has signature $-++$ or $+--$, and similarly, a vertex has a $3$-edge but not a $2$-edge if and only if it has signature $++-$ or $--+$. Therefore $\G$ is a closed loop if and only if all vertices admit both a $2$ and a $3$ edge if and only if all vertices have signature $+-+$ or $-+-$. In this case, axiom $1$ ensures that these signatures alternate across edges, and so there must be an even number of vertices. Therefore the generating function is $kQ_{+-+} + kQ_{-+-} = ks_{(2,2)}$ for some positive integer $k$. If $\G$ is a path, then there are exactly two vertices that admit one color edge and not the other, and all other vertices admit both a $2$ and a $3$ edge. The two vertices admitting a single color edge contribute only to $s_{(3,1)}$ or $s_{(2,1,1)}$, and so both must contribute to the same function to ensure local Schur positivity. By axioms $1$ and $3$, if $\sigma(w)=++-$, then $w$ has a $3$-edge and not a $2$-edge, and $\sigma(E_3(w)) = +-+$, and similarly if $\sigma(v)=-++$, then $v$ has a $2$-edge and not a $3$-edge, and $\sigma(E_2(v)) = +-+$. Therefore these either agree, meaning $E_3(w)=E_2(v)$ giving generating function $s_{(3,1)}$, or we may toggle by pairs of $2$ and $3$ edges $k$ times, with vertices having signatures $+-+$, $-+-$, $+-+$, giving generating function $s_{(3,1)} + k s_{(2,2)}$.
\end{proof}

For examples when $k>0$, see Figure~\ref{fig:degree4}. The top left graph has generating function $s_{(3,1)}+s_{(2,2)}$, the bottom left graph has generating function $s_{(2,1,1)} + s_{(2,2)}$ and the right graph has generating function $2s_{(2,2)}$.

\begin{figure}[ht]
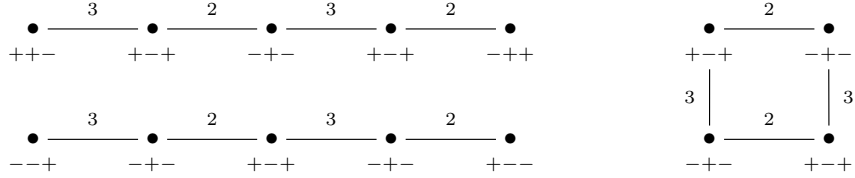

  \begin{displaymath}
    \begin{array}{\cs{7}\cs{7}\cs{7}\cs{7}\cs{7}\cs{7}\cs{7}c}
      \sbull{c1}{++-} & \sbull{c2}{+-+} & \sbull{c3}{-+-} & \sbull{c4}{+-+} & \sbull{c5}{-++} & & \sbull{e1}{+-+} & \sbull{e2}{-+-} \\[6ex]
      \sbull{d1}{--+} & \sbull{d2}{-+-} & \sbull{d3}{+-+} & \sbull{d4}{-+-} & \sbull{d5}{+--} & & \sbull{f1}{-+-} & \sbull{f2}{+-+} 
    \end{array}
    \psset{nodesep=3pt,linewidth=.1ex}
    \everypsbox{\scriptstyle}
    \ncline {c1}{c2} \naput{3}
    \ncline {c2}{c3} \naput{2}
    \ncline {c3}{c4} \naput{3}
    \ncline {c4}{c5} \naput{2}
    \ncline {d1}{d2} \naput{3}
    \ncline {d2}{d3} \naput{2}
    \ncline {d3}{d4} \naput{3}
    \ncline {d4}{d5} \naput{2}
    \ncline {e1}{e2} \naput{2}
    \ncline {e1e1}{f1} \nbput{3}
    \ncline {e2e2}{f2} \naput{3}
    \ncline {f1}{f2} \naput{2}
  \end{displaymath}
  \caption{\label{fig:degree4} Possible locally Schur positive graphs of degree $4$.}
\end{figure}

Having $\LSF_4$ restricts the lengths of $2$-color strings by forcing the number of edges of a nontrivial connected component of $E_{\imo} \cup E_{i}$ to be two, either with three distinct vertices (corresponding to $s_{(3,1)}$ or $s_{(2,1,1)}$) or forming a cycle with two vertices (corresponding to $s_{(2,2)}$). The map $\varphi_i$ swaps $i$-edges on connected components of $E_{\imo} \cup E_i$ with more than two edges. This map is studied in Section~\ref{sec:LSP4}.

Given a connected signed, colored graph of degree $5$ having $\LSF_3$ and $\LSF_4$, if the graph is locally Schur positive (in this case, meaning has $\LSP_5$) but the generating function is not a single Schur function, then there are again only a few possibilities for the generating function.

\begin{proposition}
  Let $\G$ be a connected, locally Schur positive graph of degree $5$ having $\LSF_4$. Then the quasisymmetric generating function for $\G$ is either $s_{\lambda}$ for some partition $\lambda$ of $5$ or has the form $s_{3,2} + k s_{3,1,1}, k s_{3,1,1}, s_{2,2,1} + k s_{3,1,1}$, for some positive integer $k$.
  \label{prop:LSP5}
\end{proposition}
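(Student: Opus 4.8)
The plan is to adapt the proof of Proposition~\ref{prop:LSP4} to three colors, using axiom~$1$ (equivalently $\LSF_3$) to turn the presence of edges into conditions on signatures, $\LSF_4$ to rigidify the two-color structure around each vertex, and $\LSP_5$ together with connectedness to control the global picture. First I would record, for each signature $\tau \in \{\pm1\}^{4}$, its \emph{edge type} $\{\, i \in \{2,3,4\} : \tau_{i-1} = -\tau_{i}\,\}$; by axiom~$1$ this is exactly the set of colors on which a vertex of signature $\tau$ carries a (unique) edge. Comparing with the standard graphs $\G_{\mu}$, $\mu \vdash 5$, yields: edge type $\emptyset$ occurs only in $\G_{(5)}$ and $\G_{(1,1,1,1,1)}$; edge types $\{2\}$ and $\{4\}$ only in $\G_{(4,1)}$ and $\G_{(2,1,1,1)}$; edge type $\{3\}$ only in $\G_{(3,1,1)}$; edge types $\{2,4\}$ and $\{2,3,4\}$ only in $\G_{(3,2)}$, $\G_{(2,2,1)}$, $\G_{(3,1,1)}$; and edge types $\{2,3\}$, $\{3,4\}$ in $\G_{(4,1)}$, $\G_{(2,1,1,1)}$, $\G_{(3,2)}$, $\G_{(2,2,1)}$.

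The key reduction is the claim: \emph{if $\G$ has a vertex of edge type $\emptyset$, $\{2\}$, or $\{4\}$, then $\G \cong \G_{(5)}$, $\G_{(1,1,1,1,1)}$, $\G_{(4,1)}$, or $\G_{(2,1,1,1)}$ and $f$ is a single Schur function}. A vertex of type $\emptyset$ is isolated, so connectedness settles that case. If $v$ has edge type $\{4\}$, then axioms $2$ and $3$ determine $\sigma(E_4(v))$; the $E_3\cup E_4$-component of $E_4(v)$ must by $\LSF_4$ be one of the configurations in Figure~\ref{fig:lambda4}, hence is the three-vertex path, whose far endpoint has edge type $\{2,3\}$; iterating in $E_2\cup E_3$ forces a further three-vertex path terminating at a vertex of edge type $\{2\}$, at which point the connected component closes and consists of exactly the four signatures of $\G_{(4,1)}$ (or of $\G_{(2,1,1,1)}$). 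Connectedness then gives $\G \cong \G_{(4,1)}$, and the case of edge type $\{2\}$ is symmetric. An immediate corollary, used below, is that outside the single-Schur case every vertex of edge type $\{2,3\}$ (resp.\ $\{3,4\}$) lies in a two-vertex doubled cycle of $(V,\sigma,E_2\cup E_3)$ (resp.\ $(V,\sigma,E_3\cup E_4)$): a type-$\{2,3\}$ vertex has both a $2$- and a $3$-edge, so its $E_2\cup E_3$-component is a doubled cycle or a three-vertex path, and in the latter case the same propagation produces a vertex of edge type $\{2\}$.

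It remains to handle the case in which every edge type lies in $\{\{3\},\{2,3\},\{3,4\},\{2,4\},\{2,3,4\}\}$; by the table the generating function is then $f = a\,s_{(3,2)} + k\,s_{(3,1,1)} + b\,s_{(2,2,1)}$ for some integers $a,b,k \ge 0$, and the statement amounts to $a, b \le 1$ and $ab = 0$. For this I would push through the diagram chase. Using axioms $2,3$, the doubled-cycle corollary, and $\LSF_4$, one finds that each vertex of edge type $\{2,3,4\}$ has exactly the three local environments in which its signature appears inside $\G_{(3,2)}$, $\G_{(2,2,1)}$, and $\G_{(3,1,1)}$, while vertices of edge types $\{3\}$, $\{2,3\}$, $\{3,4\}$ are leaves or doubled-cycle vertices pinned down by their unique partner. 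Assembling these local pieces — which can only be glued along vertices of edge type $\{2,4\}$ — and using connectedness shows that $\G$ is either a closed ``necklace'' built entirely from $\G_{(3,1,1)}$-patterns, so $f = k\,s_{(3,1,1)}$, or an open ``path'' whose two ends carry the edge-type-$\{2,3\}$ and -$\{3,4\}$ signatures of one copy of $\G_{(3,2)}$ or of one copy of $\G_{(2,2,1)}$. Exactly as in Proposition~\ref{prop:LSP4}, the two ends of a path cannot be of opposite flavor, since then the degree-$5$ generating function would contain a signature (of edge type $\{2,3\}$ or $\{3,4\}$) whose $s_{(3,2)}$- or $s_{(2,2,1)}$-completion is unavailable, contradicting $\LSP_5$. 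Hence a path contributes exactly one $s_{(3,2)}$ or exactly one $s_{(2,2,1)}$, giving $a, b \le 1$ and $ab = 0$ and the three listed forms.

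I expect the last step to be the main obstacle: the environment classification and the ``path versus necklace'' analysis require a longer and more branched diagram chase than the linear argument in degree $4$, since the three-color setting genuinely has the two-dimensional diamond of Figure~\ref{fig:lambda5} as its building block. Enumerating the environments of the edge-type-$\{2,3,4\}$ vertices, tracking how the edge-type-$\{2,4\}$ gluing vertices stitch consecutive patterns together, and excluding every assembly in which $\G_{(3,2)}$- and $\G_{(2,2,1)}$-patterns could coexist is finite but delicate, and is where the care of the argument is concentrated.
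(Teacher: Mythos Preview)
Your proposal is correct and follows essentially the same approach as the paper's proof: first dispose of the single-Schur cases by tracing outward from vertices of extreme edge type (the paper does this starting from signatures $\pm+++$ and $-+++$/$+---$), then show that $\{2,3\}$- and $\{3,4\}$-type vertices must sit on doubled edges, and finally analyze the remaining structure as a path or cycle of alternating $2$- and $4$-edges with $3$-edges sprouting off. Your ``edge type'' bookkeeping and ``path versus necklace'' language repackage the paper's explicit signature chase, but the logical skeleton is the same; your $\LSP_5$ argument for excluding mixed-flavor endpoints is exactly the missing-quasisymmetric-term argument (a $\{3,4\}$-end of signature $++-+$ forces $s_{(3,2)}$, which in turn requires a $\{2,3\}$-vertex of signature $+-++$ that is absent when the other end is $-+--$), which the paper handles instead by the parity observation that the number of lone $3$-edges must be even.
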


\begin{proof}
  If $\G$ is a single vertex with no edges, then by axiom $1$, the vertex has signature $++++$ or $----$, in which case the generating function is $s_{(5)}$ or $s_{(1,1,1,1,1)}$. If $\G$ has a vertex, say $u$, that admits a $2$-edge but neither a $3$-edge nor a $4$-edge, the by axiom $1$, $u$ must have signature $-+++$ or $+---$. Suppose the former is the case. By $\LSF_4$ for $E_2 \cup E_3$, there must be vertices $v = E_2(u)$ and $w = E_3(v)$ such that $w$ does not have a $2$-edge. By axioms $1$ and $3$, we have $\sigma(v)=+-++$, and by those same axioms again we have $\sigma(w) = ++-+$. Finally, by $\LSF_4$ for $E_3 \cup E_4$, we must have $x = E_4(w)$ with $\sigma(v)=+++-$. In particular, the generating function of $\G$ is $s_{(4,1)}$. The same argument applies when $u$ has signature $+---$ by multiplying the signature component wise by $-1$ to conclude the $\G$ has generating function $s_{(2,1,1,1)}$. From these two cases, reversing signature shows that if $\G$ has a vertex admitting a $4$-edge but neither a $2$-edge nor a $3$-edge then it has generating function $s_{(4,1)}$ or $s_{(2,1,1,1)}$. Therefore we may assume every vertex of $\G$ has either a $3$-edge (possibly with other edges as well) or both a $2$-edge and a $4$-edge.

  By axiom $1$, a vertex $u$ has a $2$-edge and a $3$-edge but no $4$-edge if and only if it has signature $+-++$ or $-+--$. If $E_2(u) \neq E_3(u)$, then $E_2(u)$ has signature $-+++$ or $+---$, which, by axiom $1$, implies it has no $3$-edge or $4$-edge contradicting the assumption on $\G$. Therefore we must have $E_2(u) = E_3(u)$ and this vertex, say $v$, has signature $-+-+$ or $+-+-$, respectively. By axiom $1$, $v$ has a $4$-edge, say $w = E_4(v)$, and by $\LSP_4$, $\sigma(w) = -++-$ or $+--+$, respectively. Therefore, by axiom $1$, $w$ has a $2$-edge but no $3$-edge, and $x = E_{2}(w)$ has signature $+-+-$ or $-+-+$, respectively. Now, by axiom $1$, $x$ has a $3$-edge and a $4$-edge. If $E_3(x) = E_4(x)$, then these five vertices comprise all of $\G$ which has generating function $s_{(3,2)}$ or $s_{(2,2,1)}$, respectively. Otherwise, by $\LSP_4$, $E_3(x)$ has signature $++--$ or $--++$, forcing the generating function to include $s_{(3,1,1)}$. We may continue following the $2$-edges and $4$-edges, alternating along and sprouting a $3$-edge every other step, and this process can only terminate when the $3$-edge and $4$-edge coincide. Moreover, it must do this after an even number, say $2k$, of lone $3$-edges sprouting up, and the resulting generating function will be $s_{(3,2)} + ks_{(3,1,1)}$ or $s_{(2,2,1)} + ks_{(3,1,1)}$, respectively. Reversing signatures, the same argument shows that beginning with a vertex that has a $4$-edge and a $3$-edge but no $2$-edge results in the same possibilities. If no vertex on $\G$ has a $3$-edge and exactly one other edge, then the same case analysis shows that the $2$-edges and $4$-edge must alternate, sprouting an even number, say $2k$, of lone $3$-edges at every other vertex, and the resulting graph has generating function $k s_{(3,1,1)}$.
\end{proof}

For examples, see Figure~\ref{fig:degree5}. The top left graph has generating function $s_{(3,2)}+s_{(3,1,1)}$, the bottom left graph has generating function $s_{(2,2,1)} + s_{(3,1,1)}$ and the right graph has generating function $2s_{(3,1,1)}$.

\begin{figure}[ht]
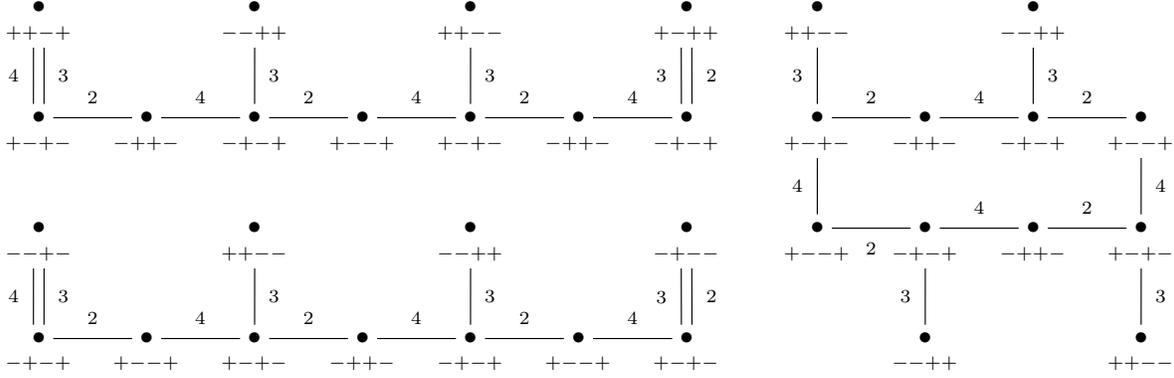

  \begin{displaymath}
    \begin{array}{\cs{6}\cs{6}\cs{6}\cs{6}\cs{6}\cs{6}\cs{8}\cs{6}\cs{6}\cs{6}c}
      \sbull{b1}{++-+} &                  & \sbull{b3}{--++} &                  & \sbull{b5}{++--} &                  & \sbull{b7}{+-++} & \sbull{a1}{++--} &                  & \sbull{A1}{--++} &                  \\[6ex]
      \sbull{c1}{+-+-} & \sbull{c2}{-++-} & \sbull{c3}{-+-+} & \sbull{c4}{+--+} & \sbull{c5}{+-+-} & \sbull{c6}{-++-} & \sbull{c7}{-+-+} & \sbull{a2}{+-+-} & \sbull{z2}{-++-} & \sbull{A2}{-+-+} & \sbull{A3}{+--+} \\[6ex]
      \sbull{B1}{--+-} &                  & \sbull{B3}{++--} &                  & \sbull{B5}{--++} &                  & \sbull{B7}{-+--} & \sbull{a3}{+--+} & \sbull{a4}{-+-+} & \sbull{z4}{-++-} & \sbull{A4}{+-+-} \\[6ex]
      \sbull{C1}{-+-+} & \sbull{C2}{+--+} & \sbull{C3}{+-+-} & \sbull{C4}{-++-} & \sbull{C5}{-+-+} & \sbull{C6}{+--+} & \sbull{C7}{+-+-} &                  & \sbull{a5}{--++} &                  & \sbull{A5}{++--} 
    \end{array}
    \psset{nodesep=3pt,linewidth=.1ex}
    \everypsbox{\scriptstyle}
    \ncline[offset=2pt] {b1b1}{c1} \naput{3}
    \ncline[offset=2pt] {c1}{b1b1} \naput{4}
    \ncline {b3b3}{c3} \naput{3}
    \ncline {b5b5}{c5} \naput{3}
    \ncline[offset=2pt] {b7b7}{c7} \naput{2}
    \ncline[offset=2pt] {c7}{b7b7} \naput{3}
    \ncline {c1}{c2} \naput{2}
    \ncline {c2}{c3} \naput{4}
    \ncline {c3}{c4} \naput{2}
    \ncline {c4}{c5} \naput{4}
    \ncline {c5}{c6} \naput{2}
    \ncline {c6}{c7} \naput{4}
    \ncline[offset=2pt] {B1B1}{C1} \naput{3}
    \ncline[offset=2pt] {C1}{B1B1} \naput{4}
    \ncline {B3B3}{C3} \naput{3}
    \ncline {B5B5}{C5} \naput{3}
    \ncline[offset=2pt] {B7B7}{C7} \naput{2}
    \ncline[offset=2pt] {C7}{B7B7} \naput{3}
    \ncline {C1}{C2} \naput{2}
    \ncline {C2}{C3} \naput{4}
    \ncline {C3}{C4} \naput{2}
    \ncline {C4}{C5} \naput{4}
    \ncline {C5}{C6} \naput{2}
    \ncline {C6}{C7} \naput{4}
    \ncline {A1A1}{A2} \naput{3}    
    \ncline {A2}{A3} \naput{2}    
    \ncline {A3A3}{A4} \naput{4}    
    \ncline {A4A4}{A5} \naput{3}    
    \ncline {a1a1}{a2} \nbput{3}    
    \ncline {a2a2}{a3} \nbput{4}    
    \ncline {a3}{a4} \nbput{2}    
    \ncline {a4a4}{a5} \nbput{3}    
    \ncline {a2}{z2} \naput{2}    
    \ncline {z2}{A2} \naput{4}    
    \ncline {a4}{z4} \naput{4}    
    \ncline {z4}{A4} \naput{2}    
  \end{displaymath}
  \caption{\label{fig:degree5} Possible locally Schur positive graphs with $\LSF_4$ of degree $5$.}
\end{figure}

Having $\LSF_5$ forces the number of edges of a nontrivial connected component of $E_{\imt} \cup E_{i}$ to be one (corresponding to $s_{(4,1)}$ or $s_{(2,1,1,1)}$) or four, where there are either five distinct vertices (corresponding to $s_{(3,2)}$ or $s_{(2,2,1)}$) or four vertices forming a cycle (corresponding to $s_{(3,1,1)}$. The map $\psi_i$ swaps $i$-edges on connected components of $E_{\imt} \cup E_i$ with more than four edges. This map is studied in Section~\ref{sec:LSP5}.

Given a connected signed, colored graph of degree $6$ having $\LSF_3, \LSF_4$ and $\LSF_5$, a stronger statement holds. This follows from Theorem~3.11 and Corollary~3.14 of \cite{Ass15}, which we recall below.

\begin{theorem}[\cite{Ass15}]
  Let $\G$ be a connected signed, colored graph of type $(\npo,\npo)$ satisfying axioms $1$ through $5$ such that each connected component of the $(n,n)$-restriction of $\G$ is isomorphic to a standard dual equivalence graph. Then there exists a morphism $\phi$ from $\G$ to $\G_{\lambda}$ for some unique partition $\lambda$ of $\npo$. 
\label{thm:cover}
\end{theorem}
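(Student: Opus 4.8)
The plan is to realize every vertex of $\G$ as a standard Young tableau of a single common shape $\lambda \vdash \npo$: each vertex already comes, through the $(n,n)$-restriction, with a tableau filled by $[n]$, and $\phi$ would adjoin the entry $\npo$ in an appropriate corner. Concretely, by hypothesis together with Theorem~\ref{thm:isomorphic}, each connected component $\C$ of the $(n,n)$-restriction of $\G$ admits a unique isomorphism onto a standard dual equivalence graph $\G_{\mu}$ with $\mu = \mu(\C) \vdash n$; this identifies the vertices of $\C$ with $\mathrm{SYT}(\mu(\C))$ on $[n]$ so that, for $v \in \C$, the first $n-1$ entries of $\sigma(v)$ form the descent signature of the associated tableau. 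Since the colors $2, \ldots, \nmo$ never involve an entry $\npo$, any morphism $\G \to \G_{\lambda}$ must carry each such $\C$ isomorphically onto the set of $T \in \mathrm{SYT}(\lambda)$ with $\npo$ in a fixed corner $c_{\C}$, where $\lambda = \mu(\C) \cup \{c_{\C}\}$; conversely, a consistent choice of addable corners $c_{\C}$ of the $\mu(\C)$ with a common union $\lambda$ defines a candidate $\phi$ by adjoining $\npo$ in cell $c_{\C}$. So I would reduce the theorem to producing such $\lambda$ and corners $c_{\C}$ and then checking the resulting $\phi$ is a morphism.

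The local input would be axiom $4$ for the two highest colors: every connected component of $(V,\sigma, E_{\nmo} \cup E_{n})$ is one of the graphs in Figure~\ref{fig:lambda4}. Given an $E_{n}$-edge $\{w,w'\}$ with $w \in \C$ and $w' \in \C'$, this determines the relative positions of the cells of $\nmo$ and $n$ in the corresponding tableaux of $\mu(\C)$ and $\mu(\C')$, and forces the edge to correspond, in $\G_{\lambda}$, to the color-$n$ elementary dual equivalence move, an operation that permutes the entries among $\{\nmo, n, \npo\}$. Reading this move off, either it fixes the cell of $\npo$ (so $\mu(\C) = \mu(\C')$ and $c_{\C} = c_{\C'}$), or it exchanges the cells of $n$ and $\npo$, in which case $\mu(\C)$ and $\mu(\C')$ are the two intermediate shapes of a square $\nu \subset \mu(\C), \mu(\C') \subset \kappa$ with $\nu \vdash \nmo$ and $\kappa \vdash \npo$, so that the only admissible choice is $c_{\C} = \kappa \setminus \mu(\C)$, $c_{\C'} = \kappa \setminus \mu(\C')$, $\lambda = \kappa$. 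In either case the edge also pins $\sigma(w)_{n}$ to the correct descent value, $+1$ or $-1$ according to the relative vertical position of $n$ and $\npo$ in the $\lambda$-tableau so built. Thus along each $E_{n}$-edge the pair $(\lambda, c)$ is forced and the position-$n$ signature comes out right.

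To globalize, I would use that $\G$ is connected, so any two components of the $(n,n)$-restriction are joined by a path alternating inside-component segments with $E_{n}$-edges, and the previous step propagates $\lambda$ and the corners $c_{\C}$ along any such path. The point to settle is the absence of monodromy: that returning to a component $\C$ along a loop cannot produce a different corner. Here the commutativity axioms are essential — axiom $5$ makes $E_{n}$ commute with every $E_{i}$ for $i \le n\!-\!3$, and together with axiom $2$ this rigidifies how $E_{n}$-edges attach along a component isomorphic to $\G_{\mu}$, forcing the $E_{n}$-edges incident to each copy of $\G_{\mu}$ into the pattern they would have in a standard graph, so that $c_{\C}$ becomes an invariant of $\C$; in the degenerate case where no $E_{n}$-edge changes $\mu$, the position-$n$ signatures of the vertices of a single component, as the tableau ranges over all of $\mathrm{SYT}(\mu)$, already single out the unique addable corner of $\mu$ into which $\npo$ must go. Either way $\lambda$ is well defined, and unique by the same last observation. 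Checking that $\phi$ is then a morphism is routine: signatures agree in positions $1, \ldots, \nmo$ because the component isomorphisms preserve them and in position $n$ by the computation above; color-adjacency for $2 \le i \le \nmo$ is preserved because adjoining $\npo$ commutes with the color-$i$ elementary dual equivalence move; and $E_{n}$-edges map to $E_{n}$-edges by the local analysis.

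I expect the monodromy step to be the main obstacle: proving that the corner assigned to a component is independent of the path used to reach it. This is a genuine use of the commutativity axioms $2$ and $5$; without them the forced local data need not patch to a single global shape, which is precisely why these axioms can never be omitted.
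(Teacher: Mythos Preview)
This theorem is not proved in the present paper: it is quoted verbatim from \cite{Ass15} (as Theorem~3.11 there) and used as a black box, so there is no proof here to compare against. That said, your outline is the right strategy and matches the argument in \cite{Ass15}: identify each $(n,n)$-restricted component with some $\G_{\mu}$, determine the cell for $\npo$ from the extra signature bit $\sigma_{n}$, and verify that $E_{n}$-edges map to elementary dual equivalences for $\nmo,n,\npo$.

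The one place where your sketch is thin is exactly where you flag it: the monodromy/well-definedness of the corner $c_{\C}$. In \cite{Ass15} this is not handled by a path-lifting argument along $E_{n}$-edges as you suggest, but rather by the content of Lemma~\ref{lem:extend-signs} of the present paper (Lemma~3.11 in \cite{Ass15}): for a connected dual equivalence graph of type $(n,N)$, the signature function alone determines the position of the cell containing $\npo$ in the augmenting tableau $A$. So the corner $c_{\C}$ is read off directly from $\sigma_{n}$ restricted to $\C$, with no need to propagate along loops; consistency across $E_{n}$-edges then becomes a local check using axioms~3 and~4 for $E_{\nmo}\cup E_{n}$. Your ``degenerate case'' remark is essentially this lemma, but you should lead with it rather than treat it as a fallback, since it is what dissolves the monodromy issue entirely.
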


The morphism of Theorem~\ref{thm:cover} is necessarily surjective, though in general it need not be injective. For example, Figure~\ref{fig:gregg} gives such a graph that is a two-fold cover of $\G_{(3,2,1)}$. 

\begin{corollary}[\cite{Ass15}]
  Let $\G$ be a signed, colored graph of degree $n$ satisfying dual equivalence axioms $1$ through $5$ such that the $(n-1)$-restriction satisfies dual equivalence axiom $6$ as well. Then the quasisymmetric generating function for $\G$ is given by
  \begin{equation}
    \sum_{v \in \G} Q_{\sigma(v)}(X) = k s_{\lambda}(X)
  \end{equation}
  for some partition $\lambda$ of $n$ and some positive integer $k$.
  \label{cor:fibers}
\end{corollary}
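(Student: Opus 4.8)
The plan is to bootstrap from Theorem~\ref{thm:cover} applied to the $(n-1)$-restriction, and then lift the resulting morphism one color at a time. Concretely, let $\G$ be a signed, colored graph of degree $n$ satisfying axioms $1$ through $5$ whose $(n-1)$-restriction satisfies axiom $6$. First I would invoke Theorem~\ref{thm:isomorphic} (or rather its proof, which only uses axioms $1$ through $6$ up to the relevant degree): since the $(n-1)$-restriction satisfies axioms $1$ through $6$ in degree $n-1$, each of its connected components is isomorphic to some standard dual equivalence graph $\G_{\mu}$ with $|\mu| = n-1$. Now view $\G$ itself as a graph of type $(n,n)$ satisfying axioms $1$ through $5$ whose $(n-1)$-restriction has components isomorphic to standard dual equivalence graphs; this is exactly the hypothesis of Theorem~\ref{thm:cover} (with the roles of $n$ and $n+1$ shifted down by one). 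Hence there is a surjective morphism $\phi : \G \to \G_{\lambda}$ for a unique partition $\lambda$ of $n$.

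The next step is to show that $\phi$ has constant fiber size $k$, from which the generating function identity follows immediately: since $\phi$ preserves signatures, $\sum_{v \in V(\G)} Q_{\sigma(v)}(X) = \sum_{u \in V(\G_{\lambda})} |\phi^{-1}(u)| \, Q_{\sigma(u)}(X)$, and if $|\phi^{-1}(u)| = k$ for all $u$, this equals $k \sum_{u} Q_{\sigma(u)}(X) = k s_{\lambda}(X)$ by Definition~\ref{def:quasisym}. To prove the fibers have constant size, I would argue that the fiber-size function $u \mapsto |\phi^{-1}(u)|$ on $V(\G_{\lambda})$ is constant along every edge of $\G_{\lambda}$. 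Fix $\{u,u'\} \in E_i(\G_{\lambda})$. For each $v \in \phi^{-1}(u)$, axiom $1$ in $\G$ (using $\sigma(v) = \sigma(u)$ and the fact that $\sigma(u)_{i-1} = -\sigma(u)_i$ since $u$ has an $i$-neighbor) guarantees $v$ has a unique $i$-neighbor $E_i(v)$, and the morphism property forces $\phi(E_i(v)) \in \{u'\}$ — more precisely $\{\phi(v),\phi(E_i(v))\} \in E_i(\G_{\lambda})$, and since $\G_{\lambda}$ satisfies axiom $1$, $u$ has a unique $i$-neighbor, namely $u'$, so $\phi(E_i(v)) = u'$. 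This gives a map $\phi^{-1}(u) \to \phi^{-1}(u')$, $v \mapsto E_i(v)$, which is an involution-type bijection (its inverse is $v' \mapsto E_i(v')$, by the same argument applied to $u'$). Hence $|\phi^{-1}(u)| = |\phi^{-1}(u')|$. Since $\G_{\lambda}$ is connected, the fiber size is a single constant $k$, and $k \geq 1$ because $\phi$ is surjective.

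The main obstacle I anticipate is the first step: verifying that the hypotheses of Theorem~\ref{thm:cover} are genuinely met after the index shift, and in particular that the degree-$n$ graph $\G$, when we only keep axioms $1$ through $5$, together with the conclusion that its $(n-1)$-restriction has standard-dual-equivalence components, really does match the statement ``type $(n+1,n+1)$ satisfying axioms $1$ through $5$ with standard $(n,n)$-restriction.'' This is a matter of bookkeeping with the type parameters and confirming that axioms $1$ through $6$ in degree $n-1$ (needed to get Theorem~\ref{thm:isomorphic} for the restriction) are all available — axioms $1$--$5$ are assumed outright, and axiom $6$ in degree $n-1$ is precisely the hypothesis about the $(n-1)$-restriction. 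A secondary subtlety is ensuring the edge-crossing bijection on fibers is well-defined at vertices $u \in V(\G_{\lambda})$ that happen to lie on no $i$-edge for the chosen $i$; this is handled by choosing, for a given pair of fibers we wish to compare, an edge of $\G_{\lambda}$ realizing adjacency of the corresponding vertices, and using connectedness of $\G_{\lambda}$ to propagate equality of fiber sizes throughout. Everything else is a routine unwinding of the morphism and axiom~$1$ definitions.
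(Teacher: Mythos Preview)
The paper does not supply its own proof of this corollary---it is quoted from \cite{Ass15}---so there is no in-paper argument to compare against directly. Your approach (apply Theorem~\ref{thm:isomorphic} to the $(n-1,n-1)$-restriction, then apply Theorem~\ref{thm:cover} with the index shifted down by one to obtain a surjective morphism $\phi:\G\to\G_\lambda$, then use the edge-bijection $v\mapsto E_i(v)$ between fibers over adjacent vertices of $\G_\lambda$ to deduce constant fiber size $k$) is exactly the standard one, and it is the reasoning the paper itself relies on implicitly when it later asserts that ``the fiber over each vertex of $\G_{\lambda}$ has the same cardinality'' in Section~\ref{sec:LSP6}. Your fiber-size argument is correct and complete: axiom~$1$ plus the signature-preserving property of $\phi$ is precisely what is needed to show $E_i$ restricts to a bijection $\phi^{-1}(u)\to\phi^{-1}(u')$ whenever $\{u,u'\}\in E_i(\G_\lambda)$, and connectedness of $\G_\lambda$ propagates this globally.

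One small omission: Theorem~\ref{thm:cover} requires $\G$ to be connected, and the conclusion ``$k\,s_\lambda$ for a single partition $\lambda$'' only makes sense under that hypothesis (distinct components could map to different $\G_\lambda$'s). The corollary as stated in the paper omits the word ``connected,'' but it is only ever applied to connected graphs (cf.\ Proposition~\ref{prop:LSP6}). You should state this assumption explicitly rather than silently inherit it when invoking Theorem~\ref{thm:cover}.
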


Applying Corollary~\ref{cor:fibers} to a graph of degree $6$, we have the following.

\begin{proposition}
  Let $\G$ be a connected graph of degree $6$ having $\LSF_3, \LSF_4$ and $\LSF_5$. The quasisymmetric generating function for $\G$ is either $s_{\lambda}$ for some partition $\lambda$ of $6$ or $k s_{(3,2,1)}$ for some positive integer $k$.
  \label{prop:LSP6}
\end{proposition}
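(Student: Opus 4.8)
The plan is to invoke Corollary~\ref{cor:fibers} directly, after checking that its hypotheses are met. Corollary~\ref{cor:fibers} states that a signed, colored graph of degree $n$ satisfying axioms $1$ through $5$, whose $(n-1)$-restriction satisfies axiom $6$ as well, has generating function $k s_{\lambda}$ for a single partition $\lambda$ of $n$ and a positive integer $k$. So for $n=6$ I would verify: (i) $\G$ satisfies axioms $1$ through $5$, and (ii) the $5$-restriction of $\G$ satisfies axiom $6$.

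For (i): axiom $1$ is exactly $\LSF_3$, which holds by hypothesis. Axioms $2$ and $5$ are the commutativity axioms, which are built into the standing assumptions for the graphs under consideration (recall the running convention in the paper that axiom $5$ is \emph{always} assumed, and axiom $2$ likewise); I would state explicitly that we take these as part of the hypotheses for a ``graph of degree $6$'' in this context, or else note that $\LSF_3$ together with the structural assumptions already in force give them. Axiom $3$ follows from $\LSF_3$ and $\LSP_4$ by the Proposition proved earlier in this section — and here $\LSF_4$ is assumed, which is stronger than $\LSP_4$, so axiom $3$ holds (one should be slightly careful that that Proposition was stated for type $(n,n)$; since a degree $6$ graph here is of type $(6,6)$ or we restrict attention to the relevant colors, this is fine). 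Axiom $4$ is equivalent to $\LSF_4$ together with $\LSF_5$ (Figure~\ref{fig:lambda4} $\Leftrightarrow \LSF_4$ and Figure~\ref{fig:lambda5} $\Leftrightarrow \LSF_5$, as recalled after Definition~\ref{def:LSP}), both of which are hypotheses. So axioms $1$--$5$ all hold.

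For (ii): the $5$-restriction of $\G$ must satisfy axiom $6$, i.e. $\LSF_6$ for the restricted graph — but $\LSF_6$ for a degree $5$ graph is vacuous since it requires $6-2 < i < n = 5$, which is empty. More to the point, axiom $6$ for a graph of type $(5,N)$ involves $E_2\cup\cdots\cup E_i$ for $i < 5$, and what we actually need is that the $5$-restriction of $\G$ satisfies axiom $6$; since a degree $5$ graph with $\LSF_3,\LSF_4,\LSF_5$ satisfies axioms $1$ through $5$ and axiom $6$ is implied by axiom $4$ when $n \le 5$ (stated explicitly in the excerpt just after Definition~\ref{def:deg}: ``if $n\leq 5$, then \ldots axiom $4$ implies axiom $6$''), this is automatic. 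Thus the $5$-restriction of $\G$ satisfies axiom $6$, and Corollary~\ref{cor:fibers} applies with $n=6$, yielding generating function $k s_{\lambda}$ for a partition $\lambda$ of $6$ and positive integer $k$.

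Finally, I would pin down which $\lambda$ and $k$ can occur. If $k=1$ the function is a single Schur function $s_\lambda$, the first alternative. If $k>1$, then $k s_\lambda$ must itself be ``locally'' consistent: restricting to $5$ colors, every connected component of the $5$-restriction has Schur-function generating function (by $\LSF_5$, or by Theorem~\ref{thm:isomorphic} applied to the components of the $5$-restriction after noting they satisfy all the axioms and are of type $(5,5)$ — each is some $\G_{\mu}$). The multiplicity of $s_\mu$ in $s_\lambda$ restricted to any $5$-subset (the branching from $S_6$-type data to $S_5$-type data, realized combinatorially by deleting the cell containing $6$ or by the standardization) is at most $1$ for every $\mu$ unless a partition $\lambda$ of $6$ has some ``branching multiplicity'' exceeding $1$ after the identification forced by $\LSF_5$ — and the only partition of $6$ for which the generating function can fail to be multiplicity-free at degree $5$ in the relevant sense, forcing $k>1$ possible, is $\lambda=(3,2,1)$ (this is the self-conjugate ``hook-free'' shape where the covering phenomenon of Theorem~\ref{thm:cover} and Figure~\ref{fig:gregg} genuinely occurs). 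For all other $\lambda \vdash 6$, $\LSF_3,\LSF_4,\LSF_5$ force $k=1$. Hence the generating function is $s_\lambda$ for some $\lambda \vdash 6$, or $k s_{(3,2,1)}$ for some positive integer $k$.

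The main obstacle I anticipate is the last step — ruling out $k>1$ for every partition of $6$ other than $(3,2,1)$. The clean way is: given $k s_\lambda$ with $k>1$, look at the components of the $5$-restriction; each is isomorphic to some $\G_\mu$ by Theorem~\ref{thm:isomorphic}, and the multiset of $\mu$'s appearing (with multiplicity) must be $k$ copies of the multiset of shapes obtained from $\lambda$ by removing a corner cell. For $k>1$ to be compatible with the connectedness of $\G$ and axiom $6$ of the $5$-restriction via Theorem~\ref{thm:cover}, one needs $\lambda$ to admit a nontrivial connected cover, and a short case check over the eleven partitions of $6$ shows $(3,2,1)$ is the only one where the branching data permits this (all others have at most one removable corner of each shape and the resulting $\G_\lambda$ is rigid enough that the cover is forced to be trivial). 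I would present this as a finite verification rather than grinding it out, pointing to Theorem~\ref{thm:cover} and the example of Figure~\ref{fig:gregg} for the $(3,2,1)$ case, and to the rigidity of $\G_\mu$ (Theorem~\ref{thm:isomorphic}) for the rest.
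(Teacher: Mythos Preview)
Your approach is essentially the paper's: verify axioms $1$--$5$ hold (via the $\LSF_m \Leftrightarrow$ axiom correspondence), note that axiom $4$ implies axiom $6$ for the degree-$5$ restriction, invoke Corollary~\ref{cor:fibers} to get $k s_\lambda$, and then argue that $k>1$ forces $\lambda=(3,2,1)$. The paper additionally splits off the case where axiom $6$ already holds for $\G$ itself (then Theorem~\ref{thm:isomorphic} gives $\G\cong\G_\lambda$ directly), but this is cosmetic.

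The one place your argument is looser than the paper's is the final step, which you flag as the main obstacle. Your phrasing ``at most one removable corner of each shape'' is not quite the right invariant (that is always true), and ``rigidity'' plus a proposed case check is vaguer than necessary. The paper's observation is simply that every partition of $6$ other than $(3,2,1)$ has at most two removable corners, hence at most two isomorphism classes of $(5,6)$-restricted components; from the structure of the morphism in Theorem~\ref{thm:cover} this forces each class to appear exactly once, so $\G\cong\G_\lambda$. This replaces your eleven-case verification with a single combinatorial remark.
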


\begin{proof}
  If $\G$ satisfies dual equivalence axiom $6$, then by Theorem~\ref{thm:isomorphic}, $\G$ is isomorphic to $\G_{\lambda}$ for some partition $\lambda$ of $6$, and so its quasisymmetric generating function is $s_{\lambda}$. Otherwise, if $\G$ fails dual equivalence axiom $6$, then by Corollary~\ref{cor:fibers}, its quasisymmetric generating function is $k s_{\lambda}$ for some positive integer $k$. However, if $\lambda$ is a partition of $6$ other than $(3,2,1)$, then $\lambda$ has at most $2$ removable corners, so there are at most two possible isomorphism classes for the $(5,6)$-restriction of $\G$. Therefore each can occur at most once, so $\G = \G_{\lambda}$.
\end{proof}

For example, Figure~\ref{fig:gregg} gives such a graph with generating function $2s_{(3,2,1)}$.

\begin{figure}[ht]
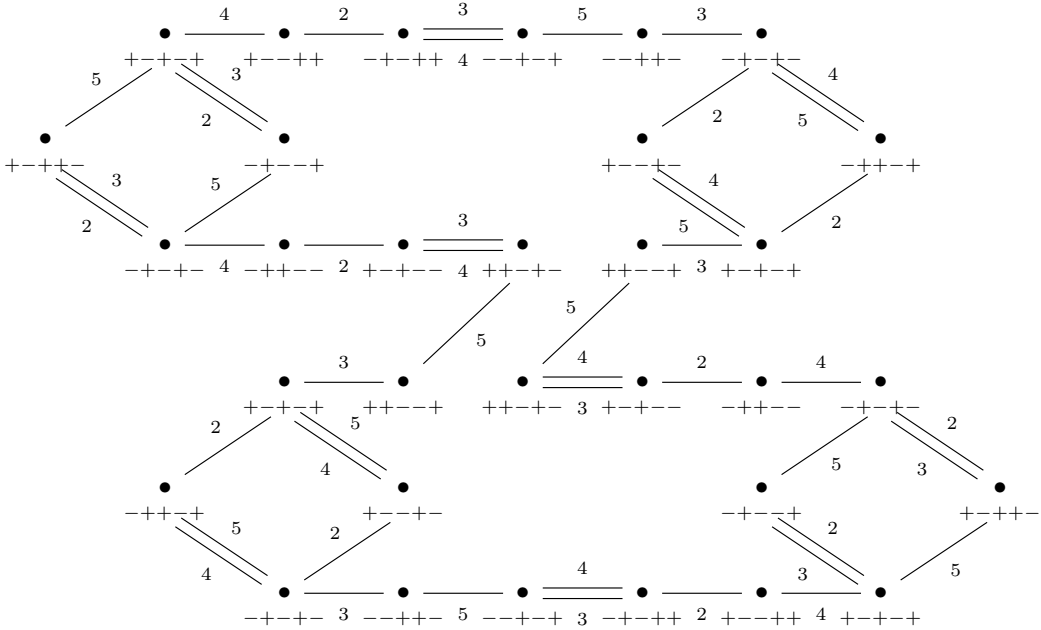

  \begin{displaymath}
    \begin{array}{\cs{7} \cs{7} \cs{7} \cs{7} \cs{7} \cs{7} \cs{7} \cs{7} c}
      &
      \sbull{c2}{+-+-+} &
      \sbull{b2}{+--++} &
      \sbull{a3}{-+-++} &
      \sbull{a4}{--+-+} &
      \sbull{b5}{--++-} &
      \sbull{c5}{-+-+-} &
      & \\[2\cellsize]
      \sbull{d3}{+-++-} & &
      \sbull{d1}{-+--+} & & &
      \sbull{d4}{+--+-} & &
      \sbull{d6}{-++-+} 
      & \\[2\cellsize]
      &
      \sbull{e2}{-+-+-} &
      \sbull{f2}{-++--} &
      \sbull{g3}{+-+--} &
      \sbull{g4}{++-+-} &
      \sbull{f5}{++--+} &
      \sbull{e5}{+-+-+} & 
      & \\[3\cellsize]
      & &
      \sbull{xe5}{+-+-+} & 
      \sbull{xf5}{++--+} &
      \sbull{xg4}{++-+-} &
      \sbull{xg3}{+-+--} &
      \sbull{xf2}{-++--} &
      \sbull{xe2}{-+-+-} & 
      \\[2\cellsize]
      &
      \sbull{xd6}{-++-+} & &
      \sbull{xd4}{+--+-} & & &
      \sbull{xd1}{-+--+}  & &
      \sbull{xd3}{+-++-}
      \\ [2\cellsize]
      & &
      \sbull{xc5}{-+-+-} &
      \sbull{xb5}{--++-} &
      \sbull{xa4}{--+-+} &
      \sbull{xa3}{-+-++} &
      \sbull{xb2}{+--++} &
      \sbull{xc2}{+-+-+} & 
    \end{array}
    \psset{nodesep=5pt,linewidth=.1ex}
    \everypsbox{\scriptstyle}
    \ncline[offset=2pt]{a3}{a4} \naput{3}
    \ncline[offset=2pt]{a4}{a3} \naput{4}
    \ncline            {b2}{a3} \naput{2}
    \ncline            {a4}{b5} \naput{5}
    \ncline            {c2}{b2} \naput{4}
    \ncline            {b5}{c5} \naput{3}
    \ncline[offset=2pt]{d1}{c2c2} \naput{2}
    \ncline[offset=2pt]{c2c2}{d1} \naput{3}
    \ncline            {c2c2}{d3} \nbput{5}
    \ncline            {d4}{c5c5} \nbput{2}
    \ncline[offset=2pt]{c5c5}{d6} \naput{4}
    \ncline[offset=2pt]{d6}{c5c5} \naput{5}
    \ncline            {d1d1}{e2} \nbput{5}
    \ncline[offset=2pt]{e2}{d3d3} \naput{2}
    \ncline[offset=2pt]{d3d3}{e2} \naput{3}
    \ncline[offset=2pt]{d4d4}{e5} \naput{4}
    \ncline[offset=2pt]{e5}{d4d4} \naput{5}
    \ncline            {e5}{d6d6} \nbput{2}
    \ncline            {e2}{f2} \nbput{4}
    \ncline            {f5}{e5} \nbput{3}
    \ncline            {f2}{g3} \nbput{2}
    \ncline            {g4g4}{xf5} \naput{5}
    \ncline[offset=2pt]{g3}{g4} \naput{3}
    \ncline[offset=2pt]{g4}{g3} \naput{4}
    \ncline[offset=2pt]{xa3}{xa4} \naput{3}
    \ncline[offset=2pt]{xa4}{xa3} \naput{4}
    \ncline            {xb2}{xa3} \naput{2}
    \ncline            {xa4}{xb5} \naput{5}
    \ncline            {xc2}{xb2} \naput{4}
    \ncline            {xb5}{xc5} \naput{3}
    \ncline[offset=2pt]{xd1xd1}{xc2} \naput{2}
    \ncline[offset=2pt]{xc2}{xd1xd1} \naput{3}
    \ncline            {xc2}{xd3xd3} \nbput{5}
    \ncline            {xd4xd4}{xc5} \nbput{2}
    \ncline[offset=2pt]{xc5}{xd6xd6} \naput{4}
    \ncline[offset=2pt]{xd6xd6}{xc5} \naput{5}
    \ncline            {xd1}{xe2xe2} \nbput{5}
    \ncline[offset=2pt]{xe2xe2}{xd3} \naput{2}
    \ncline[offset=2pt]{xd3}{xe2xe2} \naput{3}
    \ncline[offset=2pt]{xd4}{xe5xe5} \naput{4}
    \ncline[offset=2pt]{xe5xe5}{xd4} \naput{5}
    \ncline            {xe5xe5}{xd6} \nbput{2}
    \ncline            {xe2}{xf2} \nbput{4}
    \ncline            {xf5}{xe5} \nbput{3}
    \ncline            {xf2}{xg3} \nbput{2}
    \ncline            {xg4}{f5f5} \naput{5}
    \ncline[offset=2pt]{xg3}{xg4} \naput{3}
    \ncline[offset=2pt]{xg4}{xg3} \naput{4}
  \end{displaymath}
  \caption{\label{fig:gregg}The smallest graph satisfying dual equivalence graph axioms $1-5$ but not $6$.}
\end{figure}

The hypotheses for Corollary~\ref{cor:fibers} are quite strong. One can construct a signed, colored graph of degree $7$ satisfying dual equivalence axioms $1$ through $5$ such that the generating function has distinct Schur functions appearing with positive multiplicity. Nevertheless, computer exploration for graphs up to degree $12$ suggests the following conjecture, which might be proved using the transformations described in this paper.

\begin{conjecture}
  For any signed, colored graph $\G$ of degree $n$ satisfying dual equivalence axioms $1$ through $5$, the quasisymmetric generating function for $\G$ is symmetric and Schur positive.
  \label{conj:cover}
\end{conjecture}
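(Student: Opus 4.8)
The plan is to prove the conjecture by transforming $\G$ into a dual equivalence graph $\widetilde{\G}$ on the same vertex set and with the same signature function, hence with the same quasisymmetric generating function. By Theorem~\ref{thm:isomorphic}, every connected component of $\widetilde{\G}$ would then be isomorphic to a standard $\G_{\lambda}$, so the common generating function of $\widetilde{\G}$ and $\G$ is $\sum_{\lambda} m_{\lambda}\, s_{\lambda}$ with every $m_{\lambda}$ a nonnegative integer, giving symmetry and Schur positivity at once. For degrees $n \le 6$ no transformation is needed: under axioms $1$--$5$ a connected graph satisfies $\LSF_3$, $\LSF_4$, and $\LSF_5$, so Propositions~\ref{prop:LSP4}, \ref{prop:LSP5}, and \ref{prop:LSP6} already exhibit its generating function as an explicit Schur-positive sum. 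So I would take $n \ge 7$ and build the chain $\G = \G_2, \G_3, \ldots, \G_{\nmo} = \widetilde{\G}$ on the vertex set of $\G$, as in Section~\ref{sec:local}, in which each $\G_{\imo}$ satisfies axioms $1$--$5$ and has its $(i,n)$-restriction a dual equivalence graph; the chain begins with $\G_2 = \G$ because axiom $1$ alone makes the $(3,n)$-restriction a dual equivalence graph.

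For the inductive step, passing from $\G_{\imo}$ to $\G_i$, I would re-wire only the $E_i$ edges so that axiom $6$ comes to hold through color $i$. A simplification available here is that, under axioms $1$--$5$, the maps $\varphi_i$ and $\psi_i$ are never called for: $\LSF_4$ and $\LSF_5$ leave no connected component of $E_{\imo}\cup E_i$ with more than two edges and none of $E_{\imt}\cup E_i$ with more than four, so only the degree-$6$ involution $\theta_i$ of Figure~\ref{fig:involutions}, together with the facilitating transformation of Section~\ref{sec:LSP}, is needed. Inside a connected component $\C$ of $E_2\cup\cdots\cup E_i$ in $\G_{\imo}$, deleting the $E_i$ edges leaves pieces whose structure is governed by the standard dual equivalence graphs (the inductive hypothesis together with Theorems~\ref{thm:isomorphic} and \ref{thm:cover}), and Theorem~\ref{thm:cover} constrains how the $E_i$ edges can join them; the analysis behind Proposition~\ref{prop:LSP6} and the example of Figure~\ref{fig:gregg} should then show that any failure of axiom $6$ on $\C$ is witnessed locally by a pair of $E_i$ edges in the configuration on which $\theta_i$ acts, so that repeatedly applying $\theta_i$ (after the preparatory moves of Section~\ref{sec:LSP}) restores axiom $6$ at color $i$. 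Three things then need checking: (a) each application strictly decreases a nonnegative complexity statistic --- for instance the number of $E_i$ edges lying on components of $E_2\cup\cdots\cup E_i$ that violate the ``at most one $E_i$ edge per path'' condition --- so the process terminates; (b) each swap, as in Figure~\ref{fig:swap}, changes no $\sigma_j$, so the generating function is unchanged; and (c) the re-wiring preserves the \emph{lower} structure, namely axioms $1$, $2$, $3$, $5$ and the dual equivalence structure already built in colors $< i$.

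The step I expect to be the main obstacle --- and the reason the statement is still only a conjecture --- is showing that the re-wiring also preserves the \emph{upper} structure: that $\G_i$ still satisfies axioms $4$ and $5$ relative to colors $\ipo, \ipt, \iph, \ldots$, so that the chain can be continued. The involution $\theta_i$ is defined purely from the local picture at color $i$, whereas axiom $4$ couples color $i$ to colors $\imo$, $\ipo$, and $\ipt$ through the forbidden components of Figure~\ref{fig:lambda5}, and axiom $5$ couples it to every color at distance at least $3$; so I would have to rule out the creation of any new forbidden degree-$5$ component and of any new axiom-$5$ commutativity violation when $E_i$ is re-wired. As Section~\ref{sec:axiom4p} makes precise, this forces $\G$ to satisfy certain \emph{additional} axioms beyond $1$--$5$. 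That section shows the graphs for LLT polynomials, Macdonald polynomials, and $k$-Schur functions satisfy these additional axioms and that $\theta_i$ and the Section~\ref{sec:LSP} transformation cannot create new violations of them --- but it leaves open exactly the two points to which the conjecture reduces: (i) whether axioms $1$--$5$ (or, more ambitiously, degree-$6$ local Schur positivity) already imply the additional axioms, and (ii) whether, granting the additional axioms, the chain $\G_2, \ldots, \G_{\nmo}$ can always be completed. An affirmative answer to both, combined with the termination and lower-structure arguments above, would prove the conjecture. A direct route bypassing the transformations --- pushing Corollary~\ref{cor:fibers} to show that Schur positivity propagates from the $(n-1)$-restriction to the whole graph --- is not obviously excluded, but it cannot be forced into the clean form $k s_{\lambda}$, since already in degree $7$ there is a graph satisfying axioms $1$--$5$ whose generating function involves two distinct Schur functions.
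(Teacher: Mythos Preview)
The statement you are addressing is a \emph{conjecture}: the paper offers no proof, only computer evidence through degree $12$ and the hope that the transformations of Sections~\ref{sec:LSP6}--\ref{sec:LSP} might eventually yield one. So there is no ``paper's own proof'' to compare against; what I can do is assess your strategy against what the paper establishes and leaves open.

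Your outline is essentially the program the paper itself sketches --- build a chain $\G = \G_2, \ldots, \G_{\nmo}$ by re-wiring one color at a time --- and you correctly identify the crux: preserving the upper structure (axiom~$4$ at colors $\ipo,\ipt$ and axiom~$5$) after each re-wiring. Your honest closing paragraph, isolating the two open sub-questions from Section~\ref{sec:axiom4p}, is exactly right and matches the paper's own assessment of where the difficulty lies.

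One point in your middle paragraph needs correction. You claim that under axioms $1$--$5$ ``the maps $\varphi_i$ and $\psi_i$ are never called for'' and only $\theta_i$ (plus Section~\ref{sec:LSP} moves) is needed. That is true \emph{only at the first step}. The paper explicitly warns, just after Theorem~\ref{thm:theta-LSP}, that applying $\theta_i$ can destroy axiom~$4$ for the $(\iph,N)$-restriction: the proof of Theorem~\ref{thm:one-step} shows that $\theta_i$ may force vertices into $W_{\ipo}$ or $C_{\ipt}$, after which $\varphi_{\ipo}$ or $\psi_{\ipt}$ is required to restore axiom~$4$. So once you leave the original $\G$, your intermediate graphs $\G_i$ need not satisfy axiom~$4$ globally, and the full machinery of $\varphi,\psi,\gamma$ becomes essential, not merely facilitating. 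This does not invalidate your overall plan, but it means the inductive hypothesis cannot be ``$\G_{\imo}$ satisfies axioms $1$--$5$''; it must be the weaker hypothesis of Theorem~\ref{thm:one-step} (axioms $1,2,3,5$, local Schur positivity, and axiom~$4$ only for the $(i,N)$-restriction), and then the burden of maintaining even that weaker hypothesis through the higher colors is precisely the unresolved obstacle you flag.
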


%
\section{Resolving axiom $6$}
%
\label{sec:LSP6}

We begin our study of locally Schur positive graphs by considering graphs that satisfy dual equivalence axioms $1$ through $5$ but not necessarily satisfying axiom $6$. Precisely, let $\G$ be a locally Schur positive graph of type $(n,N)$ such that the $(i,N)$-restriction of $\G$ is a dual equivalence graph and the $(\ipo,N)$-restriction of $\G$ satisfies dual equivalence axiom $4$ (equivalently, has $\LSF_4$ and $\LSF_5$). Having $\LSF_6$, which is equivalent to axiom $6$, restricts the size of $E_2 \cup \cdots \cup E_{\imo}$ isomorphism classes of a connected component of $E_2 \cup \cdots \cup E_i$ to be one. The map $\theta_i$ swaps $i$-edges on connected components of $E_2 \cup \cdots \cup E_i$ with more than one member of a given $E_2 \cup \cdots \cup E_{\imo}$ isomorphism class. 

By Theorem~\ref{thm:cover}, for each connected component $\mathcal{H}$ of the $(\ipo,\ipo)$-restriction of $\G$, there exists a morphism $\phi$ from $\mathcal{H}$ to $\G_{\lambda}$ for a unique partition $\lambda$ of $\ipo$, and the fiber over each vertex of $\G_{\lambda}$ has the same cardinality. By Theorem~\ref{thm:isomorphic}, $\mathcal{H}$ satisfies axiom $6$ if and only if $\phi$ is an isomorphism. Figure~\ref{fig:tab-gregg} highlights the isomorphism classes of the $(5,6)$-restriction of the graph in Figure~\ref{fig:gregg} that is a $2$-fold cover of $\G_{(3,2,1)}$.

\begin{figure}[ht]
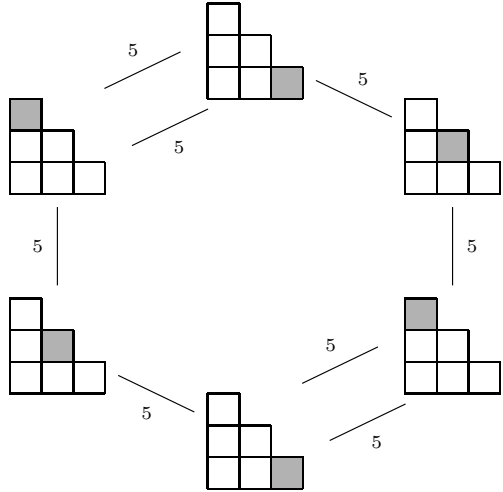

  \begin{displaymath}
    \begin{array}{\cs{9}\cs{9}c}
      & \rnode{a}{\tableau{\e \\ \e & \e \\ \e & \e & \cb}} & \\[\cellsize]
      \rnode{b}{\tableau{\cb \\ \e & \e \\ \e & \e & \e}} & 
      & \rnode{c}{\tableau{\e \\ \e & \cb \\ \e & \e & \e}} \\[5\cellsize]
      \rnode{C}{\tableau{\e \\ \e & \cb \\ \e & \e & \e}} & 
      & \rnode{B}{\tableau{\cb \\ \e & \e \\ \e & \e & \e}} \\[\cellsize]
      & \rnode{A}{\tableau{\e \\ \e & \e \\ \e & \e & \cb}} & 
    \end{array}
    \psset{nodesep=5pt,linewidth=.1ex}
    \everypsbox{\scriptstyle}
    \ncline[offset=12pt] {a}{b} \naput{5}
    \ncline[offset=12pt] {b}{a} \naput{5}
    \ncline {a}{c} \naput{5}
    \ncline {b}{C} \nbput{5}
    \ncline {c}{B} \naput{5}
    \ncline {C}{A} \nbput{5}
    \ncline[offset=12pt] {B}{A} \naput{5}
    \ncline[offset=12pt] {A}{B} \naput{5}
  \end{displaymath}
  \caption{\label{fig:tab-gregg}The $(5,6)$-restriction of Figure~\ref{fig:gregg} highlighting the two-fold cover of $\G_{(3,2,1)}$.}
\end{figure}

We define an involution $\theta_i$ on vertices of $\mathcal{H}$ admitting an $i$-neighbor as indicated in Figure~\ref{fig:theta} and use it to redefine $i$-edges that are in violation of axiom $6$.

\begin{figure}[ht]
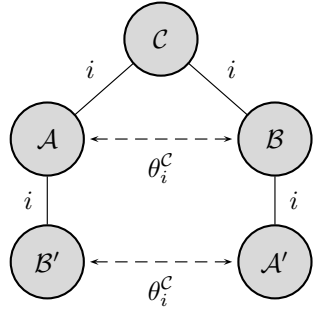

  \begin{displaymath}
    \begin{array}{\cs{9}\cs{9}c}
      & \rnode{X}{%
        \psset{xunit=1ex}
        \psset{yunit=1ex}
        \pspicture(0,0)(1,1)
        \pscircle[fillstyle=solid,fillcolor=lightgray](0.5,0.5){0.5}
        \rput(0.5,0.5){$\C$}
        \endpspicture} & \\[6ex]
      \rnode{Y1}{%
        \psset{xunit=1ex}
        \psset{yunit=1ex}
        \pspicture(0,0)(1,1)
        \pscircle[fillstyle=solid,fillcolor=lightgray](0.5,0.5){0.5}
        \rput(0.5,0.5){$\mathcal{A}$}
        \endpspicture} & & \rnode{Y2}{%
        \psset{xunit=1ex}
        \psset{yunit=1ex}
        \pspicture(0,0)(1,1)
        \pscircle[fillstyle=solid,fillcolor=lightgray](0.5,0.5){0.5}
        \rput(0.5,0.5){$\mathcal{B}$}
        \endpspicture} \\[8ex]
      \rnode{Z1}{%
        \psset{xunit=1ex}
        \psset{yunit=1ex}
        \pspicture(0,0)(1,1)
        \pscircle[fillstyle=solid,fillcolor=lightgray](0.5,0.5){0.5}
        \rput(0.5,0.5){$\mathcal{B}^{\prime}$}
        \endpspicture} & & \rnode{Z2}{%
        \psset{xunit=1ex}
        \psset{yunit=1ex}
        \pspicture(0,0)(1,1)
        \pscircle[fillstyle=solid,fillcolor=lightgray](0.5,0.5){0.5}
        \rput(0.5,0.5){$\mathcal{A}^{\prime}$}
        \endpspicture}
    \end{array}
    \psset{nodesep=12pt,linewidth=.1ex}
    \ncline[nodesep=8pt] {Y1}{X} \naput{i}
    \ncline[nodesep=8pt] {X}{Y2} \naput{i}
    \ncline {Y1}{Z1} \nbput{i}
    \ncline {Y2}{Z2} \naput{i}
    \ncline[nodesep=14pt,linestyle=dashed]{<->} {Y1}{Y2} \nbput{\theta_i^{\C}}
    \ncline[nodesep=14pt,linestyle=dashed]{<->} {Z1}{Z2} \nbput{\theta_i^{\C}}
  \end{displaymath}
  \caption{\label{fig:theta} An illustration of the involution $\theta_i^{\C}$ where $\mathcal{A} \cong \mathcal{A}^{\prime}$ and $\mathcal{B} \cong \mathcal{B}^{\prime}$.}
\end{figure}

\begin{definition}
  Let $\mathcal{H}$ be a connected component of the $(\ipo,\ipo)$-restriction of $\G$ and let $\C$ be a connected component of the $(i,i)$-restriction of $\mathcal{H}$. Let $E_i(\C)$ be the union of all connected components $\mathcal{B}$ of the $(i,i)$-restriction of $\mathcal{H}$ such that $\mathcal{B} \neq \C$ and $\{w,u\} \in E_{i}$ for some $w \in \C$ and some $u \in \mathcal{B}$. For each connected component $\mathcal{B}^{\prime}$ of the $(i,i)$-restriction of $\mathcal{H}$, let $\phi_{\mathcal{B}^{\prime}}$ be the (unique) isomorphism from $\mathcal{B}^{\prime}$ to some (unique) $\mathcal{B} \subset E_i(\C)$. Define the involution $\theta_i^{\C}$ by
  \begin{equation}
    \theta_i^{\C}(u) = \left\{ \begin{array}{rl}
        \phi_{\mathcal{B}^{\prime}}(E_{i}(u))
        & \mbox{if} \ u \in E_i(\C) \ \mbox{and} \ E_i(u) \in \mathcal{B}^{\prime}, \\  
        E_i(\phi_{\mathcal{B}^{\prime}}(u))
        & \mbox{if} \ E_i(u) \in E_i(\C) \ \mbox{and} \ u \in \mathcal{B}^{\prime}, \\  
        E_{i}(u) & \mbox{otherwise.}
      \end{array} \right.
    \label{eqn:theta}
  \end{equation}
  Define $E_i'$ to be the set of pairs $\{v,\theta_i^{\C}(v)\}$ for all vertices $v$ admitting an $i$-neighbor. Define a signed, colored graph $\theta_i^{\C}(\G)$ by
  \begin{equation}
    \theta_i^{\C}(\G) = (V, \sigma, E_2 \cup\cdots\cup E_{\imo} \cup E'_i \cup E_{\ipo} \cup\cdots\cup E_{\nmo}). 
  \end{equation}
  \label{defn:theta}
\end{definition}

In order to ensure that axiom $3$ is maintained, one must be careful in the choice of $\C$.

\begin{definition}
  Let $\mathcal{H}$ be a connected component of the $(\ipo,\ipo)$-restriction of $\G$, and let $\lambda$ be the unique partition of $\ipo$ such that there is a surjective morphism from $\mathcal{H}$ to $\G_{\lambda}$. A connected component $\C$ of the $(i,i)$-restriction of $\mathcal{H}$ is \emph{negatively dominant} if one of the following holds:
  \begin{itemize}
  \item $\sigma_{\ipo}(\C) \equiv -1$ and for every connected component $\mathcal{B}$ of the $(i,i)$-restriction of $\mathcal{H}$ such that $\sigma_{\ipo}(\mathcal{B}) \equiv -1$, if $\C \cong \G_{\mu}$ and $\mathcal{B} \cong \G_{\nu}$ for $\mu,\nu \subset \lambda$, then $\mu \geq \nu$ in dominance order;
  \item $\sigma_{\ipo}(\mathcal{B}) \equiv +1$ for every connected component $\mathcal{B}$ of the $(i,i)$-restriction of $\mathcal{H}$, and if $\C \cong \G_{\mu}$ and $\mathcal{B} \cong \G_{\nu}$ for $\mu,\nu \subset \lambda$, then $\mu \geq \nu$ in dominance order.
  \end{itemize}
  \label{defn:hindge}
\end{definition}

We now show that for a suitably chosen $\C$, the map $\theta_i^{\C}$ is well-defined and brings the graph closer to satisfying dual equivalence axiom $6$.

\begin{proposition}
  Let $\G$ be a locally Schur positive graph of type $(n,N)$ satisfying dual equivalence axioms $1,2,3$ and $5$ such that the $(i,N)$-restriction is a dual equivalence graph and the $(\ipo,N)$-restriction satisfies dual equivalence axiom $4$. For $\C$ a negatively dominant $(i,i)$-restricted component of $\G$, the graph $\theta_i^{\C}(\G)$ also satisfies dual equivalence axioms $1,2,3$ and $5$ and the $(\ipo,N)$-restriction of $\theta_i^{\C}(\G)$ also satisfies dual equivalence axiom $4$. Moreover, if $\mathcal{H}$ is the connected component of the $(\ipo,N)$-restriction of $\G$ containing $\C$, then $\theta_i^{\C}(\mathcal{H})$ has two connected components.
  \label{prop:theta-reasonable}
\end{proposition}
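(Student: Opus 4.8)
The plan is to verify each conclusion in turn, tracking carefully what $\theta_i^{\C}$ does and does not change. First I would record that $\theta_i^{\C}$ only redefines $E_i$-edges and leaves $E_j$ for $j\neq i$ untouched, and that it is an involution on the set of vertices admitting an $i$-neighbor (this is essentially built into the definition via the isomorphisms $\phi_{\mathcal{B}'}$, since $\mathcal{A}\cong\mathcal{A}'$ and $\mathcal{B}\cong\mathcal{B}'$ get swapped as in Figure~\ref{fig:theta}, while vertices with $E_i(u)\notin E_i(\C)$ keep their $i$-edge). Because the edge set is only modified within $\mathcal{H}$ and only among edges joining $\C$ to $E_i(\C)$, every conclusion can be checked locally inside $\mathcal{H}$. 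The key structural fact I would extract from Definition~\ref{defn:theta} is that $\theta_i^{\C}$ replaces the two ``bouquets'' of $i$-edges at $\C$ (namely $\C$--$\mathcal{A}$--$\mathcal{A}'$ and the parallel $\C$--$\mathcal{B}$--$\mathcal{B}'$ patterns) by the reconnected versions, and that this is exactly the operation that splits one connected component into two.

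For axioms $1,2,3$: axiom $1$ is about which vertices admit an $i$-neighbor, and since $\theta_i^{\C}$ is an involution on precisely that vertex set and preserves signatures (it never moves a vertex between signature classes — $\phi_{\mathcal{B}'}$ is sign-preserving), the set of $i$-edge-endpoints and hence axiom $1$ is preserved; uniqueness of $E_i(w)$ is preserved since $\theta_i^{\C}$ is a well-defined involution. Axiom $2$ for a new edge $\{v,\theta_i^{\C}(v)\}$ follows because $v$ and $\theta_i^{\C}(v)$ have the same signatures as $v$ and its old $i$-neighbor $E_i(v)$ respectively (signature-preservation of $\phi_{\mathcal{B}'}$ again), so the signature-reversal pattern in positions $\imo,i$ and the agreement outside $[\imt,\ipo]$ carry over from $\G$. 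Axiom $3$ is where the choice of $\C$ being \emph{negatively dominant} enters: using the reformulation from the Proposition preceding Definition~\ref{def:LSP} (axiom $3$ $\Leftrightarrow$ at least one endpoint of every $i$-edge has an $i\pm1$-neighbor), I would argue that the new $i$-edges join a component $\mathcal{A}'$ (or $\mathcal{B}'$) to a component isomorphic to it sitting inside $E_i(\C)$, and negatively dominant-ness guarantees the $\sigma_{\imo}$-adjacency needed on the correct side — this is exactly the sort of dominance bookkeeping the definition was tailored for. Axiom $5$ involves edges of colors differing by at least $3$; since $\theta_i^{\C}$ only touches color $i$ and, by the local structure, only rearranges $i$-edges in a way that commutes with the untouched far-away colors (the underlying $(i\!-\!2,\ldots,i)$-restricted pieces $\mathcal{A},\mathcal{B}$ are carried isomorphically), axiom $5$ is maintained; one chases the standard square.

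For the $(\ipo,N)$-restriction satisfying axiom $4$: this restriction sees only colors up to $i$, so one must check that the connected components of $(V,\sigma,E_{\imt}\cup E_{\imo}\cup E_i')$ still all appear in Figure~\ref{fig:lambda5}. The point is that within $\mathcal{H}$ the $(i,i)$-restricted components $\mathcal{A},\mathcal{A}',\mathcal{B},\mathcal{B}'$ are each isomorphic to standard dual equivalence graphs (by Theorem~\ref{thm:cover}), so locally $\theta_i^{\C}$ is swapping two isomorphic pieces; since $\G$ already had $\LSF_4$ and $\LSF_5$ on this restriction, and the reconnection is between isomorphic components, the three-color local generating functions are unchanged, hence still Schur functions — so $\LSF_4,\LSF_5$, i.e.\ axiom $4$, persist. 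Finally, $\theta_i^{\C}(\mathcal{H})$ having exactly two components: before applying $\theta_i^{\C}$, $\mathcal{H}$ is connected and $\C$ is joined by $i$-edges to $E_i(\C)$; after the swap, following the new edges, $\C$ together with the ``$\mathcal{A}$-side'' forms one component and the ``$\mathcal{B}$-side'' forms the other, because the swap in Figure~\ref{fig:theta} precisely cuts the two crossing $i$-edges at the top of $\C$ and re-glues them into two separate bouquets. I would make this precise by exhibiting the bipartition of $V(\mathcal{H})$ and checking no $E_j$-edge ($j\le i$) crosses it: $E_j$-edges for $j<i$ stay within a single $(i,i)$-component, which lands entirely on one side, and the new $E_i'$-edges respect the bipartition by construction.

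\textbf{Main obstacle.} The crux is the axiom $3$ verification, i.e.\ showing that the negatively-dominant choice of $\C$ forces every new $i$-edge to have an endpoint with an $(i\pm1)$-neighbor. This requires understanding, via the morphism to $\G_\lambda$ of Theorem~\ref{thm:cover}, exactly which of the reconnected components carries the requisite $\sigma_{\imo}$-sign-reversal, and ruling out the ``bad'' configuration where both endpoints of some $E_i'$-edge are isolated in $E_{\imo}\cup E_i'$ — the dominance hypothesis is precisely what is engineered to exclude this, so the work is in translating the dominance-order condition on $\mu,\nu\subset\lambda$ into the local edge-existence statement. The claim that $\theta_i^{\C}(\mathcal{H})$ has two components rather than remaining connected or splitting further is a secondary subtlety, depending on the fact that $\C$ is joined to $E_i(\C)$ by the specific doubled-edge pattern of Figure~\ref{fig:theta}, which itself is a consequence of axiom $4$ on the restriction.
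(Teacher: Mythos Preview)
Your overall plan matches the paper's: axioms $1,2,5$, axiom $4$ on the $(\ipo,N)$-restriction, and the two-component claim are all dispatched quickly (the paper calls them ``obvious from the definition'' or consequences of swapping between isomorphic $(i,i)$-components), and axiom $3$ is identified as the only place requiring the negatively-dominant hypothesis. So the architecture is right.

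However, your axiom $3$ sketch contains two concrete confusions that would derail the argument as written. First, the relevant signature index is $\sigma_{\ipo}$, not $\sigma_{\imo}$. The $\imt$-side of axiom $3$ is automatic: the isomorphisms $\phi_{\mathcal{B}'}$ preserve $\sigma_1,\ldots,\sigma_i$, so the new edge has the same $(\sigma_{\imt},\sigma_{\imo},\sigma_i)$ pattern at its endpoints as the old edge did. What can break is the $\ipo$-side, because $\sigma_{\ipo}$ is constant on each $(i,i)$-component but may differ between $\mathcal{B}$ and $\mathcal{B}'$. Second, the new $i$-edges do \emph{not} join a component to an isomorphic copy of itself: they join $\mathcal{A}$ to $\mathcal{B}$ (both in $E_i(\C)$) and $\mathcal{A}'$ to $\mathcal{B}'$, where generically $\mathcal{A}\not\cong\mathcal{B}$.

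The paper's actual argument for axiom $3$ is a short dominance contradiction that you have not supplied. One first shows (using the analogue of Lemma~3.11 of \cite{Ass15}) that for an $i$-edge between $a\in\mathcal{A}\cong\G_\alpha$ and $b\in\mathcal{B}\cong\G_\beta$ with $\alpha>\beta$, axiom $3$ fails precisely when $\sigma(\mathcal{A})_{\ipo}=-1$ and $\sigma(\mathcal{B})_{\ipo}=+1$. Now suppose this happens for a new edge with $\mathcal{A},\mathcal{B}\subset E_i(\C)$. Negative dominance gives $\sigma(\C)_{\ipo}=-1$ and $\mu\ge\alpha$ where $\C\cong\G_\mu$. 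Since axiom $3$ held in $\G$ for the old edge between $\C$ and $\mathcal{B}$, the same characterization forces $\beta\ge\mu$. Hence $\beta\ge\mu\ge\alpha$, contradicting $\alpha>\beta$. This is the step your proposal gestures at but does not carry out, and it only works once you have the correct index ($\ipo$) and the correct description of which components the new edges connect.
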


\begin{proof}
  The assertion that $\mathcal{H}$ has two connected components is obvious from the definition of $\theta_i^{\C}$. Axioms $1,2$ and $5$ follow from the definition of $\theta_i^{\C}$, and axiom $4$ follows from the fact that edges are swapped only between isomorphic components, so the local structure of the $E_{\imt} \cup E_{\imo} \cup E_i$ remains unchanged by $\theta_i^{\C}$. Therefore we need only address axiom $3$.

  Let $\mathcal{A}$ and $\mathcal{B}$ be two connected components of the $(i,i)$-restriction of $\mathcal{H}$, and suppose $\mathcal{A}  \cong \G_{\alpha}$ and $\mathcal{B} \cong \G_{\beta}$ with $\alpha > \beta$ in dominance order. Let $a \in \mathcal{A}$ and $b \in \mathcal{B}$ and suppose $\{a,b\} \in E_i$. Similar to the proof of \cite{Ass15}(Lemma~3.11), we have $\sigma(w)_{\imo,i} =+-$ and $\sigma(v)_{\imo,i} = -+$. Therefore axiom $3$ fails for this edge if and only if $\sigma(w)_{\imo,i,\ipo} =+--$ and $\sigma(v)_{\imo,i,\ipo} = -++$ if and only if $\sigma(\mathcal{A})_{\ipo} = -1$ and $\sigma(\mathcal{B})_{\ipo} = +1$. With this characterization in mind, suppose now that $\mathcal{A},\mathcal{B}$ and $\mathcal{B}'$ are restricted components of $\mathcal{H}$, with $\mathcal{A},\mathcal{B} \in E_i(\C)$, $\mathcal{B}' \cong \mathcal{B}$, and $a \in \mathcal{A}, b \in \mathcal{B}$ and $b' \in \mathcal{B}'$ such that $\{a,b'\} \in E_i$ and $b = \theta_i^{\C}(a)$. As before, let $\mathcal{A} \cong \G_{\alpha}$ and $\mathcal{B}' \cong \mathcal{B} \cong \G_{\beta}$. Suppose $\sigma(\mathcal{A})_{\ipo} = -1$ and $\sigma(\mathcal{B})_{\ipo} = +1$. Let $\C \cong \G_{\mu}$.  Then the choice of $\C$ as negatively dominant ensures that $\sigma(\C)_{\ipo} = -1$ and that $\mu > \alpha$. Further, since axiom $3$ holds for $\G$, the preceding characterization ensures that $\beta > \mu$. Therefore $\beta > \alpha$ and axiom $3$ holds for $\theta_i^{\C}(\G)$ as well.
\end{proof}

We must consider the local Schur positivity, and show that it is maintained when applying $\theta_i^{\C}$. This is not difficult to show when the $(\iph,N)$-restriction of $\G$ satisfies dual equivalence axiom $4$.

\begin{theorem}
  Let $\G$ be a locally Schur positive graph of type $(n,N)$ such that the $(i,N)$-restriction is a dual equivalence graph and the $(\iph,N)$-restriction satisfies dual equivalence axiom $4$. For $\C$ a negatively dominant restricted component, $\theta_i^{\C}(\G)$ is locally Schur positive.
  \label{thm:theta-LSP}
\end{theorem}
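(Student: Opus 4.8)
The plan is to reduce the statement to checking $\LSP_m$ for $m=4,5,6$ separately, using the fact established in Proposition~\ref{prop:theta-reasonable} that $\theta_i^{\C}(\G)$ retains axioms $1,2,3,5$ and that the $(\ipo,N)$-restriction retains axiom $4$. Since $\theta_i^{\C}$ only alters $E_i$ edges, it suffices to show that the restricted degree-$m$ generating functions of the components of $(V,\sigma,E_{j-(m-3)}\cup\cdots\cup E_j)$ are unchanged for those $j$ with $j\neq i$ that would be affected, and Schur positive for $j=i$. For $j<i$, no $E_i$ edge appears in the restriction, so those components are literally unchanged; for $j>i+1$, the relevant restrictions sit inside the $(\ipo,N)$-restriction (for $m=4$), or inside the $(\iph,N)$-restriction (for $m=5,6$), which by hypothesis satisfies axiom $4$, hence $\LSF_5$, hence is already a disjoint union of pieces from Figures~\ref{fig:lambda4} and~\ref{fig:lambda5}, and these are preserved since $\theta_i^{\C}$ swaps edges only between isomorphic $(i,i)$-components. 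So the genuine content is at degrees touching color $i$ itself: $\LSP_4$ for $E_{\imo}\cup E_i$, $\LSP_5$ for $E_{\imt}\cup E_{\imo}\cup E_i$, and $\LSP_6$ for $E_2\cup\cdots\cup E_i$.

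For the degree-$4$ and degree-$5$ conditions at color $i$, I would argue that $\theta_i^{\C}$ does not change the isomorphism type of any connected component of $E_{\imo}\cup E_i$ or of $E_{\imt}\cup E_{\imo}\cup E_i$. The key observation is that by construction $\theta_i^{\C}$ moves an $i$-edge from $\mathcal{B}'$ to the isomorphic $\mathcal{B}$ via $\phi_{\mathcal{B}'}$, and since $\G$ is a dual equivalence graph in colors $\le i$ locally (the $(i,N)$-restriction is a full DEG), the lower-color neighborhoods around the two endpoints of an $i$-edge are governed by axioms $2$ and $3$ and are therefore determined by the signatures $\sigma_{\imt},\sigma_{\imo},\sigma_i$ at the endpoints; because $\phi_{\mathcal{B}'}$ is sign-preserving, these are identical before and after. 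Hence each $E_{\imt}\cup E_{\imo}\cup E_i$ component of $\theta_i^{\C}(\G)$ is isomorphic as a signed colored graph to one of $\G$, which by $\LSP_5$ (indeed $\LSF_5$) of $\G$ already has the right form; this gives $\LSP_4$ and $\LSP_5$ of $\theta_i^{\C}(\G)$, and in fact preserves $\LSF_4$, $\LSF_5$.

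For $\LSP_6$ at color $i$, I would use Proposition~\ref{prop:LSP6} together with the structure already in hand. Let $\mathcal{H}$ be the $(\ipo,N)$-restricted component containing $\C$; after applying $\theta_i^{\C}$ it splits into two components $\mathcal{H}_1,\mathcal{H}_2$. By Proposition~\ref{prop:theta-reasonable} the $(\ipo,N)$-restriction of $\theta_i^{\C}(\G)$ still satisfies axiom $4$, so each $\mathcal{H}_k$ has $\LSF_3,\LSF_4,\LSF_5$ once we intersect back to a $(6,N)$-window; by the hypothesis that the $(\iph,N)$-restriction of $\G$ satisfies axiom $4$, the relevant degree-$6$ restricted components (which live inside $(V,\sigma,E_2\cup\cdots\cup E_{\iph})$, hence inside structure controlled by axiom $4$ at the top colors) are exactly those of $\G$ reshuffled along isomorphic $(i,i)$-pieces. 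Applying Proposition~\ref{prop:LSP6} to each degree-$6$ restricted component of $\theta_i^{\C}(\G)$ shows its generating function is either a single $s_\lambda$ or $k\,s_{(3,2,1)}$ — in either case Schur positive — which is exactly $\LSP_6$. I expect the main obstacle to be the bookkeeping in the degree-$6$ case: one must be sure that the degree-$6$ restricted components one needs to analyze really do satisfy $\LSF_3,\LSF_4,\LSF_5$ after the surgery, and this is precisely where the hypothesis that the \emph{$(\iph,N)$-restriction} (rather than just the $(\ipo,N)$-restriction) satisfies axiom $4$ is used, matching the remark in the text that this makes the argument ``not difficult.'' The subtlety is making the reduction to Proposition~\ref{prop:LSP6} airtight rather than the positivity itself, which is then automatic.
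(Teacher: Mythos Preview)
Your proposal has the direction of the difficulty backwards. You identify the ``genuine content'' as the windows ending at color $i$ --- namely $E_{\imo}\cup E_i$, $E_{\imt}\cup E_{\imo}\cup E_i$, and $E_{\imh}\cup\cdots\cup E_i$ --- but those are precisely the easy cases, already absorbed by Proposition~\ref{prop:theta-reasonable} (the $(\ipo,N)$-restriction of $\theta_i^{\C}(\G)$ retains axiom~$4$, hence $\LSF_4$ and $\LSF_5$ for all $j\le i$, exactly because $\theta_i^{\C}$ swaps edges between isomorphic $(i,i)$-components). The actual work lies in the windows that contain $E_i$ together with \emph{higher} colors: $E_i\cup E_{\ipo}$ for $\LSP_4$, and $E_{\imo}\cup E_i\cup E_{\ipo}$ and $E_i\cup E_{\ipo}\cup E_{\ipt}$ for $\LSP_5$. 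Your trichotomy $j<i$, $j=i$, $j>i+1$ simply omits $j=i+1$, and your claim that for $j>i+1$ the restrictions ``sit inside the $(\iph,N)$-restriction'' and are ``preserved since $\theta_i^{\C}$ swaps edges only between isomorphic $(i,i)$-components'' does not help when $j=i+2$: the window $E_i\cup E_{\ipo}\cup E_{\ipt}$ still contains $E_i$, and the $(i,i)$-isomorphism $\phi_{\mathcal{B}'}$ controls only $\sigma_1,\ldots,\sigma_{\imo}$, not the placement of $E_{\ipo}$- or $E_{\ipt}$-edges.

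This is where the paper's proof actually lives. It first disposes of the case where one of the swapped $i$-edges is also an $\ipo$-edge (then the two components merge under $\theta_i^{\C}$ and positivity is preserved by addition), and otherwise reduces to the configuration of Figure~\ref{fig:theta-swap}. From there it does a genuine case analysis: for $E_{\imo}\cup E_i\cup E_{\ipo}$ it uses the preservation of axiom~$3$ --- this is exactly where the hypothesis that $\C$ is \emph{negatively dominant} enters --- together with signature chasing to show the two degree-$5$ generating functions on either side of the swap coincide; for $E_i\cup E_{\ipo}\cup E_{\ipt}$ it uses $\sigma_{\ipt}$-constancy (axiom~$2$) to reduce to the $s_{(3,1,1)}$-merge case versus the matching $s_{(3,2)}$/$s_{(2,2,1)}$ case. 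Your sketch has no analogue of this argument, and your appeal to Proposition~\ref{prop:LSP6} for $\LSP_6$ does not substitute for it, since that proposition presupposes $\LSF_4$ and $\LSF_5$ on the degree-$6$ window, which is precisely what is in question for windows crossing color $i$ upward.
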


\begin{proof}
  Let $\{w,x\}, \{u,v\} \in E_i(\G)$ with $\theta_i^{\C}(w) = u$ and $\theta_i^{\C}(x) = v$. By the definition of $\theta_i^{\C}$, there is an isomorphism from the connected component of the $(i,N)$-restriction of $\G$ containing $w$ to the connected component of the $(i,N)$-restriction of $\G$ containing $v$ that sends $w$ to $v$, and similarly for the pair $x$ and $u$. Furthermore, by axiom $2$, $\sigma(u)_{\ipt} = \sigma(w)_{\ipt} = \sigma(x)_{\ipt} = \sigma(v)_{\ipt}$.

  By Proposition~\ref{prop:theta-reasonable}, if local Schur positivity fails, then it must do so for some connected component of $E_i \cup E_{\ipo}$, $E_{\imo} \cup E_i \cup E_{\ipo}$, or $E_i \cup E_{\ipo} \cup E_{\ipt}$. If either $\{w,x\} \in E_{\ipo}(\G)$ or $\{u,v\} \in E_{\ipo}(\G)$, then applying $\theta_i^{\C}$ results in the two components being joined for all three cases since there $\ipo$-edges do not change. In this case, local Schur positivity is preserved since components are combined. Therefore we may assume this is not the case. For $E_i \cup E_{\ipo}$, since these components appear in Figure~\ref{fig:lambda4}, each chain must have one $E_i$ edge and one $E_{\ipo}$ edge. Therefore if some component is not locally Schur positive in $\theta_i^{\C}(\G)$, then it must be that one chain has two $E_{\ipo}$ edges while the other has none, violating axiom $3$ contradicting Proposition~\ref{prop:theta-reasonable}. Thus $\theta_i^{\C}(\G)$ maintains $\LSP_4$. By symmetry, we may assume $u$ and $x$ admit $\ipo$-neighbors, and $w$ and $v$ do not as shown in Figure~\ref{fig:theta-swap}. Note that by axioms $3$ and $4$, we have $\sigma(w)_{\ipo} = \sigma(x)_{\ipo} = \sigma(u)_{\ipo} = \sigma(v)_{\ipo}$.

  \begin{figure}[ht]
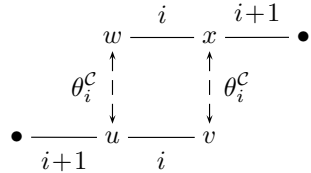

    \begin{displaymath}
      \begin{array}{\cs{7}\cs{7}\cs{7}c}
        & \rnode{a}{w} & \rnode{b}{x} & \rnode{B}{\B} \\[6ex]
        \rnode{C}{\B} & \rnode{c}{u} & \rnode{d}{v} &
      \end{array}
      \psset{nodesep=3pt,linewidth=.1ex}
      \ncline {a}{b} \naput{i}
      \ncline {b}{B} \naput{\ipo}
      \ncline {c}{d} \nbput{i}
      \ncline {C}{c} \nbput{\ipo}
      \ncline[linestyle=dashed] {<->}{a}{c} \nbput{\theta_i^{\C}}
      \ncline[linestyle=dashed] {<->}{b}{d} \naput{\theta_i^{\C}}
    \end{displaymath}
    \caption{\label{fig:theta-swap} Connected components of $E_i \cup E_{\ipo}$ that $\theta_i^{\C}$ might alter.}
  \end{figure}

  Consider now components of $E_{\imo} \cup E_i \cup E_{\ipo}$. Neither of the $i$-edges between $w$ and $x$ nor between $u$ and $v$ can be double edges with $\imo$ since the vertices lie on difference components of the $\ipo$-restriction. Therefore the restricted degree $5$ generating functions cannot be $s_{(3,2)}$ or $s_{(2,2,1)}$. Therefore the restricted degree $5$ generating function for the top component is $s_{(3,1,1)}$ if and only if $x$ has an $\imo$-neighbor, and restricted degree $5$ generating function for the bottom component is $s_{(3,1,1)}$ if and only if $u$ has an $\imo$-neighbor. By axiom $3$, exactly one of $w$ and $x$ has an $\imo$-neighbor and exactly one of $u$ and $v$ has an $\imo$-neighbor, and by Proposition~\ref{prop:theta-reasonable}, $\theta_i^{\C}$ preserves axiom $3$, and so exactly one of $w$ and $u$ has an $\imo$-neighbor. Combining these, the restricted degree $5$ generating function for the top component is $s_{(3,1,1)}$ if and only if the restricted degree $5$ generating function for the bottom component is $s_{(3,1,1)}$. When this is the case, both resulting components after applying $\theta_i^{\C}$ will have generating function $s_{(3,1,1)}$. When this is not the case, by Proposition~\ref{prop:theta-reasonable}, $\theta_i^{\C}$ preserves axiom $2$, and so both components must have generating function $s_{(4,1)}$ or $s_{(2,1,1,1)}$, and so both resulting components after applying $\theta_i^{\C}$ will again have the same generating function. Thus connected components of $E_{\imo} \cup E_i \cup E_{\ipo}$ remain locally Schur positive.
 
  Consider now components of $E_i \cup E_{\ipo} \cup E_{\ipt}$. All or none of $u,v,w,x$ admit an $\ipt$-neighbor. None do if and only if both of the two components have restricted degree $5$ generating function $s_{(4,1)}$ or $s_{(2,1,1,1)}$, in which case both resulting components after applying $\theta_i^{\C}$ will still have that generating function. Therefore assume all four vertices have an $\ipt$-neighbor. If one of the components has restricted degree $5$ generating function $s_{(3,1,1)}$, then applying $\theta_i^{\C}$ results in the two components being joined, and so local Schur positivity is preserved since components are combined and generating functions are added. In the alternative case, both of the two components have restricted degree $5$ generating function $s_{(3,2)}$ or $s_{(2,2,1)}$, in which case both resulting components after applying $\theta_i^{\C}$ will still have the same generating function. Thus connected components of $E_i \cup E_{\ipo} \cup E_{\ipt}$ remain locally Schur positive, and so $\theta_i^{\C}(\G)$ has $\LSP_5$.
\end{proof}

The conclusion that is missing from Theorem~\ref{thm:theta-LSP} is that the $(\iph,N)$-restriction of the resulting graph still satisfies axiom $4$. As suggested by the proof for the two instances when components are combined, this is not, in general, the case. This fact is the main impediment to establishing Conjecture~\ref{conj:cover}. The transformations defined in the following two sections directly address this issue by resolving axiom $4$.

%
\section{Two color components}
%
\label{sec:LSP4}

In order to determine locally when axiom $4$ fails, we introduce the notion of the $i$-type of a vertex. 

\begin{definition}
  Let $\G$ be a signed, colored graph of type $(n,N)$ satisfying axioms $1, 2, 3$ and $5$. For $i \leq n$ with $i<N$, we say that a vertex $w$ has \emph{$i$-type W} if it has an $\imo$-neighbor and $\sigma(w)_{i} = -\sigma(E_{\imo}(w))_{i}$.
  \label{defn:type-W}
\end{definition}

For $i=2$, no vertex can have $i$-type W since there are no $\imo$-edges. If $\{w,x\} \in E_{\imo}$ and neither has an $i$-edge, then by axiom $1$ they must have signatures $+--$ and $-++$, in which case they violate axiom $3$. Therefore when we refer to the $i$-type of a vertex as W or not, we implicitly assume that the vertex has an $i$-edge and that $i \geq 3$.

In a dual equivalence graph, the vertices of $i$-type W are precisely those that are part of a double edge for $E_{\imo}$ and $E_i$. Vertices of $i$-type W are crucial to understanding how $\LSF_4$ can fail in a graph with $\LSP_4$. 

\begin{figure}[ht]
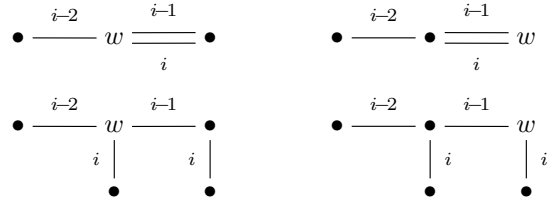

  \begin{displaymath}
    \begin{array}{\cs{7}\cs{7}\cs{10}\cs{7}\cs{7}\cs{7}}
      \rnode{d1}{\bullet} & \rnode{d2}{w} & \rnode{d3}{\bullet} &
      \rnode{d4}{\bullet} & \rnode{d5}{\bullet} & \rnode{d6}{w} \\[5ex]
      \rnode{D1}{\bullet} & \rnode{D2}{w} & \rnode{D3}{\bullet} &
      \rnode{D4}{\bullet} & \rnode{D5}{\bullet} & \rnode{D6}{w} \\[3ex]
      & \rnode{DD2}{\bullet} & \rnode{DD3}{\bullet} &
      & \rnode{DD5}{\bullet} & \rnode{DD6}{\bullet} 
    \end{array}
    \psset{nodesep=3pt,linewidth=.1ex}
    \everypsbox{\scriptstyle}
    \ncline {d1}{d2} \naput{\imt}
    \ncline[offset=2pt] {d2}{d3} \naput{\imo}
    \ncline[offset=2pt] {d3}{d2} \naput{i}
    \ncline {d4}{d5} \naput{\imt}
    \ncline[offset=2pt] {d5}{d6} \naput{\imo}
    \ncline[offset=2pt] {d6}{d5} \naput{i}
    \ncline {D1}{D2} \naput{\imt}
    \ncline {D2}{D3} \naput{\imo}
    \ncline {D2}{DD2} \nbput{i}
    \ncline {D3}{DD3} \nbput{i}
    \ncline {D4}{D5} \naput{\imt}
    \ncline {D5}{D6} \naput{\imo}
    \ncline {D5}{DD5} \naput{i}
    \ncline {D6}{DD6} \naput{i}
  \end{displaymath}
  \caption{\label{fig:type-W} An illustration of vertices $w$ of $i$-type W and neighboring $E_{\imt}$ and $E_{\imo}$ edges.}
\end{figure}

Figure~\ref{fig:type-W} shows the $E_{\imt}$, $E_{\imo}$ and $E_{i}$ edges neighboring a vertex with $i$-type W. The top row contains the possibilities in a dual equivalence graph, while the lower row gives the additional possibilities in the more general setting when axiom $4$ does not hold.

\begin{remark}
  By axioms $1,2$ and $5$, edges $E_j$ with $j\leq\imf$ or $j \geq i\!+\!2$ do not change whether or not a vertex is $i$-type W, i.e. $w$ is $i$-type W if and only if $E_j(w)$ is $i$-type W. In contrast, $E_{\imh}$ often changes whether or not a vertex is $i$-type W, as does $E_{\ipo}$, so these cases require some care. 
\end{remark}

Note that $w$ has $i$-type W if and only if $E_{\imo}(w)$ has $i$-type W, so in some sense $i$-type W is a property of $\imo$-edges rather than of vertices. It is helpful to have a dual property for $i$-edges.

\begin{definition}
  Let $\G$ be a signed, colored graph of type $(n,N)$ satisfying axioms $1, 2, 3$ and $5$. For $i < n$, a vertex $w$ has a \emph{flat $i$-edge} if $w$ has an $i$-edge and $\sigma(w)_{\imt} = \sigma(E_i(w))_{\imt}$.
  \label{defn:flat}
\end{definition}

In a signed, colored graph satisfying axioms $1, 2, 3$ and $5$, flat $i$-edges relate to $i$-type W in the following way: a vertex $w$ has a flat $i$-edge if and only if at most one of $w$ and $E_i(w)$ has $i$-type W. By axiom $1$, a vertex $w$ has a flat $i$-edge if and only if exactly one of $w$ and $E_i(w)$ has an $\imo$-neighbor. 

\begin{definition}
  Let $\G$ be a signed, colored graph of type $(n,N)$ satisfying axioms $1, 2, 3$ and $5$. For $i < n$, a \emph{non-flat $i$-chain} is a sequence $(w_1,w_2,\ldots,w_{2h-1},w_{2h})$ of distinct vertices such that
  \begin{displaymath}
    w_{2j-1} = E_i ( w_{2j} ) 
    \hspace{2em} \mbox{and} \hspace{2em}
    w_{2j+1} = E_{\imo} ( w_{2j} ) .
  \end{displaymath}
  \label{defn:nonflat-chain}
\end{definition}

\begin{figure}[ht]
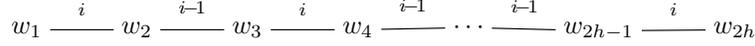

  \begin{displaymath}
    \begin{array}{\cs{7}\cs{7}\cs{7}\cs{7}\cs{7}\cs{7}c}
      \rnode{d1}{w_{1}} & \rnode{d2}{w_{2}} & \rnode{d3}{w_{3}} & \rnode{d4}{w_{4}} & \rnode{c}{\cdots} & \rnode{d5}{w_{2h-1}} & \rnode{d6}{w_{2h}} 
    \end{array}
    \psset{nodesep=3pt,linewidth=.1ex}
    \everypsbox{\scriptstyle}
    \ncline {d1}{d2} \naput{i}
    \ncline {d2}{d3} \naput{\imo}
    \ncline {d3}{d4} \naput{i}
    \ncline {d4}{c} \naput{\imo}
    \ncline {c}{d5} \naput{\imo}
    \ncline {d5}{d6} \naput{i}
  \end{displaymath}
  \caption{\label{fig:nonflat-chain} An illustration of a non-flat $i$-chain.}
\end{figure}

Implicitly, every vertex on a non-flat $i$-chain has an $i$-neighbor, and every vertex except, perhaps, the first and last has an $\imo$-neighbor. In particular, each $i$-edge of a non-flat $i$-chain is non-flat, except, perhaps, the first or last, and $w_j$ has $i$-type W for $1<j<2h$.

In a dual equivalence graph, every non-flat $i$-chain has length $2$. Define $W_i(\G)$ to be the set of vertices that lie on a non-flat $i$-chain of length greater than $2$, i.e.
\begin{equation}
  W_i(\G) \ = \ \left\{ w \in V \ | \ w = w_{j} \mbox{ on a non-flat $i$-chain of length $2h$ with } 1<j<2h \right\}.
\label{eqn:W}
\end{equation}
Equivalently, $W_i$ is the set of vertices $w$ for which $w$ has $i$-type W and $E_{\imo}(w) \neq E_i(w)$.

\begin{proposition}
  Let $\G$ be a signed, colored graph of type $(n,n)$ satisfying dual equivalence axioms $1,2,3$ and $5$ that has $\LSP_4$. Then $\G$ has $\LSF_4$ if and only if $W_i(\G)$ is empty for all $1 < i < n$.
  \label{prop:empty-W}
\end{proposition}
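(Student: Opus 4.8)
The plan is to argue both implications by analyzing how a non-flat $i$-chain of length greater than $2$ forces the restricted degree $4$ generating function on a connected component of $E_{\imo}\cup E_i$ to be a non-trivial sum of Schur functions, and conversely that its absence forces each such component to have the shape of one of the allowed pieces in Figure~\ref{fig:lambda4}. Throughout I would work with a fixed $i$ with $1<i<n$ and with connected components $\C$ of the two-color restriction $(V,\sigma,E_{\imo}\cup E_i)$. By axiom $1$ (equivalently $\LSF_3$) every vertex has at most one $\imo$-edge and at most one $i$-edge, so each such $\C$ is either a single vertex, a path, or an even cycle whose edges alternate between color $\imo$ and color $i$. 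I would first record, exactly as in the proof of Proposition~\ref{prop:LSP4} but now in colors $\imt,\imo,i$, the dictionary between $\sigma(w)_{\imt,\imo,i}$ and which of the two colors is incident to $w$: a vertex with both an $\imo$- and an $i$-neighbor has $\sigma(w)_{\imo,i}\in\{+-,-+\}$, a vertex with an $i$-edge but no $\imo$-edge has $\sigma(w)_{\imo,i}\in\{++,--\}$ with the further constraint from axioms $1,2,3$ on $\sigma(w)_{\imt}$ (flatness of that $i$-edge), and similarly for a vertex with an $\imo$-edge but no $i$-edge.

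For the forward direction I would assume $\G$ has $\LSF_4$ and show $W_i(\G)=\varnothing$. Having $\LSF_4$ means every $\C$ as above contributes a single Schur function $s_\mu$ with $\mu\vdash 4$ (in the variables $\sigma(v)_{\imt,\imo,i}$), so by inspection of Figure~\ref{fig:lambda4} the only nontrivial components are a path with exactly two edges (one $\imo$, one $i$) contributing $s_{(3,1)}$ or $s_{(2,1,1)}$, or a $2$-cycle (a double edge) contributing $s_{(2,2)}$. In the two-edge path, the middle vertex $w$ is the unique vertex of $i$-type W, and its $\imo$-neighbor equals its $i$-neighbor is impossible only if the path has three distinct vertices, but then the chain $(w_1,w_2)$ — reading off a non-flat $i$-chain starting at an endpoint — has length exactly $2$; in the $2$-cycle, $E_{\imo}(w)=E_i(w)$ for both vertices, so again no vertex lies in $W_i(\G)$ by the equivalent description at the end of the excerpt. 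Hence $W_i(\G)=\varnothing$ for all $i$.

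For the converse I would assume $W_i(\G)=\varnothing$ for all $i$ and deduce $\LSF_4$ from $\LSP_4$. Fix $\C$; if it is a single vertex or a $2$-cycle its generating function is already $s_{(4)}$, $s_{(1^4)}$, or $s_{(2,2)}$. Otherwise $\C$ is a path or cycle with at least three vertices, so it contains at least one vertex $w$ with both an $\imo$- and an $i$-neighbor, i.e. of $i$-type W; since $W_i(\G)=\varnothing$ we must have $E_{\imo}(w)=E_i(w)$, which forces $\C$ to be exactly a $2$-cycle — a contradiction unless instead $\C$ is a two-edge path whose middle vertex $w$ has $E_{\imo}(w)\neq E_i(w)$ but is still not in $W_i(\G)$ because the non-flat $i$-chain through it has length $2$. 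I would check that this is precisely the case $\C$ is a path with exactly two edges and three distinct vertices, for which axioms $1,2,3$ pin down the signatures to one of the two patterns giving $s_{(3,1)}$ or $s_{(2,1,1)}$. The one remaining scenario to rule out is a path or cycle of length $\geq 3$ all of whose interior $i$-type W vertices satisfy $E_{\imo}=E_i$: I would argue that if $\C$ has a double edge at some vertex then, because edges alternate colors and each vertex has at most one edge of each color, that double edge is a whole connected component, so $\C$ is the $2$-cycle already handled. This shows every $\C$ is one of the pieces of Figure~\ref{fig:lambda4}, i.e. $\G$ has $\LSF_4$.

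\medskip

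\emph{Main obstacle.} The delicate point is the bookkeeping at the ends of a non-flat $i$-chain: the first and last $i$-edges need not be non-flat and the endpoints need not have $\imo$-neighbors, so I must be careful that "$\C$ is not a $2$-cycle and has $\geq 3$ vertices" genuinely produces an $i$-type W vertex $w$ with $E_{\imo}(w)\neq E_i(w)$, hence an element of $W_i(\G)$ — this is exactly where the hypothesis $W_i(\G)=\varnothing$ must be converted into a structural constraint on $\C$, and where axiom $3$ (used to exclude the forbidden signatures $+--,-++$ on a two-edge component, as in the remark after Definition~\ref{defn:type-W}) does the real work together with $\LSP_4$ to force the two distinguished endpoints to contribute to the same Schur function.
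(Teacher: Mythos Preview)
Your overall strategy matches the paper's: classify connected components of $E_{\imo}\cup E_i$ as paths or cycles and show that $W_i=\varnothing$ forces each to appear in Figure~\ref{fig:lambda4}. However, there is a genuine definitional error that breaks the argument.

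You repeatedly identify ``$w$ has both an $\imo$-neighbor and an $i$-neighbor'' with ``$w$ has $i$-type W.'' These are not equivalent. By Definition~\ref{defn:type-W} (and the reformulation in the paper's proof), $w$ has $i$-type W if and only if $w$ has an $\imo$-neighbor \emph{and $E_{\imo}(w)$ also has an $i$-neighbor}. In the three-vertex path $a\,\overset{\imo}{-}\,b\,\overset{i}{-}\,c$, the middle vertex $b$ has both neighbors, but $E_{\imo}(b)=a$ has no $i$-edge, so $b$ is \emph{not} of $i$-type W. Thus your forward-direction claim that ``the middle vertex is the unique vertex of $i$-type W'' is false, and your converse patch (``$w$ is still not in $W_i(\G)$ because the non-flat $i$-chain through it has length $2$'') is incoherent: under the equivalent description $W_i=\{w:w\text{ has $i$-type W and }E_{\imo}(w)\neq E_i(w)\}$, a vertex of $i$-type W with distinct $\imo$- and $i$-neighbors is in $W_i$ regardless of chain length.

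The paper's converse avoids this by splitting on the correct dichotomy. If $u$ has an $i$-edge and \emph{does} have $i$-type W, then $W_i=\varnothing$ forces $E_{\imo}(u)=E_i(u)$, giving the double edge. If $u$ has an $i$-edge and does \emph{not} have $i$-type W, one argues (using $W_i=\varnothing$ again, applied to $E_i(u)$) that $E_i(u)$ cannot have $i$-type W either; together with $\LSP_4$ this forces the $i$-edge to be flat, so exactly one endpoint has an $\imo$-neighbor and the component is a three-vertex path. Your proposal never cleanly isolates this second case, and in particular never rules out a four-vertex path $a\,\overset{\imo}{-}\,b\,\overset{i}{-}\,c\,\overset{\imo}{-}\,d$ in which no vertex has $i$-type W (so $W_i=\varnothing$ imposes nothing); this component has generating function $Q_{-++}+Q_{+-+}+Q_{-+-}+Q_{+--}$, which is not Schur positive, so it is excluded only by the hypothesis $\LSP_4$ --- a step you gesture at in your ``main obstacle'' paragraph but never actually execute.
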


\begin{proof}
  When $\LSF_4$ holds for $\G$, non-flat $i$-chains have length at most $2$, so $W_i(\G)$ is empty. Conversely, suppose $W_i(\G)$ is empty for all $i$. By axioms $1$ and $2$, a vertex $u$ has $i$-type W if and only if both $u$ and $E_{\imo}(u)$ have an $i$-neighbor. In this case, $u$ lies on a non-flat $i$-chain of length at most $2$ if and only if $E_{\imo}(u) = E_i(u)$. Alternatively, if $u$ admits an $i$-edge but does not have $i$-type W, then by the previous analysis neither does $E_i(u)$, and so the $i$-edge is flat. In particular, the connected component of $E_{\imo} \cup E_{i}$ containing $u$ is a path with three vertices. Therefore all connected components of $E_{\imo} \cup E_i$ appear in Figure~\ref{fig:lambda4}. 
\end{proof}

By Proposition~\ref{prop:empty-W}, $\LSF_4$ holds if and only if $W_i$ is empty. We construct a map $\varphi_i^w$ with the goal of reducing the cardinality of $W_i$. This map, illustrated in Figure~\ref{fig:phi}, takes as input an element $w \in W_i(\G)$ and redefines $i$-edges so that $E_{\imo}(w) = E_i(w)$, thereby removing $w$ (and $E_{\imo}(w)$) from $W_i$.

\begin{figure}[ht]
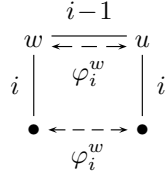

  \begin{displaymath}
    \begin{array}{\cs{8}c}
      \rnode{C}{w} & \rnode{D}{u}  \\[5ex] 
      \rnode{B}{\B} & \rnode{E}{\B}
    \end{array}
    \psset{nodesep=3pt,linewidth=.1ex}
    \ncline[offset=2pt] {C}{D} \naput{\imo}
    \ncline {B}{C} \naput{i}
    \ncline {D}{E} \naput{i}
    \ncline[offset=-2pt,linestyle=dashed] {<->}{C}{D} \nbput{\varphi_i^w}
    \ncline[linestyle=dashed] {<->}{B}{E} \nbput{\varphi_i^w}
  \end{displaymath}
  \caption{\label{fig:phi} An illustration of the involution $\varphi_i^w$, with $w \in W_i(\G)$ and $u = E_{\imo}(w)$.}
\end{figure}

How $\varphi_i^w$ acts on the connected component of $E_{\imo} \cup E_i$ is straightforward given that there is a unique choice that preserves axiom $1$. By axiom $5$, if $\{w,x\} \in E_i$ and $\{x,y\} \in E_j$ for $|i-j| \geq 3$, then $\{w,v\} \in E_j$ and $\{v,y\} \in E_i$ for some $v \in V$. Changing a single $i$-edge may result in a violation of this condition. Therefore when one $i$-edge is changed, all other $i$-edges that subsequently violate axiom $5$ must also be changed, as illustrated in Figure~\ref{fig:axiom5}.

\begin{figure}[ht]
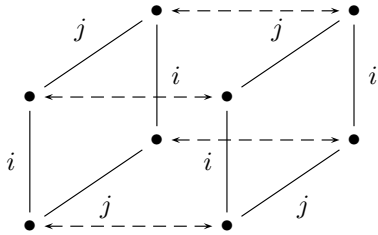

  \begin{displaymath}
    \begin{array}{\cs{5}\cs{5}\cs{5}\cs{5}\cs{5}c}
      & & \rnode{a3}{\bullet} & & & \rnode{a6}{\bullet} \\[1ex]
      & & & & & \\[1ex]
      \rnode{c1}{\bullet} & & & \rnode{c4}{\bullet} & & \\[1ex]
      & & \rnode{d3}{\bullet} & & & \rnode{d6}{\bullet} \\[1ex]
      & & & & & \\[1ex]
      \rnode{f1}{\bullet} & & & \rnode{f4}{\bullet} & & 
    \end{array}
    \psset{nodesep=3pt,linewidth=.1ex}
    \ncline {c1}{f1} \nbput{i}
    \ncline {c4}{f4} \nbput{i}
    \ncline {a3}{d3} \naput{i}
    \ncline {a6}{d6} \naput{i}
    \ncline {c1}{a3} \naput{j}
    \ncline {c4}{a6} \naput{j}
    \ncline {f1}{d3} \nbput{j}
    \ncline {f4}{d6} \nbput{j}
    \ncline[linestyle=dashed]{<->} {a3}{a6}
    \ncline[linestyle=dashed]{<->} {c1}{c4}
    \ncline[linestyle=dashed]{<->} {d3}{d6}
    \ncline[linestyle=dashed]{<->} {f1}{f4}
  \end{displaymath}
  \caption{\label{fig:axiom5} An illustration of how to maintain axiom $5$ when swapping $i$-edges.}
\end{figure}

\begin{definition}
  Let $(V,\sigma,E)$ be a signed, colored graph of type $(n,N)$ satisfying axioms $1,2$ and $5$. For a vertex $w$, the \emph{$i$-package of $w$} is the connected component of $E_2 \cup\cdots\cup E_{\imh} \cup E_{\iph} \cup\cdots\cup E_{\nmo}$ containing $w$.
  \label{defn:package}
\end{definition}

By axiom $2$, both $\sigma_{\imo}$ and $\sigma_{i}$ are constant on $i$-packages. Therefore $w$ admits an $i$-neighbor if and only if every vertex of the $i$-package of $w$ admits an $i$-neighbor. By axiom $5$, knowing $E_i(w)$ determines $E_i$ on the entire $i$-package of $w$. That is to say, $E_i$ may be regarded as an isomorphism between the $i$-packages of $w$ and $E_i(w)$ that preserves $\sigma_1, \ldots, \sigma_{\imh}, \sigma_{\ipt}, \ldots, \sigma_{N-1}$. If the four vertices in Figure~\ref{fig:swap} have isomorphic $i$-packages, we can swap all $i$-edges on the corresponding $i$-packages while maintaining axioms $2$ and $5$.

First, Lemma~3.11 from \cite{Ass15} shows that if ignoring the highest color edges of a graph results in a dual equivalence graph of degree $n-1$, then there is a unique dual equivalence graph of degree $n$ into which the restriction embeds. To be precise, for partitions $\lambda \subset \rho$, with $|\lambda| = n$ and $|\rho| = N$, choose a tableau $A$ of shape $\rho/\lambda$ with entries $n+1,\ldots,N$. Consider the set of standard Young tableaux $T \in \mathrm{SYT}(\rho)$ such that $T$ restricted to $\rho/\lambda$ is $A$. Let $\G_{\lambda,A}$ be the signed, colored graph of type $(n,N)$ on this set with $i$-edges given by elementary dual equivalences for $\triple$ with $i<n$. Then $\G_{\lambda,A}$ is a dual equivalence graph of type $(n,N)$, and the $(n,n)$-restriction of $\G_{\lambda,A}$ is $\G_{\lambda}$.

\begin{lemma}[\cite{Ass15}]
  Let $\G = (V,\sigma,E)$ be a connected dual equivalence graph of type $(n,N)$ with $n<N$, and let $\phi$ be an isomorphism from the $(n,n)$-restriction of $\G$ to $\G_{\lambda}$ for some partition $\lambda$ of $n$.  Then there exists a semi-standard tableau $A$ of shape $\rho/\lambda$, $|\rho| = N$, with entries $n+1,\ldots,N$ such that $\phi$ gives an isomorphism from $\G$ to $\G_{\lambda,A}$. Moreover, the position of the cell of $A$ containing $n+1$ is unique.
\label{lem:extend-signs}
\end{lemma}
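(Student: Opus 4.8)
The idea is to build $A$ by adjoining one cell at a time, from $n+1$ up to $N$, which amounts to an induction on $N-n$. The base case $N=n$ is the hypothesis, with $A$ empty. For the inductive step, let $\G'$ be the $(n,N-1)$-restriction of $\G$; it is again a connected dual equivalence graph of type $(n,N-1)$ (for $N\ge n+2$ no axiom even refers to the discarded entry $\sigma_{N-1}$, since edge colors are at most $n-1$; for $N=n+1$ it is the graph of the hypothesis), and its $(n,n)$-restriction is $\G_\lambda$ via the same $\phi$. By induction $\phi$ gives an isomorphism $\G'\to\G_{\lambda,A'}$ for some $A'$ of shape $\rho'/\lambda$ with entries $n+1,\dots,N-1$, in which the cell of $n+1$ is uniquely determined. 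Identifying $V$ with the vertex set of $\G_{\lambda,A'}$ — a set of standard Young tableaux of shape $\rho'$ — via $\phi$, the entries $\sigma_1,\dots,\sigma_{N-2}$ and all color-adjacencies already agree, so it suffices to find an addable cell $c$ of $\rho'$ such that, for every $v\in V$, writing $N$ in cell $c$ of $\phi(v)$ produces a standard Young tableau whose $\sigma_{N-1}$ entry equals $\sigma(v)_{N-1}$; then $A=A'\cup\{N\mapsto c\}$ completes the step, column-strictness of $A$ being automatic as $N$ is the largest entry and $c$ is addable, and $\phi$ an isomorphism since both graphs satisfy axiom~$1$ and $\phi$ preserves $\sigma$ and is bijective.

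\textbf{The easy range $N\ge n+2$.} Since $N-1>(n-1)+1$, axiom~$2$ forces $\sigma_{N-1}$ to be constant on the connected graph $\G$, say $\sigma_{N-1}\equiv\varepsilon$; meanwhile the cell $p$ of $N-1$ is fixed by $A'$ and, being the cell of the maximal entry, is a removable corner of $\rho'$. The addable cell of $\rho'$ in its longest row lies weakly below $p$, while the addable cell of $\rho'$ immediately above $p$ — which exists precisely because $p$ is a removable corner — lies strictly above $p$; one of these two realizes the descent value $\varepsilon$, and we take it for $c$. No uniqueness is needed or claimed here, as the ``moreover'' concerns only the cell of $n+1$.

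\textbf{The core case $N=n+1$.} Here $A'$ is empty, $\rho'=\lambda$, and the cell of $N-1=n$ varies with $v$; we must produce an addable cell $c$ of $\lambda$ such that $\sigma(v)_n=+1$ exactly when the row of $n$ in $\phi(v)$ is weakly below the row of $c$. First, $\sigma_n$ factors through the row of $n$: by axiom~$2$ it is constant on each connected component of $E_2\cup\cdots\cup E_{\nmt}$, and these components of $\G_\lambda$ are precisely the blocks indexed by the removable corners $q$ of $\lambda$ (moves of color at most $n-2$ fix the cell of $n$, which must be a removable corner, and for a given such $q$ the tableaux with $n$ in $q$ correspond move-preservingly to $\mathrm{SYT}(\lambda\setminus q)$, which is connected), on each of which the row of $n$ is constant. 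Second, and this is the crux, $\sigma_n$ is monotone along these blocks ordered by the rows of their corners: any $E_{\nmo}$-edge joining two blocks is necessarily the dual equivalence move exchanging $n-1$ and $n$, so on such an edge $\{w,x\}$ with $w$ carrying $n$ in the lower corner one has $\sigma(w)_{\nmo}=+1$ (in $w$ the entry $n-1$ sits in the higher corner), whence axiom~$3$ forbids $\sigma_n$ from dropping from $+1$ on the lower block to $-1$ on the upper one. As the block-adjacency graph of $\G_\lambda$ is the path on these blocks in row order, $\sigma_n$ passes from $-1$ to $+1$ at a single threshold row, and that row determines $c$. \emph{The main obstacle of the whole argument is precisely this step}: identifying which moves connect the blocks, confirming that these blocks form a path in row order, and carrying out the sign bookkeeping so that axiom~$3$ yields genuine monotonicity rather than merely excluding one isolated local pattern.

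\textbf{Uniqueness of the cell of $n+1$.} In any valid $A$ this cell is an addable cell of $\lambda$, and distinct addable cells lie in distinct rows; between any two of them along the boundary of $\lambda$ there is a removable corner $q$, and since $\phi$ is surjective onto $\mathrm{SYT}(\lambda)$ there is a vertex $v$ whose tableau carries $n$ in $q$. For that $v$ the two candidate cells force opposite values of $\sigma(v)_n$ under the descent rule, contradicting the fixed signature of $\G$. Hence at most one addable cell can serve, and since $n+1$ is placed at the first step of the induction and never moved afterward, its position in $A$ is unique.
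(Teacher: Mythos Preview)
The paper does not prove this lemma; it is quoted from \cite{Ass15} (Lemma~3.11 there) without proof, so there is no in-paper argument to compare against. Your approach---induction on $N-n$, reducing to the core case $N=n+1$ and then using axiom~$3$ to force monotonicity of $\sigma_n$ across the blocks indexed by removable corners of $\lambda$---is the natural one and is essentially correct.

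One point warrants care. Your assertion that ``the block-adjacency graph of $\G_\lambda$ is the path on these blocks in row order'' is stronger than what you need and is in fact false: for $\lambda=(3,2,1)$ there are $E_{n-1}$-edges directly between the block with $n$ in the lowest corner and the block with $n$ in the highest corner (put $n-2$ at the middle corner; then $n-2$ lies between $n-1$ and $n$ in the reading word, forcing the swap of $n-1$ and $n$). What your argument actually requires is only that \emph{consecutive} removable corners, ordered by row, are joined by an $E_{n-1}$-edge. This is easy to exhibit: removing the two corners creates a new removable corner of the residual shape lying between them in reading order, and placing $n-2$ there forces the desired swap. Walking the corners in row order along these consecutive-pair edges then gives a path in the block graph to which your axiom~$3$ constraint applies at every step, yielding the monotonicity and hence the threshold determining $c$. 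Extra edges in the block graph impose only redundant (and consistent) constraints. With this correction your construction and your uniqueness argument both go through; in the easy range $N\geq n+2$ the cleanest choices of addable cell are simply the one at the end of row~$1$ (for $\varepsilon=+1$) and the one atop column~$1$ (for $\varepsilon=-1$).
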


\begin{lemma}
  Let $\G$ be a signed, colored graph of type $(n,N)$ satisfying dual equivalence axioms $1,2,3$ and $5$, and suppose that the $(\imt,N)$-restriction of $\G$ is a dual equivalence graph. Let $w$ be a vertex of $i$-type W such that every vertex on the $\imo$-package of $w$ has a flat $\imo$-edge. Then there exists an isomorphism between the $i$-packages of $w$ and $E_{\imo}(w)$.
\label{lem:phi-compatible}
\end{lemma}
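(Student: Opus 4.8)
The plan is to build the isomorphism between the two $i$-packages by first embedding the ambient structure into a standard dual equivalence graph, where the $i$-packages become packages of tableaux, and then exhibiting the isomorphism explicitly via elementary dual equivalence moves. First I would pass to the connected component $\mathcal{D}$ of the $(\imt,N)$-restriction of $\G$ containing $w$; by hypothesis $\mathcal{D}$ is a dual equivalence graph, so by Theorem~\ref{thm:isomorphic} together with Lemma~\ref{lem:extend-signs} there is an isomorphism $\phi$ from $\mathcal{D}$ to some $\G_{\kappa, A}$, where $\kappa$ is a partition of $\imt$ and $A$ has entries $\imo, i, \ipo, \ldots, N$. In this model, a vertex is a standard Young tableau $T$ with $T|_{\rho/\kappa} = A$, and the $\imo$- and $i$-edges are elementary dual equivalences for the triple $\triple$ (note $\imt < \imt$ is vacuous, so only $E_{\imo}$ and $E_i$ are present within $\mathcal{D}$ beyond those already in $\G_{\kappa}$). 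Since $\sigma(T)_{\imo}, \sigma(T)_i$ are controlled by the relative positions of the cells containing $\imo, i, \ipo$ and these positions determine everything about $E_{\imo}, E_i$, I would translate the statement "$w$ has $i$-type W" and "every vertex on the $\imo$-package of $w$ has a flat $\imo$-edge" into explicit statements about where $\imo$ sits relative to $i$ in the tableau model.

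Next, I would analyze the $\imo$-package and $i$-package in the tableau picture. The $i$-package of $w$ is the connected component under all edge colors \emph{except} $\imh, \imt, \imo, i, \ipo$; in the tableau model, these are exactly the elementary dual equivalences that can move entries in the ranges $\{1,\ldots,\imf\}$ and $\{\ipt,\ldots,N\}$, i.e. everything disjoint from the window $\{\imh,\ldots,\ipo\}$ on which $\sigma_{\imo}$ and $\sigma_i$ depend. (For $i=2,3$ the low-color ranges degenerate, but the argument is unaffected.) So the $i$-package of $w$ is determined by fixing the placement of the cells containing $\imh, \imt, \imo, i, \ipo$ and varying the rest. The $\imo$-edge $\{w, E_{\imo}(w)\}$, being an elementary dual equivalence on $\{\imt, \imo, i\}$, swaps the positions of $i$ and $\imt$ (or of $\imo$ and $i$, depending on which is "further" — the $i$-type W hypothesis pins down the case), and I would check that this is precisely a bijection that preserves the placement of all cells outside the window, hence carries the $i$-package of $w$ onto the $i$-package of $E_{\imo}(w)$. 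The flatness hypothesis on every vertex of the $\imo$-package guarantees that across the whole $\imo$-package the cell containing $\imt$ does not interact with the cell containing $\iph$ or lower — this is what ensures the swap extends consistently to a package isomorphism rather than merely to the single chain. Concretely, I expect to show the map "$\phi$, then the elementary dual equivalence swapping $\imt \leftrightarrow i$ (or the relevant pair), then $\phi^{-1}$" is the desired isomorphism, and that it preserves $\sigma_1, \ldots, \sigma_{\imh}, \sigma_{\ipt}, \ldots, \sigma_{N-1}$ because the swapped entries stay within the window.

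I expect the main obstacle to be verifying that the elementary dual equivalence defining the $\imo$-edge genuinely restricts to an isomorphism of $i$-packages rather than merely relating the two chains — that is, that the flatness hypothesis on the entire $\imo$-package is both necessary and sufficient for the local swap to propagate through all the other edge colors present in the packages without obstruction. In tableau terms, this comes down to checking that the cell containing $\imt$ and the cell containing $\iph$ (whose interaction would be governed by $E_{\imh}$, a color \emph{excluded} from both packages but \emph{present} on the $\imo$-package and hence constrained by flatness) are positioned compatibly for every tableau in the package. A careful case analysis on the mutual positions of the cells labeled $\imh, \imt, \imo, i, \ipo$ — using $i$-type W to fix some of these and flatness of $\imo$-edges to fix the rest — should close this gap; I would organize it as a short lemma identifying, in the $\G_{\kappa,A}$ model, exactly which cell-configurations occur, and then the package isomorphism is immediate from Lemma~\ref{lem:extend-signs} applied to the $(\imo, N)$-restriction.
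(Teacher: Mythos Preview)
Your proposal has a genuine gap stemming from two linked misconceptions. First, the $i$-package is the component under $E_2 \cup \cdots \cup E_{\imh} \cup E_{\iph} \cup \cdots \cup E_{\nmo}$: it \emph{includes} $E_{\imh}$ and \emph{excludes} $E_{\ipt}$, not the reverse. With your shifted window the lemma would be immediate from axiom~5 alone (every remaining edge color would commute with $E_{\imo}$) and the flatness hypothesis would be idle; the entire content of the lemma is extending the isomorphism across $E_{\imh}$, which does \emph{not} commute with $E_{\imo}$. Second, your tableau model cannot carry the $\imo$-edge. When you embed the $(\imt,N)$-restricted component $\mathcal{D}$ into $\G_{\kappa,A}$ with $|\kappa|=\imt$, the entries $\imo, i, \ipo, \ldots, N$ all live in the \emph{fixed} augmenting tableau $A$; every vertex of $\G_{\kappa,A}$ has them in the same cells, so there is no elementary dual equivalence on the triple $(\imt,\imo,i)$ available inside this model. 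Worse, $u = E_{\imo}(w)$ need not lie in $\mathcal{D}$ at all, since $E_{\imo}$ does not commute with $E_{\imh}$; the vertices $w$ and $u$ may sit in distinct $(\imt,N)$-components mapping to $\G_{\kappa,A}$ and $\G_{\kappa',A'}$ with a priori different $\kappa,\kappa'$.

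The paper's argument works one level lower. By axioms~2 and~5, $E_{\imo}$ is already an isomorphism of $\imo$-packages (which exclude $E_{\imh}$); the flatness hypothesis says exactly that $E_{\imo}$ preserves $\sigma_{\imh}$ throughout the $\imo$-package, so it upgrades to an isomorphism between the $(\imh,\imt)$-restrictions of the two $i$-packages. Now embed each of \emph{these} into a common $\G_{\mu,A}$ with $|\mu|=\imh$ via Lemma~\ref{lem:extend-signs}, and invoke Theorem~\ref{thm:cover} to extend both embeddings uniquely across the $E_{\imh}$ edges to morphisms into the same $\G_{\lambda}$ with $|\lambda|=\imt$. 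Composing one extension with the inverse of the other furnishes the desired $i$-package isomorphism.
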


\begin{proof}
  If $E_{\imo}(w) = E_i(w)$, then the result follows immediately from axioms $2$ and $5$. Suppose then that $w \in W_i(\G)$, and set $u = E_{\imo}(w)$. Recall that $E_{\imo}$ may be regarded as an involution on vertices that admit an $\imo$-neighbor. Regarded as such, by axioms $1,2$ and $5$, $E_{\imo}$ gives an involution between $\imo$-packages of $w$ and $u$. Therefore we need only show that this isomorphism restricted to $E_{2} \cup \cdots \cup E_{\imf}$ extends to an isomorphism for $E_{2} \cup \cdots \cup E_{\imh}$, since the isomorphism for $E_{\iph} \cup \cdots \cup E_{\nmo}$ is already established.

  By the assumption that all vertices $v$ on the $\imo$-package of $w$ have flat $\imo$-edges, we know $\sigma(v)_{\imh} = \sigma(E_{\imo}(v))_{\imh}$. Therefore $E_{\imo}$ gives an involution between the $(\imh,\imt)$-restrictions of the $i$-packages of $w$ and $u$. We extend this isomorphism as illustrated in Figure~\ref{fig:compatible}.

  \begin{figure}[ht]
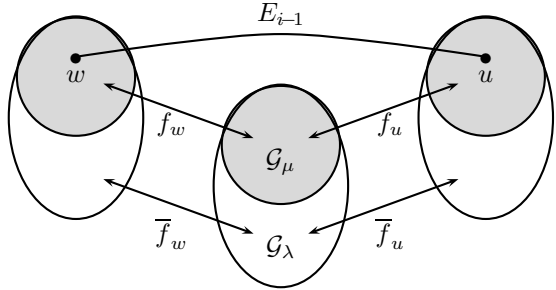

    \begin{center}
      \psset{xunit=3ex}
      \psset{yunit=3ex}
      \pspicture(0,-3)(14,5)
      \pscircle[fillstyle=solid,fillcolor=lightgray](1,3){.8}
      \psellipse(1,1.8)(2,3)
      \pscircle[fillstyle=solid,fillcolor=lightgray](7,1){.8}
      \psellipse(7,-.2)(2,3)
      \pscircle[fillstyle=solid,fillcolor=lightgray](13,3){.8}
      \psellipse(13,1.8)(2,3)
      \rput(1,3.5){$\bullet$}
      \rput(1,3){$w$}
      \rput(13,3.5){$\bullet$}
      \rput(13,3){$u$}
      \rput(7,.5){$\G_{\mu}$}
      \rput(7,-2){$\G_{\lambda}$}
      \pscurve(1.1,3.6)(4,4)(7,4.25)(10,4)(12.9,3.6)
      \rput(7,4.8){$E_{\imo}$}
      \psline{<->}(1.8,2.8)(6.2,1.2)
      \rput(3.8,1.6){$f_{w}$}
      \psline{<->}(7.8,1.2)(12.2,2.8)
      \rput(10.2,1.6){$f_{u}$}
      \psline{<->}(1.8,0)(6.2,-1.6)
      \rput(3.8,-1.6){$\overline{f}_{w}$}
      \psline{<->}(7.8,-1.6)(12.2,0)
      \rput(10.2,-1.6){$\overline{f}_{u}$}
      \endpspicture
    \end{center}
      \caption{\label{fig:compatible}Extending the isomorphism of $\imo$-packages to an isomorphism of $i$-packages}
  \end{figure}

  By Lemma~\ref{lem:extend-signs} and the hypothesis that the $(\imt,N)$-restriction of $\G$ is a dual equivalence graph, there exist isomorphisms, say $f_w$ and $f_u$, from the $(\imh,\imt)$-restrictions of the $i$-packages of $w$ and $u$ to the augmented dual equivalence graph $\G_{\mu,A}$ for a unique partition $\mu$ of $\imh$ and a unique single cell augmenting tableau $A$.  By Theorem~\ref{thm:cover}, the two isomorphism extend consistently across $E_{\imh}$ edges to give isomorphisms $\overline{f}_w$ and $\overline{f}_u$ from the $(\imt,\imt)$-restrictions of the connected components containing $w$ and $u$, respectively, to $\G_{\lambda}$ where $\lambda$ is the shape of $\mu$ augmented by $A$. In particular, the composition of these isomorphisms gives an isomorphism between the $(\imt,\imt)$-restrictions of the $i$-packages of $w$ and $u$.
\end{proof}

The hypotheses of Lemma~\ref{lem:phi-compatible} cannot be relaxed, so these vertices are of particular importance. Therefore we define the set $W_i^0(\G) \subseteq W_i(\G)$ by
\begin{equation}
  W_i^0(\G) = \{ w \in W_i(\G) \ | \ \mbox{every vertex on the $\imo$-package of $w$ has a flat $\imo$-edge} \}.
  \label{eqn:W-0}
\end{equation}
In a dual equivalence graph, these sets coincide, as follows from the following.

\begin{proposition}
  If every connected component of $E_{\imt} \cup E_{\imo}$ appears in Figure~\ref{fig:lambda4}, then $W_i^0(\G) = W_i(\G)$.
  \label{prop:phi-lambda4}
\end{proposition}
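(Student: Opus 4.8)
The plan is to prove the nontrivial inclusion $W_i(\G) \subseteq W_i^0(\G)$; the reverse inclusion is immediate from the definition of $W_i^0(\G)$. Fix $w \in W_i(\G)$ and let $P$ be the $\imo$-package of $w$. Two observations drive the argument. First, by axiom $2$ the signature entries $\sigma_{\imt}$ and $\sigma_{\imo}$ are constant on $P$, because $P$ is a connected component using only edges $E_j$ with $j \leq \imf$ or $j \geq \ipt$; in particular, since $w$ has an $\imo$-neighbor, axiom $1$ shows every vertex of $P$ has an $\imo$-neighbor. Second, by the Remark following Figure~\ref{fig:type-W} (which uses axioms $1,2,5$) these are exactly the edges that do not change whether a vertex has $i$-type W, so \emph{every} vertex of $P$ has $i$-type W, since $w$ does. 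Consequently, for each $v \in P$, both $v$ and $E_{\imo}(v)$ carry an $i$-edge (applying axiom $3$ to the $\imo$-edge at $v$ and then axiom $1$).

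The key local claim I would prove next is that no vertex $v \in P$ satisfies $E_{\imt}(v) = E_{\imo}(v)$, i.e.\ no vertex of $P$ lies on a double $\imt/\imo$-edge. Suppose $E_{\imt}(v) = E_{\imo}(v) =: v'$. Since $v$ has $i$-type W, both $v$ and $v'$ admit $i$-edges, so $\sigma(v)_i = -\sigma(v)_{\imo}$ and $\sigma(v')_i = -\sigma(v')_{\imo}$ by axiom $1$. Applying axiom $2$ to the $\imt$-edge $\{v,v'\}$ gives $\sigma(v')_i = \sigma(v)_i$ (since $i > (\imt)+1$), and applying it to the $\imo$-edge $\{v,v'\}$ gives $\sigma(v')_{\imo} = -\sigma(v)_{\imo}$. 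Combining these four equalities forces $\sigma(v')_i = -\sigma(v')_i$, a contradiction. Note this step uses only axioms $1,2,3$, not the hypothesis on $E_{\imt}\cup E_{\imo}$.

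Finally I would invoke the hypothesis. For $v \in P$, the connected component of $(V,\sigma,E_{\imt}\cup E_{\imo})$ containing $v$ appears in Figure~\ref{fig:lambda4}; since $v$ has an $\imo$-edge, that component is either the three-vertex path or the two-vertex double edge. The key claim rules out the double edge, so it is the path. In that path the $\imo$-edge joins the middle vertex, which has an $\imt$-neighbor, to an endpoint, which has none; hence exactly one of $v$ and $E_{\imo}(v)$ has an $\imt$-neighbor, and by the characterization following Definition~\ref{defn:flat} the vertex $v$ has a flat $\imo$-edge. Since this holds for every $v \in P$, we conclude $w \in W_i^0(\G)$, as desired.

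The main obstacle is conceptual rather than computational: the argument hinges on recognizing that $i$-type W is an invariant of the $\imo$-package, so that it propagates from $w$ to all of $P$, and this must be kept in step with the bookkeeping that the $\imo$-package deliberately omits precisely the colors $\imh,\imt,\imo,i,\ipo$ — the colors that can alter $i$-type W or an $\imo$-edge. Once that is set up, the signature chase in the key claim and the appeal to Figure~\ref{fig:lambda4} are routine. It is also worth checking the degenerate case of a vertex of $P$ with no $\imt$-edge at all: such a vertex is the endpoint of a three-vertex path carrying only an $\imo$-edge, so it trivially has a flat $\imo$-edge and causes no difficulty.
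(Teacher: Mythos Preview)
Your proof is correct and follows essentially the same route as the paper: under the hypothesis a non-flat $\imo$-edge forces a double $\imt/\imo$-edge, which (via axiom~2) forces $\sigma(v)_i = \sigma(E_{\imo}(v))_i$ and hence failure of $i$-type~W, and then invariance of $i$-type~W on the $\imo$-package yields the contradiction with $w \in W_i(\G)$. Your key claim can be shortened: axiom~2 applied to the $\imt$-edge $\{v,v'\}$ alone already gives $\sigma(v)_i = \sigma(v')_i$, directly contradicting $i$-type~W without the detour through the existence of $i$-edges and the $\sigma_{\imo}$ computation.
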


\begin{proof}
  If every connected component of $E_{\imt} \cup E_{\imo}$ appears in Figure~\ref{fig:lambda4}, then the $\imo$-edge at $v$ is not flat if and only if $E_{\imo}(v) = E_{\imt}(v)$. In this case, by axiom $2$, $\sigma(v)_{i} = \sigma(E_{\imt}(v))_{i} = \sigma(E_{\imo}(v))_{i}$, so $v$ does not have $i$-type W. Since $\sigma_i$ is constant on $i$-packages, no vertex on the $i$-package of $v$ has $i$-type W. Therefore, $W_i^0(\G) = W_i(\G)$.
\end{proof}

We use the isomorphism of Lemma~\ref{lem:phi-compatible} to define an involution $\varphi^{w}_i$ as follows.

\begin{definition}
  For $w \in W_i^0(\G)$, let $u = E_{\imo}(w)$, and let $\phi$ the isomorphism of Lemma~\ref{lem:phi-compatible}. Define the involution $\varphi^{w}_i$ on all vertices admitting an $i$-neighbor by
  \begin{equation}
    \varphi^w_i(v) = \left\{ \begin{array}{rl}
        \phi(v) & \mbox{if $v$ lies on the $i$-package of $w$ or $u$,} \\ [1ex]
        E_{i} \phi E_{i}(v) & \mbox{if $E_{i}(v)$ lies on the $i$-package of $w$ or $u$,} \\ [1ex]
        E_i(v) & \mbox{otherwise.}
      \end{array} \right.
    \label{eqn:phi}
  \end{equation}
  Define $E_i'$ to be the set of pairs $\{v,\varphi_i^w(v)\}$ for each $v$ admitting an $i$-neighbor. Define a signed, colored graph $\varphi^w_i(\G)$ of type $(n,N)$ by
  \begin{equation}
    \varphi^w_i(\G) = (V, \sigma, E_2 \cup\cdots\cup E_{\imo} \cup E_i' \cup E_{\ipo} \cup\cdots\cup E_{\nmo}).
    \label{eqn:G'}
  \end{equation}
  \label{defn:phi}
\end{definition}

Since the isomorphisms from Lemma~\ref{lem:phi-compatible} for $w$ and $u = E_{\imo}(w)$ are inverse to one another, we abuse notation in Definition~\ref{defn:phi} by letting $\phi$ denote either. Note as well that $\varphi_i^w = \varphi_i^u$.

The goal is to use the maps $\varphi_i$ to transform $\LSP_4$ into $\LSF_4$. For example, the graph in Figure~\ref{fig:box}, which arises from the graph for the Macdonald polynomial $\widetilde{H}_{(5)}(X;q,t)$, is locally Schur positive but fails $\LSF_4$. The transformation of this graph into a dual equivalence graph requires only $\varphi_3$ and $\varphi_4$. The result is the dual equivalence graph given in Figure~\ref{fig:open-box}. We note, however, that order matters. For the depicted transformation, we first applied $\varphi_3$ twice and then applied $\varphi_4$ twice. Had we applied the maps in a different order, the resulting graphs would be different, though in this case it would still be a dual equivalence graph.

\begin{figure}[ht]
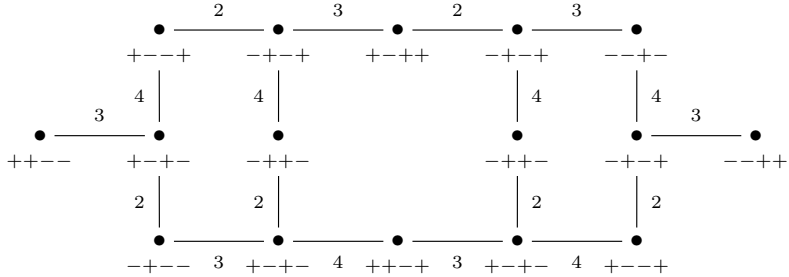

  \begin{displaymath}
    \begin{array}{\cs{7}\cs{7}\cs{7}\cs{7}\cs{7}\cs{7}c}
      & \sbull{t1}{+--+} & \sbull{t2}{-+-+} & \sbull{t3}{+-++} 
      & \sbull{t4}{-+-+} & \sbull{t5}{--+-} & \\[2\cellsize]
      \sbull{m0}{++--} & \sbull{m1}{+-+-} & \sbull{m2}{-++-} & 
      & \sbull{m4}{-++-} & \sbull{m5}{-+-+} & \sbull{m6}{--++} \\[2\cellsize]
      & \sbull{b1}{-+--} & \sbull{b2}{+-+-} & \sbull{b3}{++-+} 
      & \sbull{b4}{+-+-} & \sbull{b5}{+--+} &
    \end{array}
    \psset{linewidth=.1ex,nodesep=3pt}
    \everypsbox{\scriptstyle}
    \ncline {t1}{t2} \naput{2}
    \ncline {t2}{t3} \naput{3}
    \ncline {t3}{t4} \naput{2}
    \ncline {t4}{t5} \naput{3}
    \ncline {t1t1}{m1} \nbput{4}
    \ncline {t2t2}{m2} \nbput{4}
    \ncline {t4t4}{m4} \naput{4}
    \ncline {t5t5}{m5} \naput{4}
    \ncline {m0}{m1} \naput{3}
    \ncline {m5}{m6} \naput{3}
    \ncline {m1m1}{b1} \nbput{2}
    \ncline {m2m2}{b2} \nbput{2}
    \ncline {m4m4}{b4} \naput{2}
    \ncline {m5m5}{b5} \naput{2}
    \ncline {b1}{b2} \nbput{3}
    \ncline {b2}{b3} \nbput{4}
    \ncline {b3}{b4} \nbput{3}
    \ncline {b4}{b5} \nbput{4}
  \end{displaymath}
  \caption{\label{fig:box}A locally Schur positive graph with generating function $s_{3,2} + s_{3,1,1} + s_{2,2,1}$.}
\end{figure}  

\begin{figure}[ht]
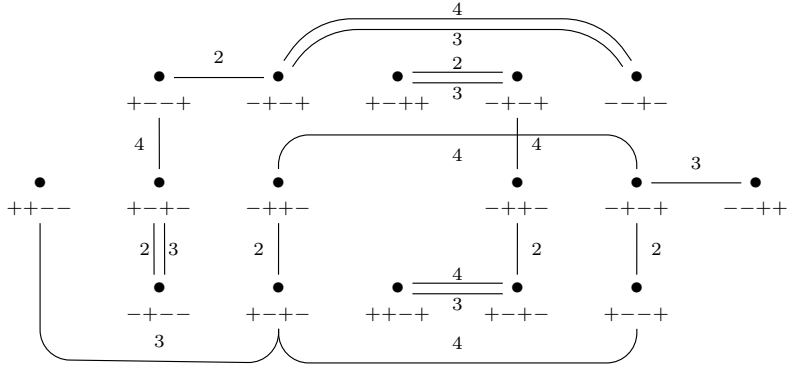

  \begin{displaymath}
    \begin{array}{\cs{7}\cs{7}\cs{7}\cs{7}\cs{7}\cs{7}c} \\[\cellsize]
      & \sbull{t1}{+--+} & \sbull{t2}{-+-+} & \sbull{t3}{+-++} 
      & \sbull{t4}{-+-+} & \sbull{t5}{--+-} & \\[2\cellsize]
      \sbull{m0}{++--} & \sbull{m1}{+-+-} & \sbull{m2}{-++-} & 
      & \sbull{m4}{-++-} & \sbull{m5}{-+-+} & \sbull{m6}{--++} \\[2\cellsize]
      & \sbull{b1}{-+--} & \sbull{b2}{+-+-} & \sbull{b3}{++-+} 
      & \sbull{b4}{+-+-} & \sbull{b5}{+--+} & \\[\cellsize]
    \end{array}
    \psset{linewidth=.1ex,nodesep=3pt}
    \everypsbox{\scriptstyle}
    \ncline {t1}{t2} \naput{2}
    \ncline[offset=2pt] {t3}{t4} \nbput{3}
    \ncline[offset=2pt] {t4}{t3} \nbput{2}
    \ncdiag[offset=2pt,angleA=55,angleB=125,arm=4.5ex,linearc=1] {t2}{t5} \nbput{3}
    \ncdiag[offset=2pt,angleB=55,angleA=125,arm=4ex,linearc=1] {t5}{t2} \nbput{4}
    \ncline {t1t1}{m1} \nbput{4}
    \ncline {t4t4}{m4} \naput{4}
    \ncdiag[angleA=90,angleB=90,arm=3ex,linearc=.4] {m2}{m5} \nbput{4}
    \ncdiag[angleA=-90,armA=12ex,angleB=-90,armB=3ex,linearc=.4] {m0m0}{b2b2} \naput{3}
    \ncline {m5}{m6} \naput{3}
    \ncline[offset=2pt] {m1m1}{b1} \nbput{2}
    \ncline[offset=2pt] {b1}{m1m1} \nbput{3}
    \ncline {m2m2}{b2} \nbput{2}
    \ncline {m4m4}{b4} \naput{2}
    \ncline {m5m5}{b5} \naput{2}
    \ncdiag[angleA=-90,angleB=-90,arm=3ex,linearc=.4] {b5b5}{b2b2} \nbput{4}
    \ncline[offset=2pt] {b3}{b4} \nbput{3}
    \ncline[offset=2pt] {b4}{b3} \nbput{4}
  \end{displaymath}
  \caption{\label{fig:open-box}The transformation of the graph in Figure~\ref{fig:box} using $\varphi_3$ and $\varphi_4$.}
\end{figure}  

The goal with $\varphi_i^w$ is to reduce the cardinality of $W_i(\G)$. The following result shows that this happens provided the $(i,N)$-restriction of $\G$ satisfies dual equivalence graph axiom $4$.

\begin{theorem}
  Let $\G$ be a locally Schur positive graph of type $(n,N)$, and suppose that the $(\imt,N)$-restriction of $\G$ is a dual equivalence graph and that the $(i,N)$-restriction of $\G$ satisfies dual equivalence axiom $4$. Then $W_i^0(\G) = W_i(\G)$, and for $w \in W_i(\G)$, $W_i(\varphi_i^w(\G))$ is a proper subset of $W_i(\G)$.
  \label{thm:phi-terminate}
\end{theorem}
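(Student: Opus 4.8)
The plan is to settle the first assertion immediately and then to analyze how $\varphi_i^w$ reshuffles non-flat $i$-chains. For $W_i^0(\G) = W_i(\G)$: the hypothesis that the $(i,N)$-restriction of $\G$ satisfies dual equivalence axiom $4$ includes, in particular, that every connected component of $E_{\imt}\cup E_{\imo}$ appears in Figure~\ref{fig:lambda4}, so Proposition~\ref{prop:phi-lambda4} yields $W_i^0(\G) = W_i(\G)$ at once. Combined with Lemma~\ref{lem:phi-compatible}, this guarantees that the isomorphism $\phi$, and hence the involution $\varphi_i^w$ of Definition~\ref{defn:phi}, is defined for every $w \in W_i(\G)$.

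Now fix $w \in W_i(\G)$ and put $u = E_{\imo}(w)$, $x = E_i(w)$, $y = E_i(u)$, so that $x \neq u$. By construction $\varphi_i^w$ changes $i$-edges only on the $i$-packages of $w, u, x, y$, replacing $E_i$ there by $\phi$ (on the packages of $w$ and $u$) and by $E_i\phi E_i$ (on the packages of $x$ and $y$), while leaving $\sigma$, every edge set $E_j$ with $j \neq i$, and all other $i$-edges untouched; one checks, as for the map $\theta$ in Proposition~\ref{prop:theta-reasonable}, that $\varphi_i^w(\G)$ still satisfies axioms $1,2,3$ and $5$, so $W_i(\varphi_i^w(\G))$ is defined. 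Since $\phi$ restricts to the involution $E_{\imo}$ on the $\imo$-package of $w$, we have $\phi(w) = E_{\imo}(w) = u$, so in $\varphi_i^w(\G)$ the pair $\{w,u\}$ carries an $i$-edge as well as an $\imo$-edge, whence neither $w$ nor $u$ lies in $W_i(\varphi_i^w(\G))$. It therefore suffices to prove $W_i(\varphi_i^w(\G)) \subseteq W_i(\G)$, since that, together with $w \in W_i(\G) \setminus W_i(\varphi_i^w(\G))$, forces the containment to be proper.

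To establish $W_i(\varphi_i^w(\G)) \subseteq W_i(\G)$ I would first note that having $i$-type W is determined by $\sigma$ together with the axiom $1$ conditions governing which vertices admit $i$- and $\imo$-neighbors (this is the equivalence exploited in the proof of Proposition~\ref{prop:empty-W}); since $\varphi_i^w$ alters neither $\sigma$ nor $E_{\imo}$ and preserves axiom $1$, the set of vertices of $i$-type W is the same for $\G$ and $\varphi_i^w(\G)$. Consequently any $v \in W_i(\varphi_i^w(\G)) \setminus W_i(\G)$ would have $i$-type W, would satisfy $E_i(v) = E_{\imo}(v)$ in $\G$, and would acquire a different $i$-neighbor in $\varphi_i^w(\G)$; in particular $v$ would lie on one of the four disturbed $i$-packages. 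For $v$ on the $i$-package of $w$ or $u$ the new $i$-neighbor of $v$ is $\phi(v)$, and the problem reduces to showing that the coincidence $E_i(v) = E_{\imo}(v)$ forces $\phi(v) = E_{\imo}(v)$; the cases of the $i$-packages of $x$ and $y$ then follow by conjugating by $E_i$.

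The step I expect to be the main obstacle is precisely this last reduction. The difficulty is that, as the Remark after Definition~\ref{defn:flat} observes, whether a vertex has $i$-type W can change across $E_{\imh}$- and $E_{\ipo}$-edges, and the $E_{\imh}$-edges lie inside the very $i$-packages being rearranged, so one cannot simply assert that $i$-type W is constant on $i$-packages. Handling this requires bringing in the dual equivalence graph structure of the $(\imt,N)$-restriction — via Lemma~\ref{lem:extend-signs} and Theorem~\ref{thm:cover}, exactly as in the proof of Lemma~\ref{lem:phi-compatible} — to determine how the $E_i$-, $E_{\imh}$- and $E_{\ipo}$-edges within these packages interlock, and thereby to show that a vertex $v$ of the $i$-package of $w$ with $E_i(v) = E_{\imo}(v)$ must in fact lie on the $\imo$-package of $w$, where $\phi$ agrees with $E_{\imo}$. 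Once that is in place, no new members of $W_i$ can be created, and $W_i(\varphi_i^w(\G))$ is a proper subset of $W_i(\G)$.
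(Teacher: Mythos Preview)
Your setup matches the paper exactly: Proposition~\ref{prop:phi-lambda4} gives $W_i^0(\G)=W_i(\G)$, $i$-type W is unchanged by $\varphi_i^w$, and the problem reduces to showing that no vertex $v$ on the $i$-package of $w$ with $E_i(v)=E_{\imo}(v)$ is moved by $\varphi_i^w$. Where you diverge is in the resolution of this last step.

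You propose to show that any such $v$ must lie on the $\imo$-package of $w$ (where $\phi=E_{\imo}$), invoking Lemma~\ref{lem:extend-signs} and Theorem~\ref{thm:cover} to control how the packages interlock. Two remarks. First, a minor slip: $E_{\ipo}$ is not part of the $i$-package, so it plays no role here; only the $E_{\imh}$-edges need to be crossed. Second, and more importantly, you do not actually carry out the argument, and the tools you name are not the ones the paper uses. The paper instead proves the stronger statement that \emph{no} vertex $v$ on the $i$-package of $w$ can satisfy $E_{\imo}(v)=E_i(v)$. The mechanism is quite concrete: axiom $6$ for the $(\imt,N)$-restriction says any such $v$ is at most one $E_{\imh}$-edge from the $\imo$-package of $w$, so it suffices to check $v=E_{\imh}(w)$. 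One then looks at the connected component of $E_{\imh}\cup E_{\imt}\cup E_{\imo}$ containing $w$ and $v$, rules out the $s_{(3,2)}$ and $s_{(2,2,1)}$ shapes by the flatness of the $\imo$-edge at $w$, and is left with the $s_{(3,1,1)}$ shape, in which $E_{\imo}(v)=E_{\imh}E_{\imo}(w)$. Combining this with axiom $5$ ($E_{\imh}$ commutes with $E_i$) forces $E_i(w)=E_{\imo}(w)$, contradicting $w\in W_i(\G)$.

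So your outline is sound, but the decisive step is left as a hope rather than an argument, and the paper's route---axiom $6$ plus a short case analysis on Figure~\ref{fig:lambda5}---is both sharper in its conclusion and more elementary in its tools than the structural extension you gesture at.
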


\begin{proof}
  By Proposition~\ref{prop:phi-lambda4}, $W_i^0(\G) = W_i(\G)$. As the $i$-type of a vertex is determined by the connected component of $E_{\imt} \cup E_{\imo}$ containing it, the $i$-type of a vertex is the same in $\G$ and $\varphi_i^w(\G)$. Therefore, to show that $v \not\in W_i(\G)$ implies $v \not\in W_i(\varphi_i^w(\G))$, we must show that for $v$ with $i$-type W such that $E_i(v) = E_{\imo}(v)$, we have $\varphi_i^w(v) = E_{\imo}(v)$ as well.

  It suffices to consider $v$ on the $i$-packages of $w$ and $E_{i}(w)$. We claim that for any $v$ on the $i$-package of $w$, $E_{\imo}(v) \neq E_i(v)$. By axiom $5$, both $E_{\imo}$ and $E_i$ commute with $E_h$ for $h \leq \imf$ and $h \geq \iph$. Therefore, if the claim holds for some vertex $v$, then it holds for any vertex connected to $v$ by edges in $E_2 \cup \cdots \cup E_{\imf} \cup E_{\iph} \cup \cdots \cup E_{\nmo}$. It suffices to show the claim for $v = E_{\imh}(w)$ since, by axiom $6$, any vertex on the $i$-package of $w$ can be reached by crossing at most one $E_{\imh}$ edge. 

  \begin{figure}[ht]
    \begin{displaymath}
      \begin{array}{\cs{7}\cs{7}\cs{7}c}
        \rnode{D1}{\bullet} & \rnode{D2}{w} & \rnode{D3}{\bullet} & \rnode{D4}{\bullet}  \\[5ex]
        \rnode{DD1}{\bullet}& \rnode{DD2}{v}& \rnode{DD3}{\bullet} &  
      \end{array}
      \psset{nodesep=3pt,linewidth=.1ex}
      \everypsbox{\scriptstyle}
      \ncline {D1}{D2} \naput{i}
      \ncline {D2}{D3} \naput{\imo}
      \ncline {D3}{D4} \naput{i}
      \ncline {D2}{DD2} \nbput{\imh}
      \ncline {D3}{DD3} \naput{\imh}
      \ncline[offset=2pt] {DD1}{DD2} \nbput{i}
      \ncline[offset=2pt] {DD2}{DD1} \nbput{\imo}
    \end{displaymath}
    \caption{\label{fig:phi-cross}An illustration when $w \in W_i(\G)$ and $E_{\imh}(w)\not\in W_i(\G)$ has $i$-type W.}
  \end{figure}
  
  Let $v = E_{\imh}(w)$ and suppose that $E_{\imo}(v) = E_i(v)$ as illustrated in Figure~\ref{fig:phi-cross}. Consider the degree $5$ generating function of the component of $E_{\imh} \cup E_{\imt} \cup E_{\imo}$ containing $w$ and $v$. It cannot be $s_{(4,1)}$ or $s_{(2,1,1,1)}$ since $w$ admits both an $\imo$ and an $\imh$ neighbor. Therefore $E_{\imo}(w)$ also has an $\imh$-neighbor. Suppose, for contradiction, that the generating function is $s_{(3,2)}$ or $s_{(2,1,1,1)}$. Then either $w$ or $E_{\imo}(w)$ has a double edge for $\imh$ and $\imt$. By axioms $1$ and $3$, since both $w$ and $v=E_{\imh}(w)$ have an $\imo$-neighbor, we cannot have $v = E_{\imt}(w)$ as well, so it must be the case that $E_{\imo}(w)$ has a double edge for $\imh$ and $\imt$. Consulting Figure~\ref{fig:lambda5}, this forces the $\imo$-edge at $v$ to be a double edge with $\imt$, which is impossible since we are assuming it is a double edge with $i$. Therefore the generating function must be $s_{(3,1,1)}$.

  By Figure~\ref{fig:lambda5}, this means $E_{\imo}(v) = E_{\imh}E_{\imo}(w)$. Using this together with axiom $5$, we have $E_{\imh}E_i(w) = E_iE_{\imh}(w) = E_i(v) = E_{\imo}(v) = E_{\imh}E_{\imo}(w)$. By axiom $1$, this implies $E_{i}(w) = E_{\imo}(w)$, contradicting the assumption that $w \in W_i(\G)$. Thus for any $v$ on the $i$-package of $w$, $E_{\imo}(v) \neq E_i(v)$. By axiom $1$, the same now holds for vertices on the $i$-package of $E_i(w)$. Therefore $W_i(\varphi_i^w(\G))$ is a proper subset of $W_i(\G)$.
\end{proof}

\begin{figure}[ht]
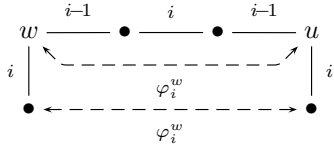

  \begin{displaymath}
    \begin{array}{\cs{7}\cs{7}\cs{7}c}
      \rnode{C}{w} & \rnode{cc}{\B} & \rnode{dd}{\B} & \rnode{D}{u}  \\[4ex] 
      \rnode{B}{\B} & & & \rnode{E}{\B}
    \end{array}
    \psset{nodesep=3pt,linewidth=.1ex}
    \everypsbox{\scriptstyle}
    \ncline {C}{cc}  \naput{\imo}
    \ncline {cc}{dd} \naput{i}
    \ncline {dd}{D}  \naput{\imo}
    \ncline {B}{C} \naput{i}
    \ncline {D}{E} \naput{i}
    \ncdiag[angleA=315,angleB=225,linearc=.15,linestyle=dashed] {<->}{C}{D} \nbput{\varphi_i^w}
    \ncline[linestyle=dashed] {<->}{B}{E} \nbput{\varphi_i^w}
  \end{displaymath}
  \caption{\label{fig:long_phi} The long version of the involution $\varphi_i^w$ for $r=1$.}
\end{figure}

\begin{remark}
  If $w$ is the second vertex of a non-flat $i$-chain of length greater than $4$, then rather than taking $u = E_{\imo}(w)$ in Definition~\ref{defn:phi}, we make take $u = E_{\imo}E_iE_{\imo}(w)$ instead as depicted in Figure~\ref{fig:long_phi}. Since both $w,u \in W_i^0(\G)$, Lemma~\ref{lem:phi-compatible} applies. More generally, we may take $u = E_{\imo} (E_i E_{\imo})^r (w)$ whenever all vertices on the $E_{\imo} \cup E_i$ path between $w$ and $u$ lie in $W_i^0(\G)$. By the proof of Theorem~\ref{thm:phi-terminate}, this generalization decreases $W_i(\G)$ if and only if $w,u$ are the second and penultimate vertices of a maximal non-flat $i$-chain, and if not, then it still does not increase $W_i(\G)$, and the number of possible application of this decreases.
\label{rmk:long_phi}
\end{remark}

For $\G$ a locally Schur positive graph of type $(n,N)$ such that the $(\imt,N)$-restriction of $\G$ is a dual equivalence graph, $\varphi^w_i(\G)$ also satisfies axioms $1, 2$ and $5$. This follows immediately from Lemma~\ref{lem:phi-compatible} and the definition of the maps on $i$-packages. It turns out that $\varphi_i^w$ also preserves axiom $3$, but this requires considerably more work to prove in general. However, when restricting to edges $E_i$ and lower, not only does axiom $3$ hold, but $\LSP_4$ does as well.

\begin{lemma}
  Let $\G$ be a locally Schur positive graph of type $(\ipo,\ipo)$, and suppose that the $(\imt,\ipo)$-restriction of $\G$ is a dual equivalence graph and that the $(i,\ipo)$-restriction of $\G$ satisfies dual equivalence axiom $4$. Then $\varphi_i^w(\G)$ has $\LSP_4$ for any $w \in W_i(\G)$.
  \label{lem:LSP4-phi}
\end{lemma}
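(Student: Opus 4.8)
The goal is to show that $\LSP_4$ survives the transformation $\varphi_i^w$. Since $\varphi_i^w$ only alters $E_i$ edges, the only connected components of $(V,\sigma,E_{\imo}\cup E_i)$ that could be affected are those meeting an $i$-edge that is reassigned, i.e. those meeting the $i$-packages of $w$ and $u=E_{\imo}(w)$. By Lemma~\ref{lem:phi-compatible}, all four vertices in Figure~\ref{fig:swap} have isomorphic $i$-packages, and the $i$-edges are swapped via this isomorphism while preserving $\sigma_{\imo}$ and $\sigma_i$ (and axioms $1,2,5$). So the plan is: first observe that away from these $i$-packages nothing changes, hence $\LSP_4$ is automatic there; then analyze, using the structure of non-flat $i$-chains, exactly which $(V,\sigma,E_{\imo}\cup E_i)$-components arise after the swap.

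First I would recall from the proof of Theorem~\ref{thm:phi-terminate} that the $i$-type of a vertex is unchanged by $\varphi_i^w$, since $i$-type W depends only on the $E_{\imt}\cup E_{\imo}$ component of the vertex. Combined with axiom~$1$ (which holds for $\varphi_i^w(\G)$), the $(V,\sigma,E_{\imo}\cup E_i)$-component of a vertex $v$ admitting an $i$-edge is: a single $2$-edge$+2$-vertex cycle if $v$ has $i$-type W and $E_{\imo}(v)=E_i(v)$; a length-$2$ path if $v$ has a flat $i$-edge and no $i$-type W vertex is involved; and otherwise a longer non-flat $i$-chain. By Definition~\ref{defn:phi}, for $v=w$ (resp.\ $v=u$) we have $\varphi_i^w(w)=u$ and the flat $i$-edges $\{B,w\}$, $\{u,E\}$ of Figure~\ref{fig:phi} become $\{B,E\}$ and $\{w,u\}$. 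So on the $i$-package of $w$ the new $E_{\imo}\cup E_i$ component containing $w$ is the $2$-cycle $\{w,u\}$ (removing $w,u$ from $W_i$), which has generating function $s_{(2,2)}$, hence Schur positive; and the $i$-edge $\{B,E\}$ joins two previously separate length-$2$ non-flat chains — but here I need to check that $B,E$ carry the right signatures. Since $B=E_i(w)$, $E=E_i(u)$ and $\sigma_{\imo},\sigma_i$ are reversed along an $i$-edge, $B$ and $E$ have equal $\sigma_{\imo,i}$; one verifies via axioms $1,2,3$ that the resulting $E_{\imo}\cup E_i$ component of $B$ is again either a length-$2$ path or a $2$-cycle, so again $\LSP_4$ holds. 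The same analysis applies to the $i$-package of $u$, and then propagates across $E_h$ edges with $h\le\imf$ or $h\ge\iph$ by axiom~$5$.

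The main obstacle I expect is handling the vertices on the $i$-packages of $w$ and $u$ other than $w,u,B,E$ themselves: the swap is performed by the package isomorphism $\phi$, so for a vertex $v'$ in the package of $w$ with $v'=E_h(w)$ (or reached by a path of such edges), $\varphi_i^w(v')=\phi(v')$ lies in the package of $u$, and I must verify that its new $E_{\imo}\cup E_i$ component is one of the two allowed types. Crucially, by the claim proved inside Theorem~\ref{thm:phi-terminate} (every vertex on the $i$-package of $w$ has $E_{\imo}(v')\ne E_i(v')$, i.e.\ lies on a non-flat $i$-chain of length $>2$), together with the $i$-type-preservation, after the swap every such $v'$ satisfies $E_{\imo}(v')=E_i(v')$ exactly when the chain has been closed off into a $2$-cycle — which is the situation forced by the definition of $\varphi_i^w$ acting through $\phi$. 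So the residual check is combinatorial bookkeeping of how the non-flat $i$-chains through the packages of $w$ and $u$ get re-stitched: the chain through $w$ gets cut into a $2$-cycle at $\{w,u\}$ plus possibly a remaining shorter non-flat chain, and similarly through $u$, and in each case the degree-$4$ generating function of every resulting $E_{\imo}\cup E_i$ component is one of $s_{(3,1)}$, $s_{(2,1,1)}$, $s_{(2,2)}$, or a Schur-positive sum of these — in particular Schur positive. I would organize this by considering the maximal non-flat $i$-chain containing $w$ (and separately the one containing $u$, noting $E_{\imo}(w)=u$ means $w$ and $u$ are adjacent on it), and tracking where $\varphi_i^w$ reconnects the flat endpoints, appealing throughout to axioms $1,2,3$ for $\G$ and to Lemma~\ref{lem:phi-compatible} to guarantee the package isomorphism respects all lower signatures.

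Finally, I would conclude that since $(V,\sigma,E_{\imo}\cup E_i)$-components of $\varphi_i^w(\G)$ are either unchanged from $\G$ (away from the packages of $w,u$) or fall into the explicitly enumerated Schur-positive cases above, $\varphi_i^w(\G)$ has $\LSP_4$, completing the proof.
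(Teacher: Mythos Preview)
Your overall plan is sound and matches the paper's opening moves: the $\{w,u\}$ component becomes an $s_{(2,2)}$ two-cycle, and the component through $B=E_i(w)$ and $E=E_i(u)$ stays Schur positive because the original was and you have peeled off an $s_{(2,2)}$ (this is exactly Proposition~\ref{prop:LSP4}, not the stronger claim you make that it must be a length-$2$ path or $2$-cycle --- that claim is false in general, since the original non-flat $i$-chain may have length $>4$).

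The real gap is your ``main obstacle'' paragraph. Propagation by axiom~$5$ only carries you across $E_h$ with $h\le\imf$ or $h\ge\iph$; the $i$-package also contains $E_{\imh}$ edges, and $E_{\imo}$ does \emph{not} commute with $E_{\imh}$. Your appeal to the claim inside Theorem~\ref{thm:phi-terminate} does not resolve this: that claim says only that no vertex on the $i$-package of $w$ has $E_{\imo}=E_i$, not that every such vertex has $i$-type~W --- and indeed across an $E_{\imh}$ edge the vertex $E_{\imh}(w)$ may fail to have an $\imo$-neighbor at all. So ``combinatorial bookkeeping of how the non-flat $i$-chains get re-stitched'' is not enough; you need a concrete argument for what happens at $E_{\imh}(w)$.

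The paper supplies exactly this missing ingredient by exploiting both hypotheses you never use. First, since the $(\imt,\ipo)$-restriction is a dual equivalence graph, axiom~$6$ holds there, so any vertex on the $i$-package of $w$ is reached by crossing at most one $E_{\imh}$ edge; hence it suffices to analyze $v=E_{\imh}(w)$. Second, since the $(i,\ipo)$-restriction satisfies axiom~$4$, the connected component of $E_{\imh}\cup E_{\imt}\cup E_{\imo}$ containing $w$ appears in Figure~\ref{fig:lambda5}, and a short case analysis on its shape (it must be the $s_{(3,1,1)}$ or the $s_{(3,2)}/s_{(2,2,1)}$ type, since $w$ admits both $\imh$- and $\imo$-neighbors) shows either that $E_{\imh}(w)\in W_i(\G)$ with $\varphi_i^{E_{\imh}(w)}=\varphi_i^w$ (so the situation replicates), or that $E_{\imh}(w)$ has no $\imo$-neighbor and the resulting $E_{\imo}\cup E_i$ component is a genuine three-vertex path. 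Without this case analysis your argument does not close.
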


\begin{proof}
  By Proposition~\ref{prop:phi-lambda4}, $W_i^0(\G) = W_i(\G)$. Afterward, the component containing $w$ and $E_{\imo}(w)$ has degree $4$ generating function $s_{(2,2)}$, which is Schur positive, and so the positivity for $E_i(w)$ and $E_i E_{\imo}(w)$ follows from Proposition~\ref{prop:LSP4} since the component was Schur positive in $\G$. By axiom $5$, if the component containing $v$ is $\LSP_4$, then so are the components on any vertex of the connected component of $E_2 \cup \cdots \cup E_{\imf}$ containing $v$. By axiom $6$, it suffices to consider the positivity across a single $E_{\imh}$ edge from $E_i(w), w, E_{\imo}(w)$ and $E_i E_{\imo}(w)$. Since all four vertices have isomorphic $i$-packages by Lemma~\ref{lem:phi-compatible}, if one of the four admits an $\imh$-neighbor, then they all do, so we may assume this is the case. By the symmetry between $w$ and $E_{\imo}(w)$ and the fact that the $\imo$-edge between them is flat, we may assume $w$ admits an $\imt$-neighbor and $E_i(w)$ does not, as depicted in Figure~\ref{fig:phi-degree4}.

  \begin{figure}[ht]
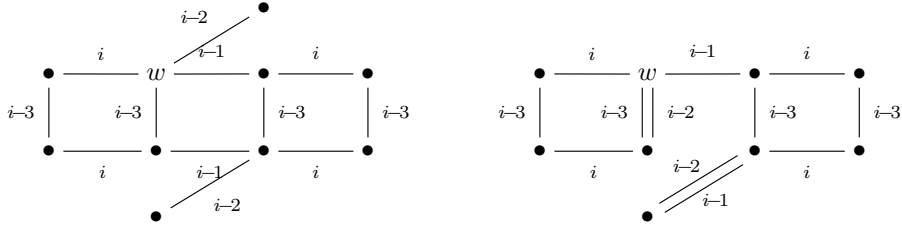

    \begin{displaymath}
      \begin{array}{\cs{8}\cs{8}\cs{8}c}
        & & \rnode{uu}{\B} & \\[3ex]
        \rnode{x}{\B} & \rnode{w}{w} & \rnode{u}{\B} & \rnode{v}{\B} \\[4ex]
        \rnode{X}{\B} & \rnode{W}{\B} & \rnode{U}{\B} & \rnode{V}{\B} \\[3ex]
        & \rnode{WW}{\B} & &       
      \end{array}
      \hspace{5em}
      \begin{array}{\cs{8}\cs{8}\cs{8}c}
        & & & \\[3ex]
        \rnode{x1}{\B} & \rnode{w1}{w} & \rnode{u1}{\B} & \rnode{v1}{\B} \\[4ex]
        \rnode{X1}{\B} & \rnode{W1}{\B} & \rnode{U1}{\B} & \rnode{V1}{\B} \\[3ex]
        & \rnode{WW1}{\B} & &
      \end{array}
      \psset{nodesep=3pt,linewidth=.1ex}
      \everypsbox{\scriptstyle}
      \ncline {w}{uu} \naput{\imt}
      \ncline {x}{w} \naput{i}
      \ncline {w}{u} \naput{\imo}
      \ncline {u}{v} \naput{i}
      \ncline {x}{X} \nbput{\imh}
      \ncline {w}{W} \nbput{\imh}
      \ncline {u}{U} \naput{\imh}
      \ncline {v}{V} \naput{\imh}
      \ncline {X}{W} \nbput{i}
      \ncline {U}{V} \nbput{i}
      \ncline {W}{U} \nbput{\imo}
      \ncline {WW}{U} \nbput{\imt}
      \ncline {x1}{w1} \naput{i}
      \ncline {w1}{u1} \naput{\imo}
      \ncline {u1}{v1} \naput{i}
      \ncline {x1}{X1} \nbput{\imh}
      \ncline[offset=2pt] {w1}{W1} \naput{\imt}
      \ncline[offset=2pt] {W1}{w1} \naput{\imh}
      \ncline {u1}{U1} \naput{\imh}
      \ncline {v1}{V1} \naput{\imh}
      \ncline {X1}{W1} \nbput{i}
      \ncline {U1}{V1} \nbput{i}
      \ncline[offset=2pt] {WW1}{U1} \nbput{\imo}
      \ncline[offset=2pt] {U1}{WW1} \nbput{\imt}
    \end{displaymath}
    \caption{\label{fig:phi-degree4} The two possibilities for $w \in W_i(\G)$ admitting an $\imh$-neighbor.}
  \end{figure}

  Consider the degree $5$ generating function for the connected component of $E_{\imh} \cup E_{\imt} \cup E_{\imo}$ containing $w$. Since $w$ admits both $E_{\imh}$ and $E_{\imo}$, it cannot have generating function $s_{(4,1)}$ or $s_{(2,1,1,1)}$.

  If the degree $5$ generating function for the connected component of $E_{\imh} \cup E_{\imt} \cup E_{\imo}$ containing $w$ is $s_{(3,1,1)}$, then by Figure~\ref{fig:lambda5}, $E_{\imo} E_{\imh}(w) = E_{\imh} E_{\imo}(w)$, and we have the situation depicted on the right side of Figure~\ref{fig:phi-degree4}. In this case, $E_{\imh}(w) \in W_i(\G)$ and $\varphi_i^{E_{\imh}(w)} = \varphi_i^w$. Therefore $\LSP_4$ is maintained.

  Since $w$ has a flat $\imo$-edge, $w$ cannot have $\imo$-type W, so if the degree $5$ generating function is $s_{(3,2)}$ or $s_{(2,2,1)}$, then $E_{\imh}(w) = E_{\imt}(w)$ and, by Figure~\ref{fig:lambda5}, we have the situation depicted on the right side of Figure~\ref{fig:phi-degree4}. In this case, $E_{\imh}(w)$ has no $\imo$-neighbor. On the other side, $E_{\imo} E_{\imh} E_{\imo}(w) = E_{\imt} E_{\imh} E_{\imo}(w)$ which does not admit an $i$-neighbor. Therefore the connected component $E_{\imo} \cup E_i$ containing $E_{\imh}(w)$ will have degree $4$ generating function $s_{(3,1)}$ or $s_{(2,1,1)}$, thus establishing $\LSP_4$ in this case as well. 
\end{proof}

The conclusion missing from Lemma~\ref{lem:LSP4-phi} and the impediment to applying $\varphi_i^w$ repeatedly is that $\LSP_5$ is not guaranteed to hold. Therefore we turn our attention next to degree $5$ components.

%
\section{Three color components}
%
\label{sec:LSP5}

To identify locally components failing $\LSF_5$, we introduce a more general notion of the $i$-type of a vertex. Vertices of $i$-types A, B, and C should, if $\LSF_5$ held, belong to degree $5$ components with generating functions $s_{4,1}$ or $s_{2,1,1,1}$, $s_{3,2}$ or $s_{2,2,1}$, and $s_{3,1,1}$, respectively. Moreover, in the case of a dual equivalence graph, the $i$-type determines the shape of the connected component of $(V, \sigma,E_{\imt} \cup E_{\imo} \cup E_{i})$ containing the vertex; for an illustration, compare Figure~\ref{fig:lambda5} with $i$-types A, B, and C in Figures~\ref{fig:type-A}, \ref{fig:type-B}, and \ref{fig:type-C}, respectively. 

\begin{definition}
  Let $\G$ be a signed, colored graph of type $(n,N)$ satisfying axioms $1, 2, 3$ and $5$. For $i \leq n$ with $i<N$, the \emph{$i$-type of a vertex $w$ of $\G$} admitting an $i$-neighbor such that $\sigma(w)_{i} = \sigma(E_{\imo}(w))_{i}$ is
    \begin{itemize}
    \item \emph{$i$-type A} if $\sigma(w)_{i} = \sigma(E_{\imo}(w))_{i}$ and $w$ does not admit an $\imt$-neighbor;
    \item \emph{$i$-type B} if $\sigma(w)_{i} = \sigma(E_{\imo}(w))_{i}$ and $w$ admits an $\imt$-neighbor and if $w$ admits an $\imo$-neighbor, then $\sigma(w)_{\imo} = -\sigma(E_{\imt}(w))_{\imo}$; otherwise, $\sigma(w)_{i} = -\sigma(E_{\imo}E_{\imt}(w))_{i}$;
    \item \emph{$i$-type C} if $\sigma(w)_{i} = \sigma(E_{\imo}(w))_{i}$ and $w$ admits an $\imt$-neighbor and if $w$ admits an $\imo$-neighbor, then $\sigma(w)_{\imo} = \sigma(E_{\imt}(w))_{\imo}$; otherwise, $\sigma(w)_{i} = \sigma(E_{\imo}E_{\imt}(w))_{i}$.
    \end{itemize}
  \label{defn:type}
\end{definition}

\begin{figure}[ht]
  \begin{displaymath}
    \begin{array}{\cs{7}\cs{7}\cs{10}\cs{7}\cs{7}c}
      \rnode{a1}{\bullet} & \rnode{a2}{w} & \rnode{a3}{\bullet} & & \rnode{a4}{\bullet} & \rnode{a5}{w} 
    \end{array}
    \psset{nodesep=3pt,linewidth=.1ex}
    \everypsbox{\scriptstyle}
    \ncline {a1}{a2} \naput{\imo}
    \ncline {a2}{a3} \naput{i}
    \ncline {a4}{a5} \naput{i}
  \end{displaymath}
  \caption{\label{fig:type-A} An illustration of vertices $w$ of $i$-type A and neighboring $E_{\imt}$ and $E_{\imo}$ edges.}
\end{figure}

\begin{figure}[ht]
  \begin{displaymath}
    \begin{array}{\cs{7}\cs{7}\cs{10}\cs{7}\cs{7}\cs{7}\cs{7}c}
      \rnode{b1}{\bullet} & \rnode{b2}{w} & \rnode{b3}{\bullet} &
      & \rnode{b4}{\bullet} & \rnode{b5}{w} & \rnode{b6}{\bullet} & \rnode{b7}{\bullet} \\[5ex]
      \rnode{B1}{\bullet} & \rnode{B2}{w} & \rnode{B3}{\bullet} &
      & \rnode{B4}{\bullet} & \rnode{B5}{w} & \rnode{B6}{\bullet} & \rnode{B7}{\bullet} \\[3ex]
      \rnode{BB1}{\bullet} & \rnode{BB2}{\bullet} & &
      & & & \rnode{BB6}{\bullet} & \rnode{BB7}{\bullet} 
    \end{array}
    \psset{nodesep=3pt,linewidth=.1ex}
    \everypsbox{\scriptstyle}
    \ncline[offset=2pt] {b1}{b2} \naput{\imt}
    \ncline[offset=2pt] {b2}{b1} \naput{\imo}
    \ncline {b2}{b3} \naput{i}
    \ncline {b4}{b5} \naput{i}
    \ncline {b5}{b6} \naput{\imt}
    \ncline[offset=2pt] {b6}{b7} \naput{\imo}
    \ncline[offset=2pt] {b7}{b6} \naput{i}
    \ncline {B1}{B2} \naput{\imt}
    \ncline {B1}{BB1} \nbput{\imo}
    \ncline {B2}{BB2} \nbput{\imo}
    \ncline {B2}{B3} \naput{i}
    \ncline {B4}{B5} \naput{i}
    \ncline {B5}{B6} \naput{\imt}
    \ncline {B6}{B7} \naput{\imo}
    \ncline {B6}{BB6} \naput{i}
    \ncline {B7}{BB7} \naput{i}
  \end{displaymath}
  \caption{\label{fig:type-B} An illustration of vertices $w$ of $i$-type B and neighboring $E_{\imt}$ and $E_{\imo}$ edges.}
\end{figure}

\begin{figure}[ht]
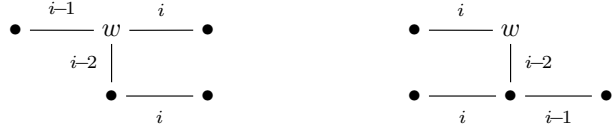

  \begin{displaymath}
    \begin{array}{\cs{7}\cs{7}\cs{10}\cs{7}\cs{7}\cs{7}c}
      \rnode{c1}{\bullet} & \rnode{c2}{w} & \rnode{c3}{\bullet} &
      & \rnode{c4}{\bullet} & \rnode{c5}{w} & \\[3ex]
      & \rnode{cc2}{\bullet} & \rnode{cc1}{\bullet} &
      & \rnode{cc4}{\bullet} & \rnode{cc5}{\bullet} &
      \rnode{cc6}{\bullet} 
    \end{array}
    \psset{nodesep=3pt,linewidth=.1ex}
    \everypsbox{\scriptstyle}
    \ncline {c1}{c2} \naput{\imo}
    \ncline {c2}{c3} \naput{i}
    \ncline {c2}{cc2} \nbput{\imt}
    \ncline {cc2}{cc1} \nbput{i}
    \ncline {c4}{c5} \naput{i}
    \ncline {c5}{cc5} \naput{\imt}
    \ncline {cc4}{cc5} \nbput{i}
    \ncline {cc5}{cc6} \nbput{\imo}
  \end{displaymath}
  \caption{\label{fig:type-C} An illustration of vertices $w$ of $i$-type C and neighboring $E_{\imt}$ and $E_{\imo}$ edges.}
\end{figure}

For $i \leq 3$, the $i$-type of a vertex cannot be A, B or C since there are not $E_{\imt}$ edges. Therefore when we refer to a vertex as having (or not having) $i$-type A, B or C, we implicitly assume that the vertex has an $i$-edge and that $i \geq 4$. Notice that $i$-type W is mutually exclusive of $i$-types A, B and C.

The $i$-type of $w$ is determined by the connected component of $E_{\imt} \cup E_{\imo}$ containing $w$ (note that this restriction includes $\sigma_i$ for every vertex on the restricted component). In general, for $i$-types B and C, if $w$ admits an $\imt$-neighbor but not an $\imo$-neighbor, then by axiom $3$, $E_{\imt}(w)$ admits an $\imo$-neighbor. Figures~\ref{fig:type-A}, \ref{fig:type-B}, and \ref{fig:type-C} show the $E_{\imt}$, $E_{\imo}$ and $E_{i}$ edges neighboring a vertex with a given $i$-type. The top row for $i$-type B are the possibilities in a dual equivalence graph, while the lower row gives the additional possibilities in the more general setting when axiom $4$ does not hold.

In a dual equivalence graph, a vertex $w$ has $i$-type C if and only if $E_{\imt}(w)$ has $i$-type C. Furthermore, in a dual equivalence graph where no vertex has $\imo$-type W, $w$ has $i$-type C if and only if both admit flat $i$-edges. This motivates the following definition.

\begin{definition}
  Let $\G$ be a signed, colored graph of type $(n,N)$ satisfying axioms $1, 2, 3$ and $5$. For $i < n$, a \emph{flat $i$-chain} is a sequence $(x_1,x_2,\ldots,x_{2h-1},x_{2h})$ of distinct vertices admitting $\imt$-edges such that
  \begin{displaymath}
    x_{2j-1} = E_i ( x_{2j} ) 
    \hspace{2em} \mbox{and} \hspace{2em}
    x_{2j+1} = E_{\imt} (E_{\imo} E_{\imt})^{m_j} ( x_{2j} ) ,
  \end{displaymath}
  for nonnegative integers $m_j$ such that $\left(E_{\imo} E_{\imt} \right)^{m_j} (x_{2j})$ does not have $\imo$-type W.
  \label{defn:flat-chain}
\end{definition}

\begin{figure}[ht]
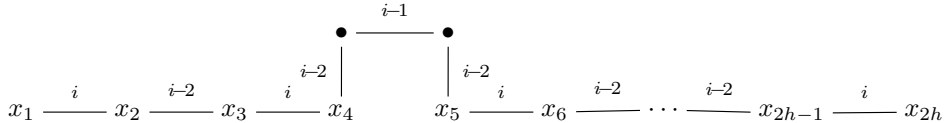

  \begin{displaymath}
    \begin{array}{\cs{7}\cs{7}\cs{7}\cs{7}\cs{7}\cs{7}\cs{7}\cs{7}c}
      & & & \rnode{D4}{\bullet} & \rnode{D5}{\bullet} & & & & \\[4ex]
      \rnode{d1}{x_{1}} & \rnode{d2}{x_{2}} & \rnode{d3}{x_{3}} & \rnode{d4}{x_{4}} & \rnode{d5}{x_{5}} & \rnode{d6}{x_{6}} & \rnode{c}{\cdots} & \rnode{d7}{x_{2h-1}} & \rnode{d8}{x_{2h}} 
    \end{array}
    \psset{nodesep=3pt,linewidth=.1ex}
    \everypsbox{\scriptstyle}
    \ncline {d1}{d2} \naput{i}
    \ncline {d2}{d3} \naput{\imt}
    \ncline {d3}{d4} \naput{i}
    \ncline {d4}{D4} \naput{\imt}
    \ncline {D4}{D5} \naput{\imo}
    \ncline {D5}{d5} \naput{\imt}
    \ncline {d5}{d6} \naput{i}
    \ncline {d6}{c} \naput{\imt}
    \ncline {c}{d7} \naput{\imt}
    \ncline {d7}{d8} \naput{i}
  \end{displaymath}
  \caption{\label{fig:flat-chain} An illustration of a flat $i$-chain.}
\end{figure}

Implicit in the definition is the fact that every vertex on a flat $i$-chain has an $i$-edge. However, we now require that each vertex also has an $\imt$-edge. By dual equivalence axioms $1$ and $2$, this forces each $i$-edge of a flat $i$-chain to be flat.

In a dual equivalence graph, a vertex of $i$-type A does not belong to a flat $i$-chain, a vertex of $i$-type B belongs to a flat $i$-chain of maximal length $2$, and a vertex of $i$-type C belongs to a flat $i$-chain of maximal length $4$. Define $C_i(\G)$ to be the set of vertices that lie on a flat $i$-chains of length greater than $4$, i.e.
\begin{equation}
  C_i(\G) \ = \ \left\{ x \in V \ | \ x = x_{j} \mbox{ on a flat $i$-chain of length $2h$ with } 2<j<2h-1 \right\}.
\label{eqn:X}
\end{equation}

Together with $W_i(\G)$, $C_i(\G)$ measures how far $\G$ is from satisfying dual equivalence axiom $4$, specifically, how many connected components of $(V,\sigma,E_{\imt} \cup E_{\imo} \cup E_{i})$ do not appear in Figure~\ref{fig:lambda5}.

\begin{proposition}
  Let $\G$ be a locally Schur positive graph of type $(n,n)$. Then $\G$ satisfies dual equivalence axiom $4$ if and only if both $W_i(\G)$ and $C_i(\G)$ are empty for all $1 < i < n$.
  \label{prop:empty}
\end{proposition}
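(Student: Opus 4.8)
The plan is to prove Proposition~\ref{prop:empty} by combining Proposition~\ref{prop:empty-W} with a parallel analysis of flat $i$-chains, since axiom $4$ for a graph of type $(n,n)$ is equivalent to the statement that every connected component of $(V,\sigma,E_{\imt} \cup E_{\imo} \cup E_i)$ appears in Figure~\ref{fig:lambda5} for every $2 < i < n$ (recall that for $n \geq 5$, Figure~\ref{fig:lambda5} implies Figure~\ref{fig:lambda4}, so $\LSF_5$ subsumes $\LSF_4$; the boundary cases $i = 3$ or small $n$ are handled directly by axiom $1$, under which no vertex has $i$-type W or A/B/C). The essential observation is that the connected component of $E_{\imt} \cup E_{\imo} \cup E_i$ containing a vertex $w$ decomposes according to $i$-type: the $E_i$-edges glue together pieces built from $E_{\imt} \cup E_{\imo}$, and Figure~\ref{fig:lambda5} prescribes exactly which gluings are allowed. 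A component fails to appear in Figure~\ref{fig:lambda5} precisely when it is ``too long'' in one of two directions — either along an $E_{\imo} \cup E_i$ alternating path (governed by $W_i$) or along a flat $E_{\imt} \cup \cdots \cup E_i$ chain (governed by $C_i$).

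First I would establish the forward direction. If $\G$ satisfies axiom $4$, then since (as just noted) axiom $4$ implies $\LSF_4$, Proposition~\ref{prop:empty-W} (applied in its ``only if'' direction, noting $\G$ has $\LSP_4$ as part of being locally Schur positive) gives that $W_i(\G)$ is empty for all $i$. For $C_i(\G)$: a flat $i$-chain of length greater than $4$ would, by Definition~\ref{defn:flat-chain}, produce a connected component of $E_{\imt} \cup E_{\imo} \cup E_i$ containing at least six vertices strung along flat $i$-edges, which does not match any of the four diagrams in Figure~\ref{fig:lambda5} (the longest there has five vertices, and the three-color cyclic one has six but no flat $i$-edge of the chain type, as one can check by inspecting $i$-types C in Figure~\ref{fig:type-C}); hence $C_i(\G) = \emptyset$.

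For the converse, assume $W_i(\G)$ and $C_i(\G)$ are empty for all $1 < i < n$; I must show every connected component $\C$ of $E_{\imt} \cup E_{\imo} \cup E_i$ appears in Figure~\ref{fig:lambda5}. Fix such a $\C$ and any vertex $w \in \C$. If $w$ (and hence, by axiom $1$, every vertex of $\C$) admits no $i$-edge, then $\C$ is a connected component of $E_{\imt} \cup E_{\imo}$, which by the hypothesis that $\G$ has $\LSF_4$ (this still needs justification — see below) or directly by emptiness of $W_{\imo}$ together with Proposition~\ref{prop:empty-W} applied at color $\imo$ appears among the one- and two-edge diagrams, forcing $\C$ into the top three rows of Figure~\ref{fig:lambda5}. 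Otherwise some vertex of $\C$ has an $i$-edge, and I would run a case analysis on the $i$-type of a chosen vertex: $i$-type A gives the single $E_i$-edge component (third row of Figure~\ref{fig:lambda5}, with no $E_{\imt}$-neighbor, matching Figure~\ref{fig:type-A}); $i$-type W with $E_{\imo}(w) = E_i(w)$ — which must hold since $W_i(\G) = \emptyset$ — gives the double-edge configurations, and since $E_{\imt}$-neighbors cannot extend these past length two without re-entering $W_i$ (this is exactly the argument in the proof of Proposition~\ref{prop:empty-W}, plus the observation that an $E_{\imt}$-edge off a double $E_{\imo}/E_i$-edge, were it present, would create the forbidden length-$4$ configuration), one lands in the fourth-row cyclic diagram or the length-five diagram; $i$-types B and C give flat $i$-chains, which by emptiness of $C_i(\G)$ have length at most $4$, and matching the allowed flat-chain lengths ($2$ for B, $4$ for C) against Figure~\ref{fig:lambda5} completes the enumeration. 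The one subtlety is confirming that emptiness of $W_i$ and $C_i$ for all $i$ indeed yields $\LSF_4$ — i.e. that the lower-color components are also controlled — but this follows by applying the same reasoning at colors $\imo$ and below, or by citing Proposition~\ref{prop:empty-W} directly.

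The main obstacle I anticipate is the bookkeeping in the converse case analysis: one must verify that the local pictures in Figures~\ref{fig:type-A}, \ref{fig:type-B}, \ref{fig:type-C} (and \ref{fig:type-W}) glue up along $E_i$-edges to produce exactly the global components of Figure~\ref{fig:lambda5} and nothing else, using axioms $1$, $2$, $3$ to rule out spurious extra edges (e.g. a vertex simultaneously carrying an $\imt$-edge and lying on a double $E_{\imo}/E_i$-edge, which axioms $1$ and $3$ forbid, as already exploited in the proof of Theorem~\ref{thm:phi-terminate}). This is a finite but somewhat delicate diagram chase; the conceptual content is entirely captured by the dichotomy ``too long along $E_{\imo} \cup E_i$'' versus ``too long along a flat chain,'' and once that dichotomy is in hand the verification is routine inspection of the four diagrams.
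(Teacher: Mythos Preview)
Your overall architecture matches the paper's: forward direction via Proposition~\ref{prop:empty-W} plus inspection of Figure~\ref{fig:lambda5}, converse via a case analysis on $i$-type after first invoking Proposition~\ref{prop:empty-W} to get $\LSF_4$. That part is fine.

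The gap is in the converse, specifically your claim that ``$i$-types B and C give flat $i$-chains, which by emptiness of $C_i(\G)$ have length at most $4$, and matching the allowed flat-chain lengths against Figure~\ref{fig:lambda5} completes the enumeration.'' Emptiness of $C_i$ bounds the flat $i$-chain at length $4$, but it does \emph{not} force a length-$4$ flat chain $(x_1,x_2,x_3,x_4)$ to close into the cyclic type-C component, i.e.\ to satisfy $E_{\imt}(x_1)=x_4$. There is a genuine alternative: $x_1$ could have $E_{\imt}(x_1)=E_{\imo}(x_1)$ (a double $\imt/\imo$ edge terminating the chain on that side) while $E_{\imt}(x_4)$ has $i$-type W with $E_{\imo}E_{\imt}(x_4)=E_iE_{\imt}(x_4)$ (terminating on the other side). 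This eight-vertex configuration satisfies axioms $1,2,3,5$, has $\LSF_4$, and has $W_i$ and $C_i$ both empty, yet it is not in Figure~\ref{fig:lambda5}. Axioms $1,2,3$ alone do not exclude it; what kills it is $\LSP_5$, since neither signature assignment for that component is Schur positive. The paper draws exactly this picture (its Figure~\ref{fig:no_loop}) and dispatches it by appeal to local Schur positivity in degree $5$.

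So the step you flagged as ``routine inspection'' in fact needs one substantive use of $\LSP_5$ that your sketch omits. Once you insert that argument---show that if the length-$4$ chain fails to close then one is forced into the Figure~\ref{fig:no_loop} configuration and then invoke $\LSP_5$---the rest of your case analysis (type A, type W forcing $E_{\imo}=E_i$, the type-B gluing) goes through essentially as in the paper.
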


\begin{proof}
  When axiom $4$ holds for $\G$, as discussed above, flat $i$-chains have length at most $4$. Therefore, by Proposition~\ref{prop:empty-W}, both $W_i(\G)$ and $C_i(\G)$ are empty.

  Now suppose that both $W_i(\G)$ and $C_i(\G)$ are empty for all $i$. By Proposition~\ref{prop:empty-W}, $\LSF_4$ holds. In particular, in \eqref{eqn:X}, we always have $m_j = 0$ for Definition~\ref{defn:flat-chain}. If a vertex $u$ admitting an $i$-edge lies on a flat $i$-chain of length $4$, say $(x_1,x_2,x_3,x_4)$, then we claim $x_4 = E_{\imt} (x_1)$ and the component of $E_{\imt} \cup E_{\imo} \cup E_i$ containing $u$ appears as the fourth graph in Figure~\ref{fig:lambda5} (type C). By axiom $2$ and the flatness of the $i$-edges of the chain, we may assume by symmetry that $x_1,x_3$ admit $\imo$-edges but $x_2,x_4$ do not. If the claim is false, then either $E_{\imt}(x_1) = E_{\imo}(x_1)$ or there is a flat $i$-edge at $E_{\imt}(x_1)$. Similarly, either $E_{\imt}(x_4)$ has $i$-type W or there is a flat $i$-edge at $E_{\imt}(x_4)$. Since no flat $i$-chain can have length greater than $4$, we must have $E_{\imt}(x_1) = E_{\imo}(x_1)$ and $E_{\imt}(x_4)$ has $i$-type W. Since $x_3$ does not have $i$-type W, $E_{\imo}(x_3)$ does not admit an $i$-edge. Since $E_{\imo}(x_3)$ cannot have $\imo$-type W, it also does not admit an $\imt$-edge. Therefore the component of $E_{\imt} \cup E_{\imo} \cup E_i$ containing $u$ appears as in Figure~\ref{fig:no_loop}. Neither of the two possible signature assignments for this graph results in a Schur positive generating function, so we have our contradiction.

  \begin{figure}[ht]
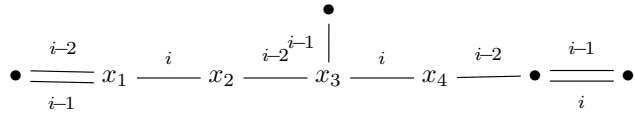

    \begin{displaymath}
      \begin{array}{\cs{7}\cs{7}\cs{7}\cs{7}\cs{7}\cs{7}c}
        & & & \rnode{t2}{\B} & & & \\[3ex]
        \rnode{x0}{\B} & \rnode{x1}{x_1} & \rnode{x2}{x_2} & \rnode{x3}{x_3} & \rnode{x4}{x_4} & \rnode{x5}{\B} & \rnode{t3}{\B}
      \end{array}
    \psset{nodesep=3pt,linewidth=.1ex}
    \everypsbox{\scriptstyle}
    \ncline {x3}{t2} \naput{\imo}
    \ncline[offset=2pt] {x5}{t3} \naput{\imo}
    \ncline[offset=2pt] {t3}{x5} \naput{i}
    \ncline[offset=2pt] {x0}{x1} \naput{\imt}
    \ncline[offset=2pt] {x1}{x0} \naput{\imo}
    \ncline {x1}{x2} \naput{i}
    \ncline {x2}{x3} \naput{\imt}
    \ncline {x3}{x4} \naput{i}
    \ncline {x4}{x5} \naput{\imt}
    \end{displaymath}
    \caption{\label{fig:no_loop}The flat $i$-chain of length $4$ that is not a loop.}
  \end{figure}

  If a vertex $u$ admitting an $i$-edge does not lie on a flat $i$-chain, then either $u$ or $E_i(u)$ does not admit an $\imt$-edge. If neither of them does, then the $i$-edge is flat, and exactly one of them, say $u$, must admit an $\imo$-edge. Then $E_i(u) \neq E_{\imo}(u)$, and so from the earlier discussion, $u$ cannot have $i$-type W. Therefore $E_{\imo}(u)$ does not admit an $i$-edge. Since $u$ does not admit an $\imt$-edge, $E_{\imo}(u)$ admits an $\imt$-edge and does not have $\imo$-type W. Therefore $E_{\imt} E_{\imo} (u)$ admits neither an $\imo$-edge nor an $i$-edge, and the component of $E_{\imt} \cup E_{\imo} \cup E_i$ containing $u$ appears as the second graph in Figure~\ref{fig:lambda5} (type A).

  If exactly one of $u$ and $E_i(u)$ has an $\imt$-edge, say $u$ does, then $u$ has $i$-type W and, by the previous arguments, $E_{\imo}(u) = E_{i}(u)$. Then since $u$ is assumed to admit an $\imt$-edge, it cannot have $\imo$-type W, and so $E_{\imt}(u)$ must admit an $i$-edge and no $\imo$-edge. Therefore $E_{\imt}(u)$ has a flat $i$-edge, and so $E_i E_{\imt}(u)$ admits an $\imt$-edge. If that $\imt$-edge has $\imo$-type W, then the component of $E_{\imt} \cup E_{\imo} \cup E_i$ containing $u$ appears as the third graph in Figure~\ref{fig:lambda5} (type B). If not, then $E_{\imt} E_i E_{\imt}(u)$ admits an $i$-edge but not an $\imo$-edge, and so the $i$-edge is flat and $E_i E_{\imt} E_i E_{\imt}(u)$ will admit an $\imt$-edge, resulting in a flat $i$-chain of length $4$. By the previous analysis, this must be a loop, giving us our contradiction.
\end{proof}

\begin{proposition}
  Let $\G$ be a signed, colored graph of type $(i,N)$ such that the $(\imt,N)$-restriction of $\G$ is a dual equivalence graph and the $(i,N)$-restriction of $\G$ satisfies axiom $4$. If a vertex $w$ of $\G$ has $i$-type W, then any vertex $v$ on the $i$-package of $w$ that has $i$-type C satisfies $E_{\imt}(v) = E_{\imh}(v)$.
  \label{prop:type-C}  
\end{proposition}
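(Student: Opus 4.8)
\emph{Overview of the plan.} The plan is to pass to the $i$-package of $w$, where under the hypotheses everything is governed by an explicit tableau model, to extract from the presence of the $i$-type-W vertex $w$ a restriction on that model, and then to verify the conclusion for each $i$-type-C vertex by a finite local check.

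\emph{Reduction to a tableau model.} By the Remark after Figure~\ref{fig:type-W} and axiom $5$, every edge $E_j$ with $j\ge\iph$ changes neither the $i$-type of a vertex nor the truth of ``$E_{\imt}(\cdot)=E_{\imh}(\cdot)$'', and each $i$-package is a product of a connected component of $E_2\cup\cdots\cup E_{\imh}$ with one of $E_{\iph}\cup\cdots\cup E_{\nmo}$; so we may assume $\G$ has type $(\ipo,N)$. Then the $i$-package $\mathcal P$ of $w$ is exactly the connected component of the $(\imt,N)$-restriction of $\G$ containing $w$, which is a dual equivalence graph by hypothesis; hence by Theorem~\ref{thm:isomorphic} and Lemma~\ref{lem:extend-signs}, $\mathcal P\cong\G_{\lambda,A}$ for a partition $\lambda$ of $\imt$ and an augmenting tableau $A$, whose fixed cells we call $\beta,\gamma,\delta$ for the entries $i-1,i,i+1$ respectively. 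Since $\beta,\gamma,\delta$ are fixed, $\sigma_{\imo}$ and $\sigma_i$ are constant on $\mathcal P$; as $w$, and hence all of $\mathcal P$, admits an $i$-edge, these constants are opposite, and a global sign reversal lets us take $\sigma_{\imo}\equiv +1$ and $\sigma_i\equiv -1$ on $\mathcal P$, which says exactly that $\beta$ lies weakly above $\gamma$ and $\delta$ lies strictly above $\gamma$.

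\emph{Using the hypothesis.} A vertex $T\in\mathcal P$ has $i$-type W precisely when it has an $\imo$-edge (equivalently $\sigma(T)_{\imt}=-1$, i.e.\ the entry $i-2$ sits strictly below $\beta$) and that $\imo$-edge flips $\sigma_i$; following the elementary dual equivalence move $d_{\imo}$ through the cells $\beta$, $\gamma$ and the cell of $i-2$ shows this forces $d_{\imo}$ to interchange $i-1$ with $i$ rather than with $i-2$, and in addition requires $\beta$ to lie weakly above $\delta$. Since such a $T$ exists by hypothesis, we conclude that $\beta$ lies weakly above $\delta$ (hence $\beta$ weakly above $\delta$ strictly above $\gamma$), and then two things follow: every $i$-edge of $\mathcal P$ is flat, and $T\in\mathcal P$ has $i$-type W if and only if the entry $i-2$ lies strictly below $\gamma$ in $T$. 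This is the structural restriction that drives everything.

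\emph{The local check.} It remains to prove $E_{\imt}(v)=E_{\imh}(v)$ whenever $v\in\mathcal P$ has $i$-type C. Since $i$-type W is incompatible with $i$-type C and (the Remark) is unchanged along every edge of the $i$-package other than the $E_{\imh}$-edges, axiom $6$ for color $\imh$ in $\mathcal P$ produces an $E_{\imh}$-edge $\{a,b=E_{\imh}(a)\}$ with $a$ of $i$-type W, with $a$ joined to $w$ and $b$ joined to $v$ through the remaining package colors; running the diagram chase of Theorem~\ref{thm:phi-terminate} on the $E_{\imh}\cup E_{\imt}\cup E_{\imo}$-component of $\{a,b\}$ — which lies in Figure~\ref{fig:lambda5} by axiom $4$ for the $(i,N)$-restriction, with $(\imh,\imt,\imo)$ in the roles of $(\imt,\imo,i)$ — one pins that component down as the six-vertex $s_{(3,1,1)}$ graph and reads off $E_{\imt}(b)=E_{\imh}(b)=a$; one then transports this coincidence along the low-color edges joining $b$ to $v$, using that a carrier of a doubled $\{\imh,\imt\}$-edge occupies a ``$d$'' or ``$e$'' vertex of the five-vertex graph in its $E_{\imf}\cup E_{\imh}\cup E_{\imt}$-component (again in Figure~\ref{fig:lambda5}). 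Alternatively, and I think more cleanly, one stays inside $\G_{\lambda,A}$: translate ``$v$ has $i$-type C'' and ``$E_{\imt}(v)=E_{\imh}(v)$'' into conditions on the cells of the entries $i-4,i-3,i-2,i-1$ of $v$, and verify, using the restriction ``$\beta$ weakly above $\delta$ strictly above $\gamma$'', that the former implies the latter. I expect this last step to be the main obstacle: it is the only place the hypothesis is genuinely consumed (through the tall configuration of $\rho/\lambda$), and the bookkeeping — whether run through Figure~\ref{fig:lambda5} across the $E_{\imf}$- and $E_{i-5}$-edges, which commute with neither $E_{\imh}$ nor $E_{\imt}$, or run directly on tableaux — is where the case analysis concentrates.
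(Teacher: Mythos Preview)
Your reduction to the tableau model $\G_{\lambda,A}$ and your extraction of the constraint on the fixed cells $\beta,\gamma,\delta$ from the existence of an $i$-type-W vertex are exactly the paper's approach. The gap is that you stop one step short: you never translate ``$v$ has $i$-type C'' into a condition on the relative positions of $i-1,i,i+1$ and compare it to the constraint you already derived. The paper does this (phrasing it in reading-word terms): $i$-type W forces $i+1$ to lie between $i-1$ and $i$, while $i$-type C forces $i-1$ to lie between $i$ and $i+1$. Since the positions of $i-1,i,i+1$ are constant on the $i$-package, these are incompatible, so there is \emph{no} vertex of $i$-type C on $\mathcal P$, and the conclusion $E_{\imt}(v)=E_{\imh}(v)$ holds vacuously. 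That single observation is the whole proof after your setup.

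Because you miss this, your ``local check'' section is chasing a phantom. The diagram-chase variant you sketch has real problems: if $a$ has $i$-type W then by axiom $4$ it sits on a double $E_{\imt},E_{\imo}$-edge, which in the $E_{\imh}\cup E_{\imt}\cup E_{\imo}$ picture of Figure~\ref{fig:lambda5} (with the shift you describe) places $a$ at the end of the five-vertex component where it admits \emph{no} $E_{\imh}$-edge, contradicting $b=E_{\imh}(a)$; so the component cannot be pinned down as the six-vertex $s_{(3,1,1)}$ graph the way you claim. Your ``cleaner'' tableau alternative is the right instinct and would, if carried out, immediately reveal the vacuity --- so the ``main obstacle'' you anticipate is in fact a one-line observation. (A minor side issue: your claim that ``every $i$-edge of $\mathcal P$ is flat'' is not meaningful here since $\G$ has type $(i,N)$ and there are no $E_i$-edges; fortunately you never use it.)
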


\begin{proof}
  For $\G$ a dual equivalence graph of type $(n,N)$ with $i \leq n$, we claim that if a vertex $w$ of $\G$ has $i$-type W, then any vertex $v$ on the $i$-package of $w$ that has $i$-type C satisfies $E_{\imt}(v) = E_{\imh}(v)$. By Theorem~\ref{thm:isomorphic} and Lemma~\ref{lem:extend-signs}, we may assume $\G = \G_{\mu,A}$ for some partition $\mu$ of $i$ and some augmenting tableau $A$ containing entries $\ipo,\ldots,N$. Let $\lambda$ be the uniquely determined shape of $\mu$ together with the cell in $A$ containing $\ipo$. A tableau $T \in \G_{\lambda}$ has $i$-type W if and only if both $\imt$ and $\ipo$ lie between $\imo$ and $i$ in the reading word of $T$. From the proof of Theorem~\ref{thm:cover}, a tableau $T \in \G_{\lambda}$ has $i$-type C if and only if $\imo$ lies between $i$ and $\ipo$ in the reading word of $T$. For $h \leq \imh$, an $E_h$ edge does not change the positions of entries greater than $\imt$, and for $h \geq \iph$, an $E_h$ edge does not change the positions of entries less than $\ipt$. In particular, the positions of $\imo,i,\ipo$ are constant on $i$-packages. The result follows, as does the proposition.
\end{proof}

By Proposition~\ref{prop:empty}, axiom $4$ holds if and only if $W_i$ and $C_i$ are empty. The map $\varphi_i^w$ reduces the cardinality of $W_i$. The second transformation, $\psi_i^x$, depicted in Figure~\ref{fig:psi}, takes as input an element $x \in C_i(\G)$. The goal with $\psi_i^x$ is to redefine $i$-edges so that $E_{\imt} E_i(x) = E_i E_{\imt}(x)$, that is, so that $x$ is not in $C_i(\G)$.

\begin{figure}[ht]
  \begin{displaymath}
    \begin{array}{\cs{8}\cs{8}c}
      & \rnode{O1}{\B} & \\[4ex] 
      \rnode{a1}{u}  & \rnode{b1}{x}  & \rnode{c1}{\B} \\[5ex] 
      \rnode{e1}{\B} & \rnode{f1}{\B} & \\[5ex] 
      \rnode{h1}{\B} & \rnode{i1}{\B} & 
    \end{array}
    \hspace{4em}
    \begin{array}{\cs{8}\cs{8}\cs{8}c}
      & & \rnode{O}{\B} & \\[4ex] 
      \rnode{T}{\B} & \rnode{a2}{\B} & \rnode{b2}{x} & \rnode{c2}{\B} \\[5ex] 
      \rnode{B}{u} & \rnode{e2}{\B} & \rnode{f2}{\B} & \\[5ex] 
      \rnode{g2}{\B} & \rnode{h2}{\B} & \rnode{i2}{\B} & 
    \end{array}
    \psset{nodesep=3pt,linewidth=.1ex}
    \ncline {a1}{O1} \naput{\imo}
    \ncline {a1}{b1} \naput{i}
    \ncline {a1}{e1} \nbput{\imt}
    \ncline {b1}{f1} \naput{\imt}
    \ncline {f1}{c1} \nbput{\imo}
    \ncline {e1}{h1} \nbput{i}
    \ncline {f1}{i1} \naput{i}
    \ncline[linestyle=dashed] {<->}{e1}{f1} \nbput{\psi_i^x}
    \ncline[linestyle=dashed] {<->}{h1}{i1} \nbput{\psi_i^x}
    \ncline {O}{a2} \nbput{\imo}
    \ncline {T}{a2} \naput{\imt}
    \ncline {T}{B} \nbput{\imo}
    \ncline {B}{e2} \nbput{\imt}
    \ncline {a2}{b2} \naput{i}
    \ncline {b2}{f2} \naput{\imt}
    \ncline {f2}{c2} \nbput{\imo}
    \ncline {B}{g2} \nbput{i}
    \ncline {e2}{h2} \nbput{i}
    \ncline {f2}{i2} \naput{i}
    \ncline[linestyle=dashed] {<->}{e2}{f2} \nbput{\psi_i^x}
    \ncline[linestyle=dashed] {<->}{h2}{i2} \nbput{\psi_i^x}
  \end{displaymath}
  \caption{\label{fig:psi} An illustration of $\psi_i^x$ where $x \in \widetilde{C}_i(\G)$ and $u = (E_{\imo}E_{\imt})^m E_{i}(x)$ does not have $\imo$-type W, for $m=0,1$.}
\end{figure}

The definition of $\psi_i^x$ on the connected component of $E_{\imt} \cup E_i$ containing $x$ is straightforward provided neither $x$ nor $E_i(x)$ has $\imo$-type W. In general, $\psi_i^x$ will be defined whenever some vertex on the connected component of $E_{\imt} \cup E_{\imo}$ containing $E_i(x)$ does not have $\imo$-type W. As before, the first step in defining the transformation is to extend it to $i$-packages.

\begin{lemma}
  Let $\G$ be a signed, colored graph of type $(n,N)$ satisfying dual equivalence axioms $1,2,3$ and $5$, and suppose that the $(\imt,N)$-restriction of $\G$ is a dual equivalence graph.  Let $x$ not admit an $\imo$-neighbor but have a flat $i$-edge such that neither $x$ nor $\left( E_{\imo} E_{\imt} \right)^{m} E_i(x)$ has $\imo$-type W for some $m \geq 0$, and suppose all vertices between $x$ and $\left( E_{\imo} E_{\imt} \right)^{m} E_i(x)$ have flat $\imt$-edges throughout their $\imt$-packages. Then the $i$-package of $E_{\imt}(x)$ is isomorphic to the $i$-package of $E_{\imt}\left(E_{\imo} E_{\imt} \right)^{m} E_i(x)$.
\label{lem:psi-compatible}
\end{lemma}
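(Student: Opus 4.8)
The plan is to proceed in close parallel with Lemma~\ref{lem:phi-compatible}: realize the required isomorphism as the composition of edge-involutions along an explicit path, observe that axioms $1$, $2$ and $5$ make that composition respect all of the $i$-package structure except for two low colors, and then promote it across those two colors using the dual equivalence structure of the $(\imt,N)$-restriction. Concretely, write $y=E_i(x)$ and $z=(E_{\imo}E_{\imt})^m(y)$. Since $x$ has a flat $i$-edge but no $\imo$-neighbor, its $i$-neighbor $y$ does admit an $\imo$-neighbor, so the hypotheses guarantee that $E_{\imt}(x)$ and $E_{\imt}(z)$ are joined by the path
\[
  E_{\imt}(x)\ \to_{\imt}\ x\ \to_{i}\ y\ \to_{\imt}\ E_{\imt}(y)\ \to_{\imo}\ \cdots\ \to_{\imo}\ z\ \to_{\imt}\ E_{\imt}(z),
\]
all of whose edges have colors in $\{\imt,\imo,i\}$. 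Let $\iota$ be the composition of the corresponding involutions; as in Lemma~\ref{lem:phi-compatible}, axioms $1$, $2$ and $5$ show that $\iota$ is a well-defined bijection between the relevant sets of vertices, and since each of $E_{\imt}$, $E_{\imo}$, $E_i$ commutes with $E_h$ for $h\le i-5$ or $h\ge\iph$ and preserves $\sigma_h$ for $h\le i-5$ or $h\ge\ipt$, the map $\iota$ automatically respects the part of the $i$-package carried by those colors and coordinates. An $i$-package isomorphism is required to respect the colors $\{2,\dots,\imh\}\cup\{\iph,\dots,\nmo\}$ and to preserve the coordinates $\sigma_1,\dots,\sigma_{\imh},\sigma_{\ipt},\dots,\sigma_{N-1}$, so what remains is to upgrade $\iota$ so that it also respects the two colors $E_{\imf}$, $E_{\imh}$ and preserves $\sigma_{\imf}$, $\sigma_{\imh}$; once this is done, $\iota$ carries the $i$-package of $E_{\imt}(x)$ isomorphically onto that of $\iota(E_{\imt}(x))=E_{\imt}(z)$.

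For the $E_{\imf}$ layer I would use the hypothesis that every vertex between $x$ and $z$ has a flat $\imt$-edge throughout its $\imt$-package: by Definition~\ref{defn:flat} a flat $\imt$-edge preserves $\sigma_{\imf}$, so on these $\imt$-packages the involution $E_{\imt}$ preserves $\sigma_{\imf}$, and, combined with the hypothesis that the $(\imt,N)$-restriction --- which contains $E_{\imf}$ --- is a dual equivalence graph and with Lemma~\ref{lem:extend-signs}, one reads off from the relevant augmented dual equivalence graphs that $\iota$ then respects $E_{\imf}$ and preserves $\sigma_{\imf}$. (This step has no counterpart in Lemma~\ref{lem:phi-compatible}, whose composition is the single involution $E_{\imo}$, which already commutes with $E_{\imf}$ and preserves $\sigma_{\imf}$.) For the $E_{\imh}$ layer I would mimic the argument of Lemma~\ref{lem:phi-compatible}: by Lemma~\ref{lem:extend-signs} and the dual equivalence hypothesis, the low-color restrictions of the $i$-packages of $E_{\imt}(x)$ and of $E_{\imt}(z)$ admit isomorphisms onto a common augmented dual equivalence graph $\G_{\mu,A}$ with $\mu$ a partition of $\imh$, and Theorem~\ref{thm:cover} glues these consistently across $E_{\imh}$-edges to isomorphisms onto $\G_{\lambda}$; composing these, as illustrated in Figure~\ref{fig:compatible}, and combining with what was already established on the other colors yields the claimed isomorphism. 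The hypothesis that neither $x$ nor $z$ has $\imo$-type W is what keeps this gluing coherent, since (as in the Remark following Figure~\ref{fig:type-W}) $E_{\imh}$ is the one color whose edges can change whether a vertex has $i$-type W.

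The main obstacle is this $E_{\imh}$ layer. Each $\imt$-edge along the path flips $\sigma_{\imh}$, whereas the effect of the $\imo$-edges on $\sigma_{\imh}$ is governed by axiom $3$ --- an $\imo$-edge flips $\sigma_{\imh}$ precisely when it is a double edge with $E_{\imh}$ --- so proving that the composite $\iota$ preserves $\sigma_{\imh}$, and correspondingly respects $E_{\imh}$-adjacency across the whole $i$-package, forces one to analyse the connected components of $E_{\imf}\cup E_{\imh}\cup E_{\imt}$ and of $E_{\imh}\cup E_{\imt}\cup E_{\imo}$ encountered along the path, case by case, in the style of the proofs of Proposition~\ref{prop:empty} and Theorem~\ref{thm:phi-terminate}, checking that the flatness and non-$\imo$-type-W hypotheses exclude exactly the configurations in which the sign flips fail to cancel. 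This bookkeeping --- which does not arise in Lemma~\ref{lem:phi-compatible}, where the path has a single edge --- is where essentially all of the work lies; handling $E_{\imf}$ is a second, much milder point, settled by the flat-$\imt$-edge hypothesis together with the dual equivalence structure of the $(\imt,N)$-restriction.
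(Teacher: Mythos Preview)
Your overall plan---parallel Lemma~\ref{lem:phi-compatible} and promote a low-color partial isomorphism across the two missing colors---is the right shape, but the paper executes it along a different route, and the place where you diverge is exactly the step you flag as unfinished.

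The paper does \emph{not} try to show that the composite $\iota$ along the full path $E_{\imt}(x)\to x\to E_i(x)\to\cdots\to u\to E_{\imt}(u)$ is itself an $i$-package isomorphism. Instead it works from the inside out. First, Lemma~\ref{lem:phi-compatible} (with $i$ shifted to $\imo$) is applied to each \emph{inner} $E_{\imt}$-edge on the path from $E_i(x)$ to $u$; these inner edges have endpoints of $\imo$-type~W, so the lemma applies and, together with the $E_{\imo}$-steps, yields an isomorphism of $\imo$-packages---hence of $(\imh,\imt)$-restrictions of $i$-packages---between $E_i(x)$ and $u$. A signature chase then shows $\sigma(u)=\sigma(E_i(x))$ in full (including $\sigma_i$, because the intermediate vertices cannot have $i$-type~W). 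The argument of Lemma~\ref{lem:phi-compatible} now upgrades this to an $i$-package isomorphism between $E_i(x)$ and $u$, and composing with $E_i$ gives one between $x$ and $u$. The crucial final step is \emph{not} pushing $\iota$ through the two outer $E_{\imt}$-edges, but invoking Theorem~\ref{thm:cover}: the $(\imt,\imo)$-restrictions at $x$ and $u$ are isomorphic to the same $\G_{\mu,A}$, the cover morphisms $\overline f_x,\overline f_u$ to $\G_\lambda$ are uniquely determined, and therefore the $(\imt,\imo)$-restrictions at $E_{\imt}(x)$ and $E_{\imt}(u)$ map to the \emph{same} restricted component of $\G_\lambda$. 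This yields the desired isomorphism---which is $\overline f_u^{-1}\circ\overline f_x$ on low colors, not $\iota$.

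Your proposed case analysis for the $E_{\imh}$ layer runs into a real obstacle at the two outer $E_{\imt}$-edges: these are precisely the edges where the hypothesis says $x$ and $u$ are \emph{not} of $\imo$-type~W, so the shifted Lemma~\ref{lem:phi-compatible} is unavailable there, and there is no reason for $E_{\imt}E_iE_{\imt}\cdots$ to intertwine $E_{\imh}$ directly. (Your parenthetical that an $\imo$-edge flips $\sigma_{\imh}$ exactly when it is a double edge with $E_{\imh}$ is also not what axiom~3 says.) The paper sidesteps this entirely via the cover morphism, and that is the missing idea in your sketch.
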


\begin{proof}
  Let $u = \left( E_{\imo} E_{\imt} \right)^{m} E_i(x)$. By axioms $1,2$ and $5$, $E_{\imt}, E_{\imo}, E_i$ all commute with $E_h$ for $h \geq \iph$, so the restriction of the $i$-package of any vertex on the connected component of $E_{\imt} \cup E_{\imo} \cup E_i$ containing $x$ to $E_{\iph} \cup \cdots \cup E_{\nmo}$ are isomorphic.  Therefore we focus our attention on extending the restriction to $E_{2} \cup \cdots \cup E_{\imh}$.

  Since all $E_{\imt}$ edges between $E_i(x)$ and $u$ are flat along their $\imt$-packages, Lemma~\ref{lem:phi-compatible} applies to each. Therefore, since $E_{\imo}$ always gives an isomorphism of $\imo$-packages, the $\imo$-package of $E_i(x)$ is isomorphic to the $\imo$-package of $u$. Further, each $E_{\imt}$ or $E_{\imo}$ edge changes $\sigma_j$ for $j = \imh,\imt,\imo$, and so $\sigma(u)_{j} = \sigma(E_i(x))_j$ for $j \leq \imo$ and $j \geq \ipo$. By axiom $2$, the $E_{\imt}$ edges preserve $\sigma_i$. Therefore, by axiom $1$, $E_{\imt} E_i(x)$ does not admit an $i$-neighbor, so neither $E_{\imt} E_i(x)$ nor $E_{\imo} E_{\imt} E_i(x)$ has $i$-type W. Continuing the argument along to $u$, no vertex of the form $E_{\imt} \left( E_{\imo} E_{\imt} \right)^{k} E_i(x)$ admits an $i$-neighbor for $0 \leq k < m$, and so none of the vertices after $E_i(x)$ can have $i$-type W. In particular, each $E_{\imo}$ edge from $E_i(x)$ to $u$ preserves $\sigma_i$ as well, and so $\sigma(u) = \sigma(E_i(x))$.
  
  Therefore we have an isomorphism between the $(\imh,\imt)$-restrictions of the $i$-packages of $E_i(x)$ and $u$. By the same argument used in the proof of Lemma~\ref{lem:phi-compatible}, we invoke Lemma~\ref{lem:extend-signs} and the hypothesis that the $(\imt,N)$-restriction of $\G$ is a dual equivalence graph to extend this to an isomorphism between the $i$-packages of $E_i(x)$ and $u$ and $\sigma(u) = \sigma(E_i(x))$. Regarding $E_i$ as an isomorphism of $i$-packages, it follows that $x$ and $u$ also have isomorphic $i$-packages. Thus, by Theorem~\ref{thm:isomorphic} and Lemma~\ref{lem:extend-signs}, the connected components of the $(\imt,\imo)$-restriction of $\G$ containing $x$ and $u$ are both isomorphic to $\G_{\mu,A}$ for the same partition $\mu$ of $\imt$ and the same augmenting tableau $A$ consisting of a single cell containing $\imo$. Denote these isomorphisms by $f_x$ and $f_u$, respectively, and let $\lambda$ be the shape of $\mu$ augmented by $A$. 

  Since the $(\imo,\imo)$-restriction of $\G$ satisfies the hypotheses of Theorem~\ref{thm:cover}, the isomorphisms $f_x$ and $f_u$ extend to morphisms $\overline{f}_x$ and $\overline{f}_u$ from the connected components of the $(\imo,\imo)$-restriction of $\G$ containing $x$ and $u$ to $\G_{\lambda}$. The picture is very similar to Figure~\ref{fig:compatible}, though now the top map is $E_i$ and the extended maps are surjective though not necessarily injective. Despite the lack of injectivity, the uniqueness of $\lambda$ and the extended maps ensures that the $(\imt,\imo)$-restriction of $\G_{\lambda}$ containing $E_{\imt}(x)$ is isomorphic to the $(\imt,\imo)$-restriction of $\G_{\lambda}$ containing $E_{\imt}(u)$, thereby establishing the desired isomorphism of $i$-packages.
\end{proof}

As with Lemma~\ref{lem:phi-compatible}, the hypotheses of Lemma~\ref{lem:psi-compatible} cannot be relaxed, so these vertices are of particular importance. Therefore we define the set $C_i^0(\G) \subseteq C_i(\G)$ by
\begin{equation}
  C_i^0(\G) = \{ x \in C_i(\G) \ | \ \mbox{all vertices between $x$ and $\left( E_{\imo} E_{\imt} \right)^{m} E_i(x)$ have flat $\imt$-edges} \},
  \label{eqn:X-0}
\end{equation}
where $m \geq 0$ is such that $\left(E_{\imo} E_{\imt} \right)^{m}E_i(x)$ does not have $\imo$-type W.

\begin{proposition}
  If every connected component of $E_{\imh} \cup E_{\imt}$ appears in Figure~\ref{fig:lambda4}, then $C_i(\G) = C_i^0(\G)$.   
  \label{prop:psi-lambda4}
\end{proposition}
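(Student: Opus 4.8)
The plan is to follow the proof of Proposition~\ref{prop:phi-lambda4} essentially verbatim, lowering every color by one: replace $i$ by $\imo$, $\imo$-edges by $\imt$-edges, $\imt$-edges by $\imh$-edges, and ``$i$-type W'' by ``$\imo$-type W''. Since $C_i^0(\G)\subseteq C_i(\G)$ holds by definition, it suffices to show that for each $x\in C_i(\G)$ the extra condition defining $C_i^0(\G)$ in \eqref{eqn:X-0} holds automatically.

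First I would establish the index-shifted structural fact underlying the argument: under the hypothesis, a vertex $v$ admitting an $\imt$-edge has a non-flat $\imt$-edge if and only if $E_{\imt}(v)=E_{\imh}(v)$. For the forward direction, axiom~$3$ applied to the non-flat $\imt$-edge forces both $v$ and $E_{\imt}(v)$ to admit $\imh$-neighbors, so both have degree two in the connected component of $E_{\imh}\cup E_{\imt}$ containing $v$; by hypothesis this component appears in Figure~\ref{fig:lambda4}, and the only such component with two vertices of degree two is the double edge, whence $E_{\imt}(v)=E_{\imh}(v)$. The reverse direction follows from axiom~$2$ applied to the $\imh$-edge $\{v,E_{\imh}(v)\}$, which fixes $\sigma_{\imf}$. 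Now when $E_{\imt}(v)=E_{\imh}(v)$, axiom~$2$ applied to that same $\imh$-edge also fixes $\sigma_{\imo}$, so $\sigma(v)_{\imo}=\sigma(E_{\imt}(v))_{\imo}$ and $v$ does not have $\imo$-type W; equivalently, every vertex of $\imo$-type W has a flat $\imt$-edge. Moreover, by the index-shifted form of the Remark following Definition~\ref{defn:type-W}, whether a vertex has $\imo$-type W is unchanged along every edge lying in an $\imt$-package, so $\imo$-type W is constant on $\imt$-packages; hence a vertex of $\imo$-type W has a flat $\imt$-edge throughout its entire $\imt$-package.

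Finally I would apply this to $x\in C_i(\G)$. Let $m$ be least with $u:=(E_{\imo}E_{\imt})^m E_i(x)$ not of $\imo$-type W; by minimality, $(E_{\imo}E_{\imt})^k E_i(x)$ has $\imo$-type W for $0\le k<m$, and therefore so does its $\imt$-neighbor $E_{\imt}(E_{\imo}E_{\imt})^k E_i(x)$, since being of $\imo$-type W is a symmetric property of $\imt$-edges. These are precisely the vertices strictly between $x$ and $u$ along the flat $i$-chain of Definition~\ref{defn:flat-chain} (when $m=0$ there are none and the condition is vacuous), so by the previous paragraph each has a flat $\imt$-edge throughout its $\imt$-package; this is the condition of \eqref{eqn:X-0}, so $x\in C_i^0(\G)$, giving $C_i(\G)\subseteq C_i^0(\G)$ and hence equality. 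The step I expect to require the most care is this last one: matching the vertices listed above against the precise meaning of ``the vertices between $x$ and $(E_{\imo}E_{\imt})^m E_i(x)$'' in Definition~\ref{defn:flat-chain} and \eqref{eqn:X-0}, and checking that $x$ and $u$ themselves—which need not have $\imo$-type W—are correctly excluded, while every remaining intermediate vertex does have $\imo$-type W so that the previous step applies.
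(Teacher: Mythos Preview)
Your proof is correct and takes a genuinely different route from the paper's. You index-shift the proof of Proposition~\ref{prop:phi-lambda4}: under the hypothesis, a non-flat $\imt$-edge forces $E_{\imt}(v)=E_{\imh}(v)$, which (since an $E_{\imh}$-edge fixes $\sigma_{\imo}$) precludes $\imo$-type W; taking the contrapositive, every vertex of $\imo$-type W has a flat $\imt$-edge, and since $\imo$-type W is constant on $\imt$-packages this holds throughout the package. You then observe that the intermediate vertices between $E_i(x)$ and $u$ are exactly those of $\imo$-type W, so the $C_i^0$ condition is met directly. The paper instead argues by contradiction at $x$ itself: assuming the $\imt$-edge at $x$ is non-flat gives $E_{\imt}(x)=E_{\imh}(x)$, and then an axiom~5 computation yields $E_iE_{\imt}(x)=E_{\imh}E_i(x)$; since both $E_i(x)$ and $E_{\imh}E_i(x)$ admit $\imt$-neighbors, the hypothesis forces $E_{\imt}E_i(x)=E_{\imh}E_i(x)$, whence $E_iE_{\imt}(x)=E_{\imt}E_i(x)$, contradicting $x\in C_i(\G)$. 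Both routes rest on the same structural observation, but you apply its contrapositive to the intermediate vertices while the paper feeds it into an axiom-5 computation at $x$. Your approach is more direct for the proposition as stated; the paper's extracts the additional fact that any $x\in C_i(\G)$ must itself have a flat $\imt$-edge.

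One small slip: in your reverse direction, axiom~2 for an $E_{\imh}$-edge \emph{toggles} $\sigma_{\imf}$ (it is the index $(\imh)-1$), not fixes it; that toggling is precisely what makes the $\imt$-edge non-flat. You never use this direction, so the main argument is unaffected.
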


\begin{proof}
  If every connected component of $E_{\imh} \cup E_{\imt}$ appears in Figure~\ref{fig:lambda4}, then if the $\imt$-edge at $x$ is not flat, by axiom $4$, $E_{\imt}(x) = E_{\imh}(x)$. By axiom $2$, this ensures that $E_{\imt}(x)$ does not have $\imo$-type W, so we are in the case where $m=0$. By axiom $5$, $E_{\imh}E_i(x) = E_iE_{\imh}(x) = E_iE_{\imt}(x)$. For $x \in C_i(\G)$, both $E_i(x)$ and $E_{\imh}E_i(x)$ admit $\imt$-neighbors, so by axiom $4$ we have $E_{\imt}(E_i(x)) = E_{\imh}(E_i(x))$. Therefore $E_{\imt}E_i(x) = E_iE_{\imt}(x)$, contradicting the assumption that $x \in C_i(\G)$. Therefore, whenever every connected component of $E_{\imh} \cup E_{\imt}$ appears in Figure~\ref{fig:lambda4}, provided there exists $m$ for which $\left( E_{\imo} E_{\imt} \right)^{m} E_i(x)$ does not have $\imo$-type W, if $x \in C_i(\G)$ then $x \in C_i^0(\G)$ as well. In particular, $C_i(\G) = C_i^0(\G)$ in this case.
\end{proof}

Given $x \in C^0_i(\G)$, we use the isomorphism of Lemma~\ref{lem:psi-compatible} to define an involution $\psi_i^x$ as follows.

\begin{definition}
  For $x \in C^0_i(\G)$, let $u = \left( E_{\imo} E_{\imt} \right)^{m} E_i(x)$ be the first vertex on the connected component of $E_{\imt} \cup E_{\imo}$ containing $E_i(x)$ not having $\imo$-type W. Let $\phi$ denote the isomorphism of Lemma~\ref{lem:psi-compatible}. Define the involution $\psi_i^x$ on all vertices admitting an $i$-neighbor as follows.
  \begin{equation}
    \psi_i^x(v) = \left\{ \begin{array}{rl}
        \phi(v) & \mbox{if $v$ lies on the $i$-package of $E_{\imt}(x)$ or $E_{\imt} (u)$,} \\[1ex]  
        E_i \phi E_i(v) & \mbox{if $E_i(v)$ lies on the $i$-package of $E_{\imt}(x)$ or $E_{\imt} (u)$,}\\[1ex]
        E_i(v) & \mbox{otherwise.}
      \end{array} \right.
    \label{eqn:psi}
  \end{equation}
  Define $E'_i$ to be the set of pairs $\{v,\psi_i^x(v)\}$ for each $v$ admitting an $i$-neighbor. Define a signed, colored graph $\psi_i^x(\G)$ of type $(n,N)$ by
  \begin{equation}
    \psi_i^x(\G) = (V, \sigma, E_2 \cup\cdots\cup E_{\imo} \cup E'_i \cup E_{\ipo} \cup\cdots\cup E_{\nmo}). 
  \end{equation}
\label{defn:psi}
\end{definition}

We again abuse notation by letting $\phi$ denote both the isomorphism from the $i$-package of $x$ to the $i$-package of $u$ and its inverse. Note that $\psi_i^x = \psi_i^u$ when $m = 0$.

Parallel to the case with $\varphi_i$, the goal is to use the maps $\psi_i$ to transform $\LSP_5$ into $\LSF_5$. For example, the graph in Figure~\ref{fig:frog} is not a dual equivalence graph. Figure~\ref{fig:frog} shows the resulting dual equivalence graph after implementing $\psi_4$ as well as $\varphi_3$ and $\varphi_4$. 

\begin{figure}[ht]
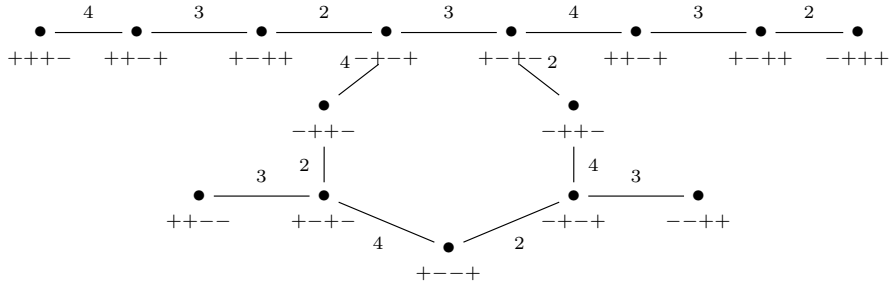

  \begin{displaymath}
    \begin{array}{\cs{3} \cs{2} \cs{2} \cs{2} \cs{2} \cs{2} \cs{2} \cs{2} \cs{2} \cs{2} \cs{2} \cs{2} \cs{2} \cs{3} c}
      \\[\cellsize]
      \sbull{A0}{+++-} & &
      \sbull{A1}{++-+} & & 
      \sbull{B1}{+-++} & & 
      \sbull{C1}{-+-+} & & 
      \sbull{D1}{+-+-} & & 
      \sbull{E1}{++-+} & & 
      \sbull{F1}{+-++} & &
      \sbull{F0}{-+++} \\[\cellsize] & & & & &
      \sbull{C2}{-++-} & & & & 
      \sbull{D2}{-++-} & & & & & \\[1.5\cellsize] & & &
      \sbull{B3}{++--} & & 
      \sbull{C3}{+-+-} & &  & & 
      \sbull{D3}{-+-+} & & 
      \sbull{E3}{--++} & & & \\ & & & & & & &
      \sbull{CD}{+--+} & & & & & & &
    \end{array}
    \psset{linewidth=.1ex,nodesep=3pt}
    \everypsbox{\scriptstyle}
    \ncline {A0}{A1} \naput{4}
    \ncline {F1}{F0} \naput{2}
    \ncline {A1}{B1} \naput{3}
    \ncline {B1}{C1} \naput{2}
    \ncline {C1}{D1} \naput{3}
    \ncline {D1}{E1} \naput{4}
    \ncline {E1}{F1} \naput{3}
    \ncline {C2}{C1C1} \naput{4}
    \ncline {D1D1}{D2} \naput{2}
    \ncline {C3}{C2C2} \naput{2}
    \ncline {D2D2}{D3} \naput{4}
    \ncline {B3}{C3} \naput{3}
    \ncline {C3}{CD} \nbput{4}
    \ncline {CD}{D3} \nbput{2}
    \ncline {D3}{E3} \naput{3}
  \end{displaymath}
  \caption{\label{fig:frog}A locally Schur positive graph with generating function $s_{4,1} + s_{3,2} + s_{3,1,1}$.}
\end{figure}

\begin{figure}[ht]
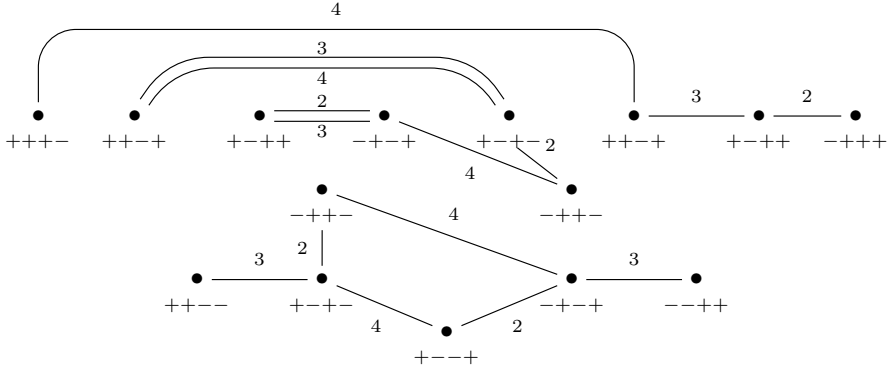

  \begin{displaymath}
    \begin{array}{\cs{3} \cs{2} \cs{2} \cs{2} \cs{2} \cs{2} \cs{2} \cs{2} \cs{2} \cs{2} \cs{2} \cs{2} \cs{2} \cs{3} c}
      \\[2\cellsize]
      \sbull{xA0}{+++-} & &
      \sbull{xA1}{++-+} & & 
      \sbull{xB1}{+-++} & & 
      \sbull{xC1}{-+-+} & & 
      \sbull{xD1}{+-+-} & & 
      \sbull{xE1}{++-+} & & 
      \sbull{xF1}{+-++} & &
      \sbull{xF0}{-+++} \\[\cellsize] & & & & &
      \sbull{xC2}{-++-} & & & & 
      \sbull{xD2}{-++-} & & & & & \\[1.5\cellsize] & & &
      \sbull{xB3}{++--} & & 
      \sbull{xC3}{+-+-} & &  & & 
      \sbull{xD3}{-+-+} & & 
      \sbull{xE3}{--++} & & & \\ & & & & & & &
      \sbull{xCD}{+--+} & & & & & & &
    \end{array}
    \psset{linewidth=.1ex,nodesep=3pt}
    \everypsbox{\scriptstyle}
    \ncdiag[angleA=90,angleB=90,arm=6.4ex,linearc=.5] {xA0}{xE1} \naput{4}
    \ncline[offset=2pt] {xB1}{xC1} \nbput{3}
    \ncline[offset=2pt] {xC1}{xB1} \nbput{2}
    \ncline {xE1}{xF1} \naput{3}
    \ncdiag[offset=2pt,angleA=55,angleB=125,arm=4.5ex,linearc=1]%
    {xA1}{xD1} \nbput{4}
    \ncdiag[offset=2pt,angleB=55,angleA=125,arm=4ex,linearc=1]%
    {xD1}{xA1} \nbput{3}
    \ncline {xF1}{xF0} \naput{2}
    \ncline {xD1xD1}{xD2} \naput{2}
    \ncline {xC2}{xD3} \naput{4}
    \ncline {xD2}{xC1} \naput{4}
    \ncline {xC3}{xC2xC2} \naput{2}
    \ncline {xCD}{xD3} \nbput{2}
    \ncline {xB3}{xC3} \naput{3}
    \ncline {xC3}{xCD} \nbput{4}
    \ncline {xD3}{xE3} \naput{3}
  \end{displaymath}
  \caption{\label{fig:dissect}The transformation of the graph in Figure~\ref{fig:frog} using $\varphi_3, \varphi_4$ and $\psi_4$.}
\end{figure}

The goal with $\psi_i^x$ is to reduce $C_i(\G)$ without increasing $W_i(\G)$. The following result shows that this happens whenever the $(i,N)$-restriction of $\G$ is a dual equivalence graph.

\begin{theorem}
  Let $\G$ be a locally Schur positive graph of type $(n,N)$ satisfying dual equivalence axioms $1,2,3$ and $5$, and suppose that $(\imt,N)$-restriction of $\G$ is a dual equivalence graph and that the $(i,N)$-restriction of $\G$ satisfies dual equivalence axiom $4$. Then $C_i^0(\G) = C_i(\G)$, and $C_i(\psi_i^x(\G))$ is a proper subset of $C_i(\G)$ and $W_i(\psi_i^x(\G)) = W_i(\G)$.
  \label{thm:psi-terminate}
\end{theorem}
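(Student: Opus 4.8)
The plan is to follow the proof of Theorem~\ref{thm:phi-terminate} closely, handling the three assertions in turn; throughout we use that $\psi_i^x(\G)$ differs from $\G$ only in the edge set $E_i$ (and that it again satisfies axioms $1,2,5$, by Lemma~\ref{lem:psi-compatible} and the definition of the maps on $i$-packages), so that the $i$-type of every vertex is unchanged, since the $i$-type is read off the connected component of $E_{\imt}\cup E_{\imo}$ containing the vertex. The equality $C_i^0(\G)=C_i(\G)$ is Proposition~\ref{prop:psi-lambda4}: the hypothesis that the $(i,N)$-restriction satisfies axiom $4$ forces every connected component of $E_{\imh}\cup E_{\imt}$ into Figure~\ref{fig:lambda4}, and (using that the $(\imt,N)$-restriction is a dual equivalence graph) Figure~\ref{fig:lambda5} supplies, for each $x\in C_i(\G)$, an $m$ with $u:=(E_{\imo}E_{\imt})^{m}E_i(x)$ not of $\imo$-type W; so we may assume $x\in C_i^0(\G)$ and $\psi_i^x$ is defined. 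By Definition~\ref{defn:psi}, the only $i$-edges $\psi_i^x$ redefines are those incident to a vertex lying on one of the $i$-packages of $E_{\imt}(x)$, $E_{\imt}(u)$, $E_iE_{\imt}(x)$, $E_iE_{\imt}(u)$.

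For $W_i(\psi_i^x(\G))=W_i(\G)$, since a vertex's membership in $W_i$ is governed by its $i$-type (W or not, the same in both graphs) together with whether its $i$-edge agrees with its $\imo$-edge, it suffices to show that none of these $i$-packages contains a vertex of $i$-type W: then $\psi_i^x$ changes no $i$-edge incident to an $i$-type W vertex and $W_i$ is literally the same set. I would deduce this from Proposition~\ref{prop:type-C}. Each of $E_{\imt}(x)$, $E_{\imt}(u)$, $E_iE_{\imt}(x)$, $E_iE_{\imt}(u)$ has $i$-type C, B, or A but never W (for $E_{\imt}(x)$ and $E_{\imt}(u)$ because $x$ and $u$ have $i$-type C and $i$-type C is preserved under $E_{\imt}$; for the other two one checks the possible positions on the flat $i$-chain through $x$). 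If a vertex on the $i$-package of such a vertex $y$ had $i$-type W, Proposition~\ref{prop:type-C} would give $E_{\imt}(y)=E_{\imh}(y)$, placing $y$ at the end of a double $\imh$--$\imt$ edge; but a flat $\imt$-edge $\{y,y'\}$ satisfies $\sigma(y)_{\imh}=\sigma(y')_{\imh}$ while an $\imh$-edge flips $\sigma_{\imh}$ by axiom $2$, so the flatness built into $C_i^0(\G)$, propagated to these vertices along the chain via axioms $2$ and $5$, rules this out.

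For $C_i(\psi_i^x(\G))\subsetneq C_i(\G)$: the containment $x\notin C_i(\psi_i^x(\G))$ is by construction, since $\psi_i^x$ is designed precisely so that $E_{\imt}E_i(x)=E_iE_{\imt}(x)$ afterward, and as $C_i(\G)$ is finite this already makes the containment proper once $C_i(\psi_i^x(\G))\subseteq C_i(\G)$ is established. For the latter I would mimic the proof of Theorem~\ref{thm:phi-terminate}: it is enough to show that no vertex of $i$-type C outside $C_i(\G)$ lands in $C_i(\psi_i^x(\G))$, i.e. that the length of the maximal flat $i$-chain through such a vertex is unchanged by $\psi_i^x$; by axiom $5$ one may restrict attention to vertices on the four $i$-packages above, and by axiom $6$ one reduces to the effect of crossing a single $E_{\imh}$ edge out of $E_{\imt}(x)$, $E_i(x)$, $E_{\imt}(u)$, or $E_i(u)$. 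The analysis is then a case check on the degree $5$ generating function of the relevant $E_{\imh}\cup E_{\imt}\cup E_{\imo}$ component against Figure~\ref{fig:lambda5}, showing in each admissible case that the isomorphism $\phi$ of Lemma~\ref{lem:psi-compatible} sends flat $i$-chains to flat $i$-chains of the same length.

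I expect this last diagram chase to be the main obstacle. It is a longer and more intricate version of the corresponding step in the proof of Theorem~\ref{thm:phi-terminate}, because flat $i$-chains are longer than non-flat $i$-chains and their combinatorics mixes $E_{\imt}$ and $E_{\imo}$ edges through the exponents $m_j$ of Definition~\ref{defn:flat-chain}: there are several more local configurations to exclude, and in a few of them one must appeal to $\LSP_5$ (and occasionally $\LSP_6$) of a restricted component rather than $\LSF_5$, exactly as in the two places in the proof of Theorem~\ref{thm:theta-LSP} where components get merged. Additional care is needed when propagating flatness to the $i$-packages of $E_iE_{\imt}(x)$ and $E_iE_{\imt}(u)$, since these vertices need not lie between $x$ and $u$ on the flat $i$-chain and so their $\imt$-edges are not immediately controlled by the definition of $C_i^0(\G)$.
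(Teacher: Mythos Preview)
Your outline is essentially the paper's, but you miss three simplifications that turn the ``main obstacle'' you anticipate into a short case check.

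First, under the hypotheses the paper shows $m=0$: if $E_i(x)$ had $\imo$-type W, its $\imt$-edge would be non-flat (by axiom~$4$ for $E_{\imh}\cup E_{\imt}$ it would be a double edge with $E_{\imh}$), contradicting $E_i(x)\in C_i(\G)$. So $u=E_i(x)$ throughout, and the exponents $m_j$ in Definition~\ref{defn:flat-chain} are all zero; the mixing of $E_{\imt}$ and $E_{\imo}$ you worry about does not occur.

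Second, your reduction stops one edge too soon. Since $\sigma(x)_{\imf}=\sigma(E_{\imt}(x))_{\imf}$ (the $\imt$-edge at $x$ is flat), axiom~$3$ forces $E_{\imh}(x)$ (and $E_{\imh}E_i(x)$) not to admit an $\imt$-neighbor, so these cannot have $i$-type C and are automatically outside $C_i$. The paper then invokes axiom~$6$ once more and reduces to $v=E_{\imf}(x)$, not $E_{\imh}(x)$.

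Third, the correct case analysis is not on the degree~$5$ generating function of the $E_{\imh}\cup E_{\imt}\cup E_{\imo}$ component containing $x$, but on the \emph{$\imt$-type} of $x$ (equivalently of $E_i(x)$, since $E_i$ commutes with $E_{\imf},E_{\imh}$). Type W is excluded by flatness and axiom~$4$; type A is excluded since $x$ admits an $\imf$-neighbor. If the $\imt$-type is C, the whole flat $i$-chain commutes with $E_{\imf}$ and $E_{\imf}(x)\in C_i(\G)$ with $\psi_i^{E_{\imf}(x)}=\psi_i^x$. If the $\imt$-type is B, axioms~$4$ and~$5$ force the local picture of Figure~\ref{fig:psi-X}; Proposition~\ref{prop:type-C} rules out $i$-type W at the endpoints, and one reads off directly that $\psi_i^x$ adds nothing to $C_i$. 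No appeal to $\LSP_5$ or $\LSP_6$ is needed. The $W_i$ assertion then follows in one line from Proposition~\ref{prop:type-C} applied to the vertices in that figure, rather than the more indirect flatness argument you sketch.
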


\begin{proof}
  By Proposition~\ref{prop:psi-lambda4}, $C_i^0(\G) = C_i(\G)$. If $E_i(x)$ has $\imo$-type W, then its $\imt$-edge is non-flat, so $E_i(x) \not\in C_i(\G)$, contradicting the hypothesis that it is. Hence $x \in C_i(\G)$ with $m = 0$.

  The $i$-type of a vertex is determined by the connected component of $E_{\imt} \cup E_{\imo}$ containing it, so the $i$-type of a vertex remains unchanged by $\psi_i^x$. No $E_{\imh}$ edge on the $i$-package of $x$ or $E_i(x)$ is part of a double edge with $E_{\imt}$, so whether or not the vertex admits an $\imo$-neighbor is preserved. Therefore to show that $v \not\in C_i(\G)$ implies $v \not\in C_i(\psi_i^x(\G))$, we must show that if $E_{\imt}E_i(v) = E_iE_{\imt}(v)$, then $E_{\imt}\psi_i^x(v) = \psi_i^xE_{\imt}(v)$.

  It suffices to consider $v$ on the $i$-packages of $x$ and $E_{i}(x)$. By axiom $5$, $E_{\imt}$ and $E_i$ all commute with $E_h$ for $h \leq i\!-\!5$ and $h \geq \iph$. Therefore if the claim holds for some vertex $v$, the it holds for every vertex on the connected component of $E_2 \cup \cdots \cup E_{i\!-\!5} \cup E_{\iph} \cup \cdots \cup E_{\nmo}$ containing $v$. Since $\sigma(v)_{\imf} = \sigma(E_{\imt}(v))_{\imf}$ for $v = x, E_i(x)$, by axiom $3$ neither $E_{\imh}(x)$ nor $E_{\imh}E_i(x)$ admits an $\imt$-neighbor, so neither can have $i$-type C. By axiom $6$, it suffices to show the claim for $v = E_{\imf}(x)$.

  \begin{figure}[ht]
    \begin{displaymath}
      \begin{array}{\cs{8}\cs{8}\cs{8}\cs{8}\cs{8}\cs{8}\cs{8}c}
        \rnode{a1}{\B} & \rnode{b1}{\B} & \rnode{c1}{\B} & \rnode{d1}{u}  &
        \rnode{e1}{x}  & \rnode{f1}{\B} & \rnode{g1}{\B} & \rnode{h1}{\B} \\[5ex]
        \rnode{a2}{\B} & \rnode{b2}{\B} & \rnode{c2}{\B} & \rnode{d2}{\B} &
        \rnode{e2}{\B} & \rnode{f2}{\B} & \rnode{g2}{\B} & \rnode{h2}{\B} \\[5ex]
        & \rnode{b3}{\B} & \rnode{c3}{\B} & & & \rnode{f3}{\B} & \rnode{g3}{\B} & 
      \end{array}
      \psset{nodesep=3pt,linewidth=.1ex}
      \everypsbox{\scriptstyle}
      \ncline {a1}{b1} \naput{\imt}
      \ncline {b1}{c1} \naput{i}
      \ncline {c1}{d1} \naput{\imt}
      \ncline {d1}{e1} \naput{i}
      \ncline {e1}{f1} \naput{\imt}
      \ncline {f1}{g1} \naput{i}
      \ncline {g1}{h1} \naput{\imt}
      \ncline {b1}{b2} \nbput{\imf}
      \ncline {c1}{c2} \nbput{\imf}
      \ncline[offset=2pt] {a1}{a2} \nbput{\imf}
      \ncline[offset=2pt] {a2}{a1} \nbput{\imh}
      \ncline[offset=2pt] {d1}{d2} \nbput{\imf}
      \ncline[offset=2pt] {d2}{d1} \nbput{\imh}
      \ncline[offset=2pt] {e1}{e2} \nbput{\imf}
      \ncline[offset=2pt] {e2}{e1} \nbput{\imh}
      \ncline[offset=2pt] {h1}{h2} \nbput{\imf}
      \ncline[offset=2pt] {h2}{h1} \nbput{\imh}
      \ncline {f1}{f2} \naput{\imf}
      \ncline {g1}{g2} \naput{\imf}
      \ncline {b2}{c2} \naput{i}
      \ncline {d2}{e2} \naput{i}
      \ncline {f2}{g2} \naput{i}
      \ncline[offset=2pt] {b2}{b3} \nbput{\imh}
      \ncline[offset=2pt] {b3}{b2} \nbput{\imt}
      \ncline[offset=2pt] {c2}{c3} \nbput{\imh}
      \ncline[offset=2pt] {c3}{c2} \nbput{\imt}
      \ncline {b3}{c3} \naput{i}
      \ncline {f3}{g3} \naput{i}
      \ncline[offset=2pt] {f2}{f3} \nbput{\imh}
      \ncline[offset=2pt] {f3}{f2} \nbput{\imt}
      \ncline[offset=2pt] {g2}{g3} \nbput{\imh}
      \ncline[offset=2pt] {g3}{g2} \nbput{\imt}
    \end{displaymath}
    \caption{\label{fig:psi-X} Components of $E_{\imf} \cup E_{\imh} \cup E_{\imt}$ when $x$ has $\imt$-type B.}
  \end{figure}
  
  Consider the $\imt$-type of $x$ and $E_i(x)$, which must be the same since, by axiom $5$, $E_i$ commutes with both $E_{\imf}$ and $E_{\imh}$. From before, both $x$ and $E_i(x)$ have flat $\imt$-edge, and so, by axiom $4$, they cannot have $\imt$-type W. Since by assumption both admit an $\imf$-neighbor, they cannot have $\imt$-type A. If they have $\imt$-type C, then the top row of Figure~\ref{fig:psi-X} commutes with $E_{\imf}$, so $E_{\imf}(x) \in C_i(\G)$ and $E_{\imf}(x) \not\in C_i(\psi_i^x(\G))$. If they have $\imt$-type B, then, by axioms $4$ and $5$, the situation is as depicted in Figure~\ref{fig:psi-X} since none of the endpoints of the $i$-edges has $i$-type W by Proposition~\ref{prop:type-C}. From the figure, it is clear that applying $\psi_i^x$ adds no vertices to $C_i(\G)$, and so $C_i(\psi_i^x(\G))$ is indeed a proper subset of $C_i(\G)$.  Moreover, by Proposition~\ref{prop:type-C}, none of the vertices involved has $i$-type W, and so $W_i(\psi_i^x(\G)) = W_i(\G)$.
\end{proof}

\begin{figure}[ht]
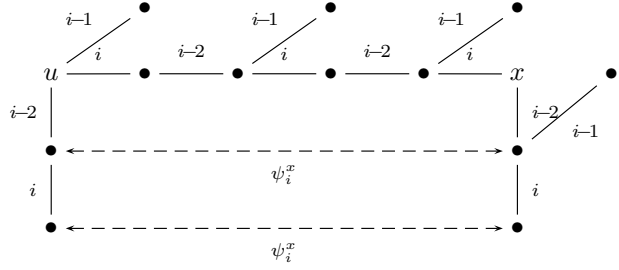

  \begin{displaymath}
    \begin{array}{\cs{7}\cs{7}\cs{7}\cs{7}\cs{7}\cs{7}\cs{7}c}
      & \rnode{O0}{\B} & & \rnode{O1}{\B} & & \rnode{O2}{\B} & \\[3ex] 
      \rnode{a1}{u}  & \rnode{a2}{\B} & \rnode{a3}{\B} & \rnode{b1}{\B} & \rnode{b2}{\B} & \rnode{c1}{x} &  \rnode{c2}{\B}\\[4ex] 
      \rnode{e1}{\B} & & & & & \rnode{f1}{\B} & \\[4ex] 
      \rnode{h1}{\B} & & & & & \rnode{i1}{\B} &
    \end{array}
    \psset{nodesep=3pt,linewidth=.1ex}
    \everypsbox{\scriptstyle}
    \ncline {a1}{O0} \naput{\imo}
    \ncline {a3}{O1} \naput{\imo}
    \ncline {b2}{O2} \naput{\imo}
    \ncline {a1}{a2} \naput{i}
    \ncline {a2}{a3} \naput{\imt}
    \ncline {a3}{b1} \naput{i}
    \ncline {b1}{b2} \naput{\imt}
    \ncline {b2}{c1} \naput{i}
    \ncline {a1}{e1} \nbput{\imt}
    \ncline {c1}{f1} \naput{\imt}
    \ncline {f1}{c2} \nbput{\imo}
    \ncline {e1}{h1} \nbput{i}
    \ncline {f1}{i1} \naput{i}
    \ncline[linestyle=dashed] {<->}{e1}{f1} \nbput{\psi_i^x}
    \ncline[linestyle=dashed] {<->}{h1}{i1} \nbput{\psi_i^x}
  \end{displaymath}
  \caption{\label{fig:long_psi} The long version of the involution $\psi_i^x$ for $m=0$ and $r=1$.}
\end{figure}

\begin{remark}
  Given $x \in C_i^0(\G)$ the third vertex of a flat $i$-chain of length greater than $6$, rather than taking $u = E_i(x)$ in Definition~\ref{defn:psi}, we make take $u = E_i E_{\imt}E_iE_{\imt}E_{i} (x)$ instead as indicated in Figure~\ref{fig:long_psi}. As with $\varphi_i^w$, Lemma~\ref{lem:psi-compatible} applies, and, in general, we may take $u = (E_{\imo}E_{\imt})^m E_i (E_{\imt}E_iE_{\imt}E_{i})^r (x)$. Then, from the proof of Theorem~\ref{thm:psi-terminate}, $C_i(\G)$ is decreased if and only if $x,u$ are the third and third from last vertices of the flat $i$-chain, but neither $C_i(\G)$ nor $W_i(\G)$ is ever increased.
  \label{rmk:long_psi}
\end{remark}

For $\G$ a locally Schur positive graph of type $(n,N)$ such that the $(\imt,N)$-restriction of $\G$ is a dual equivalence graph, $\psi_i^x(\G)$ also satisfies axioms $1, 2$ and $5$ by Lemma~\ref{lem:psi-compatible} and the definition of the map on $i$-packages. It turns out that $\psi_i^x$ also preserves axiom $3$, but this requires considerably more work to prove. However, when restricting to edges $E_i$ and lower, not only does axiom $3$ hold, but $\LSP_4$ does as well.

\begin{lemma}
  Let $\G$ be a locally Schur positive graph of type $(\ipo,\ipo)$ and suppose that the $(\imt,\ipo)$-restriction of $\G$ is a dual equivalence graph and that the $(i,\ipo)$-restriction of $\G$ satisfies dual equivalence axiom $4$. Then $C_i^0(\G) = C_i(\G)$, and $\psi_i^x(\G)$ has $\LSP_4$ for any $x \in C_i(\G)$.
  \label{lem:LSP4}
\end{lemma}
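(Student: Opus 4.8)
The plan is to mirror the structure of the proof of Lemma~\ref{lem:LSP4-phi}, using Theorem~\ref{thm:psi-terminate} and Proposition~\ref{prop:type-C} in place of the analogous facts for $\varphi_i$. First, the equality $C_i^0(\G) = C_i(\G)$ is immediate from Proposition~\ref{prop:psi-lambda4}, since the hypothesis that the $(\imt,\ipo)$-restriction is a dual equivalence graph forces every connected component of $E_{\imh} \cup E_{\imt}$ to appear in Figure~\ref{fig:lambda4}. So the real content is $\LSP_4$ for $\psi_i^x(\G)$. As in Theorem~\ref{thm:psi-terminate}, since $E_i(x) \in C_i(\G)$ its $\imt$-edge is flat, so $m = 0$ and we may write $u = E_i(x)$; the four vertices whose $i$-edges actually move under $\psi_i^x$ are (the $i$-packages of) $E_{\imt}(x)$, $E_{\imt}(u)$, and their $E_i$-images $x$ and $u$. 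By Lemma~\ref{lem:psi-compatible} these four vertices have isomorphic $i$-packages, so it suffices to check $\LSP_4$ on a single representative and then propagate.

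Next I would reduce to a local check exactly as in Lemma~\ref{lem:LSP4-phi}: by axiom $5$, $\LSP_4$ on the component of $E_{\imo} \cup E_i$ containing a vertex $v$ transfers to any vertex joined to $v$ by edges in $E_2 \cup \cdots \cup E_{\imf}$; and by axiom $6$ (which holds since the $(i,\ipo)$-restriction satisfies axiom $4$, hence also axiom $6$), it suffices to examine the positivity across a single $E_{\imh}$ edge from each of $E_i(x)=u$, $x$, $E_{\imt}(x)$, $E_{\imt}(u)$. For the vertex $x$ itself, $x \in C_i(\G)$ means $x$ has $i$-type C, so the component of $E_{\imt} \cup E_{\imo} \cup E_i$ containing $x$ has degree $5$ generating function $s_{(3,1,1)}$ (by $\LSP_5$, since $x$ is neither type A nor type B); in particular the $\imo$-edge at $x$ is flat, and the component of $E_{\imo} \cup E_i$ containing $x$ already has three distinct vertices both before and after applying $\psi_i^x$, so it contributes $s_{(3,1)}$ or $s_{(2,1,1)}$ — positive. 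The substance is the case analysis for a vertex $v = E_{\imh}(z)$ with $z \in \{u, x, E_{\imt}(x), E_{\imt}(u)\}$ admitting an $\imh$-neighbor: by Lemma~\ref{lem:psi-compatible} all four $i$-packages are isomorphic, so if one such $v$ admits an $\imh$-neighbor they all do, and I would then examine the degree $5$ generating function of the component of $E_{\imh} \cup E_{\imt} \cup E_{\imo}$ containing $z$. Since $z$ has a flat $\imt$-edge it cannot have $\imt$-type W, and by Proposition~\ref{prop:type-C} — applied because each of $u$, $x$ has $i$-type W would be false; rather, one checks the $\imt$-type of $z$ — the options are $\imt$-type B or C. When that generating function is $s_{(3,1,1)}$ (type C), the square of $E_{\imh}$ and $E_{\imt}$-edges at $z$ commute, so $E_{\imh}(z) \in C_i(\G)$ with $\psi_i^{E_{\imh}(z)} = \psi_i^z$, and $\LSP_4$ is inherited; when it is $s_{(3,2)}$ or $s_{(2,2,1)}$ (type B), $E_{\imh}(z) = E_{\imt}(z)$ and one traces, just as at the end of the proof of Lemma~\ref{lem:LSP4-phi}, that the relevant $E_{\imo} \cup E_i$ component across the $E_{\imh}$ edge has three distinct vertices with generating function $s_{(3,1)}$ or $s_{(2,1,1)}$.

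The main obstacle I expect is the type-B case: unlike for $\varphi_i$, the flat $i$-chain through $x$ has length greater than $4$, so the picture across an $E_{\imh}$ (here really $E_{\imf}$ in the chain-propagation and $E_{\imh}$ in the cross-edge) edge involves more vertices, and one must verify — using Figure~\ref{fig:lambda5}, axioms $3$, $4$, $5$, and crucially Proposition~\ref{prop:type-C} to rule out $i$-type W at the endpoints of the moved $i$-edges — that the $i$-edges genuinely commute with $E_{\imh}$ in the type-C subcase and that no spurious cycle of length $2$ in $E_{\imo} \cup E_i$ (which would give the non-positive $Q_{++-} + Q_{--+}$) is created in the type-B subcase. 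The diagram-chase is bookkeeping-heavy but structurally parallel to Lemma~\ref{lem:LSP4-phi}; once the type-B configuration is pinned down by Figure~\ref{fig:psi-X}, positivity is visible from the figure. Everything else — axioms $1,2,5$ for $\psi_i^x(\G)$, and the reduction to a single cross-edge — is routine given Lemma~\ref{lem:psi-compatible} and axiom $6$.
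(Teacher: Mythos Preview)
You have the right scaffolding (local check at the moved edges, then propagate by axioms $5$ and $6$ across a single $E_{\imh}$ edge), but you misidentify which $i$-edges actually move, and this derails the argument. By Definition~\ref{defn:psi}, $\psi_i^x$ alters the $i$-edges on the $i$-packages of $E_{\imt}(x)$ and $E_{\imt}(u)$ and on the $i$-packages of their $E_i$-images $E_iE_{\imt}(x)$ and $E_iE_{\imt}(u)$; these $E_i$-images are \emph{not} $x$ and $u$ (indeed $x\in C_i(\G)$ says precisely that $E_iE_{\imt}(x)\neq E_{\imt}E_i(x)$). So your analysis of the $E_{\imo}\cup E_i$ component at $x$ is irrelevant, since the $i$-edge at $x$ is untouched; and your assertion that this component has generating function corresponding to $s_{(3,1,1)}$ is in any case false, since $x\in C_i(\G)$ means the flat $i$-chain through $x$ is too long to sit in Figure~\ref{fig:lambda5}.

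The paper's proof is much more direct and avoids your $\imt$-type case split altogether. The key observation is that at the vertex $E_{\imt}(x)$ whose $i$-edge changes, \emph{both} the old neighbor $E_iE_{\imt}(x)$ and the new neighbor $\psi_i^x(E_{\imt}(x))=E_{\imt}E_i(x)$ fail to admit an $\imo$-neighbor: the former because $E_{\imt}(x)$ has a flat $i$-edge and itself admits an $\imo$-neighbor (axiom $3$), the latter because $E_i(x)$ admits an $\imo$-neighbor but is not of $\imo$-type W. Hence the two-color $E_{\imo}\cup E_i$ string through $E_{\imt}(x)$ is literally unchanged by $\psi_i^x$, and similarly at $E_iE_{\imt}E_i(x)$. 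The cross-$E_{\imh}$ step then becomes a single line: since $E_{\imt}(x)$ and $E_{\imt}E_i(x)$ have flat $\imt$-edges, neither has $\imt$-type W, so any $E_{\imh}$ edge preserves $\sigma_{\imo}$ and hence preserves the $E_{\imo}\cup E_i$ component structure. Your elaborate $\imt$-type B/C analysis, borrowed from the proof of Theorem~\ref{thm:psi-terminate}, is aimed at controlling $C_i$ rather than $\LSP_4$ and is not needed here.
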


\begin{proof}
  Since the $(i,\ipo)$-restriction of $\G$ satisfies dual equivalence axiom $4$, by Proposition~\ref{prop:psi-lambda4}, $\psi_i^x$ may be applied for any $x \in C_i(\G)$ and, in this case, $E_i(x) \in C_i(\G)$ with $\psi_i^{E_i(x)} = \psi_i^x$. Since the $i$-edge between $x$ and $E_i(x)$ is flat, exactly one of these vertices admits an $\imo$-neighbor, say $E_i(x)$ does and $x$ does not. Since $x$ does not admit an $\imo$-neighbor, by axiom $3$, $E_{\imt}(x)$ does. Since $x \in C_i(\G)$, $E_{\imt}(x)$ has a flat $i$-edge, and so since $E_{\imt}(x)$ admits an $\imo$-neighbor, $E_i E_{\imt}(x)$ does not. On the other side, since $E_i(x)$ admits an $\imo$-neighbor but does not have $\imo$-type W, $E_{\imt} E_i(x)$ does not admit an $\imo$-neighbor. Therefore neither $E_i(E_{\imt}(x))$ nor $\psi_i^x(E_{\imt}(x)) = E_{\imt} E_i(x)$ admits an $\imo$-neighbor, so $\LSP_4$ of the connected component containing $E_{\imt}(x)$ is preserved. Similarly, since neither $E_i( E_i E_{\imt} E_i (x)) = E_{\imt} E_i(x)$ nor $\psi_i^x( E_i E_{\imt} E_i (x) ) = E_i E_{\imt}(x)$ admit an $\imo$-neighbor, $\LSP_4$ for the connected component containing $E_{\imt} E_i (x)$ is preserved. By axioms $2$ and $5$, both $E_{\imo}$ and $E_i$ commute with $E_h$ for $h \leq \imf$, so $\LSP_4$ is maintained on $E_2 \cup \cdots \cup E_{\imf}$. Since both $E_{\imt}(x)$ and $E_{\imt} E_i(x)$ have flat $\imt$-edges, neither has $\imt$-type W, and so if either, and hence both, admits an $\imh$-neighbor, the $\imh$-edge must preserve $\sigma_{\imo}$. Thus $\LSP_4$ extends across a single $E_{\imh}$ edge as well. By axiom $6$, the claim follows.
\end{proof}

As with Lemma~\ref{lem:LSP4-phi} for $\varphi_i^w$, the conclusion missing from Lemma~\ref{lem:LSP4} and the impediment to applying $\psi_i^x$ repeatedly is that $\LSP_5$ is not guaranteed to hold.

%
\section{Resolving axiom $4$}
%
\label{sec:LSP}

Unfortunately, neither $\varphi_i^w(\G)$ nor $\psi_i^x(\G)$ always has $\LSP_5$. If $\varphi_i^w(\G)$ or $\psi_i^x(\G)$ has $\LSP_5$ for at least one $w \in W_i^0(\G)$ or at least one $x \in C_i^0(\G)$, then by Theorems~\ref{thm:phi-terminate} and \ref{thm:psi-terminate}, we could always apply one of the maps until both $W_i$ and $C_i$ were both empty. With this idea in mind, define a set $U_i(\G) \subset W_i(\G) \cup C_i(\G)$ by
\begin{equation}
  U_i(\G) = \left\{ 
    \begin{array}{c}
      w \in W_i^0(\G) \\
      x \in C_i^0(\G)
    \end{array}
    \big| \ \mbox{components of $E_{\imt} \cup E_{\imo} \cup E_i$ are Schur positive in} 
    \begin{array}{c}
      \varphi_i^w(\G) \\
      \psi_i^x(\G)
    \end{array} 
  \right\} .
  \label{eqn:U}
\end{equation}

If $U_i(\G)$ is nonempty, then we may select either $w \in W_i(\G)$ or $x \in C_i(\G)$ and apply the corresponding map, either $\varphi_i^w$ or $\psi_i^x$, to transform $\G$ into a locally Schur positive graph that is measurably closer to being a dual equivalence graph. Therefore we focus on when $U_i(\G)$ is empty. 

We note that when $W_i(\G) \cup C_i(\G)$ is nonempty and $U_i(\G)$ is empty, no vertex has $i$-type A. 

\begin{lemma}
  Let $\G$ be a locally Schur positive graph of type $(\ipo,\ipo)$, and suppose that the $(\imt,\ipo)$-restriction of $\G$ is a dual equivalence graph and that the $(i,\ipo)$-restriction of $\G$ satisfies dual equivalence axiom $4$. If a connected component of $E_{\imt} \cup E_{\imo} \cup E_i$ not appearing in Figure~\ref{fig:lambda5} has no element in $U_i(\G)$, then no vertex on the component has $i$-type A.
  \label{lem:no-A}
\end{lemma}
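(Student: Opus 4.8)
The plan is to argue by contraposition: assuming $\C$ contains a vertex $v$ of $i$-type A, I will exhibit an element of $U_i(\G)$ lying on $\C$. First I would read off the local structure of $v$ from axioms $1$, $2$, and $3$: one checks that $E_i(v)$ also has $i$-type A, that the $i$-edge $\{v,E_i(v)\}$ is flat, and that exactly one of $v,E_i(v)$ — say $v$ — admits an $\imo$-neighbor $g=E_{\imo}(v)$, which by axiom $3$ then admits an $\imt$-neighbor $h=E_{\imt}(g)$. Since $v$ has no $\imt$-edge and the $(i,\ipo)$-restriction satisfies axiom $4$, the $E_{\imt}\cup E_{\imo}$-component $v\stackrel{\imo}{-}g\stackrel{\imt}{-}h$ must appear in Figure~\ref{fig:lambda4}, so $h$ carries no further $\imt$- or $\imo$-edge, and $E_i(v),v,g,h$ form a path with successive colors $i,\imo,\imt$ in $\C$ having no other $\imt,\imo,i$-edges at $E_i(v),v,g$. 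Because $\C$ does not appear in Figure~\ref{fig:lambda5}, it is strictly larger than this $4$-path, so the escaping edge must be an $i$-edge at $h$. Tracking signatures along the path one finds $\sigma(E_i(v))$ restricted to coordinates $\imh,\imt,\imo,i$ equals $+++-$ or $---+$; since such a signature occurs only in $s_{(4,1)}$, respectively $s_{(2,1,1,1)}$, among Schur functions of partitions of $5$, and $\LSP_5$ makes the restricted degree-$5$ generating function $\mathrm{gen}(\C)$ Schur positive, $\mathrm{gen}(\C)$ must contain at least one copy of $s_{(4,1)}$, respectively $s_{(2,1,1,1)}$.

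Next I would bring in the transformations. Since the $(i,\ipo)$-restriction satisfies axiom $4$, Propositions~\ref{prop:phi-lambda4} and \ref{prop:psi-lambda4} give $W_i^0(\C)=W_i(\C)$ and $C_i^0(\C)=C_i(\C)$, and by (the reasoning of) Proposition~\ref{prop:empty} applied to $\C$, since $\C$ does not appear in Figure~\ref{fig:lambda5} we have $W_i(\C)\cup C_i(\C)\neq\emptyset$. Chasing $i$-edges (and, in the flat case, also $\imt$-edges, using axiom $3$) outward from the escaping $i$-edge at $h$ produces a non-flat $i$-chain, respectively a flat $i$-chain, on which $\{v,E_i(v)\}$ sits at an extreme and which, because $\C$ exceeds the $4$-path, properly extends; I would take $w\in W_i^0(\C)$, respectively $x\in C_i^0(\C)$, to be the appropriate interior vertex of that chain and form $\varphi_i^w(\G)$, respectively $\psi_i^x(\G)$, so that the involution re-routes the $i$-edge at $h$ (and the compatible $i$-edges along the relevant $i$-package) and detaches the piece carrying $v$ as a single $E_{\imt}\cup E_{\imo}\cup E_i$-component whose generating function is precisely $s_{(4,1)}$, respectively $s_{(2,1,1,1)}$.

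Finally I would verify that this $w$ (respectively $x$) lies in $U_i(\G)$: by Lemma~\ref{lem:LSP4-phi} (respectively Lemma~\ref{lem:LSP4}) the map preserves $\LSP_4$, and by Theorems~\ref{thm:phi-terminate} and \ref{thm:psi-terminate} it alters only the $E_{\imt}\cup E_{\imo}\cup E_i$-components met by the relevant $i$-package; the unaltered components keep their Schur positive generating functions, the detached $4$-path has generating function $s_{(4,1)}$ (respectively $s_{(2,1,1,1)}$), and what remains of $\C$ has generating function $\mathrm{gen}(\C)$ minus that Schur function, still Schur positive because the copy was present. This contradicts the hypothesis that no element of $\C$ belongs to $U_i(\G)$, completing the contrapositive. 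The main obstacle — and the step demanding the kind of detailed diagram chasing typical of this section — is establishing that $w$ (respectively $x$) can in fact be chosen so that the re-routing cuts off exactly the $s_{(4,1)}$ (respectively $s_{(2,1,1,1)}$) piece while leaving the remainder of $\C$ and the neighboring $i$-packages Schur positive, i.e. that the transformation interacts with the surrounding structure precisely as the local picture around $v$ predicts.
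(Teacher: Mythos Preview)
Your overall contrapositive strategy is right, but the local analysis around the type~A vertex breaks down in two places.

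First, the claim that $E_i(v)$ also has $i$-type~A (equivalently, that the $i$-edge $\{v,E_i(v)\}$ is flat) is not justified. If $v$ has $i$-type~A with an $\imo$-neighbor, then $\sigma(v)_{\imh,\imt,\imo,i}=++-+$ or $--+-$, and axiom~3 does \emph{not} force $\sigma(E_i(v))_{\imt}=\sigma(v)_{\imt}$; when the edge is non-flat one gets $\sigma(E_i(v))=+-+-$ or $-+-+$, so $E_i(v)$ has an $\imt$-neighbor and is not of $i$-type~A. Second, and more seriously, even in the flat case your ``escaping $i$-edge at $h$'' does not exist: tracing signatures gives $\sigma(h)_{\imh,\imt,\imo,i}=-+++$ or $+---$, so $h$ admits \emph{no} $i$-neighbor. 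In that situation the $4$-path $E_i(v),v,g,h$ already has no further $\imt,\imo,i$-edges and is exactly the second graph in Figure~\ref{fig:lambda5}, contradicting the hypothesis on $\C$. Thus neither your flat-chain/$\psi_i$ branch nor your non-flat-chain/$\varphi_i$ branch is reached along the path you describe.

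The paper's argument avoids this by not tracking $v$ and $E_i(v)$ simultaneously. It first produces a vertex $u\in\C$ with $\sigma(u)_{\imh,\imt,\imo,i}=-+++$ or $+---$ (your $h$ in case~(b), and in case~(a) via $\LSP_5$), so $u$ has only an $\imt$-edge. It then walks the $E_{\imo}\cup E_i$ chain starting at $E_{\imt}(u)$: the first edge is $\imo$, and if the next $i$-edge is flat the component is already in Figure~\ref{fig:lambda5}; otherwise each $\imo$-edge along the chain is flat (else it would be a double $\imt,\imo$-edge by axiom~4), and $\LSP_4$ forces the chain to terminate in a flat $i$-edge with the correct signature. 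Applying the \emph{long} version of $\varphi_i$ from Remark~\ref{rmk:long_phi} across this entire non-flat segment carves off a piece with generating function $s_{(4,1)}$ or $s_{(2,1,1,1)}$, producing the desired element of $U_i(\G)$. Note that only $\varphi_i$ is used; no $\psi_i$ case arises.
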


\begin{proof}
  If there is a vertex of $i$-type A, then we claim there exists a vertex $u$ admitting an $\imt$-neighbor but not an $\imo$-neighbor nor an $i$-neighbor. Suppose $w$ has $i$-type A. Then either there exists $w$ admitting neither an $\imo$-neighbor nor an $\imt$-neighbor, or there exists $w$ admitting an $\imo$-neighbor but not an $\imt$-neighbor such that $E_{\imo}(w)$ does not admit an $i$-neighbor. In the former case, $\sigma_{\imh,\imt,\imo,i}(w) = +++-$ or $---+$, which, by $\LSP_5$, must be contribute to the Schur function $s_{4,1}$ or $s_{2,1,1,1}$, respectively. Therefore there must be a vertex $u$ with signature $\sigma_{\imh,\imt,\imo,i}(u) = -+++$ or $+---$, respectively, and so $u$ admits an $\imt$-neighbor but not an $\imo$-neighbor nor an $i$-neighbor. In the latter case, since the $(i,\ipo)$-restriction satisfies axiom $4$, $E_{\imo}(w)$ cannot have $\imo$-type W, and so $u = E_{\imt} E_{\imo} (w)$ will admit neither an $\imo$-neighbor nor an $i$-neighbor, thereby establishing the claim. 

  If $E_{\imo}E_{\imt}(u)$ has a flat $i$-edge, then the component appears in Figure~\ref{fig:lambda5} after all, so assume it has a non-flat $i$-edge. Since the component of $E_{\imo} \cup E_i$ begins at $E_{\imt}(u)$ with $\sigma(E_{\imt}(u))_{\imh,\imt,\imo,i} = +-++$ or $-+--$, $\LSP_4$ ensures that after an even number of alternating $E_{\imo}$ and $E_i$ edges, the components ends after a flat $i$-edge at a vertex $v$ with $\sigma(v)_{\imt,\imo,i} = ++-$ or $--+$, respectively. Each $E_{\imo}$ edge on the component must be flat, since otherwise by Figure~\ref{fig:lambda4} it would be a double edge with $E_{\imt}$, and by axiom $4$ $E_i$ fixes $\sigma_{\imh}$, so $\sigma(v)_{\imh} = \sigma(E_{\imt}(u))_{\imh}$. Therefore applying the longest possible $\varphi_i^{w}$, as discussed in Remark~\ref{rmk:long_phi}, removes a component of $E_{\imt} \cup E_{\imo} \cup E_i$ with generating function $s_{4,1}$ or $s_{2,1,1,1}$, respectively, contradicting the assumption that $U_i(\G)$ is empty. Hence no vertex on the component has $i$-type A.
\end{proof}

\begin{remark}
  The usefulness of Lemma~\ref{lem:no-A} lies in the fact that the degree $5$ generating function for a locally Schur positive nontrivial connected component of $E_{\imt} \cup E_{\imo} \cup E_i$ with no vertex of $i$-type A must be $ f = b s_{(3,2)} + c s_{(3,1,1)} + d s_{(2,2,1)}$ for $b,c,d \geq 0$. If $g$ is any Schur positive function, then $f-g$ is Schur positive if and only if it is a nonnegative sum of fundamental quasisymmetric functions. Therefore if a locally Schur positive piece is removed from such a component, the remaining structure is also locally Schur positive.
  \label{rmk:typeA}
\end{remark}

When $W_i(\G) \cup C_i(\G)$ is nonempty but $U_i(\G)$ is empty, we can apply a slight variant of the map $\psi_i$, denoted by $\gamma_i$, to $\G$ so that $U_i(\G)$ is nonempty. The map $\gamma_i$ is depicted in Figure~\ref{fig:gamma}. As usual, we begin by establishing the necessary isomorphism of $i$-packages.

\begin{lemma}
  Let $\G$ be a signed, colored graph of type $(n,N)$, and suppose that the $(\imt,N)$-restriction of $\G$ is a dual equivalence graph.  Let $z$ have a non-flat $i$-edge such that no vertex between $z$ and $\left( E_{\imo} E_{i} \right)^{m} (z)$ has $\imo$-type W, and suppose $z$ and $\left( E_{\imo} E_{i} \right)^{m} (z)$ have $i$-neighbors and flat $\imt$-edges. Then the $i$-package of $E_{\imt}(z)$ is isomorphic to the $i$-package of $E_{\imt}\left( E_{\imo} E_{i} \right)^{m} (z)$.
  \label{lem:gamma-compatible}
\end{lemma}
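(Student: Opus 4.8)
The plan is to transcribe, with the obvious modifications, the proofs of Lemmas~\ref{lem:phi-compatible} and~\ref{lem:psi-compatible}: the alternating path of $E_{\imo}$- and $E_i$-edges from $z$ to $u:=\left(E_{\imo}E_i\right)^m(z)$ plays the role that a single $E_{\imo}$-edge (respectively, an alternating $E_{\imo}E_{\imt}$-path) played there. List the vertices of the path as $z,\;E_i(z),\;E_{\imo}E_i(z),\;E_iE_{\imo}E_i(z),\;\ldots,\;u$, with consecutive vertices joined alternately by $E_i$- and $E_{\imo}$-edges; if $m=0$ there is nothing to prove, so assume $m\geq 1$. First I would strip off the high colors: by axioms $1,2$ and $5$, each of $E_{\imt}$, $E_{\imo}$, $E_i$ commutes with every $E_h$, $h\geq\iph$, so the restrictions to $E_{\iph}\cup\cdots\cup E_{\nmo}$ of the $i$-packages of all the vertices on the path — and of their $E_{\imt}$-neighbours where these exist — are canonically identified and $\sigma_j$ is constant along the path for $j\geq\ipo$. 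It therefore suffices to produce a matching isomorphism of the portions of the $i$-packages of $E_{\imt}(z)$ and $E_{\imt}(u)$ living in colors $2,\ldots,\imh$.

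For the part in colors $\le\imf$ I would transport along the path: $E_i$ is an isomorphism of $i$-packages (it fixes $\sigma_j$ for $j\le\imh$) and $E_{\imo}$ is an isomorphism of $\imo$-packages, so composing the edge-maps carries the colors-$\le\imf$ structure of the $\imo$-package of $z$ onto that of $u$. The two hypotheses enter here: since $z$ has a non-flat $i$-edge, axiom~$3$ gives $\sigma(z)_{\imt}=-\sigma(z)_{\imo}$, and one checks inductively along the path — using that none of the intermediate vertices has $\imo$-type W, and comparing against the admissible local configurations of Figures~\ref{fig:lambda4} and~\ref{fig:lambda5} — that the $\imt$-neighbour structure is preserved and $\sigma_{\imh}$ is left unchanged at every step, the flat-$\imt$-edge conditions at $z$ and $u$ pinning down $\sigma_{\imf}$ at the two ends. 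Applying $E_{\imt}$ at both endpoints then yields an isomorphism between the $(\imh,\imt)$-restrictions of the $i$-packages of $E_{\imt}(z)$ and $E_{\imt}(u)$ under which the signatures agree in every coordinate other than possibly $\imt,\imo,i$.

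I would then extend across $E_{\imh}$ exactly as in the earlier proofs. By Lemma~\ref{lem:extend-signs} and the hypothesis that the $(\imt,N)$-restriction of $\G$ is a dual equivalence graph, the $(\imh,\imt)$-restrictions of these two $i$-packages are each isomorphic to one and the same augmented standard graph $\G_{\mu,A}$, with $\mu$ a partition of $\imh$ and $A$ a single cell containing $\imt$; call the isomorphisms $f_z$ and $f_u$. By Theorem~\ref{thm:cover} they extend consistently across $E_{\imh}$-edges — surjectively, though not necessarily injectively — to morphisms onto $\G_\lambda$, where $\lambda$ is the shape of $\mu$ augmented by $A$, and composing through $\G_\lambda$ gives the required isomorphism of the portions in colors $\le\imh$; together with the first step this is the desired isomorphism of $i$-packages.

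The main obstacle is the middle step: confirming that ``no intermediate vertex has $\imo$-type W'', together with the flat-$\imt$-edge conditions at the two endpoints, is exactly what forces each $E_{\imo}$-edge along the path to leave $\sigma_{\imh}$ fixed and to preserve an $\imt$-neighbour, so that flatness and the relevant signatures survive all the way from $z$ to $u$. This is the direct analogue of the bookkeeping in Lemma~\ref{lem:psi-compatible} showing that no vertex strictly past $E_i(x)$ admits an $i$-neighbour, and once it is settled the remaining steps are routine transcriptions of the proofs of Lemmas~\ref{lem:phi-compatible} and~\ref{lem:psi-compatible}.
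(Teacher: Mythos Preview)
Your plan is correct and follows essentially the same route as the paper: both arguments reduce the problem to the machinery of Lemmas~\ref{lem:phi-compatible} and~\ref{lem:psi-compatible}, first transporting the $i$-package structure along the alternating $E_i/E_{\imo}$-path from $z$ to $u$, then extending across the flat $E_{\imt}$-edges at the endpoints via Lemma~\ref{lem:extend-signs} and Theorem~\ref{thm:cover}.

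One clarification about your ``main obstacle'': the paper handles the middle step by first making explicit the key structural fact that \emph{every intermediate vertex has $i$-type W and every $E_i$-edge along the path is non-flat}. This follows because each vertex $(E_{\imo}E_i)^k(z)$ and $E_i(E_{\imo}E_i)^k(z)$ has both an $i$-neighbor and an $\imo$-neighbor (by the path structure and the hypothesis that $u$ has an $i$-neighbor), forcing non-flatness. Once this is established, each post-$E_i$ vertex $E_i(E_{\imo}E_i)^k(z)$ lacks an $\imt$-neighbor (since $\sigma_{\imt}$ toggled while $\sigma_{\imh}$ did not), and hence by axiom~3 its $\imo$-edge is flat throughout the $\imo$-package --- so Lemma~\ref{lem:phi-compatible} applies directly at each $E_{\imo}$-step. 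You attribute the preservation of $\sigma_{\imh}$ to the ``no $\imo$-type W'' hypothesis, but that hypothesis is actually used only at the endpoints $z$ and $u$, where (exactly as in Lemma~\ref{lem:psi-compatible}) it is needed to push the isomorphism across the flat $E_{\imt}$-edges. The inductive signature-tracking you describe would work, but making the $i$-type~W observation explicit lets you cite Lemma~\ref{lem:phi-compatible} rather than re-derive it.
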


\begin{proof}
  Since $z$ has a non-flat $i$-edge, $E_{i}(z)$ must have $i$-type W, and so, too, must $E_{\imo} E_{i}(z)$. If $E_{\imo} E_{i}(z)$ has a non-flat $i$-edge, then the pattern persists so that all vertices between $z$ and $u = \left( E_{\imo} E_{i} \right)^{m} (z)$ have $i$-type W, and all $E_i$ edges between them are non-flat. Thus each $E_{\imo}$ edge between $z$ and $u$ toggles $\sigma_{\imt,\imo}$ by axiom $2$ and toggles $\sigma_i$ since it has $\imo$-type W. Similarly, each $E_i$ edge between $z$ and $u$ toggles $\sigma_{\imo,i}$ by axiom $2$ and toggles $\sigma_{\imt}$ since it is non-flat. Finally, since there is an even number of edges between $u$ and $z$ each of which toggles $\sigma_{\imt,\imo,i}$, we have $\sigma(z)_{\imt,\imo,i} = \sigma(u)_{\imt,\imo,i}$. In particular, both or neither admit an $\imt$-neighbor. If neither does, the result is clearly true, so assume both do. By Lemma~\ref{lem:phi-compatible} and the fact that $E_i$ gives an isomorphism of $i$-packages, the $i$-package of $z$ is isomorphic to the $i$-package of $u$. Now the same argument in Lemma~\ref{lem:psi-compatible} applies to extend the $i$-package isomorphisms across the flat $E_{\imt}$ edges since neither has $\imo$-type W.
\end{proof}

\begin{figure}[ht]
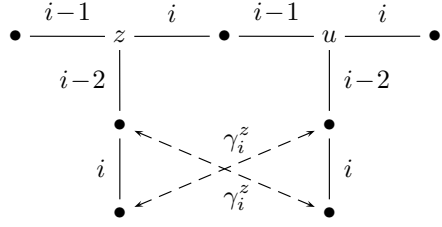

  \begin{displaymath}
    \begin{array}{\cs{8}\cs{8}\cs{8}\cs{8}c}
      \rnode{a0}{\B} & \rnode{b0}{z} & \rnode{c0}{\B} &
      \rnode{d0}{u} & \rnode{e0}{\B} \\[5ex] 
      & \rnode{b1}{\B} & & \rnode{d1}{\B} & \\[5ex]
      & \rnode{b2}{\B} & & \rnode{d2}{\B} & 
    \end{array}
    \psset{nodesep=3pt,linewidth=.1ex}
    \ncline {a0}{b0} \naput{\imo}
    \ncline {b0}{c0} \naput{i}
    \ncline {c0}{d0} \naput{\imo}
    \ncline {d0}{e0} \naput{i}
    \ncline {b0}{b1} \nbput{\imt}
    \ncline {d0}{d1} \naput{\imt}
    \ncline {b1}{b2} \nbput{i}
    \ncline {d1}{d2} \naput{i}
    \ncline[linestyle=dashed] {<->}{b1}{d2} \naput{\gamma_i^z}
    \ncline[linestyle=dashed] {<->}{b2}{d1} \nbput{\gamma_i^z}
  \end{displaymath}
  \caption{\label{fig:gamma} An illustration of $\gamma_i^z$.}
\end{figure}

Following the familiar pattern, we use the isomorphism of Lemma~\ref{lem:gamma-compatible} to define an involution $\gamma_i^z$.

\begin{definition}
  For $z$ not $\imo$-type W with a non-flat $i$-edge and a flat $\imt$-edge, let $u = \left( E_{\imo} E_{i} \right)^{m} (z)$, $m>0$, such that $u$ does not have $\imo$-type W and has a flat $\imt$-edge. Let $\phi$ denote the isomorphism of Lemma~\ref{lem:gamma-compatible}. Define the involution $\gamma_i^z$ on all vertices admitting an $i$-neighbor as follows.
  \begin{equation}
    \gamma_i^z(v) = \left\{ \begin{array}{rl}
        E_i\phi(v) & \mbox{if $v$ lies on the $i$-package of $E_{\imt}(z)$ or $E_{\imt} (u)$,} \\[1ex]  
        \phi E_i(v) & \mbox{if $E_i(v)$ lies on the $i$-package of $E_{\imt}(z)$ or $E_{\imt} (u)$,}\\[1ex]
        E_i(v) & \mbox{otherwise.}
      \end{array} \right.
    \label{eqn:gamma}
  \end{equation}
  Define $E'_i$ to be the set of pairs $\{v,\gamma_i^x(v)\}$ for each $v$ admitting an $i$-neighbor. Define a signed, colored graph $\gamma_i^x(\G)$ of type $(n,N)$ by
  \begin{equation}
    \gamma_i^x(\G) = (V, \sigma, E_2 \cup\cdots\cup E_{\imo} \cup E'_i \cup E_{\ipo} \cup\cdots\cup E_{\nmo}). 
  \end{equation}
\label{defn:gamma}
\end{definition}

\begin{remark}
  Note that the $m>0$ case for $\psi_i$ handles the situation where vertices have $\imo$-type W, that is, components of $E_{\imt} \cup E_{\imo}$ that do not appear in Figure~\ref{fig:lambda4}. The map $\gamma_i$ is similar, but it handles the situation where vertices have $i$-type W, that is, components of $E_{\imo} \cup E_{i}$ do not appear in Figure~\ref{fig:lambda4}. Therefore while applying $\psi_i$ for $m>0$ is relatively rare (e.g. does not arise when axiom $4$ holds for the $(i,N)$-restriction), $\gamma_i$ is often indispensable, at least in theory.
  \label{rmk:psi-v-gamma}
\end{remark}

The map $\gamma_i^z(\G)$ maintains $\LSP_4$ for the same reasons that $\psi_i^x$ does. Unlike $\varphi_i^w$ and $\psi_i^x$, the map $\gamma_i^z$ does not separate connected components of $E_{\imt} \cup E_{\imo} \cup E_i$, so $\LSP_5$ is trivially maintained. While $\gamma_i^z(\G)$ does not decrease $W_i$ or $C_i$, neither does it increase them. Its usefulness lies in the fact that it increases $U_i$. Specifically, when $W_i(\G) \cup C_i(\G)$ is nonempty and $U_i(\G)$ is empty, the flatness of the $\imt$-edges in Definition~\ref{defn:gamma} always holds, that is, $\gamma_i$ applies. 

\begin{lemma}
  Let $\G$ be a locally Schur positive graph of type $(\ipo,\ipo)$, and suppose that the $(\imt,\ipo)$-restriction of $\G$ is a dual equivalence graph and that the $(i,\ipo)$-restriction of $\G$ satisfies dual equivalence axiom $4$. If a connected component of $E_{\imt} \cup E_{\imo} \cup E_i$ not appearing in Figure~\ref{fig:lambda5} has no element in $U_i(\G)$, then for any $z$ such that $z$ has an $\imt$-edge and a non-flat $i$-edge but does not have $\imo$-type W, both $z$ and $u = (E_{\imo} E_i)^m(z)$ have flat $\imt$-edges, where $m>0$ and $u$ has an $i$-neighbor and does not have $\imo$-type W.
  \label{lem:gamma-flat}
\end{lemma}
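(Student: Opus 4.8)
The plan is to argue by the now-familiar diagram chase, using Lemma~\ref{lem:no-A} and Remark~\ref{rmk:typeA} to rule out the only way a $\imt$-edge at $z$ or at $u$ could fail to be flat. Write $\C$ for the connected component of $E_{\imt} \cup E_{\imo} \cup E_i$ containing $z$. Since $\C$ does not appear in Figure~\ref{fig:lambda5} and contains no element of $U_i(\G)$, Lemma~\ref{lem:no-A} shows that no vertex of $\C$ has $i$-type A, and hence by Remark~\ref{rmk:typeA} the degree $5$ generating function of $\C$ has the form $b\,s_{(3,2)}+c\,s_{(3,1,1)}+d\,s_{(2,2,1)}$ for some $b,c,d\geq 0$, with the property that excising any Schur positive connected piece of $\C$ leaves a locally Schur positive graph. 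This is the lever for the contradiction: whenever a non-flat $\imt$-edge at $z$ is seen to produce an $i$-type A vertex on $\C$ (equivalently, to enable a long $\varphi_i$ stripping off an $s_{(4,1)}$ or $s_{(2,1,1,1)}$ component), the hypothesis that $U_i(\G)$ is empty is violated.

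First I would record the local geometry. Since $z$ has a non-flat $i$-edge, both $z$ and $E_i(z)$ have $i$-type W, and iterating $E_{\imo}$ and $E_i$ produces the alternating non-flat $i$-chain through $z$, whose interior vertices all have $i$-type W and which terminates at $u=(E_{\imo}E_i)^m(z)$ with $m>0$, the first vertex along it that has an $i$-neighbour but is not $\imo$-type W. Because $z$ has an $\imt$-neighbour but is not $\imo$-type W, we have $\sigma(z)_{\imo}=\sigma(E_{\imt}(z))_{\imo}$, and from axioms $1$ and $3$ at the $\imt$-, $\imo$- and $i$-edges the signature of $z$ alternates along $\imh,\imt,\imo,i$.

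The heart of the matter is to show that $z$ has a flat $\imt$-edge. Suppose not. Colours $\leq\imt$ are governed by the $(\imt,N)$-restriction, which is a dual equivalence graph, so a non-flat $\imt$-edge at $z$ forces $E_{\imh}(z)=E_{\imt}(z)$ --- a double $E_{\imh},E_{\imt}$-edge --- and the alternation of $\sigma(z)$ extends to $\sigma(z)_{\imf}$. Since the $(i,N)$-restriction satisfies axiom $4$, the connected component of $E_{\imh}\cup E_{\imt}\cup E_{\imo}$ through $z$ appears in Figure~\ref{fig:lambda5}; the only such component with a double $\imh,\imt$-edge at a vertex also admitting an $\imo$-edge is the five-vertex one, so $z$ plays the role of the vertex carrying both the double $\imh,\imt$-edge and a single $\imo$-edge, and in particular $E_{\imo}(z)$ acquires an $\imh$-neighbour. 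Now bring in colour $i$: by axioms $2$ and $5$ the colour-$i$ structure commutes with the $\imh$- and $\imt$-edges, so propagating $\sigma$ across this rigid pattern and then out along the non-flat $i$-chain through $z$ --- exactly as in the proof of Lemma~\ref{lem:psi-compatible}, with Proposition~\ref{prop:type-C} telling us which endpoints of the relevant $i$-edges have $i$-type W --- one checks that the component of $E_{\imt}\cup E_{\imo}\cup E_i$ attached on the $E_{\imt}(z)$ side degenerates to one containing a vertex with an $\imt$-neighbour and an $i$-neighbour but no $\imo$-neighbour, whose restricted degree $5$ generating function is forced to be $s_{(4,1)}$ or $s_{(2,1,1,1)}$; that is, $\C$ contains a vertex of $i$-type A, contradicting Lemma~\ref{lem:no-A}. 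Hence $z$ has a flat $\imt$-edge. The identical argument applied to $u$, which lies on the same chain, admits both an $\imt$- and an $i$-neighbour, and is not $\imo$-type W, shows that $u$ also has a flat $\imt$-edge; with this, every hypothesis of Definition~\ref{defn:gamma} is satisfied and $\gamma_i^z$ is well defined.

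I expect the middle step of the previous paragraph to be the main obstacle: verifying that the double $E_{\imh},E_{\imt}$-edge at $z$ and its forced surroundings really do force an $i$-type A vertex (or some other failure of Schur positivity) onto $\C$, rather than being absorbed into a harmless type-B or type-C piece. This is a multi-case signature chase --- distinguishing whether the non-flat $i$-chain through $z$ is closed or open, and tracking $\sigma_{\imf}$ and $\sigma_{\imh}$ across $E_{\imh}$-edges using axiom $6$ to move within $i$-packages --- of the same flavour and difficulty as the chases in the proofs of Lemmas~\ref{lem:psi-compatible} and \ref{lem:LSP4}.
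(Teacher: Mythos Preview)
Your proposal has a genuine gap in the mechanism of contradiction. You aim to derive a contradiction by producing a vertex of $i$-type A on $\C$ (or equivalently, you say, a strip with generating function $s_{(4,1)}$ or $s_{(2,1,1,1)}$). But this is not what happens, and your own description betrays the mismatch: a vertex ``with an $\imt$-neighbour and an $i$-neighbour but no $\imo$-neighbour'' is not of $i$-type A --- type A requires \emph{no} $\imt$-neighbour --- and such a vertex contributes to $s_{(3,2)}$, $s_{(3,1,1)}$ or $s_{(2,2,1)}$, not $s_{(4,1)}$. The actual lever is different: assuming a non-flat $\imt$-edge, one constructs an explicit element of $U_i(\G)$, i.e.\ a vertex $w\in W_i(\G)$ or $x\in C_i(\G)$ such that $\varphi_i^w$ or $\psi_i^x$ carves off a piece with generating function $s_{(3,2)}$, $s_{(2,2,1)}$ or $s_{(3,1,1)}$, and then Remark~\ref{rmk:typeA} (together with Lemma~\ref{lem:no-A}) guarantees the remainder is Schur positive, contradicting $U_i(\G)=\varnothing$.

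Moreover, the $z$ and $u$ cases are \emph{not} handled by ``the identical argument''. The local diagrams forced by a non-flat $\imt$-edge at $u$ versus at $z$ are genuinely different (in the paper, compare Figures~\ref{fig:gamma-u} and~\ref{fig:gamma-flat}), and the resulting case analyses diverge: for $u$ one shows $\varphi_i^u$ strips an $s_{(3,2)}$ or $s_{(2,2,1)}$ piece (after ruling out two subcases on $E_{\imt}(w)$, and then arguing by finiteness that the remaining pattern cannot persist indefinitely); for $z$ one first uses the already-established flatness at $u$ to constrain the picture, then splits on whether $x=E_iE_{\imt}(z)$ has $\imo$-type W, and in the main case applies $\varphi_i^x$ to break $\C$ into three pieces, with further subcases on $E_{\imo}(z)$, $E_{\imo}(w)$, and $E_iE_{\imt}(x)$ determining whether $\varphi_i$ or $\psi_i$ finishes the job. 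Your sketch supplies none of these identifications; the phrase ``one checks that \ldots\ degenerates'' is exactly the content of the lemma, and the specific $w$ or $x$ to which $\varphi_i$ or $\psi_i$ is applied must be named and the resulting pieces computed.
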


\begin{proof}
  Since the $(i,\ipo)$-restriction of $\G$ satisfies dual equivalence axiom $4$, an $E_{\imt}$ edge between two vertices is non-flat if and only if it is a double edge with $E_{\imh}$. Similarly, a vertex has $\imo$-type W if and only if it has a double edge for $E_{\imt}$ and $E_{\imo}$. Keeping these in mind, we consider the $\imt$-edges at $u$ and $z$ in turn.

  First suppose that the $\imt$-edge at $u$ is non-flat, and so has a double edge for $E_{\imh}$ and $E_{\imt}$. By axiom $5$, we have $E_{\imh} E_i (u) = E_i E_{\imh} (u) = E_{i} E_{\imt} (u)$. By axiom 4 for $E_{\imh} \cup E_{\imt} \cup E_{\imo}$, $x = E_{\imh} E_{\imo} (u)$ must have a double edge for $E_{\imt}$ and $E_{\imo}$. By axiom $2$, $E_{\imh}(u) = E_{\imt}(u)$ does not admit an $\imo$-neighbor. Therefore the $i$-edge at $E_{\imh}(u) = E_{\imt}(u)$ is flat and $w = E_i E_{\imt} (u)$ admits an $\imo$-neighbor. Therefore we have the case depicted in Figure~\ref{fig:gamma-u}.

  \begin{figure}[ht]
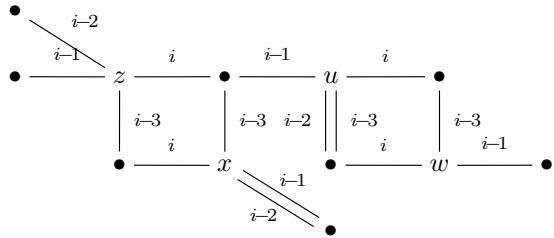

    \begin{displaymath}
      \begin{array}{\cs{8}\cs{8}\cs{8}\cs{8}\cs{8}c}
        \rnode{z4}{\B} & & & & & \\[3ex]
        \rnode{a0}{\B} & \rnode{a1}{z} & \rnode{a2}{\B} & \rnode{a3}{u} & \rnode{a4}{\B} & \\[5ex]
        & \rnode{b1}{\B} & \rnode{b2}{x} & \rnode{b3}{\B} & \rnode{b4}{w} & \rnode{b5}{\B} \\[3ex]
        & & & \rnode{c1}{\B} & &
      \end{array}
      \psset{nodesep=3pt,linewidth=.1ex}
      \everypsbox{\scriptstyle}
      \ncline {a1}{z4} \nbput{\imt}
      \ncline {a0}{a1} \naput{\imo}
      \ncline {a1}{a2} \naput{i}
      \ncline {a2}{a3} \naput{\imo}
      \ncline {a3}{a4} \naput{i}
      \ncline {a1}{b1} \naput{\imh}
      \ncline {a2}{b2} \naput{\imh}
      \ncline[offset=2pt] {a3}{b3} \naput{\imh}
      \ncline[offset=2pt] {b3}{a3} \naput{\imt}
      \ncline {a4}{b4} \naput{\imh}
      \ncline {b1}{b2} \naput{i}
      \ncline {b3}{b4} \naput{i}
      \ncline {b4}{b5} \naput{\imo}
      \ncline[offset=2pt] {b2}{c1} \nbput{\imt}
      \ncline[offset=2pt] {c1}{b2} \nbput{\imo}
    \end{displaymath}
    \caption{\label{fig:gamma-u} Situation when $u$ has a non-flat $\imt$-edge.} 
  \end{figure}

  Since the $i$-edge at $w$ is flat and $E_i(w)$ admits an $\imt$-neighbor, $w$ also admits an $\imt$-neighbor. In particular, $\sigma_{\imh,\imt,\imo,i}(w) = \sigma_{\imh,\imt,\imo,i}(x)$. If $E_{\imt} (w) = E_{\imh} (w)$, then $\varphi_i^u$ carves the connected component of $E_{\imt} \cup E_{\imo} \cup E_i$ into two pieces, one of which (the piece containing $u$ and $x$) has generating function $s_{(3,2)}$ or $s_{(2,2,1)}$. By Lemma~\ref{lem:no-A} and Remark~\ref{rmk:typeA}, this contradicts the assumption that $U_i(\G)$ is empty. If $E_{\imt} (w) = E_{\imo} (w)$, then applying $\varphi_i^u$ carves the connected component of $E_{\imt} \cup E_{\imo} \cup E_i$ into three pieces, one of which (the piece containing $u$ and $x$) has generating function $s_{(3,2)}$ or $s_{(2,2,1)}$, and the generating function of the piece containing $E_{\imh}(z)$ remains unchanged. Thus we once again have $u \in U_i(\G)$ contradicting the assumption that $U_i(\G)$ is empty. Therefore by axiom $4$ for $E_{\imh} \cup E_{\imt} \cup E_{\imo}$, we must have $E_{\imo} E_{\imh} (w) = E_{\imh} E_{\imo} (w)$. In particular, the $i$-edge at $u$ is not flat, so $E_i(u)$ admits an $\imo$-neighbor. Furthermore, $E_{i} (u)$ does not admit an $\imt$-neighbor since the $i$-edge is non-flat, and so by axiom $3$, $E_{\imo} E_i (u)$ must admit an $\imt$-neighbor. Therefore $E_{\imo} E_i (u)$ must also admit an $i$-neighbor, since otherwise it has $i$-type A, which is disallowed by Lemma~\ref{lem:no-A}. Since $E_{\imh} E_{\imo} (w) = E_{\imo} E_i (u)$, by axioms $2$ and $5$ $E_{\imo}(w)$ also admits an $i$-neighbor, and the scenario repeats by replacing $u$ with $E_{\imo} E_i (u)$. By the finiteness of the graph, a contradiction must eventually be reached. Therefore it must be that $u$ has a flat $\imt$-edge.

  Now suppose that the $\imt$-edge at $z$ is non-flat, and so has a double edge for $E_{\imh}$ and $E_{\imt}$. Let $x = E_i E_{\imt} (z) = E_i E_{\imh} (z)$. By axiom $2$, since $z$ has an $\imo$-neighbor, $E_{\imh}(z) = E_{\imt}(z)$ does not. Therefore, by axiom $3$, $x$ has a flat $i$-edge and both an $\imo$-neighbor and an $\imt$-neighbor. If $x$ has $\imo$-type W, then $E_{\imt}(x) = E_{\imo}(x)$. By axiom $5$, $x = E_{\imh} E_i (z)$. By axiom $4$ for $E_{\imh} \cup E_{\imt} \cup E_{\imo}$, since $u = E_{\imo} E_{\imh} (x)$, $u$ must have a double edge for $E_{\imh}$ and $E_{\imt}$ contradicting the assumption that $u$ has a flat $\imt$-edge. Thus we may assume that $x$ does not have $\imo$-type W.

  By axiom $4$ for $E_{\imh} \cup E_{\imt} \cup E_{\imo}$, since $E_{\imt}(x) \neq E_{\imo}(x)$, we must have $E_{\imh} E_{\imo} (x) = E_{\imo} E_{\imh} (x) = u$. Since $u$ admits an $i$-neighbor, by axiom $2$ so must $E_{\imh} (u) = E_{\imo} (x)$, forcing $x$ to have $i$-type W. Set $w = E_i E_{\imo} (x)$. By axiom $5$, we must have $E_{\imh} E_i (u) = w$. Since $u$ admits an $\imt$-neighbor and has a flat $i$-edge, axiom $3$ ensures that $E_i (u)$ admits an $\imt$-neighbor as well. Since $x$ admits an $\imt$-neighbor and does not have $\imo$-type W, by axiom $3$ again $E_{\imo} (x)$ does not admit an $\imt$-neighbor. By axiom $2$, $E_{\imh} (z) = E_{\imt} (z)$ does not admit an $\imo$-neighbor, and so by $\LSP_4$, $w$ must admit an $\imo$-neighbor. Therefore the $i$-edge at $w$ is not flat, and so $w$ must also admit an $\imt$-neighbor. Since both $w$ and $E_{\imh}(w) = E_i (u)$ admit $\imt$-edges, by axiom $4$ we must have $E_{\imh}(w) = E_{\imt}(w)$. Therefore the situation is as depicted in Figure~\ref{fig:gamma-flat}.

  \begin{figure}[ht]
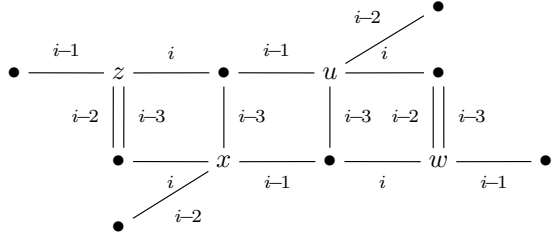

    \begin{displaymath}
      \begin{array}{\cs{8}\cs{8}\cs{8}\cs{8}\cs{8}c}
        & & & & \rnode{z4}{\B} & \\[3ex]
        \rnode{a0}{\B} & \rnode{a1}{z} & \rnode{a2}{\B} & \rnode{a3}{u} & \rnode{a4}{\B} & \\[5ex]
        & \rnode{b1}{\B} & \rnode{b2}{x} & \rnode{b3}{\B} & \rnode{b4}{w} & \rnode{b5}{\B} \\[3ex]
        & \rnode{c1}{\B} & & & & 
      \end{array}
      \psset{nodesep=3pt,linewidth=.1ex}
      \everypsbox{\scriptstyle}
      \ncline {a3}{z4} \naput{\imt}
      \ncline {a0}{a1} \naput{\imo}
      \ncline {a1}{a2} \naput{i}
      \ncline {a2}{a3} \naput{\imo}
      \ncline {a3}{a4} \naput{i}
      \ncline[offset=2pt] {a1}{b1} \naput{\imh}
      \ncline[offset=2pt] {b1}{a1} \naput{\imt}
      \ncline {a2}{b2} \naput{\imh}
      \ncline {a3}{b3} \naput{\imh}
      \ncline[offset=2pt] {a4}{b4} \naput{\imh}
      \ncline[offset=2pt] {b4}{a4} \naput{\imt}
      \ncline {b1}{b2} \nbput{i}
      \ncline {b2}{b3} \nbput{\imo}
      \ncline {b3}{b4} \nbput{i}
      \ncline {b4}{b5} \nbput{\imo}
      \ncline {b2}{c1} \naput{\imt}
    \end{displaymath}
    \caption{\label{fig:gamma-flat} Situation when $z$ has a non-flat $\imt$-edge and $x$ does not have $\imo$-type W.} 
  \end{figure}

  In this case, $u,x \in W_i^0(\G)$ and applying $\varphi_i^u = \varphi_i^x$ breaks the graph up to three components. By axiom $4$ for $E_{\imt} \cup E_{\imo}$, neither $E_{\imo}(z)$ nor $E_{\imo}(w)$ admits an $\imt$-neighbor. If neither admits an $i$-neighbor, then a component with generating function $s_{(3,1,1)}$ has been removed. By axiom $4$, $E_{\imt}(x)$ does not admit an $\imo$-edge, and so by axioms $2$ and $3$ it must admit a necessarily flat $i$-edge. If $E_i E_{\imt} (x)$ has $\imo$-type W, then a component with generating function $s_{(3,2)}$ or $s_{(2,2,1)}$ has been removed thereby ensuring that the third component remains locally Schur positive as well, contradicting the assumption that $U_i(\G)$ is empty. If $E_i E_{\imt} (x)$ does not have $\imo$-type W, then axioms $2,3,4$ again ensure that $E_{\imt} E_i E_{\imt} (x)$ admits a necessarily flat $i$-edge. In this case, $\psi_i^{E_i E_{\imt} (x)}$ applies to the original component without breaking it, once again contradicting the assumption that $U_i(\G)$ is empty. Thus, by symmetry, we may assume $E_{\imo} (w)$ admits an $i$-neighbor. In this case, $w$ has $i$-type W, and so $w \in W_i^0(\G)$. Now we are once again in the case of the right side of Figure~\ref{fig:phi-degree4} (with $w$ representing $w$), and $\varphi_i^w$ removes a component with generating function $s_{(3,2)}$ or $s_{(2,2,1)}$, either way maintaining local Schur positivity and contradicting that $U_i(\G)$ is empty.
\end{proof}

Next we show that when $W_i(\G) \cup C_i(\G)$ is nonempty and $U_i(\G)$ is empty, the structure of connected components of $E_{\imt} \cup E_{\imo} \cup E_i$ is that of a rooted tree.

\begin{lemma}
  Let $\G$ be a locally Schur positive graph of type $(\ipo,\ipo)$, and suppose that the $(\imt,\ipo)$-restriction of $\G$ is a dual equivalence graph and that the $(i,\ipo)$-restriction of $\G$ satisfies dual equivalence axiom $4$. If a connected component of $E_{\imt} \cup E_{\imo} \cup E_i$ not appearing in Figure~\ref{fig:lambda5} has no element in $U_i(\G)$, then, treating double edges as single edges, the component is a tree. Moreover, the component contains a unique vertex $w$ not admitting an $\imt$-neighbor such that $E_{\imo}(w) = E_i(w)$.
  \label{lem:tree}
\end{lemma}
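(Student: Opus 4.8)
The plan is to assemble $\C$ from the connected components of $E_{\imt}\cup E_{\imo}$ that it contains. Since the $(i,\ipo)$-restriction of $\G$ satisfies dual equivalence axiom $4$, each such ``block'' appears in Figure~\ref{fig:lambda4}: a lone vertex, a three-vertex path ($\imt$-edge then $\imo$-edge), or a two-vertex $(\imt,\imo)$-double edge. These blocks are joined inside $\C$ by $E_i$-edges. By Lemma~\ref{lem:no-A} no vertex of $\C$ has $i$-type A, so every endpoint of an $E_i$-edge has $i$-type W, B, or C; using axiom $3$, a non-flat $E_i$-edge has both endpoints admitting $\imo$-neighbors, hence both of $i$-type W, and so is either the lone edge of a collapsed $(\imo,i)$-double edge or an interior edge of a maximal non-flat $i$-chain, while a flat $E_i$-edge has exactly one endpoint admitting an $\imo$-neighbor and its other endpoint then admits an $\imt$-neighbor and is of $i$-type B or C. The first step is to turn Definition~\ref{defn:type} and Figures~\ref{fig:type-W}, \ref{fig:type-B}, \ref{fig:type-C} into an explicit catalogue of how an $E_i$-edge can meet a block, discarding the configurations excluded by the flatness conclusions of Lemma~\ref{lem:gamma-flat}.

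With this catalogue I would orient the edges of $\C$ ``toward a root'': a flat $E_i$-edge is directed from the endpoint with an $\imo$-neighbor toward the endpoint without one; the two extreme edges of a maximal non-flat $i$-chain are directed outward; and inside each block the orientation is routed along the $\imt$- and $\imo$-edges so that, after collapsing double edges, every vertex of $\C$ has exactly one outgoing edge, the only exceptions being vertices $w$ with no $\imt$-neighbor and $E_{\imo}(w)=E_i(w)$, which have none. A connected graph on $|V(\C)|$ vertices carrying an orientation with all out-degrees at most $1$ has at most $|V(\C)|-1$ edges, so it is a tree exactly when the orientation has no directed cycle and precisely one sink; thus both assertions of the lemma follow once I establish (i) no directed cycles, and (ii) the unique sink has the stated form, its uniqueness being automatic since each edge was oriented canonically.

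The crux, and the step I expect to be the main obstacle, is ruling out directed cycles. I would argue by contradiction, taking a shortest directed cycle and pushing the constraints of Figures~\ref{fig:lambda4} and \ref{fig:lambda5} on the blocks it traverses until only a short list of local configurations survives. In each surviving case one of two things happens: either some connected component of $E_{\imo}\cup E_i$ or of $E_{\imt}\cup E_{\imo}\cup E_i$ is forced to have a non-Schur-positive generating function, contradicting $\LSP_4$ or $\LSP_5$ for $\G$; or the cycle exhibits a maximal non-flat $i$-chain (resp.\ a flat $i$-chain) to which $\varphi_i^w$ (resp.\ $\psi_i^x$) applies and splits $\C$ into two pieces that, by Lemma~\ref{lem:no-A} and Remark~\ref{rmk:typeA}, are each still locally Schur positive, contradicting $U_i(\G)=\emptyset$. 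Once cycles are excluded, $\C$ is a tree; tracing the orientation shows its unique sink is a vertex $w$ with no $\imt$-neighbor and $E_{\imo}(w)=E_i(w)$, and uniqueness of $w$ follows from uniqueness of the sink. What remains is the lengthy but routine verification of the local catalogue and of the well-definedness of the orientation.
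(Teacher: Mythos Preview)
Your orientation/sink framework is sound in outline: on a finite connected graph, out-degree at most $1$ together with no directed cycle forces exactly $|V|-1$ edges, hence a tree with a unique sink. But the proposal defers precisely the steps that constitute the proof.

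First, the orientation is not actually defined. You direct flat $E_i$-edges and the extreme edges of a maximal non-flat $i$-chain, but you say nothing about the interior $E_i$- and $E_{\imo}$-edges of such a chain, and the routing inside a three-vertex $E_{\imt}\cup E_{\imo}$ block is described only by its intended outcome (``so that every vertex has exactly one outgoing edge''). Without concrete local rules you cannot verify out-degree $\leq 1$, nor confirm that the out-degree-zero vertices are exactly those with no $\imt$-neighbor and $E_{\imo}(w)=E_i(w)$.

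Second, ruling out cycles is the entire substance of the lemma, and your plan is only to ``push the constraints \ldots until a short list survives'' and then appeal to $\varphi_i$ or $\psi_i$. This is exactly what the paper does, and it is not routine. The paper's case analysis runs: pure $E_{\imt}\cup E_i$ loops of length $>4$ are broken by $\psi_i$ peeling off an $s_{(3,1,1)}$, and length-$4$ ones by a $\varphi_i$ that does not split the component; any remaining loop must carry a vertex of $i$-type W, and applying $\varphi_i$ there reduces either to the configuration of Figure~\ref{fig:gamma-flat} (already dispatched in Lemma~\ref{lem:gamma-flat}) or to a pure $E_{\imo}\cup E_i$ loop; finally, an $\LSP_5$ count balancing left-type-B against right-type-B path terminations forces at least one $(\imo,i)$ double edge, and following edges outward from it shows it is the only such loop. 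Each step uses a specific structural feature; your proposal names none of them. Calling the residue ``lengthy but routine'' understates the difficulty: the catalogue and the cycle-elimination together \emph{are} the proof, and the orientation device does not shorten them.
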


\begin{proof}
  Suppose there is a sequence of at least three edges forming a loop. If the loop consists entirely of $E_{\imt}$ and $E_i$ edges, then following signatures around the loop using axiom $2$, the number of edges must be a multiple of $4$. If there are more than $4$ edges in the loop, one of the vertices lies in $C_i(\G)$ and applying $\psi_i$ will remove a component of $E_{\imt} \cup E_{\imo} \cup E_i$ with generating function $s_{(3,1,1)}$, contradicting the assumption that $U_i(\G)$ is empty. If there are $4$ edges, then some vertex on the loop is in $W_i(\G)$ and applying $\varphi_i$ does not split the component, thus maintaining local Schur positivity and contradicting that $U_i(\G)$ is empty. Therefore the loop must contain a vertex of $i$-type W. In this case, there are only two ways for $\varphi_i$ to break the component: if the loop consists of more than two $E_{\imo}$ and $E_i$ edges or if the loop is as in Figure~\ref{fig:gamma-flat}. The latter case is resolved as in the proof of Lemma~\ref{lem:gamma-flat}, so the only possible loops are with $E_{\imo}$ and $E_i$ edges. Chasing signatures using axioms $2$ and $3$ shows that there are an even number, say $2k$, of edges in the loop and that every other vertex admits an $\imt$-neighbor.

  We next claim that there is a vertex $w$ such that $E_{\imo}(w) = E_i(w)$ and that this is the only loop consisting solely of $E_{\imo}$ and $E_i$ edges. If no vertex has left $i$-type B, meaning the component of $i$-type B on the left side of Figure~\ref{fig:type-B}, then $\LSP_5$ dictates that no vertex can have right $i$-type B either since both vertices can only contribute to the Schur function $s_{(3,2)}$ or $s_{(2,2,1)}$, and so no vertex can have $i$-type W. Thus all vertices have $i$-type C, in which case the finiteness of the graph ensures there is a closed loop of $E_{\imt}$ and $E_i$ edges, contradicting the previous result. Therefore there must be a vertex with left $i$-type B. Starting from this vertex, we can follow the graph outwards never looping back. If we reach a vertex with $i$-type C, then the $E_{\imo}$ leads to a leaf and the other edge continues on. If we reach a vertex with left $i$-type B, then we reach a leaf since we must follow the double edge between $E_{\imt}$ and $E_{\imo}$. If we reach a vertex with right $i$-type B, then we reach a vertex with $i$-type W. At this point, if $E_{\imo}$ and $E_i$ form a double edge, then we have reached a leaf. Otherwise, we branch in two directions. Therefore every path must end in a double edge. If all endings are at vertices with a double edge between $E_{\imt}$ and $E_{\imo}$, then there will be one more left $i$-type B component than right $i$-type B component, contradicting that $\G$ is $\LSP_5$.

  Follow edges from $w$, say starting with $E_{\imt}$. Each time we reach a vertex $v$ admitting an $i$-edge, either $v$ does not admit an $\imo$-neighbor, thereby forcing the $i$-edge to be flat, or $v \in W_i(\G)$. In either case, we cannot have $E_{\imo}(v) = E_i(v)$, so the vertex $w$ is unique up to interchanging $w$ and $E_{\imo}(w) = E_i(w)$. Since exactly one of the two admits an $\imt$-neighbor, the lemma follows.
\end{proof}

With the structure of graphs where $W_i(\G) \cup C_i(\G)$ is nonempty and $U_i(\G)$ is empty rigidly established, we now show that there exists $z$ such that $\gamma_i^z$ applies and facilitates the application of either $\phi_i^w$ or $\psi_i^x$.

\begin{theorem}
  Let $\G$ be a locally Schur positive graph of type $(\ipo,\ipo)$, and suppose that the $(\imt,\ipo)$-restriction of $\G$ is a dual equivalence graph and that the $(i,\ipo)$-restriction of $\G$ satisfies dual equivalence axiom $4$. If $W_i(\G) \cup C_i(\G)$ is nonempty and $U_i(\G)$ is empty, then there exists $z$ such that $U_i(\gamma_i^z(\G))$ is nonempty.
  \label{thm:nonempty}
\end{theorem}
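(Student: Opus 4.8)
The plan is to exploit the rigid tree structure of Lemma~\ref{lem:tree} to locate a vertex $z$ at which $\gamma_i^z$ rearranges the offending component enough that the longest form of $\varphi_i$ or $\psi_i$ (Remarks~\ref{rmk:long_phi} and \ref{rmk:long_psi}) can peel off one of the connected components of Figure~\ref{fig:lambda5}. Fix a connected component $\C$ of $E_{\imt}\cup E_{\imo}\cup E_i$ not appearing in Figure~\ref{fig:lambda5}; such a $\C$ exists because $W_i(\G)\cup C_i(\G)$ is nonempty, and by hypothesis $\C$ contains no element of $U_i(\G)$. By Lemma~\ref{lem:tree}, $\C$ is a tree once double edges are contracted, rooted at the unique vertex $w_0$ with no $\imt$-neighbor and $E_{\imo}(w_0)=E_i(w_0)$. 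By Lemma~\ref{lem:no-A} no vertex of $\C$ has $i$-type A, so by Remark~\ref{rmk:typeA} deleting from $\C$ any subgraph whose degree-$5$ generating function is $s_{(3,1,1)}$, $s_{(3,2)}$ or $s_{(2,2,1)}$ leaves the remainder locally Schur positive. Thus it is enough to produce $z$ for which $\gamma_i^z(\G)$ admits a maximal non-flat $i$-chain or a maximal flat $i$-chain whose resolution by the appropriate long map detaches such a piece; by Lemmas~\ref{lem:LSP4-phi} and \ref{lem:LSP4} and the $\LSF_4$-stability of Theorems~\ref{thm:phi-terminate} and \ref{thm:psi-terminate}, the vertex at which that map acts then lies in $U_i(\gamma_i^z(\G))$.

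Since $U_i(\G)$ is empty, $\C$ cannot be a single path: a path would constitute one non-flat or flat $i$-chain carrying its attached $\imt$-edges, and the longest $\varphi_i$ or $\psi_i$ would already resolve it, forcing a vertex of $\C$ into $U_i(\G)$. So $\C$ has a branching vertex --- a vertex $v$ of $i$-type W with $E_{\imo}(v)\neq E_i(v)$ from which two edges of the contracted tree lead away from $w_0$ --- and we take one at maximal distance from $w_0$. Beyond $v$ the two subtrees of $\C$ are finite $E_{\imo}\cup E_i$ zigzags, each ending at a leaf (a double edge) and hence each containing a vertex not of $i$-type W. Let $z$ be the vertex on the root side of the non-flat $i$-edge at $v$ that is not of $\imo$-type W, and let $u=(E_{\imo}E_i)^m(z)$, $m>0$, be the first vertex on the far side of $v$ that is not of $i$-type W. By Lemma~\ref{lem:gamma-flat} --- applicable precisely because $W_i(\G)\cup C_i(\G)\neq\emptyset$ while $U_i(\G)=\emptyset$ --- both $z$ and $u$ have flat $\imt$-edges, so $\gamma_i^z$ is defined, and Lemma~\ref{lem:gamma-compatible} supplies the $i$-package isomorphism it requires.

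Applying $\gamma_i^z$ transposes the pendant $i$-edges at $E_{\imt}(z)$ and $E_{\imt}(u)$, rerouting the two subtrees past $v$ so that the branch at $v$ is replaced by a longer ordinary $i$-chain, without changing $W_i$ or $C_i$ and without separating any component of $E_{\imt}\cup E_{\imo}\cup E_i$, hence preserving local Schur positivity. In $\gamma_i^z(\G)$ the leaf chain past $v$ is no longer obstructed by the branch, so the longest $\varphi_i^w$ or $\psi_i^x$ of Remark~\ref{rmk:long_phi} or \ref{rmk:long_psi} now applies to it and detaches a component with generating function $s_{(3,1,1)}$, $s_{(3,2)}$ or $s_{(2,2,1)}$; by Remark~\ref{rmk:typeA} the rest of the component stays locally Schur positive, so the corresponding $w$ or $x$ lies in $U_i(\gamma_i^z(\G))$, which is therefore nonempty.

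The step I expect to be the main obstacle is the verification in the last paragraph that a single $\gamma_i^z$ move genuinely exposes a chain whose long resolution splits off exactly one of the three admissible degree-$5$ graphs and nothing worse. This demands a case analysis according to whether the vertices flanking $v$ have $i$-type B or C, careful bookkeeping of the $E_{\imh}$ edges controlling the $i$-packages on which $\gamma_i^z$, $\varphi_i$ and $\psi_i$ act --- so that all the swaps are honest $i$-package isomorphisms and create no vertex of $i$-type A --- and repeated appeals to $\LSP_5$ to exclude the degenerate local configurations, in the spirit of the diagram chases proving Lemmas~\ref{lem:gamma-flat} and \ref{lem:tree}.
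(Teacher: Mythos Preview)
Your overall strategy---locate a branching $R$-node in the tree of Lemma~\ref{lem:tree}, apply $\gamma_i$ nearby, and then peel off a degree-$5$ piece---is on the right track, but two points are genuine gaps.

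First, your description of what $\gamma_i^z$ accomplishes is not accurate. You write that it ``rerout[es] the two subtrees past $v$ so that the branch at $v$ is replaced by a longer ordinary $i$-chain.'' But $\gamma_i^z$ does not collapse a branch into a path: by Definition~\ref{defn:gamma} it only swaps the $i$-edges hanging off $E_{\imt}(z)$ and $E_{\imt}(u)$, i.e.\ it exchanges the two subtrees attached via flat $\imt$-edges at the two ends of the non-flat $i$-chain. The branching at $v$---which comes from the non-flat $i$-edge together with the $\imo$-edge at a type-W vertex---is untouched. What the paper exploits (see Figure~\ref{fig:bar}) is that after swapping these pendants, an $R$-node that previously sent its \emph{flat} edge toward a $C$-node may now send it toward an $L$-node, creating the left configuration of Figure~\ref{fig:peel} where $\varphi_i$ applies. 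Your argument needs to track which pendant lands where, not assert that branching disappears.

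Second, and more seriously, you are missing the sign/counting argument that makes the whole scheme work. The paper assigns each node a sign and observes that $\LSP_5$ forces the balance conditions $\#C^+=\#C^-$, $\#L^+=\#R^+$, $\#L^-=\#R^-$. These constraints are what guarantee that somewhere in the tree there is a leaf whose sign matches the root and whose position allows the longest $\psi_i$ (Remark~\ref{rmk:long_psi}) along the flat spine from the root to detach an $s_{(3,2)}$ or $s_{(2,2,1)}$. Without this, your claim that picking the maximal-depth branching vertex and applying $\gamma_i$ once exposes a peelable chain is unsupported: the four residual configurations in the paper's Figure~\ref{fig:node} show that after excluding the immediately resolvable cases there is still real structure to navigate, and it is the sign balance---not mere maximality of depth---that tells you a $\gamma_i$-swap will produce a Figure~\ref{fig:peel} or Figure~\ref{fig:bar} situation rather than another Figure~\ref{fig:node} case. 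Your final paragraph correctly flags this step as the obstacle; it is not just a case check but requires the $\LSP_5$ bookkeeping you have omitted.
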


\begin{proof}
  Fix a connected component of $E_{\imt} \cup E_{\imo} \cup E_i$ not appearing in Figure~\ref{fig:lambda5}. Recall that under the hypothesis that axiom $4$ holds for the $(i,\ipo)$-restriction of $\G$, if $w \in W_i(\G)$ then $\varphi_i^w$ applies and if $x \in C_i(\G)$ then $\psi_i^x$ applies, though neither necessarily preserves $\LSP_5$. Also, by Lemma~\ref{lem:gamma-flat}, if $z$ has a non-flat $i$-edge but does not have $\imo$-type W, then $\gamma_i^z$ applies.

  By Lemma~\ref{lem:tree}, the component is a rooted tree consisting of vertices of $i$-types W, B and C, with the root being the unique vertex not admitting an $\imt$-neighbor with a double edge for $\imo$ and $i$. Identify each connected component of $E_{\imt} \cup E_{\imo}$ as $i$-type C ($C$-node), left $i$-type B ($L$-node), or right $i$-type B and $i$-type W ($R$-node), where this last case has a vertex $v$ of right $i$-type B and a vertex $E_{\imt}(v)$ with $i$-type W. Consider the graph with nodes given by these components and directed edges given by $i$-edges directed away from the root. Since the graph is a tree, every node has a unique incoming $i$-edge. Furthermore, $L$-nodes correspond precisely to leaves, a $C$-node has one outgoing flat $i$-edge, and an $R$-node has one outgoing flat $i$-edge and one outgoing non-flat $i$-edge. In the case of the root, an $R$-node, the outgoing non-flat $i$-edge is a double edge with $E_{\imo}$, i.e. a loop back to the root, and this is the only loop in the graph.

  Figure~\ref{fig:peel} illustrates three situations that cannot arise in this graph when $U_i(\G)$ is empty. First, if an $R$-node goes to an $L$-node by a flat $i$-edge, then $\varphi_i$ preserves local Schur positivity as depicted in the left case of Figure~\ref{fig:peel}. Second, if a $C$-node goes to another $C$-node (necessarily by a flat $i$-edge), then $\psi_i$ preserves local Schur positivity as depicted in the middle case of Figure~\ref{fig:peel}. The third case is more complicated. If an $R$-node goes to another $R$-node by a flat $i$-edge and each of these $R$-nodes goes to an $L$-node by a non-flat $i$-edge, then $\psi_i$ preserves local Schur positivity. This is the rightmost case depicted in Figure~\ref{fig:peel}. The assumption that $U_i(\G)$ is empty forbids these cases.

  \begin{figure}[ht]
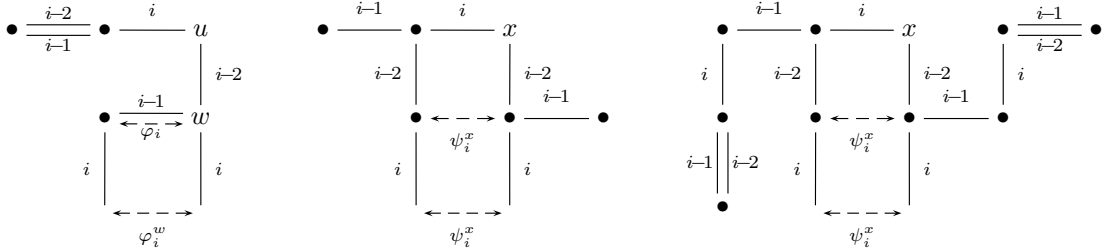

    \begin{displaymath}
      \begin{array}{\cs{7}\cs{7}c}
        \rnode{a3}{\B} & \rnode{a2}{\B} & \rnode{a1}{u} \\[5ex]
        & \rnode{b2}{\B} & \rnode{b1}{w}  \\[5ex]
        & \rnode{c2}{} & \rnode{c1}{}
      \end{array}
      \hspace{3em}
      \begin{array}{\cs{7}\cs{7}\cs{7}c}
        \rnode{A1}{\B} & \rnode{A2}{\B} & \rnode{A3}{x} &  \\[5ex]
        & \rnode{B2}{\B} & \rnode{B3}{\B} & \rnode{B4}{\B} \\[5ex]
        & \rnode{C2}{} & \rnode{C3}{} &
      \end{array}
      \hspace{3em}
      \begin{array}{\cs{7}\cs{7}\cs{7}\cs{7}c}
        \rnode{AA1}{\B} & \rnode{AA2}{\B} & \rnode{AA3}{x} & \rnode{AA4}{\B} & \rnode{ZZ4}{\B}  \\[5ex]
        \rnode{BB1}{\B} & \rnode{BB2}{\B} & \rnode{BB3}{\B} & \rnode{BB4}{\B}&  \\[5ex]
        \rnode{CC1}{\B} & \rnode{CC2}{} & \rnode{CC3}{} & & 
      \end{array}
      \psset{nodesep=3pt,linewidth=.1ex}
      \everypsbox{\scriptstyle}
      \ncline{a2}{a1} \naput{i}
      \ncline[offset=2pt]{a3}{a2} \nbput{\imo}
      \ncline[offset=2pt]{a2}{a3} \nbput{\imt}
      \ncline{b1}{a1} \nbput{\imt}
      \ncline[offset=2pt,linestyle=dashed]{<->}{b1}{b2} \nbput{\imo}
      \ncline[offset=2pt]{b2}{b1} \nbput{\varphi_i}
      \ncline{c1}{b1} \nbput{i}
      \ncline{c2}{b2} \naput{i}
      \ncline[linestyle=dashed]{<->}{c2}{c1} \nbput{\varphi^w_i}
      \ncline{A1}{A2} \naput{\imo}
      \ncline{A2}{A3} \naput{i}
      \ncline{A2}{B2} \nbput{\imt}
      \ncline{A3}{B3} \naput{\imt}
      \ncline[linestyle=dashed]{<->}{B2}{B3} \nbput{\psi^x_i}
      \ncline{B3}{B4} \naput{\imo}
      \ncline{B2}{C2} \nbput{i}
      \ncline{B3}{C3} \naput{i}
      \ncline[linestyle=dashed]{<->}{C2}{C3} \nbput{\psi^x_i}
      \ncline[offset=2pt]{AA4}{ZZ4} \nbput{\imt}
      \ncline[offset=2pt]{ZZ4}{AA4} \nbput{\imo}
      \ncline{AA1}{AA2} \naput{\imo}
      \ncline{AA2}{AA3} \naput{i}
      \ncline{AA1}{BB1} \nbput{i}
      \ncline{AA2}{BB2} \nbput{\imt}
      \ncline{AA3}{BB3} \naput{\imt}
      \ncline{AA4}{BB4} \naput{i}
      \ncline[linestyle=dashed]{<->}{BB2}{BB3} \nbput{\psi^x_i}
      \ncline{BB3}{BB4} \naput{\imo}
      \ncline[offset=2pt]{BB1}{CC1} \nbput{\imo}
      \ncline[offset=2pt]{CC1}{BB1} \nbput{\imt}
      \ncline{BB2}{CC2} \nbput{i}
      \ncline{BB3}{CC3} \naput{i}
      \ncline[linestyle=dashed]{<->}{CC2}{CC3} \nbput{\psi^x_i}
    \end{displaymath}
    \caption{\label{fig:peel} Three cases where $\varphi_i^w$ (left) or $\psi_i^x$ (middle and right) preserve $\LSP_5$.}
  \end{figure}

  Figure~\ref{fig:bar} depicts two cases that are easily resolved with $\gamma_i$. The left hand case depicts the situation when an $R$-node goes to a $C$-node by a non-flat $i$-edge and that $C$-node goes to an $L$-node (necessarily by a flat $i$-edge). In this case, applying $\gamma_i$ interchanges the subtree below the $R$-node with that subtree below the $C$-node, and the result is an instance of the leftmost case of Figure~\ref{fig:peel} where $\varphi_i$ can by applied. The right hand case of Figure~\ref{fig:bar} depicts that situation when both the flat and non-flat $i$-edges from an $R$-node go to $C$-nodes. Once again, applying $\gamma_i$ interchanges the subtrees, now resulting in an instance of the middle case of Figure~\ref{fig:peel} where $\psi_i$ can by applied.

  \begin{figure}[ht]
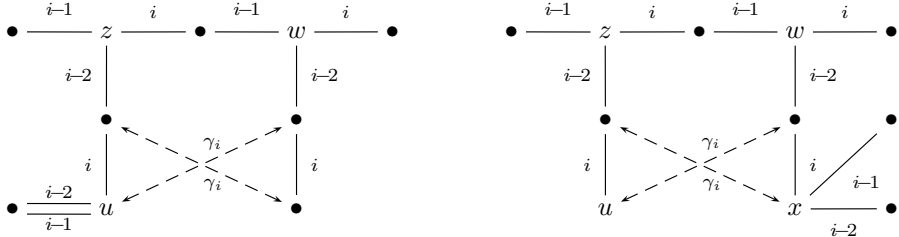

    \begin{displaymath}
      \begin{array}{\cs{7}\cs{7}\cs{7}\cs{7}c}
        \rnode{a1}{\B} & \rnode{a2}{z} & \rnode{a3}{\B} & \rnode{a4}{w} & \rnode{a5}{\B} \\[5ex]
        & \rnode{b2}{\B} & & \rnode{b4}{\B} & \\[5ex]
        \rnode{c1}{\B} & \rnode{c2}{u} & & \rnode{c4}{\B} &
      \end{array}
      \hspace{3em}
      \begin{array}{\cs{7}\cs{7}\cs{7}\cs{7}c}
        \rnode{A1}{\B} & \rnode{A2}{z} & \rnode{A3}{\B} & \rnode{A4}{w} & \rnode{A5}{\B}  \\[5ex]
        & \rnode{B2}{\B} & & \rnode{B4}{\B} & \rnode{B5}{\B}  \\[5ex]
        & \rnode{C2}{u} & & \rnode{C4}{x} & \rnode{C5}{\B} 
      \end{array}
      \psset{nodesep=3pt,linewidth=.1ex}
      \everypsbox{\scriptstyle}
      \ncline{a1}{a2} \naput{\imo}
      \ncline{a2}{a3} \naput{i}
      \ncline{a3}{a4} \naput{\imo}
      \ncline{a4}{a5} \naput{i}
      \ncline{a2}{b2} \nbput{\imt}
      \ncline{a4}{b4} \naput{\imt}
      \ncline{b2}{c2} \nbput{i}
      \ncline{b4}{c4} \naput{i}
      \ncline[offset=2pt]{c1}{c2} \nbput{\imo}
      \ncline[offset=2pt]{c2}{c1} \nbput{\imt}
      \ncline[linestyle=dashed]{<->}{b2}{c4} \naput{\gamma_i}
      \ncline[linestyle=dashed]{<->}{b4}{c2} \naput{\gamma_i}
      \ncline{A1}{A2} \naput{\imo}
      \ncline{A2}{A3} \naput{i}
      \ncline{A3}{A4} \naput{\imo}
      \ncline{A4}{A5} \naput{i}
      \ncline{A2}{B2} \nbput{\imt}
      \ncline{A4}{B4} \naput{\imt}
      \ncline{B2}{C2} \nbput{i}
      \ncline{B4}{C4} \naput{i}
      \ncline[linestyle=dashed]{<->}{B2}{C4} \naput{\gamma_i}
      \ncline[linestyle=dashed]{<->}{B4}{C2} \naput{\gamma_i}
      \ncline{C4}{B5} \nbput{\imo}
      \ncline{C4}{C5} \nbput{\imt}
    \end{displaymath}
    \caption{\label{fig:bar} Two cases where $\gamma_i^z$ allows $\varphi_i^w$ (left) or $\psi_i^x$ (right) to preserve $\LSP_5$.}
  \end{figure}
  
  We claim that this analysis resolves all configurations for edges coming from an $R$-node, except for the four shown in Figure~\ref{fig:node} or the case where the non-flat edge connected to another $R$-node. For the figures, we draw flat $i$-edges vertically and non-flat $i$-edges horizontally. If the non-flat $i$-edge of the $R$-node goes to an $L$-node, then the flat $i$-edge must either go to a $C$-node or another $R$-node, since the left side of Figure~\ref{fig:peel} precludes an $L$-node. In the former case, the (necessarily flat) $i$-edge from the $C$-node must go either to an $L$-node or an $R$-node, the left two cases of Figure~\ref{fig:node}, since the middle case of Figure~\ref{fig:peel} precludes another $C$-node. In the latter case, the non-flat $i$-edge of the second $R$-node must go to a $C$-node, since the right case of Figure~\ref{fig:peel} precludes another $L$-node. The flat $i$-edge from the second $R$-node cannot go to an $L$-node (by the left case of Figure~\ref{fig:peel}) nor to a $C$-node (by the right case of Figure~\ref{fig:bar}), so it must go to another $R$-node. Similarly, the (necessarily flat) $i$-edge from the $C$-node cannot go to another $C$-node (by the middle case of Figure~\ref{fig:peel}) nor to an $L$-node (by the left case of Figure~\ref{fig:bar}), so it must go to yet another $R$-node. The resulting case is the third of Figure~\ref{fig:node}. This handles all cases where the non-flat $i$-edge of an $R$-node goes to an $L$-node, so consider the alternative case in which the non-flat $i$-edge must go to a $C$-node. The analysis here is identical to the previous case, resulting in the rightmost case in Figure~\ref{fig:node}. Thus the claim is proved, and Figure~\ref{fig:node} contains all the remaining cases. Moreover, the root, necessarily an $R$-node, must be one of the middle two cases but with the non-flat edge looping instead of going to an $L$-node.

  \begin{figure}[ht]
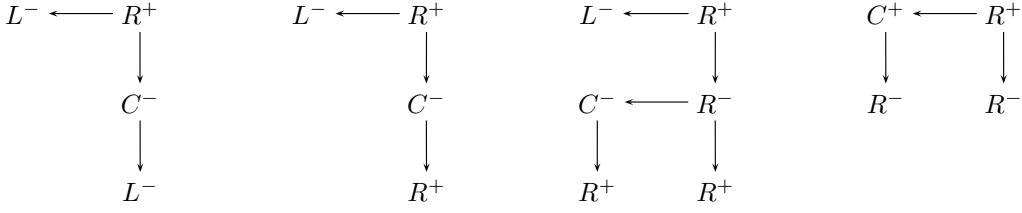

    \begin{displaymath}
      \begin{array}{\cs{7}c}
        \rnode{a1}{L^-} & \rnode{x1}{R^+} \\[5ex]
        \rnode{b1}{ } & \rnode{y1}{C^-} \\[5ex]
        \rnode{c1}{ } & \rnode{z1}{L^-}
      \end{array}
      \hspace{4em}
      \begin{array}{\cs{7}c}
        \rnode{a2}{L^-} & \rnode{x2}{R^+} \\[5ex]
        \rnode{b2}{ } & \rnode{y2}{C^-} \\[5ex]
        \rnode{c2}{ } & \rnode{z2}{R^+}
      \end{array}
      \hspace{4em}
      \begin{array}{\cs{7}c}
        \rnode{a3}{L^-} & \rnode{x3}{R^+} \\[5ex]
        \rnode{b3}{C^-} & \rnode{y3}{R^-} \\[5ex]
        \rnode{c3}{R^+} & \rnode{z3}{R^+}
      \end{array}
      \hspace{4em}
      \begin{array}{\cs{7}c}
        \rnode{a4}{C^+} & \rnode{x4}{R^+} \\[5ex]
        \rnode{b4}{R^-} & \rnode{y4}{R^-} \\[5ex]
        &
      \end{array}
      \psset{nodesep=3pt,linewidth=.1ex}
      \everypsbox{\scriptstyle}
      \ncline{->}{x1}{a1}
      \ncline{->}{x1}{y1}
      \ncline{->}{y1}{z1}
      \ncline{->}{x2}{a2}
      \ncline{->}{x2}{y2}
      \ncline{->}{y2}{z2}
      \ncline{->}{x3}{a3}
      \ncline{->}{x3}{y3}
      \ncline{->}{y3}{b3}
      \ncline{->}{b3}{c3}
      \ncline{->}{y3}{z3}
      \ncline{->}{x4}{a4}
      \ncline{->}{a4}{b4}
      \ncline{->}{x4}{y4}
    \end{displaymath}
    \caption{\label{fig:node} The four possible scenarios for edges emanating from an $R^+$-node, where horizontal edges are flat and vertical edges are non-flat.}
  \end{figure}

  For the case where long chains of $R$-nodes are connected by non-flat edges, the finiteness of the graph ensures that eventually one of these $R$-nodes must connect to either an $L$-node or a $C$-node, so this last $R$-node will also fall into one of the four cases depicted in Figure~\ref{fig:node}. 

  Associate a sign to each node as follows. For $C$-nodes, the sign is positive if $\sigma_{\imh}(v)=++--$ where $v$ is the vertex admitting neither an $\imt$-neighbor nor an $i$-neighbor and negative otherwise. For $L$-nodes and $R$-nodes, the sign is positive if the component belongs to $\G_{(3,2)}$ and negative if it belongs to $\G_{(2,2,1)}$. Then the graph described in this way has $\LSP_5$ if and only if
  \begin{equation}
    \# C^+ = \# C^- \mbox{ and } \# L^+ = \# R^+ \mbox{ and } \# L^- = \# R^-. 
    \label{e:LSP5}
  \end{equation}
  Note that a flat edge changes the sign except for leaves, and a non-flat edge preserves the sign except for leaves.

  Note that if the leaf reached from the root using only flat edges has the same sign as the root, then the longest application of $\psi_i$, as discussed in Remark~\ref{rmk:long_psi}, may be applied to remove this leaf and the root, which has generating function $s_{(3,2)}$ if positive or $s_{(2,2,1)}$ if negative. Given the four possibilities in Figure~\ref{fig:node}, the only terminal case is the leftmost. Since the graph is locally Schur positive, \eqref{e:LSP5} ensures that there must be some leaf with the same sign as the root and a flat incoming edge. In the two rightmost cases in Figure~\ref{fig:node}, the map $\gamma_i$ may always be applied and doing so swaps the subtrees from the lower two $R$-nodes, similar to the scenarios in Figure~\ref{fig:bar}. Therefore we may use $\gamma_i$ to swap subtrees until this leaf lies on the flat path from the root.
\end{proof}

We can summarize the state of the transformations with the following result showing that a locally Schur positive graph of degree $n$ for which the $n-1$-restriction is a dual equivalence graph can itself be transformed into a dual equivalence graph. In particular, it has a Schur positive quasisymmetric generating function.

\begin{theorem}
  Let $\G$ be a locally Schur positive graph of type $(n,n)$, and suppose that the $(\imt,n)$-restriction of $\G$ is a dual equivalence graph and that the $(i,n)$-restriction of $\G$ satisfies dual equivalence axiom $4$. Then we can apply $\varphi_i, \psi_i, \gamma_i,$ and $\theta_i$ in such a way that the resulting graph still satisfies axioms $1,2$ and $5$, the $(\ipo,N)$-restriction is a dual equivalence graph.
  \label{thm:one-step}
\end{theorem}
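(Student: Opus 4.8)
The plan is to repair the color-$i$ edges of $\G$ in two stages. In Stage~1 the maps $\varphi_i,\psi_i,\gamma_i$ are used to restore dual equivalence axiom~$4$ for the color $i$ --- equivalently, by Proposition~\ref{prop:empty}, to empty the obstruction sets $W_i(\G)$ and $C_i(\G)$; in Stage~2 the map $\theta_i$ is used to restore axiom~$6$. Everything rests on one uniform observation: each of $\varphi_i^w,\psi_i^x,\gamma_i^z,\theta_i^{\C}$ changes only the color-$i$ edges --- it leaves $E_2,\ldots,E_{\imo}$ untouched and acts through $i$-package isomorphisms --- so each preserves axioms $1,2,5$ (Lemmas~\ref{lem:phi-compatible}, \ref{lem:psi-compatible}, \ref{lem:gamma-compatible} and Definition~\ref{defn:theta}), preserves axiom~$3$ (Proposition~\ref{prop:theta-reasonable}, together with the axiom~$3$ statements attached to the other maps), and leaves the $(i,N)$-restriction a dual equivalence graph. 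Hence the hypotheses needed to invoke the results of Sections~\ref{sec:LSP6}--\ref{sec:LSP} persist after every move, and one argues by induction on the number of moves, working inside the $(\ipo,N)$-restriction, on which local Schur positivity is maintained throughout: $\LSP_4$ by Lemmas~\ref{lem:LSP4-phi} and \ref{lem:LSP4}, $\LSP_5$ by the $U_i$-mechanism described below, and $\LSP_6$ for the color $i$ by Corollary~\ref{cor:fibers} (the lower colors being already well-behaved).

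\emph{Stage 1.} By Proposition~\ref{prop:empty} the $(\ipo,N)$-restriction satisfies axiom~$4$ exactly when $W_i(\G)=C_i(\G)=\emptyset$. Run the following loop while $W_i(\G)\cup C_i(\G)\neq\emptyset$: if $U_i(\G)=\emptyset$, apply $\gamma_i^z$ for a $z$ supplied by Theorem~\ref{thm:nonempty}; since $\gamma_i^z$ leaves $W_i$, $C_i$ and every $E_{\imt}\cup E_{\imo}\cup E_i$ generating function unchanged, the graph stays locally Schur positive and $U_i$ becomes nonempty. Now choose an element of $U_i(\G)$; it lies in $W_i^0(\G)=W_i(\G)$ or in $C_i^0(\G)=C_i(\G)$ (Propositions~\ref{prop:phi-lambda4}, \ref{prop:psi-lambda4}), and applying $\varphi_i^w$ or $\psi_i^x$ in its longest admissible form (Remarks~\ref{rmk:long_phi}, \ref{rmk:long_psi}) detaches a connected component appearing in Figure~\ref{fig:lambda5}, thereby strictly raising the number of connected components of $E_{\imt}\cup E_{\imo}\cup E_i$; membership in $U_i$ keeps $\LSP_5$, Lemmas~\ref{lem:LSP4-phi}, \ref{lem:LSP4} keep $\LSP_4$, and by Theorems~\ref{thm:phi-terminate} and \ref{thm:psi-terminate} the obstruction sets only shrink. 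That component count is bounded by $|V|$ and $\gamma_i$ never changes it, so the loop terminates; at termination $W_i(\G)=C_i(\G)=\emptyset$, i.e.\ the $(\ipo,N)$-restriction now satisfies axiom~$4$.

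\emph{Stage 2.} Now the $(i,N)$-restriction is a dual equivalence graph and the $(\ipo,N)$-restriction satisfies axiom~$4$, so Proposition~\ref{prop:theta-reasonable} applies. For a connected component $\mathcal{H}$ of the $(\ipo,\ipo)$-restriction, Theorems~\ref{thm:cover} and \ref{thm:isomorphic} identify failure of axiom~$6$ with the canonical surjective morphism $\mathcal{H}\to\G_{\lambda}$ having a fiber of size greater than one. While such an $\mathcal{H}$ exists, choose a negatively dominant $(i,i)$-restricted component $\C$ of $\mathcal{H}$ (Definition~\ref{defn:hindge}) and apply $\theta_i^{\C}$: by Proposition~\ref{prop:theta-reasonable} this preserves axioms $1,2,3,5$ and axiom~$4$ of the $(\ipo,N)$-restriction, and it splits $\mathcal{H}$ into two components, strictly increasing the number of connected components of the $(\ipo,\ipo)$-restriction. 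As that number is bounded by $|V|$, after finitely many such moves every component of the $(\ipo,\ipo)$-restriction maps isomorphically onto a standard dual equivalence graph, so axiom~$6$ holds for the color $i$. Combined with axioms $1$--$5$ and axiom~$4$, this makes the $(\ipo,N)$-restriction of the final graph a dual equivalence graph, while axioms $1,2,5$ still hold for the whole graph, as required.

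\emph{Main obstacle.} The crux --- the reason Sections~\ref{sec:LSP4}--\ref{sec:LSP} exist --- is that $\varphi_i^w$ and $\psi_i^x$ do \emph{not} in general preserve $\LSP_5$, so the reduction of $W_i\cup C_i$ in Stage~1 can be carried out safely only at an element of $U_i$; the real work is Theorem~\ref{thm:nonempty}, which via the rigid rooted-tree structure of the non-Figure~\ref{fig:lambda5} components established in Lemmas~\ref{lem:no-A}, \ref{lem:gamma-flat} and \ref{lem:tree} guarantees that whenever $U_i$ is empty a single $\gamma_i$ move makes it nonempty. The rest is bookkeeping: checking at each move that the hypotheses of the invoked result still hold, which in every case reduces to the single fact that none of the four maps ever alters an edge of a color below $i$. (The parallel question of maintaining local Schur positivity for the colors \emph{above} $i$ is exactly the shortfall taken up in Section~\ref{sec:axiom4p} and is not addressed here.)
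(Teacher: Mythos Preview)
Your two-stage outline matches the paper's proof, and most of the ingredients are correctly cited. The one real gap is your termination argument for Stage~1. You claim that each application of $\varphi_i^w$ or $\psi_i^x$ (in its longest form) at a vertex of $U_i$ ``detaches a connected component appearing in Figure~\ref{fig:lambda5}'', thereby strictly raising the component count of $E_{\imt}\cup E_{\imo}\cup E_i$. That is not what membership in $U_i$ gives you: $U_i$ only guarantees that $\LSP_5$ is preserved after the move, which holds trivially whenever the move merely reshuffles $i$-edges within a single $E_{\imt}\cup E_{\imo}\cup E_i$ component without splitting it (the generating function is then unchanged). Nor do Remarks~\ref{rmk:long_phi} and \ref{rmk:long_psi} assert that the long forms always detach a Figure~\ref{fig:lambda5} piece. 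The paper's termination measure is different and does work: by Theorem~\ref{thm:phi-terminate} each $\varphi_i$ strictly shrinks $W_i$; by Theorem~\ref{thm:psi-terminate} each $\psi_i$ preserves $W_i$ and strictly shrinks $C_i$; and $\gamma_i$ changes neither. Thus $(|W_i|,|C_i|)$ decreases lexicographically at every non-$\gamma_i$ move, and since Theorem~\ref{thm:nonempty} guarantees at most one $\gamma_i$ between consecutive non-$\gamma_i$ moves, first $W_i$ and then $C_i$ must empty. You already cite those two theorems, so this is an easy repair.

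A smaller point concerns Stage~2. Proposition~\ref{prop:theta-reasonable} is stated with local Schur positivity as a standing hypothesis, so to iterate $\theta_i^{\C}$ you must in principle maintain $\LSP$ between successive applications; you explicitly set this aside. The paper handles it by invoking Theorem~\ref{thm:theta-LSP} after each $\theta_i^{\C}$, and when that theorem's hypothesis that the $(\iph,N)$-restriction satisfy axiom~$4$ is damaged, repairing it with $\varphi_{\ipo}$ or $\psi_{\ipt}$ before the next $\theta_i^{\C}$.
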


\begin{proof}
  By Theorem~\ref{thm:nonempty}, we may always apply either $\varphi_i$ or $\psi_i$, perhaps with an intermediate application of $\gamma_i$. By Theorem~\ref{thm:phi-terminate}, each application of $\varphi_i$ strictly decreases $|W_i|$, and by Theorem~\ref{thm:psi-terminate} applying $\psi_i$ does not increase $|W_i|$, so eventually $W_i$ will be empty. By Theorem~\ref{thm:psi-terminate}, applying $\psi_i$ strictly decreases $|C_i|$, so once $W_i$ is empty, $\varphi_i$ will no longer be applied, and repeated applications of $\psi_i$ will result in $C_i$ being empty as well. At this point, by Proposition~\ref{prop:empty}, axiom $4$ holds for the $(\ipo,N)$-restriction. By construction, these maps maintain axioms $1,2$ and $5$, and axiom $4$ implies axiom $3$ for the $(\ipo,N)$-restriction. 

  By Proposition~\ref{prop:theta-reasonable}, if axiom $6$ fails for the $(\ipo,N)$-restriction, then we may choose a negatively dominant $(i,i)$-restricted component $\C$ and apply $\theta_i^{\C}$ while maintaining axioms $1,2,3$ and $5$. By Theorem~\ref{thm:theta-LSP}, the resulting graph remains locally Schur positive. Moreover, there are only two cases where the the $(\iph,N)$-restriction of $\theta_i^{\C}(\G)$ fails to satisfy axiom $4$: if one of $u,w,x,v$ has $\ipo$-type W or if one of $u,w,x,v$ has $\ipt$-type C. Suppose $w$ is the offending vertex. Then $w \in W_{\ipo}(\G)$ in the former case and $z = E_{\ipt}(w) \in C_{\ipt}(\G)$ in the latter. Given the strong hypotheses of the Proposition, $W_{\ipo}^0(\G) = W_{\ipo}(\G)$ in the former case and $C_{\ipt}^0(\G) = C_{\ipt}(\G)$ in the latter. Therefore $\varphi_{\ipo}$ or $\psi_{\ipt}$ may be used to restore axiom $4$ for the $(\iph,N)$-restriction. Therefore we may choose another negatively dominant component and continue thus. By Proposition~\ref{prop:theta-reasonable}, this process terminates exactly when axiom $6$ is satisfied for the $(\ipo,N)$-restriction, thus completing step 6. The result satisfies axioms $1, 2$ and $5$ by construction, and once again axiom $4$ implies axiom $3$ for the $(\iph,N)$-restriction. 
\end{proof}

At this point, we have only to consider the effect of the transformations on edges $E_j$ for $j>i$. For $j\geq \iph$, the extension of the maps to $i$-packages ensures that local Schur positivity is preserved. However, in general, these maps do not preserve local Schur positivity for $E_{\ipo}$ or $E_{\ipt}$.

%
\section{Additional axioms}
%
\label{sec:axiom4p}

It is not the case, in general, that $\varphi_i$ maintains local Schur positivity. The following condition is essential for ensuring that there is a way to maintain $\LSP_4$ for edges greater than $i$.

\begin{definition}
  A locally Schur positive graph $\G$ \emph{satisfies axiom $4'$(a)} if the following condition holds. If $w \in W_i(\G)$ has a non-flat $\imo$-edge, then the components of $E_{\imt} \cup E_{\imo}$ and $E_{\imo} \cup E_i$ containing $w$ have the same quasisymmetric functions in their degree $4$ generating functions
  \label{defn:axiom4pa}
\end{definition}

\begin{figure}[ht]
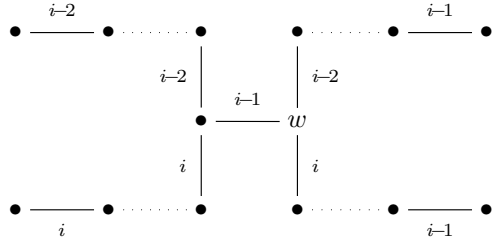

  \begin{displaymath}
    \begin{array}{\cs{7}\cs{7}\cs{7}\cs{7}\cs{7}c}
      \rnode{A0}{\B} & \rnode{B0}{\B} & \rnode{C0}{\B} & \rnode{D0}{\B} & \rnode{E0}{\B} & \rnode{F0}{\B} \\[5ex]
      & & \rnode{C1}{\B} & \rnode{D1}{w} & & \\[5ex]      
      \rnode{A2}{\B} & \rnode{B2}{\B} & \rnode{C2}{\B} & \rnode{D2}{\B} & \rnode{E2}{\B} & \rnode{F2}{\B}
    \end{array}
    \psset{nodesep=3pt,linewidth=.1ex}
    \everypsbox{\scriptstyle}
    \ncline {A0}{B0} \naput{\imt}
    \ncline[linestyle=dotted] {B0}{C0} 
    \ncline[linestyle=dotted] {D0}{E0}
    \ncline {E0}{F0} \naput{\imo}
    \ncline {C0}{C1} \nbput{\imt}
    \ncline {D0}{D1} \naput{\imt}
    \ncline {C1}{D1} \naput{\imo}
    \ncline {C1}{C2} \nbput{i}
    \ncline {D1}{D2} \naput{i}
    \ncline {A2}{B2} \nbput{i}
    \ncline[linestyle=dotted] {B2}{C2} 
    \ncline[linestyle=dotted] {D2}{E2}
    \ncline {E2}{F2} \nbput{\imo}
  \end{displaymath}
  \caption{\label{fig:axiom4pa} The case forbidden by axiom $4'a$.}
\end{figure}

The hypotheses of axiom $4'a$ ensure that both $w$ and $E_{\imo}(w)$ admit an $\imt$, an $\imo$ and an $i$-neighbor. If $E_{\imo} \cup E_i$ forms a closed loop through $w$, then each edge toggles $\sigma_{\imt}$. By axiom $2$, $E_i$ preserves $\sigma_{\imh}$. Since $w$ admits an $\imt$-neighbor, $E_i(w)$ does not. By axiom $2$, $E_{\imo}E_i(w)$ therefore admits an $\imt$-neighbor and the cycle continues so that $(E_{\imo}E_i)^m (w)$ admits an $\imt$-neighbor and $E_i (E_{\imo}E_i)^m (w)$ does not for all $m > 0$. By the assumption that the component is a loop, we must have $w = (E_{\imo}E_i)^m (w)$ for some $m > 0$, so then $E_i(w) = E_i (E_{\imo}E_i)^m (w)$ does not admit an $\imt$-neighbor. This contradiction works for $E_{\imt} \cup E_{\imo}$ as well, therefore neither can be a loop. This leaves two ways to align the two-color strings sharing an $E_{\imo}$ edge. One way results in the same degree $4$ signatures while the other is given in Figure~\ref{fig:axiom4pa}. Note that applying $\varphi_{\imo}^w$ in this case breaks $\LSP_4$, if it held for the graph, for $E_{\imo} \cup E_{i}$ and $\varphi_{i}^w$ cannot be applied since the $\imo$-edge at $w$ is not flat. Moreover, this is the only case where both maps fail.

Figure~\ref{fig:4'a} shows a graph with $\LSP_4$ and $\LSP_5$ that violates axiom $4'a$. The generating function is not Schur positive. Here $\varphi_4$ is needed in two places, and in both instances breaks local Schur positivity. There are two places requiring $\varphi_5$, however neither satisfies the hypotheses necessary to apply the map.

\begin{figure}[ht]
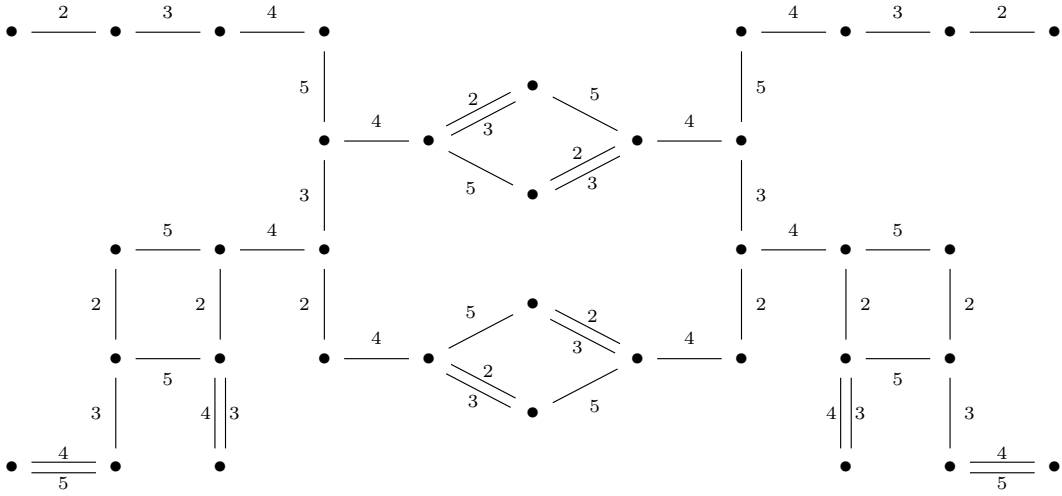

  \begin{displaymath}
    \begin{array}{\cs{8}\cs{8}\cs{8}\cs{8}\cs{8}\cs{8}\cs{8}\cs{8}\cs{8}\cs{8}c}
      \rnode{z1}{\B} & \rnode{a0}{\B} & \rnode{b0}{\B} & \rnode{c0}{\B} & \rnode{d0}{  } & \rnode{e0}{  } & \rnode{f0}{  } & \rnode{g0}{\B} & \rnode{h0}{\B} & \rnode{i0}{\B} & \rnode{y1}{\B} \\[2ex]
      & \rnode{a1}{  } & \rnode{b1}{  } & \rnode{c1}{  } & \rnode{d1}{  } & \rnode{e1}{\B} & \rnode{f1}{  } & \rnode{g1}{  } & \rnode{h1}{  } & \rnode{i1}{  } & \\[2ex]
      & \rnode{a2}{  } & \rnode{b2}{  } & \rnode{c2}{\B} & \rnode{d2}{\B} & \rnode{e2}{  } & \rnode{f2}{\B} & \rnode{g2}{\B} & \rnode{h2}{  } & \rnode{i2}{  } & \\[2ex]
      & \rnode{a3}{  } & \rnode{b3}{  } & \rnode{c3}{  } & \rnode{d3}{  } & \rnode{e3}{\B} & \rnode{f3}{  } & \rnode{g3}{  } & \rnode{h3}{  } & \rnode{i3}{  } & \\[2ex]
      & \rnode{a4}{\B} & \rnode{b4}{\B} & \rnode{c4}{\B} & \rnode{d4}{  } & \rnode{e4}{  } & \rnode{f4}{  } & \rnode{g4}{\B} & \rnode{h4}{\B} & \rnode{i4}{\B} & \\[2ex]
      & \rnode{a5}{  } & \rnode{b5}{  } & \rnode{c5}{  } & \rnode{d5}{  } & \rnode{e5}{\B} & \rnode{f5}{  } & \rnode{g5}{  } & \rnode{h5}{  } & \rnode{i5}{  } & \\[2ex]
      & \rnode{a6}{\B} & \rnode{b6}{\B} & \rnode{c6}{\B} & \rnode{d6}{\B} & \rnode{e6}{  } & \rnode{f6}{\B} & \rnode{g6}{\B} & \rnode{h6}{\B} & \rnode{i6}{\B} & \\[2ex]
      & \rnode{a7}{  } & \rnode{b7}{  } & \rnode{c7}{  } & \rnode{d7}{  } & \rnode{e7}{\B} & \rnode{f7}{  } & \rnode{g7}{  } & \rnode{h7}{  } & \rnode{i7}{  } & \\[2ex]
      \rnode{z8}{\B} & \rnode{a8}{\B} & \rnode{b8}{\B} & \rnode{c8}{  } & \rnode{d8}{  } & \rnode{e8}{  } & \rnode{f8}{  } & \rnode{g8}{  } & \rnode{h8}{\B} & \rnode{i8}{\B} & \rnode{y8}{\B} 
    \end{array}
    \psset{nodesep=5pt,linewidth=.1ex}
    \everypsbox{\scriptstyle}
    \ncline {z1}{a0} \naput{2}
    \ncline {a0}{b0} \naput{3}
    \ncline {b0}{c0} \naput{4}
    \ncline {g0}{h0} \naput{4}
    \ncline {h0}{i0} \naput{3}
    \ncline {i0}{y1} \naput{2}
    \ncline {c0}{c2} \nbput{5}
    \ncline {g0}{g2} \naput{5}
    \ncline {c2}{d2} \naput{4}
    \ncline[offset=2pt] {d2}{e1} \nbput{3}
    \ncline[offset=2pt] {e1}{d2} \nbput{2}
    \ncline {d2}{e3} \nbput{5}
    \ncline[offset=2pt] {e3}{f2} \nbput{3}
    \ncline[offset=2pt] {f2}{e3} \nbput{2}
    \ncline {e1}{f2} \naput{5}
    \ncline {f2}{g2} \naput{4}
    \ncline {c2}{c4} \nbput{3}
    \ncline {g2}{g4} \naput{3}
    \ncline {a4}{b4} \naput{5}
    \ncline {b4}{c4} \naput{4}
    \ncline {g4}{h4} \naput{4}
    \ncline {h4}{i4} \naput{5}
    \ncline {a4}{a6} \nbput{2}
    \ncline {b4}{b6} \nbput{2}
    \ncline {c4}{c6} \nbput{2}
    \ncline {g4}{g6} \naput{2}
    \ncline {h4}{h6} \naput{2}
    \ncline {i4}{i6} \naput{2}
    \ncline {a6}{b6} \nbput{5}
    \ncline {c6}{d6} \naput{4}
    \ncline[offset=2pt] {d6}{e7} \nbput{3}
    \ncline[offset=2pt] {e7}{d6} \nbput{2}
    \ncline {d6}{e5} \naput{5}
    \ncline[offset=2pt] {e5}{f6} \nbput{3}
    \ncline[offset=2pt] {f6}{e5} \nbput{2}
    \ncline {e7}{f6} \nbput{5}
    \ncline {f6}{g6} \naput{4}
    \ncline {h6}{i6} \nbput{5}
    \ncline {a6}{a8} \nbput{3}
    \ncline[offset=2pt] {b6}{b8} \nbput{4}
    \ncline[offset=2pt] {b8}{b6} \nbput{3}
    \ncline[offset=2pt] {h6}{h8} \nbput{4}
    \ncline[offset=2pt] {h8}{h6} \nbput{3}
    \ncline {i6}{i8} \naput{3}
    \ncline[offset=2pt] {z8}{a8} \nbput{5}
    \ncline[offset=2pt] {a8}{z8} \nbput{4}
    \ncline[offset=2pt] {i8}{y8} \nbput{5}
    \ncline[offset=2pt] {y8}{i8} \nbput{4}
  \end{displaymath}
  \caption{\label{fig:4'a}A locally Schur positive graph that fails axiom $4'a$.}
\end{figure}

As with $\varphi_i$, it is not the case, in general, that $\psi_i$ maintains local Schur positivity. The following condition is essential for ensuring that there is a way to maintain $\LSP_4$ for edges greater than $i$.

\begin{definition}
  A locally Schur positive graph $\G$ \emph{satisfies axiom $4'b$} if the following condition holds: if $x \in C_i(\G)$ has $\ipo$-type W, then there is a maximal length flat $i$-chain such that every vertex before $x$ or every vertex after $x$ has $\ipo$-type W.
  \label{defn:axiom4p}
\end{definition}

\begin{figure}[ht]
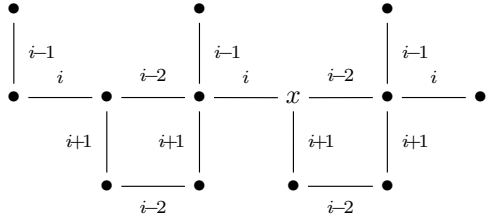

  \begin{displaymath}
    \begin{array}{\cs{7}\cs{7}\cs{7}\cs{7}\cs{7}c}
      \rnode{a0}{\B} & \rnode{b0}{  } & \rnode{c0}{\B} &
      \rnode{d0}{  } & \rnode{e0}{\B} & \rnode{f0}{  } \\[5ex]
      \rnode{a1}{\B} & \rnode{b1}{\B} & \rnode{c1}{\B} &
      \rnode{d1}{x}  & \rnode{e1}{\B} & \rnode{f1}{\B} \\[5ex]
      \rnode{a2}{  } & \rnode{b2}{\B} & \rnode{c2}{\B} &
      \rnode{d2}{\B} & \rnode{e2}{\B} & \rnode{f2}{  }
    \end{array}
    \psset{nodesep=3pt,linewidth=.1ex}
    \everypsbox{\scriptstyle}
    \ncline {a1}{b1} \naput{i}
    \ncline {b1}{c1} \naput{\imt}
    \ncline {c1}{d1} \naput{i}
    \ncline {d1}{e1} \naput{\imt}
    \ncline {e1}{f1} \naput{i}
    \ncline {a1}{a0} \nbput{\imo}
    \ncline {c1}{c0} \nbput{\imo}
    \ncline {e1}{e0} \nbput{\imo}
    \ncline {b1}{b2} \nbput{\ipo}
    \ncline {c1}{c2} \nbput{\ipo}
    \ncline {d1}{d2} \naput{\ipo}
    \ncline {e1}{e2} \naput{\ipo}
    \ncline {b2}{c2} \nbput{\imt}
    \ncline {d2}{e2} \nbput{\imt}
  \end{displaymath}
  \caption{\label{fig:axiom4pb} The case forbidden by axiom $4'b$.}
\end{figure}

The hypotheses of axiom $4'b$ ensure that both $x$ and $E_i(x)$ admit an $\ipo$-neighbor (though $E_{\ipo}$ edges need not exist). By axioms $2$ and $1$, both $E_{\imt}(x)$ and $E_{\imt}(E_i(x))$ must admit an $\ipo$-neighbor (again, $E_{\ipo}$ edges need not exist, though if they do, axiom $5$ ensures the shown commutativity). The forbidden conclusion is that neither $E_iE_{\imt}(x)$ nor $E_iE_{\imt}(E_i(x))$ admits an $\ipo$-neighbor as depicted on the right side of Figure~\ref{fig:axiom4pb}. Note that applying $\psi_{i}^x$ in this case breaks $\LSP_4$, if it held for the graph, for $E_{i} \cup E_{\ipo}$ and fails axiom $3$. Applying $\varphi_{\ipo}^x$ breaks $\LSP_4$, if it held for the graph, for $E_{i} \cup E_{\ipo}$ across the $E_{\imt}$ edges, which are part of the $\ipo$-package of $w$. Again, this is the only case where both maps fail. 

Figure~\ref{fig:4'b} shows a graph that violates axiom $4'b$. The generating function is not Schur positive. Neither $\varphi_3$ nor $\varphi_4$ is needed. Each of $\varphi_5, \psi_4$ and $\psi_5$ can be applied in exactly one place, and none of these preserves local Schur positivity. In fact, both $\varphi_5$ and $\psi_4$ would create a violation of axiom $3$.

\begin{figure}[ht]
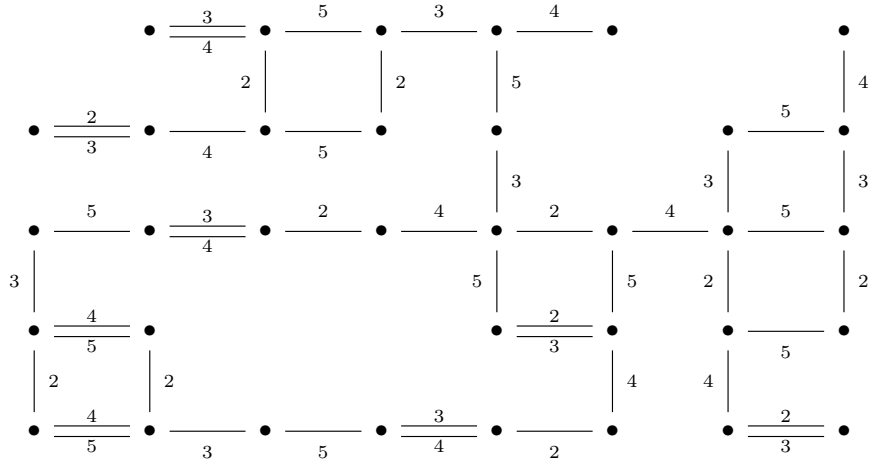

  \begin{displaymath}
    \begin{array}{\cs{9}\cs{9}\cs{9}\cs{9}\cs{9}\cs{9}\cs{9}c}
      \rnode{a1}{  } & \rnode{b1}{\B} & \rnode{c1}{\B} & \rnode{d1}{\B} & \rnode{e1}{\B} & \rnode{f1}{\B} & \rnode{g1}{  } & \rnode{h1}{\B} \\[6ex]
      \rnode{a2}{\B} & \rnode{b2}{\B} & \rnode{c2}{\B} & \rnode{d2}{\B} & \rnode{e2}{\B} & \rnode{f2}{  } & \rnode{g2}{\B} & \rnode{h2}{\B} \\[6ex]
      \rnode{b4}{\B} & \rnode{b3}{\B} & \rnode{c3}{\B} & \rnode{d3}{\B} & \rnode{e3}{\B} & \rnode{f3}{\B} & \rnode{g3}{\B} & \rnode{h3}{\B} \\[6ex]
      \rnode{b5}{\B} & \rnode{c5}{\B} & \rnode{c4}{  } & \rnode{d4}{  } & \rnode{e4}{\B} & \rnode{f4}{\B} & \rnode{g4}{\B} & \rnode{h4}{\B} \\[6ex]
      \rnode{b6}{\B} & \rnode{c6}{\B} & \rnode{d6}{\B} & \rnode{e6}{\B} & \rnode{f6}{\B} & \rnode{f5}{\B} & \rnode{g5}{\B} & \rnode{g6}{\B}
    \end{array}
    \psset{nodesep=5pt,linewidth=.1ex}
    \everypsbox{\scriptstyle}
    \ncline[offset=2pt] {b1}{c1} \nbput{4}
    \ncline[offset=2pt] {c1}{b1} \nbput{3}
    \ncline {c1}{d1} \naput{5}
    \ncline {d1}{e1} \naput{3}
    \ncline {e1}{f1} \naput{4}
    \ncline {c1}{c2} \nbput{2}
    \ncline {d1}{d2} \naput{2}
    \ncline {e1}{e2} \naput{5}
    \ncline {h1}{h2} \naput{4}
    \ncline[offset=2pt] {a2}{b2} \nbput{3}
    \ncline[offset=2pt] {b2}{a2} \nbput{2}
    \ncline {b2}{c2} \nbput{4}
    \ncline {c2}{d2} \nbput{5}
    \ncline {g2}{h2} \naput{5}
    \ncline {e2}{e3} \naput{3}
    \ncline {g2}{g3} \nbput{3}
    \ncline {h2}{h3} \naput{3}
    \ncline[offset=2pt] {b3}{c3} \nbput{4}
    \ncline[offset=2pt] {c3}{b3} \nbput{3}
    \ncline {c3}{d3} \naput{2}
    \ncline {d3}{e3} \naput{4}
    \ncline {e3}{f3} \naput{2}
    \ncline {f3}{g3} \naput{4}
    \ncline {g3}{h3} \naput{5}
    \ncline {b3}{b4} \nbput{5}
    \ncline {e3}{e4} \nbput{5}
    \ncline {f3}{f4} \naput{5}
    \ncline {g3}{g4} \nbput{2}
    \ncline {h3}{h4} \naput{2}
    \ncline[offset=2pt] {e4}{f4} \nbput{3}
    \ncline[offset=2pt] {f4}{e4} \nbput{2}
    \ncline {g4}{h4} \nbput{5}
    \ncline {b4}{b5} \nbput{3}
    \ncline {f4}{f5} \naput{4}
    \ncline {g4}{g5} \nbput{4}
    \ncline[offset=2pt] {b5}{c5} \nbput{5}
    \ncline[offset=2pt] {c5}{b5} \nbput{4}
    \ncline {b5}{b6} \naput{2}
    \ncline {c5}{c6} \naput{2}
    \ncline {f5}{f6} \naput{2}
    \ncline[offset=2pt] {g5}{g6} \nbput{3}
    \ncline[offset=2pt] {g6}{g5} \nbput{2}
    \ncline[offset=2pt] {b6}{c6} \nbput{5}
    \ncline[offset=2pt] {c6}{b6} \nbput{4}
    \ncline {c6}{d6} \nbput{3}
    \ncline {d6}{e6} \nbput{5}
    \ncline[offset=2pt] {e6}{f6} \nbput{4}
    \ncline[offset=2pt] {f6}{e6} \nbput{3}
  \end{displaymath}
  \caption{\label{fig:4'b}A locally Schur positive graph satisfying axioms $1,2,3$ and $5$ along with axiom $4'a$ but not $4'b$.}
\end{figure}

Note that both of these examples fail to have $\LSP_6$. It remains open as to whether $\LSP_6$ implies axiom $4'$. The following result shows that it is, in some sense, axiom $4'$ is already superfluous.

\begin{theorem}
  Let $\G$ be a locally Schur positive graph satisfying axiom $4'$ such that the $(\imt,N)$-restriction of $\G$ is a dual equivalence graph. For any $w \in W_i^0(\G)$ or $x \in C_i^0(\G)$, if $\varphi_i^w(\G)$ or $\psi_i^x(\G)$ or $\gamma_i^z(\G)$ has $\LSP_4$, then $\varphi_i^w(\G)$ or $\psi_i^x(\G)$ or $\gamma_i^z(\G)$ maintains axiom $4'$, respectively, and for $\C$ a negatively dominant restricted component, $\theta_i^{\C}(\G)$ vacuously satisfies axiom $4'$.
\label{thm:axiom4p}
\end{theorem}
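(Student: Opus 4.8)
# Proof proposal for Theorem~\ref{thm:axiom4p}

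\textbf{Overall strategy.} The statement asserts that applying any one of the four transformations, subject to the $\LSP_4$ hypothesis on the output, preserves axiom $4'$ (i.e. both $4'a$ and $4'b$). The natural approach is to treat the two sub-axioms separately, and within each to treat the four maps separately. Since axiom $4'$ is a statement about the local structure of $E_{\imt}\cup E_{\imo}$-components (for $4'a$) and about flat $i$-chains together with $\ipo$-type W data (for $4'b$), and since each transformation only re-routes $i$-edges on isomorphic $i$-packages while leaving $E_j$ for $j\neq i$ untouched, the key observation is that a \emph{potential} violation of axiom $4'$ in the transformed graph must involve at least one $i$-edge that was actually changed. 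The plan is therefore: for each map, identify the handful of local configurations that a rerouted $i$-edge can create, and show that each such configuration either already existed in $\G$ (so axiom $4'$ for $\G$ handles it) or is ruled out by the $\LSP_4$ hypothesis on the output, or is vacuous.

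\textbf{Step 1: The $\theta_i^{\C}$ case is vacuous.} By Definition~\ref{defn:type-W} and the construction in Definition~\ref{defn:theta}, $\theta_i^{\C}$ swaps $i$-edges only between isomorphic $(i,i)$-restricted components $\mathcal{B}'\cong\mathcal{B}$, so the $i$-type of every vertex (W, A, B, or C) is unchanged, and every $i$-edge remains flat or non-flat exactly as before. Hence $W_i$ and $C_i$ are unchanged as \emph{sets}, and moreover a vertex $w\in W_i$ has a non-flat $\imo$-edge in $\theta_i^{\C}(\G)$ iff it does in $\G$, and $x\in C_i$ has $\ipo$-type W iff it does in $\G$. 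But the hypotheses of Theorem~\ref{thm:theta-LSP} and Proposition~\ref{prop:theta-reasonable} — with which this theorem is to be composed in Theorem~\ref{thm:one-step} — put us in the regime where the $(i,N)$-restriction is already a dual equivalence graph, so $W_i(\G)=C_i(\G)=\emptyset$ and there is nothing to check; axiom $4'$ holds vacuously. (If one wants the statement without that regime assumption, one argues directly that $\theta_i^{\C}$ never creates a new $w\in W_i$ with a non-flat $\imo$-edge nor a new $x\in C_i$ with $\ipo$-type W, since the relevant $E_{\imt}$, $E_{\imo}$, $E_{\ipo}$ edges are all untouched.)

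\textbf{Step 2: The $\varphi_i^w$ case and axiom $4'a$.} Suppose $\varphi_i^w(\G)$ has $\LSP_4$ but violates $4'a$: there is a vertex $v\in W_i(\varphi_i^w(\G))$ with a non-flat $\imo$-edge such that the $E_{\imt}\cup E_{\imo}$ and $E_{\imo}\cup E_i$ components through $v$ disagree in their degree $4$ generating functions. Since $\varphi_i^w$ does not alter any $E_{\imt}$, $E_{\imo}$, or $E_{\imf}$-and-lower edges, the $E_{\imt}\cup E_{\imo}$-component through $v$ is identical in $\G$ and $\varphi_i^w(\G)$; thus $v$ has a non-flat $\imo$-edge and is $i$-type W in $\G$ too, i.e. $v\in W_i(\G)$. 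If $v$ is on the $E_{\imo}\cup E_i$-path that $\varphi_i^w$ shortened (so its $i$-edge changed), then by Definition~\ref{defn:phi} and Lemma~\ref{lem:phi-compatible} the new $E_{\imo}\cup E_i$-component through $v$ is a two-vertex cycle with generating function $s_{(2,2)}$; since axiom $4'a$ held in $\G$, the $E_{\imt}\cup E_{\imo}$-component through $v$ already had generating function $s_{(2,2)}$ (the degree $4$ pieces agreeing), so $4'a$ still holds. If $v$'s $i$-edge was not changed, the $E_{\imo}\cup E_i$-component through $v$ is unchanged and $4'a$ for $\G$ applies directly. The remaining subtlety is the case of the "long" $\varphi_i^w$ of Remark~\ref{rmk:long_phi}, where intermediate vertices on the chain lie in $W_i^0(\G)$; here one argues via Figure~\ref{fig:axiom4pa} that the configuration forbidden by $4'a$ in $\G$ is exactly the one that would prevent the shortening, and the $\LSP_4$ hypothesis on the output rules out the degenerate mismatch. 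This is where the diagram-chase is densest, and I expect \textbf{this to be the main obstacle}: one must show no rerouted $i$-edge can create the precise bad alignment of Figure~\ref{fig:axiom4pa}, using $\LSP_4$ of the output and the structure of $i$-packages.

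\textbf{Step 3: The $\psi_i^x$ and $\gamma_i^z$ cases.} For $\psi_i^x$ and axiom $4'b$, suppose $\psi_i^x(\G)$ has $\LSP_4$ but some $y\in C_i(\psi_i^x(\G))$ with $\ipo$-type W violates $4'b$. Since $\psi_i^x$ leaves $E_{\ipo}$ and all $E_j$ with $j\neq i$ fixed, "$\ipo$-type W" and flatness of $\imt$-edges are unchanged, and the flat $i$-chain through $y$ in $\psi_i^x(\G)$ differs from one in $\G$ only along the segment whose $i$-edges were swapped. Using Definition~\ref{defn:psi}, the compatibility Lemma~\ref{lem:psi-compatible}, and the structure established in the proof of Theorem~\ref{thm:psi-terminate} (Figure~\ref{fig:psi-X}), one checks that the swap either shortens a flat $i$-chain to length $\leq 4$ (removing $y$ from $C_i$, so nothing to check), or splices two chains in a way that inherits the $\ipo$-type W distribution from $\G$; the forbidden configuration of Figure~\ref{fig:axiom4pb} would, by axiom $4'b$ for $\G$, already have had all of "every vertex before $x$" or "every vertex after $x$" being $\ipo$-type W, and this property is transported across the swap. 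The degenerate case where the swap would break $\LSP_4$ for $E_i\cup E_{\ipo}$ is precisely excluded by the hypothesis that $\psi_i^x(\G)$ has $\LSP_4$. For $\gamma_i^z$, recall (Remark following Definition~\ref{defn:gamma}) that $\gamma_i^z$ does not separate $E_{\imt}\cup E_{\imo}\cup E_i$-components and only permutes subtrees hanging off an $R$-node; it changes neither the multiset of $i$-types nor flatness data nor any $E_j$ with $j\neq i$, so $W_i$, $C_i$, and the $\ipo$-type W and non-flat-$\imo$-edge attributes are all preserved verbatim, whence axiom $4'$ transfers from $\G$ to $\gamma_i^z(\G)$ with essentially no work — the only thing to verify is that the rerouted $i$-edges internal to the swap do not create a new $W_i$-element with a non-flat $\imo$-edge, which follows from Lemma~\ref{lem:gamma-compatible} since those $i$-edges lie on isomorphic $i$-packages of vertices all sharing the same $\sigma_{\imt,\imo,i}$ data. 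Assembling Steps 1–3 over all four maps gives the theorem.
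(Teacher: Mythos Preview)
Your proposal has a structural gap: axiom $4'$ is not a statement at the single index $i$, but at every index, and because the transformations reroute $E_i$-edges only, a violation can be created at index $j$ precisely when the two-colour or flat-chain data at index $j$ involves the colour $i$. Concretely, for axiom $4'a$ one must check the three overlapping pairs $E_{\imt}\cup E_{\imo}$ vs.\ $E_{\imo}\cup E_i$ (index $i$), $E_{\imo}\cup E_i$ vs.\ $E_i\cup E_{\ipo}$ (index $i{+}1$), and $E_i\cup E_{\ipo}$ vs.\ $E_{\ipo}\cup E_{\ipt}$ (index $i{+}2$); for axiom $4'b$ one must check both flat $i$-chains (index $i$) and flat $\ipt$-chains (index $i{+}2$). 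Your Steps~2--3 only address index $i$. The paper handles all of these cases, and its argument for $4'a$ is in fact uniform and much shorter than your case split: since $\LSP_4$ is assumed on the output, each two-colour generating function is of the form $s_{(3,1)}+m s_{(2,2)}$ or $s_{(2,1,1)}+m s_{(2,2)}$ and the transformation can only decrease $m$; either $m$ stays positive (so the set of quasisymmetric functions appearing is unchanged) or $m$ drops to $0$ (so the hypothesis of $4'a$ is no longer met). Your index-$i$ argument, by contrast, asserts that if $v$'s $i$-edge changed then the new $E_{\imo}\cup E_i$-component is an $s_{(2,2)}$ two-cycle and then tries to conclude the $E_{\imt}\cup E_{\imo}$-component was already $s_{(2,2)}$; that deduction is wrong (the old components could both have been $s_{(3,1)}+m s_{(2,2)}$).

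Your Step~1 for $\theta_i^{\C}$ is also only partially correct. Under the regime where axiom $4$ holds for the $(\iph,N)$-restriction, axiom $4'a$ \emph{is} vacuous and so is $4'b$ at index $i$; but $4'b$ at index $i{+}2$ is not vacuous, because $\theta_i^{\C}$ can create new elements of $C_{\ipt}$. The paper carries out an explicit case analysis here, distinguishing whether the swapped vertices $u,v,w,x$ have $\ipt$-type C or $\ipt$-type B and checking that the resulting maximal flat $\ipt$-chain has the required $\iph$-type~W distribution in either case. Finally, the hardest part of the whole proof is $4'b$ at index $i$ under $\varphi_i^w$: here the paper does a three-subcase chase on which of the three $i$-edges in the forbidden configuration was rerouted, and the middle-edge subcase is resolved only by invoking axiom $4'a$ (at index $i{+}1$) for $\G$ together with a nontrivial walk along $E_i\cup E_{\ipo}$ strings. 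Your sketch in Step~3 (which only treats $\psi_i^x$ for $4'b$) does not supply this argument.
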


\begin{proof}
  There are three cases to consider for axiom $4'a$: $E_{\imt} \cup E_{\imo} \cup E_i$, $E_{\imo} \cup E_i \cup E_{\ipo}$, and $E_i \cup E_{\ipo} \cup E_{\ipt}$. By $\LSP_4$ of $\G$, the overlapping two color strings that satisfy the hypotheses for axiom $4'a$ both have generating functions $s_{(3,1)} + m s_{(2,2)}$ or both have $s_{(2,1,1)} + m s_{(2,2)}$ for some $m>0$. Since $\LSP_4$ is assumed to be maintained by each of the maps in question, the only change in generating function is to reduce $m$. If $m$ becomes $0$, the hypotheses of axiom $4'a$ are not met, and if $m$ remains positive then the quasisymmetric functions that appear remain unchanged. Therefore none of the maps can create a violation of axiom $4'a$ provided $\LSP_4$ holds.

  There are two cases to consider for axiom $4'b$: $E_{\imt} \cup E_i$ and $E_i \cup E_{\ipt}$. The latter case is easily resolved by axiom $2$ since all edges involved in an application of $\varphi_i^w$ or $\psi_i^x$ or $\gamma_i^z$ preserve $\sigma_{\ipt,\iph}$, and so preserve $\iph$-type W. Thus any violation after applying either map must have already existed in $\G$. Any violation of axiom $4'b$ for $E_{\imt} \cup E_i$ created by $\psi_i^x$ or $\gamma_i^z$ must have existed already in $\G$ since these maps remove or exchange sequences within flat $i$-chains. Therefore we consider how $\varphi_i^w$ might result in a component as depicted in the right side of Figure~\ref{fig:axiom4pb}. There are three $i$-edges that could have resulted from $\varphi_i^w$.

  \begin{figure}[ht]
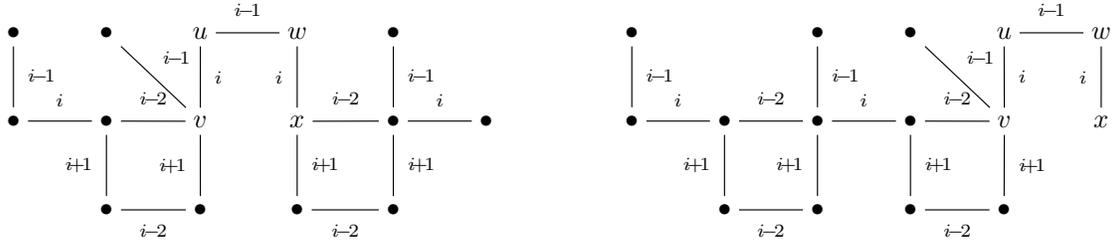

    \begin{displaymath}
      \begin{array}{\cs{7}\cs{7}\cs{7}\cs{7}\cs{7}c}
        \rnode{a0}{\B} & \rnode{b0}{\B} & \rnode{c0}{u}  &
        \rnode{d0}{w}  & \rnode{e0}{\B} & \rnode{f0}{  } \\[5ex]
        \rnode{a1}{\B} & \rnode{b1}{\B} & \rnode{c1}{v}  &
        \rnode{d1}{x}  & \rnode{e1}{\B} & \rnode{f1}{\B} \\[5ex]
        \rnode{a2}{  } & \rnode{b2}{\B} & \rnode{c2}{\B} &
        \rnode{d2}{\B} & \rnode{e2}{\B} & \rnode{f2}{  }
      \end{array}
      \hspace{4em}
      \begin{array}{\cs{7}\cs{7}\cs{7}\cs{7}\cs{7}c}
        \rnode{A0}{\B} & \rnode{B0}{  } & \rnode{C0}{\B} &
        \rnode{D0}{\B} & \rnode{E0}{u}  & \rnode{F0}{w}  \\[5ex]
        \rnode{A1}{\B} & \rnode{B1}{\B} & \rnode{C1}{\B} &
        \rnode{D1}{\B} & \rnode{E1}{v}  & \rnode{F1}{x}  \\[5ex]
        \rnode{A2}{  } & \rnode{B2}{\B} & \rnode{C2}{\B} &
        \rnode{D2}{\B} & \rnode{E2}{\B} & \rnode{F2}{  }
      \end{array}
      \psset{nodesep=3pt,linewidth=.1ex}
      \everypsbox{\scriptstyle}
      \ncline {a1}{b1} \naput{i}
      \ncline {b1}{c1} \naput{\imt}
      \ncline {c1}{c0} \nbput{i}
      \ncline {c0}{d0} \naput{\imo}
      \ncline {d0}{d1} \nbput{i}
      \ncline {d1}{e1} \naput{\imt}
      \ncline {e1}{f1} \naput{i}
      \ncline {a1}{a0} \nbput{\imo}
      \ncline {c1}{b0} \nbput{\imo}
      \ncline {e1}{e0} \nbput{\imo}
      \ncline {b1}{b2} \nbput{\ipo}
      \ncline {c1}{c2} \nbput{\ipo}
      \ncline {d1}{d2} \naput{\ipo}
      \ncline {e1}{e2} \naput{\ipo}
      \ncline {b2}{c2} \nbput{\imt}
      \ncline {d2}{e2} \nbput{\imt}
      \ncline {A1}{B1} \naput{i}
      \ncline {B1}{C1} \naput{\imt}
      \ncline {C1}{D1} \naput{i}
      \ncline {D1}{E1} \naput{\imt}
      \ncline {E1}{E0} \nbput{i}
      \ncline {E0}{F0} \naput{\imo}
      \ncline {F0}{F1} \nbput{i}
      \ncline {A1}{A0} \nbput{\imo}
      \ncline {C1}{C0} \nbput{\imo}
      \ncline {E1}{D0} \nbput{\imo}
      \ncline {B1}{B2} \nbput{\ipo}
      \ncline {C1}{C2} \nbput{\ipo}
      \ncline {D1}{D2} \naput{\ipo}
      \ncline {E1}{E2} \naput{\ipo}
      \ncline {B2}{C2} \nbput{\imt}
      \ncline {D2}{E2} \nbput{\imt}
    \end{displaymath}
    \caption{\label{fig:phi-ax4p} An illustration of how $\varphi_i^w$ might result in a violation of axiom $4'b$.}
  \end{figure}

  For the right edge, let $x,w,u,v$ be as depicted in the right side of Figure~\ref{fig:phi-ax4p}. If $x$ does not admit an $\ipo$-neighbor, then by axiom $2$, $w$ must. By axioms $1$ and $2$, this ensures that $u$ does not admit an $\ipo$-neighbor. Therefore the $i$-edge between $v$ and $x$ is the right edge in a violation of axiom $4'b$ in $\varphi_i^w(\G)$ only if the $i$-edge between $v$ and $u$ is the right edge in a violation of axiom $4'b$ in $\G$. The case for left edge is similarly resolved.

  For the middle edge, let $x,w,u,v$ be as depicted in the left side of Figure~\ref{fig:phi-ax4p}. By axiom $2$ and the fact that the $i$-edge between $u$ and $v$ is not flat, $u$ has no $\imt$-neighbor since $v$ does. By axiom $2$ again, $w$ must admit an $\imt$-neighbor. Since $x$ also admits an $\imt$-neighbor, the $i$-edge between $w$ and $x$ must be flat by axiom $3$, and so they lie on a flat $i$-chain. Now for $\ipo$-neighbors, by axiom $2$, $\sigma(u)_{\ipo} = \sigma(w)_{\ipo}$ and, by axiom $1$, $\sigma(u)_{i} = -\sigma(w)_{i}$. Therefore exactly one of $u$ and $w$ admits an $\ipo$-neighbor, so we consider each case in turn. If $w$ admits an $\ipo$-neighbor, then the $i$-edge between $w$ and $x$ is the middle edge in a violation of axiom $4'b$ for $E_{\imt} \cup E_i$ in $\G$, a contradiction. Alternatively, if $u$ admits an $\ipo$-neighbor, then by axiom $4'a$ for $E_{\imo} \cup E_i \cup E_{\ipo}$ in $\G$, following the $E_i \cup E_{\ipo}$ string from $u$ through $v$ and onwards must terminate in an $i$-edge. In particular, $E_{\ipo}(v)$ admits an $i$-neighbor, say $z = E_i E_{\ipo}(v)$. Axioms $2$ and $3$ ensure that both $v$ and $z$ admit an $\imo$-neighbor while $E_{\ipo}(v) = E_i(z)$ does not. Moreover, all of $v, E_{\ipo}(v) = E_i(z)$ and $z$ admit an $\imt$-neighbor. In particular, the $i$-edge between $E_i(z)$ and $z$ is flat, and, since $E_i(z)$ does not admit an $\imo$-neighbor, neither $E_i(z)$ nor $E_{\imt}E_i(z)$ has $\imo$-type W. Therefore by axioms $2$ and $3$, $E_{\imt}E_i(z)$ admits an $i$-neighbor, so we again have a flat $i$-chain. By axiom $5$, $E_{\imt}E_{\ipo}(v) = E_{\ipo}E_{\imt}(v)$, and the assumption on $\G$ is that $E_i E_{\imt}(v)$ does not admit an $\ipo$-neighbor. Therefore by local Schur positivity of $E_i \cup E_{\ipo}$ in $\G$, the $E_i \cup E_{\ipo}$ string beginning at $E_i E_{\imt}(v)$ ends with an $\ipo$-edge. In particular, this implies $E_i E_{\imt} E_i(z)$ admit an $\ipo$-neighbor. At long last, this creates a violation of axiom $4'b$ in $\G$, another contradiction.

  By Theorem~\ref{thm:theta-LSP}, it suffices to show that axiom $4'$ is maintained.  Since axiom $4$ holds for the $(\iph,N)$-restriction of $\G$, even after applying $\theta_i^{\C}$, axiom $4'a$ is vacuously satisfied since only $E_i \cup E_{\ipo}$ strings have the potential not to appear in Figure~\ref{fig:lambda4}. Similarly, axiom $4'b$ is vacuous for $E_{\imt} \cup E_i$. 

  For axiom $4'b$ for $E_{i} \cup E_{\ipt}$, consider $\{w,x\}, \{u,v\} \in E_i(\G)$ with $\theta_i^{\C}(w) = u$ and $\theta_i^{\C}(x) = v$. If neither $w,x$ nor $u,v$ has $\ipt$-type C, then $C_i(\theta_i^{\C}(\G))$ remains empty, so assume $w,x$ have $\ipt$-type C. In this case, either all of $w,x,E_i(w),E_i(x)$ have $\iph$-type W or none does. Therefore if $u,v$ also have $\ipt$-type C, then we have the situation depicted in the left side of Figure~\ref{fig:theta-4p}. In this case, the maximal flat $\ipt$-chain $(w, E_i(w), E_i(x), x, v, E_i(v), E_i(u), u)$ has either all, none, the first four, or the last four vertices of $\iph$-type W, thereby satisfying axiom $4'b$.

  \begin{figure}[ht]
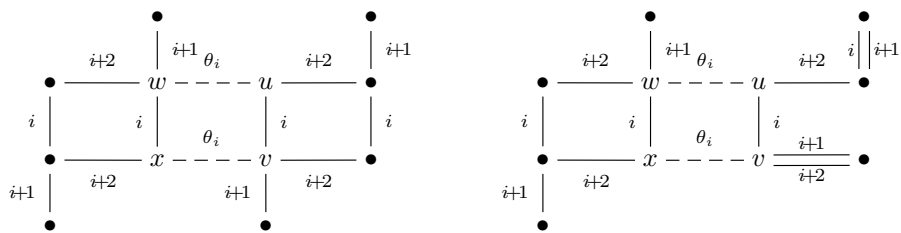

    \begin{displaymath}
      \begin{array}{\cs{8}\cs{8}\cs{8}c}
        & \rnode{uu}{\B} & & \rnode{vv}{\B} \\[3ex]
        \rnode{x}{\B} & \rnode{w}{w} & \rnode{u}{u} & \rnode{v}{\B} \\[4ex]
        \rnode{X}{\B} & \rnode{W}{x} & \rnode{U}{v} & \rnode{V}{\B} \\[3ex]
        \rnode{XX}{\B} & & \rnode{WW}{\B} &
      \end{array}
      \hspace{5em}
      \begin{array}{\cs{8}\cs{8}\cs{8}c}
        & \rnode{uu1}{\B} & & \rnode{vv1}{\B} \\[3ex]
        \rnode{x1}{\B} & \rnode{w1}{w} & \rnode{u1}{u} & \rnode{v1}{\B} \\[4ex]
        \rnode{X1}{\B} & \rnode{W1}{x} & \rnode{U1}{v} & \rnode{V1}{\B} \\[3ex]
        \rnode{WW1}{\B} & & & 
      \end{array}
      \psset{nodesep=3pt,linewidth=.1ex}
      \everypsbox{\scriptstyle}
      \ncline {v}{vv}\nbput{\ipo}
      \ncline {w}{uu}\nbput{\ipo}
      \ncline {x}{w} \naput{\ipt}
      \ncline[linestyle=dashed] {w}{u} \naput{\theta_i}
      \ncline {u}{v} \naput{\ipt}
      \ncline {x}{X} \nbput{i}
      \ncline {w}{W} \nbput{i}
      \ncline {u}{U} \naput{i}
      \ncline {v}{V} \naput{i}
      \ncline {X}{W} \nbput{\ipt}
      \ncline {U}{V} \nbput{\ipt}
      \ncline[linestyle=dashed] {W}{U} \naput{\theta_i}
      \ncline {WW}{U} \naput{\ipo}
      \ncline {XX}{X} \naput{\ipo}
      \ncline {w1}{uu1}\nbput{\ipo}
      \ncline {X1}{WW1}\nbput{\ipo}
      \ncline {x1}{w1} \naput{\ipt}
      \ncline[offset=2pt] {U1}{V1} \nbput{\ipt}
      \ncline[offset=2pt] {V1}{U1} \nbput{\ipo}
      \ncline[linestyle=dashed] {w1}{u1} \naput{\theta_i}
      \ncline[linestyle=dashed] {W1}{U1} \naput{\theta_i}
      \ncline {x1}{X1} \nbput{i}
      \ncline {W1}{w1} \naput{i}
      \ncline {u1}{U1} \naput{i}
      \ncline {X1}{W1} \nbput{\ipt}
      \ncline {u1}{v1} \naput{\ipt}
      \ncline[offset=2pt] {vv1}{v1} \nbput{i}
      \ncline[offset=2pt] {v1}{vv1} \nbput{\ipo}
    \end{displaymath}
    \caption{\label{fig:theta-4p} The two possible cases where $C_i(\theta_i^{\C}(\G))$ is nonempty.}
  \end{figure}

  If $u,v$ instead have $\ipt$-type B, say where $E_{\ipt}(v) = E_{\ipo}(v)$ and so $E_i E_{\ipt}(u) = E_{\ipo} E_{\ipt} (u)$, then we have the situation depicted on the right side of Figure~\ref{fig:theta-4p}. In this case, the maximal flat $\ipt$-chain is $(E_{\ipt}(u), u, w, E_{\ipt}(w), E_{\ipt}(x), x)$, where all or none of the last four vertices have $\iph$-type W, again satisfying axiom $4'b$.
\end{proof}

The question remains as to whether axiom $4'$ is sufficient to guarantee that these transformations can be applied in a suitable way to maintain local Schur positivity throughout. Extensive computer evidence suggests that the graphs for Macdonald polynomials, LLT polynomials, $k$-Schur functions, and Schubert times Schur coefficients all enjoy Schur positive components, and may all be transformed systematically by the maps presented in this paper (interestingly, without the need for $\gamma_i^z$). Studying these transformations in detail has led to a formula for Macdonald polynomials in cases where the corresponding graph is not a dual equivalence graph \cite{Ass-B}. A deeper study of these transformations applied to these specific graphs may yield a comprehensive theory that encompasses these and further applications.

%
%

\bibliographystyle{amsalpha} 
\bibliography{part2.bib}

\end{document}